\let\realItem\item % save a copy of the original item
\NewDocumentCommand\myItem{ o }{%
   \IfNoValueTF{#1}%
      {\realItem}% add an item
      {\realItem[#1]\def\@currentlabel{#1}}% add an item and update label
}
\setlist[enumerate]{
    before=\let\item\myItem,       % use \myItem in enumerate
    label=\textnormal{(\arabic*)}, % format the label
    widest=(2')                    % set the widest label
}
\def\namedlabel#1#2{\begingroup
	#2%
	\def\@currentlabel{#2}%
	\phantomsection\label{#1}\endgroup
}
\DeclareMathOperator{\dv}{div}
\DeclareMathOperator{\loc}{loc}
\newcommand{\RR}{\mathbb{R}}
\newcommand{\mA}{\mathcal{A}}
\newcommand{\Om}{\Omega}
\newcommand{\na}{\nabla}
\newcommand{\pa}{\partial}
\newcommand{\sig}{\sigma}
\newcommand{\La}{\Lambda}
\newcommand{\al}{\alpha}
\newcommand{\de}{\delta}
\newcommand{\ka}{\kappa}
\newcommand{\om}{\omega}
\newcommand{\la}{\lambda}
\newcommand{\g}{g}
\newcommand{\data}{\mathit{data}}
\newcommand{\vh}{v_h}
\newcommand{\vhla}{v_h^{\La}}
\newcommand{\uu}{u_0}
\newcommand{\lai}{\la_i}
\newcommand{\vla}{v^\La}
\newcommand{\vp}{\varphi}
\DeclareMathOperator*{\esssup}{ess\,sup}
\newcommand{\NN}{\mathbb{N}}
\newcommand{\mh}{\widehat{\Upsilon}}
\newcommand{\Pizo}{\Pi_{z_0}}
\newcommand{\sdel}{\delta^*}
\newcommand{\mfc}{c_0}
\newcommand{\mfs}{\gamma}
\newcommand{\mc}{\mathcal{C}}
\newcommand{\co}{c_{rad}}
\newcommand{\Q}{\mathcal{Q}}
\newcommand{\B}{\mathcal{B}}
\newcommand{\ti}{\mathcal{S}}
\theoremstyle{plain}
\newtheorem{theorem}{Theorem}[section]
\newtheorem{lemma}[theorem]{Lemma}
\newtheorem{definition}[theorem]{Definition}
\newtheorem{proposition}[theorem]{Proposition}
\def\Xint#1{\mathchoice
	{\XXint\displaystyle\textstyle{#1}}%
	{\XXint\textstyle\scriptstyle{#1}}%
	{\XXint\scriptstyle\scriptscriptstyle{#1}}%
	{\XXint\scriptstyle\scriptscriptstyle{#1}}%
	\!\int}
\def\XXint#1#2#3{{\setbox0=\hbox{$#1{#2#3}{\int}$}
		\vcenter{\hbox{$#2#3$}}\kern-.5\wd0}}
\def\Yint#1{\mathchoice
	{\YYint\displaystyle\textstyle{#1}}%
	{\YYint\textstyle\scriptstyle{#1}}%
	{\YYint\scriptstyle\scriptscriptstyle{#1}}%
	{\YYint\scriptscriptstyle\scriptscriptstyle{#1}}%
	\!\iint}
\def\YYint#1#2#3{{\setbox0=\hbox{$#1{#2#3}{\iint}$}
		\vcenter{\hbox{$#2#3$}}\kern-.51\wd0}}
\def\longdash{{-}\mkern-3.5mu{-}} 
\def\fiint{\Yint\longdash}
\def\Xint#1{\mathchoice
	{\XXint\displaystyle\textstyle{#1}}%
	{\XXint\textstyle\scriptstyle{#1}}%
	{\XXint\scriptstyle\scriptscriptstyle{#1}}%
	{\XXint\scriptscriptstyle\scriptscriptstyle{#1}}%
	\!\int}
\def\XXint#1#2#3{{\setbox0=\hbox{$#1{#2#3}{\int}$ }
		\vcenter{\hbox{$#2#3$ }}\kern-.6\wd0}}
\def\dashint{\Xint-}
\let\orgdescriptionlabel\descriptionlabel
\renewcommand*{\descriptionlabel}[1]{%
	\let\orglabel\label
	\let\label\@gobble
	\phantomsection
	\edef\@currentlabel{#1}%
	\let\label\orglabel
	\orgdescriptionlabel{#1}%
}
\numberwithin{equation}{section}
\DeclareRobustCommand{\rchi}{{\mathpalette\irchi\relax}}
\newcommand{\irchi}[2]{\raisebox{\depth}{$#1\chi$}} % inner command, used by \rchi
\def\Xint#1{\mathchoice
    {\XXint\displaystyle\textstyle{#1}}%
    {\XXint\textstyle\scriptstyle{#1}}%
    {\XXint\scriptstyle\scriptscriptstyle{#1}}%
    {\XXint\scriptscriptstyle\scriptscriptstyle{#1}}%
    \!\int}
\def\XXint#1#2#3{\setbox0=\hbox{$#1{#2#3}{\int}$}
    \vcenter{\hbox{$#2#3$}}\kern-0.5\wd0}
\def\fint{\Xint-}
\def\dashint{\Xint{\raise4pt\hbox to7pt{\hrulefill}}}
\def\XXiint#1#2#3{\setbox0=\hbox{$#1{#2#3}{\iint}$}
    \vcenter{\hbox{$#2#3$}}\kern-0.5\wd0}
\begin{document}
	
\title[Very weak solutions]{Very weak solutions to \\ degenerate parabolic double-phase systems}

\author{Wontae Kim}
\address[Wontae Kim]{Korea Institute for Advanced Study, 5 Hoegi-ro, Dongdaemun-gu, Seoul 02455, Republic of Korea}
\email{wontae@kias.re.kr}

\author{Lauri Särkiö}
\address[Lauri Särkiö]{Department of Mathematics, Aalto University, P.O. BOX 11100, 00076 Aalto, Finland}
\email{lauri.sarkio@aalto.fi}
%\thanks{??}

\everymath{\displaystyle}

\makeatletter
\@namedef{subjclassname@2020}{\textup{2020} Mathematics Subject Classification}
\makeatother

\begin{abstract}
We prove a local self-improving property for the gradient of very weak solutions to degenerate parabolic double-phase systems. 
The result is based on a reverse Hölder inequality with constants that are independent of the solution. Delicate methods are required to avoid a self-referential argument. In particular, we develop a new phase analysis method.
\end{abstract}

\keywords{Parabolic double-phase systems, regularity theory, very weak solutions}
\subjclass[2020]{35A01, 35A02, 35D30, 35K55, 35K65}
\maketitle
%\tableofcontents

\section{Introduction}
In this paper, we study parabolic double-phase systems with the prototype equation
\[
u_t-\dv(|\na u|^{p-2}\na u + a(z)|\na u|^{q-2}\na u)=-\dv(|F|^{p-2}F+a(z)|F|^{q-2}F),
\]
in $\Omega_T=\Omega\times(0,T)$, where $\Omega$ is a bounded domain in $\mathbb{R}^n$, $n\geq 2$, and $T>0$. Here $1<p<q<\infty$ and $a$ is a nonnegative function that is Hölder continuous in both space and time directions. Before stating our main theorem, we discuss the context of our methods.

In the regularity theory of parabolic PDEs, the classical techniques of DeGiorgi and Moser developed for elliptic equations have been extended to the linear parabolic setting ($p=2$ and $a\equiv 0$) in \cite{MR241822,MR159139}. Although this extension was already nontrivial in the linear case, additional difficulties arise when dealing with nonlinear parabolic equations. These challenges demand analytical tools that can handle nonlinear structures. A major breakthrough in this direction was the introduction of intrinsic geometries for nonlinear parabolic PDEs, first established by DiBenedetto in the study of parabolic $p$-Laplace equations ($a\equiv0$). Intrinsic geometries compensate for the lack of homogeneity in energy estimates and provide scaling invariant structures essential for nonlinear analysis. Based on this framework, various regularity properties have been developed. 
We refer to DiBenedetto’s book \cite{MR1230384} for the proofs of Hölder and gradient Hölder continuity. Furthermore, Harnack inequalities were derived by DiBenedetto, Gianazza and Vespri in \cite{MR2413134,MR2414742,MR2731161,MR2865434}, while gradient higher integrability was proved by Kinnunen and Lewis in \cite{KL_HI}. In addition, Calderón–Zygmund type estimates were obtained by Acerbi and Mingione in \cite{MR2286632} and Schauder estimates were established by Mingione and Kuusi in \cite{MR2968162,MR3273649,MR3187676,MR3191979} and Misawa in \cite{MR1939688}.

This approach with intrinsic geometry has also been applied to other classes of nonlinear parabolic PDEs.
In the setting of porous medium equations, DiBenedetto, Gianazza and Vespri proved Hölder continuity and Harnack inequalities in \cite{MR2413134,MR2414742,MR2731161,MR2865434}, while Gianazza and Schwarzacher obtained gradient higher integrability in \cite{MR3928041,MR4019094}. 
The same methodology has been adapted to equations that are nonlinear in both time and spatial derivative parts. 
For doubly nonlinear equations, higher integrability, Hölder continuity and gradient Hölder continuity have been developed in a sequence of works \cite{bögelein2025schauderestimatesparabolicplaplace, MR4163123,  MR4287785, MR4603642, MR4500278,MR4373238, MR4797292} by Bögelein, Duzaar, Liao, Moring, Schätzler, Scheven and their collaborators.

The elliptic non-standard $(p,q)$-growth condition including the anisotropic equation was first introduced and its regularity was established by Marcellini in \cite{MR969900,MR1094446,MR1401415}. We refer to \cite{MR4779533,MR4665778,MR4965436,MR4917742} for recent progress in this direction.
Regarding the equation considered in this paper, the elliptic double-phase model was studied by Esposito, Leonetti, and Mingione in \cite{MR2076158} within the framework of nonstandard growth conditions introduced by Marcellini. The corresponding weak solution space was analyzed by Fonseca, Malý, and Mingione in \cite{MR2058167}. As the name double-phase indicates, the growth condition distinguishes two phases depending on whether the coefficient $a$ vanishes or not at a given point. The regularity theory for elliptic double-phase models was developed after the phase analysis introduced by Colombo and Mingione in \cite{MR3360738,MR3294408}. 
Moreover, they established higher integrability, Hölder continuity, and gradient Hölder continuity. They also proved Calderón–Zygmund type estimates in \cite{MR3447716}, while delicate borderline cases were later shown by De Filippis and Mingione in \cite{MR3985927}. Furthermore, the Harnack inequality was established by Baroni, Colombo, and Mingione in \cite{MR3348922}. In the spirit of nonlinear parabolic equations, our main result will be proved by combining intrinsic geometry with this phase analysis framework.

The exact definition of the solutions that we study is the following. We consider very weak solutions $u=u(z)=u(x,t)$ to the system \begin{align}\label{11}
	u_t-\dv\mA(z,\na u)=
    -\dv (|F|^{p-2}F+a(z)|F|^{q-2}F) 
    \quad\text{in}\ \Omega_T,
\end{align}
where $\mA(z,\na u):\Omega_T\times \RR^{Nn}\longrightarrow \RR^{Nn}$ with $N\ge1$ is a Carath\'eodory vector field satisfying the structure assumptions
\begin{align}\label{12}
		\mA(z,\xi)\cdot \xi\ge L_1(|\xi|^p+a(z)|\xi|^q)
      \quad\text{and}\quad 
		|\mA(z,\xi)|\le L_2(|\xi|^{p-1}+a(z)|\xi|^{q-1})
\end{align}
for a.e. $z\in \Omega_T$ and every $\xi\in \RR^{Nn}$ with constants $0<L_1\le L_2<\infty$. We assume that the coefficient function $a:\Omega_T\longrightarrow\RR^+$ is non-negative and satisfies $a\in C^{\alpha,\alpha/2}(\Omega_T)$ for some $\al \in (0,1]$. Here $a\in C^{\alpha,\alpha/2}(\Omega_T)$ means that $a\in L^{\infty}(\Omega_T)$ and there exists a constant $[a]_{\alpha,\alpha/2;\Omega_T}=[a]_\alpha>0$, such that
\begin{align}\label{eq_holder_reg}
		|a(x,t)-a(y,s)|\le [a]_{\alpha,\alpha/2;\Omega_T}\max\{ |x-y|^\alpha,|t-s|^\frac{\alpha}{2} \}
\end{align}
for every $(x,y)\in\Omega$ and $(t,s)\in (0,T)$. 
For the source term $F$, we assume that $F:\Om_T\longrightarrow\mathbb{R}^{Nn}$ is a measurable map such that $(|F|^p+a|F|^q)\in L^1(\Om_T)$. Throughout the paper, we denote $H(z,s):\Omega_T\times \RR^{+ }\longrightarrow \RR^+$ as
\begin{align*}
	H(z,s)=s^p+a(z)s^q.
\end{align*}
Regarding the exponents, we assume that
\begin{align}\label{eq_range_q}
   2<p< q < p+\frac{2\alpha}{n+2}. 
\end{align}

\begin{definition}\label{def_weak}
A function $u\in L^1(0,T;W^{1,1}(\Omega,\RR^N))$ satisfying
\[
\iint_{\Omega_T} H(z,|\na u|)^\delta\,dz <\infty
\]
is a very weak solution to \eqref{11} with integrability deficit $\de \in (0,1)$, if
\[
    \iint_{\Omega_T}\left(-u\cdot \varphi_t+\mA(z,\na u)\cdot \na\varphi\right)\,dz
    =
    \iint_{\Om_T} \left( |F|^{p-2}F\cdot \na \varphi+a(z)|F|^{q-2}F\cdot \na\varphi  \right)\,dz
\]
for every $\varphi\in C_0^\infty(\Omega_T,\RR^N)$.
\end{definition}

Our main theorem states that if the integrability deficit is close enough to 1, a very weak solution to \eqref{11} is actually a weak solution, i.e., satisfies Definition~\ref{def_weak} with $\de = 1$. The constants depend on 
\[
\data = (n,N,p,q,L_1,L_2,[a]_\al)
\]
and here, as in the rest of the paper, we denote $c=c(\data)$ for a positive constant depending at most on the constants above.

\begin{theorem}\label{HI_theorem}
Suppose $p,q$ satisfy \eqref{eq_range_q}. Then there exists $\de_0=\de_0(\mathit{data})\in(0,1)$ with $2-q(1-\delta_0)>0$
such that if $u$ is a very weak solution to \eqref{11} with integrability deficit $\de\in(\de_0,1)$ and $u\in C(0,T;L^2(\Omega_T,\mathbb{R}^N))$, we have 
	\begin{align*}
		\begin{split}
			\fiint_{Q_{R} (z_0) }H(z,|\na u|)\,dz 
            &\le c\left(\fiint_{Q_{2R} (z_0) } (  H(z,|\na u|)^\de+ H(z,|F|) ) \,dz+1\right)^{1+\frac{q(1-\de)}{2-q(1-\de)}}
		\end{split}
	\end{align*}
	 for every $Q_{2R}(z_0) \subset\Om_T$ where $c\ge
     1$ depending on $\data,\|a\|_{L^\infty(\Om_T)},\| (H(z,|\na u|)+H(z,|F|)+1 )\|_{L^\delta(\Om_T)}$, and $R\in(0,R_0)$ for $R_0\in(0,1)$ depending on $\data$, $\delta$, $\| H(z,|\na u|) \|_{L^\delta(\Om_T)}$, $\| H(z,|F|) \|_{L^\delta(\Om_T)}$, $\|u\|_{C(0,T;L^2(\Omega,\mathbb{R}^N))}$.
\end{theorem}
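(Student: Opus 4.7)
The plan is to prove Theorem~\ref{HI_theorem} by establishing a localized reverse Hölder inequality for $H(z,|\na u|)$ on intrinsic cylinders with constants that do not depend on $\|\na u\|_{L^p(\Om_T)}$, and then to convert this local estimate into the stated self-improving bound via a Calderón--Zygmund / stopping-time argument. The fundamental obstruction is that the very weak regularity $H(z,|\na u|)\in L^\de$ with $\de<1$ forbids testing the weak form of \eqref{11} directly against $u$ or a standard cutoff thereof. Following the Iwaniec--Lewis philosophy adapted to the parabolic double-phase setting, I would therefore construct an admissible test function as a parabolic Lipschitz truncation $v^{\La}$ of $u-(u)_Q$ at a level $\La$ chosen slightly above the intrinsic scale of the cylinder, and insert it into a Steklov-averaged version of the weak formulation to handle the missing time derivative.

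The first major step is to set up double-phase intrinsic cylinders $Q_\rho^\la(z_0)=B_\rho(x_0)\times(t_0-\la^{2-p}\rho^2,t_0+\la^{2-p}\rho^2)$ coupled to the solution by the requirement $\fint_{Q_\rho^\la(z_0)} H(z,|\na u|)\,dz \sim \la^p$, and a stopping-time construction to produce a Vitali cover of the upper level set $\{H(z,|\na u|)>M\la^p\}$ by such cylinders. On each cylinder a phase classification is needed: the $p$-phase, where $a(z)|\na u|^q$ is negligible compared to $|\na u|^p$, and the $(p,q)$-phase, where it is dominant. For weak solutions this is performed by comparing $a(z_0)\la^{q-p}$ to $1$, but for very weak solutions the only accessible averages involve $H^\de$ with $\de<1$, and the standard comparison becomes self-referential. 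This is where the new phase analysis announced in the abstract enters: combining the Hölder bound \eqref{eq_holder_reg} with the exponent restriction \eqref{eq_range_q}, the oscillation of $a$ across $Q_\rho^\la$ is controlled by the same intrinsic quantity used to separate the two phases, so that the phase of each cylinder becomes decidable from $L^\de$ information alone.

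On each intrinsic cylinder I would then derive a Caccioppoli-type estimate by inserting $v^{\La}$ into the Steklov-averaged equation. The good set $\{v^{\La}=u-(u)_Q\}$ produces the main contribution, while the bad set carries only a weak-type measure bound in terms of a parabolic maximal function of $H^\de$. Balancing the error on the bad set against the principal term, separately in each phase, gives a local reverse Hölder inequality of the schematic form
\[
\fint_{Q_{\rho/2}^\la(z_0)} H(z,|\na u|)\,dz \le c\left(\fint_{Q_\rho^\la(z_0)} H(z,|\na u|)^\de\,dz\right)^{1/\de} + \fint_{Q_\rho^\la(z_0)} H(z,|F|)\,dz+1,
\]
and combining this with the intrinsic cover and a Gehring-type iteration yields the asserted estimate; the exponent $1+q(1-\de)/(2-q(1-\de))$ arises from the algebra of the intrinsic parabolic rescaling against the $q$-scaling of $\La$, and the condition $2-q(1-\de)>0$ is exactly what ensures this scaling closes. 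The central obstacle, and the reason such a delicate phase analysis is necessary, is the need for \emph{absolute} constants in the Lipschitz-truncation / Caccioppoli step: any dependence on $\|\na u\|_{L^p}$ --- a quantity that is \emph{a priori} infinite for a very weak solution --- would render the argument circular, and it is precisely the range \eqref{eq_range_q} together with the Hölder regularity of $a$ that makes the two phases compatible and permits this closure.
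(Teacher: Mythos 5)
Your overall program is the one the paper carries out: Steklov-averaged weak formulation, parabolic Lipschitz truncation constructed from a Whitney cover of a maximal-function level set, a Caccioppoli inequality, Poincar\'e--Sobolev estimates leading to a reverse H\"older inequality on intrinsic cylinders, a stopping-time/Vitali cover, and a Gehring-type level-set integration. The phase dichotomy and the use of H\"older continuity of $a$ together with the range \eqref{eq_range_q} to decide the phase intrinsically are also correctly identified. However, three ingredients in your plan are set up in a way that would not close.

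First, you couple the intrinsic cylinders to the solution by requiring $\fiint_{Q_\rho^\la} H(z,|\na u|)\,dz\sim\la^p$. For a very weak solution, $H(z,|\na u|)$ is not known to be integrable, so this coupling is vacuous; the stopping time must be run on the $\de$-th power, as in \eqref{eq_p1}--\eqref{eq_p2} and \eqref{eq_pq3}--\eqref{eq_pq4}, i.e.\ $\fiint H^\de\sim\pi^{\de p}$ resp.\ $\fiint H^\de\sim\Pi^\de$. Getting this wrong is not cosmetic: the whole point of the argument is to close using only $L^\de$-averages.

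Second, you only introduce $p$-intrinsic cylinders $Q_\rho^\la=B_\rho\times(t_0-\la^{2-p}\rho^2,t_0+\la^{2-p}\rho^2)$. In the $(p,q)$-intrinsic phase $\pi^p<a(z_0)\pi^q$, the correct geometry is $G_\rho^\pi$ with time scale $\la^2/H(z_0,\la)$ as in \eqref{def_G_cylinder}. If you try to run the Caccioppoli/Poincar\'e/Sobolev chain on $p$-intrinsic cylinders in the $(p,q)$-phase, the scaling of the time derivative does not match the dominant nonlinearity $a|\na u|^q$ and the estimates do not collapse to a single power of $\Pi$; the Whitney decomposition must likewise use both $Q_i^{\la_i}$ and $G_i^{\la_i}$ depending on the phase of the center.

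Third, you write that "the bad set carries only a weak-type measure bound" and that balancing it against the principal term gives the reverse H\"older inequality. For weak solutions this is enough because $\La|E(\La)^c|\to 0$ as $\La\to\infty$; for very weak solutions this term does \emph{not} vanish, and in fact the inequalities must be \emph{integrated} in $\La$ over $(\theta\Pi,\infty)$ against $\La^{-2+\de}$ and absorbed using smallness of $\theta$ and $1-\de$. Moreover the constants hidden in the bad-set estimate involve the ratio $\rho/(\rho_2-\rho_1)$, and controlling the time-derivative contributions of the truncation (especially for Whitney cylinders that are "long" relative to $\Q_2$, the $\Theta_2$-cylinders) requires a genuinely new comparison between the Whitney cylinders and the ambient intrinsic cylinder, which is the content of Lemma~\ref{lem_theta_2}. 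Without these, the Caccioppoli inequality for very weak solutions would either have constants depending on $\de$ or on unavailable norms of $u$, which would render the argument circular --- exactly the danger you flag at the end of your proposal.
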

For weak solutions to elliptic $p$-Laplace systems, gradient higher integrability results go back to Elcrat and Meyers in \cite{Meyers}. The core principle of the proof is already the same as in our paper. First a Caccioppoli inequality and a Sobolev-Gagliardo-Nirenberg type inequality are used to obtain a reverse Hölder inequality, after which higher integrability follows from an application of Gehring's lemma. Higher integrability result similar to Theorem~\ref{HI_theorem} for elliptic $p$-Laplace systems was proven for very weak solutions by Iwaniec and Sbordone in \cite{Iwaniec_Sbordone} and independently by Lewis in \cite{Lewis_elliptic}. We mention that a conjecture in \cite{Iwaniec_Sbordone} on minimal integrability deficit for elliptic $p$-Laplace problems was  recently answered by Colombo and Tione in \cite{colombo_jems}. There is a difficulty in showing the Caccioppoli inequality for very weak solutions, as a very weak solution is not an admissible test function. This problem was overcome in \cite{Lewis_elliptic} by using a type of Lipschitz truncation based on Whitney decomposition. 

Theorem~\ref{HI_theorem} was proven for parabolic $p$-Laplace systems in \cite{KL_veryweak}. In the parabolic case, intrinsic geometry plays a role both in the Lipschitz truncation and in obtaining the reverse Hölder inequality. Based on \cite{KL_veryweak}, gradient higher integrability of very weak solutions was extended to variable exponent problem and higher order problem of $p$-Laplace type in \cite{MR4074602,verena_thesis,bogelein_li,Li}. Furthermore, corresponding arguments have been implemented in the context of Calder\'on-Zygmund type estimates in \cite{MR3928653,MR3483172} and applied to establish existence of very weak solutions for elliptic $p$-Laplace problems \cite{MR3531368,MR3500290} and linear parabolic problems \cite{MR3985550}. 
The parabolic Lipschitz truncation method has been considered also in relation to caloric approximations in \cite{MR3672391,MR4525732} and existence of solutions to fluid models in \cite{MR2668872}. 

     For parabolic double-phase problems higher integrability for weak solutions was shown in \cite{KKM,KS}. In there, phase analysis is used to distinguish whether the energy estimates of double-phase systems are perturbed into that of $p$-Laplace systems or $(p,q)$-Laplace systems depending on the pointwise value of $a$. Based on the phase division, we construct a corresponding intrinsic geometry.
 Similar phase analysis was used in \cite{KKS} to obtain standard energy estimates for weak solutions. Although a Lipschitz truncation method was applied in \cite{KKS}, the argument is different for very weak solutions.
Other recent result for parabolic double-phase problems include partial regularity in the nondegenerate case in \cite{MR4774133,kim2025boundedsolutionsinterpolativegap,MR4926910,oh2025gradient,ok2025partialregularityparabolicsystems,sen2025lipschitzregularityparabolicdouble}.
In particular, Marcellini's approach has been extended to the parabolic setting in \cite{MR3102165,MR3073153,MR3356846,MR3532237,MR3482400}.

Corresponding result to Theorem~\ref{HI_theorem} for very weak solutions to the elliptic case was proved in \cite{BBK}. Obtaining the higher integrability result for very weak solutions to parabolic double-phase problems presents several new challenges. In order to show that there exists $\de_0$ satisfying Theorem~\ref{HI_theorem}, the higher integrability gained in the argument has to be quantified and independent from $\de$. In \cite{KKM} the phase analysis in the reverse Hölder inequality argument for weak solutions is done with respect to a constant $K$ depending on the integral of the gradient. Utilizing the same argument for very weak solutions would lead to estimates depending on $\de$. To avoid with this problem, we consider cylinders with radii bounded by a constant $R_0$ appearing in Theorem~\ref{HI_theorem} and use a new phase analysis argument.

Also in the proof of the Caccioppoli inequality there are novel challenges. In \cite{KKS} the Lipschitz truncation method is used to prove a Caccioppoli inequality for weak solutions to parabolic double-phase problems. In the argument in \cite{KKS}, the term $\La|E(\La)^c|$ in \eqref{def_E} vanishes as $\La \to \infty$. For very weak solutions this does not happen and finer estimates are required. In particular, we have to compare intrinsic cylinders in the Whitney decomposition with the underlying intrinsic cylinders defined in Section~\ref{sec_setup}. In our context these cylinders have different geometries and to compare them we provide a new argument in Lemma~\ref{lem_theta_2}. Our proof of the Caccioppoli inequality is self-contained. To simplify some of the arguments of previous papers, we have included the term $\tfrac{|u-u_0|}{\rho}$ in the definition of $E(\La)$.              

Finally, we note that the range of $q$ in the parabolic double-phase system is not completely understood, however, the strict inequality for the upper bound of $q$ in \eqref{eq_range_q} seems to be natural. For the elliptic case, the range of $q$ is well understood and its optimality is known. When we consider very weak solutions in the elliptic setting, the natural integrability of the gradient does not allow the borderline case, see \cite{BBK} for the details.

\section{Preliminaries and setup of the proof}

\subsection{Notation} 
Let $\Omega$ be a bounded open subset in $\mathbb{R}^n$ and $\Omega_T=\Omega\times(0,T)$, $T>0$.  
A point in $\RR^{n+1}$ is denoted as $z=(x,t)$ where $x \in \RR^n$ and $t\in \RR$.
A ball with center $x_0\in\RR^n$ and radius $\rho>0$ is denoted as
\[
    B_\rho(x_0)=\{x\in \RR^n:|x-x_0|<\rho\}.
\]
 Let $t_0\in\RR$ and $\la>0$. Parabolic cylinders with quadratic scaling in time are denoted as
\[
    Q_\rho(z_0)=B_\rho(x_0)\times I_\rho(t_0),
\]
where
\[
 I_\rho(t_0)=(t_0-\rho^2,t_0+\rho^2).
\]

For $\la\geq1$, a $p$-intrinsic cylinder centered at $z_0=(x_0,t_0)$ is 
\begin{align}\label{def_Q_cylinder}
	Q_{\rho}^\la(z_0)=
	B_{\rho}(x_0)\times I_{\rho}^\la(t_0),
 \quad I_{\rho}^\la(t_0)=(t_0-\la^{2-p}\rho^2,t_0+\la^{2-p}\rho^2)
\end{align}
and a $(p,q)$-intrinsic cylinders centered at $z_0=(x_0,t_0)$ is
\begin{align}\label{def_G_cylinder}
G_{\rho}^\la(z_0)=B_{\rho}(x_0)\times J_{\rho}^\la(t_0),
 \quad J_\rho^{\la}(t_0)=\left(t_0-\tfrac{\la^2}{H(z_0,\la)}\rho^2,t_0+\tfrac{\la^2}{H(z_0,\la)}\rho^2\right).
\end{align}

Note that if $\la=1$, then $Q_\rho(z_0)=Q^\la_{\rho}(z_0)$. Moreover, we write
\[
    cQ_\rho^\la(z_0)=Q_{c\rho}^\la(z_0)
    \quad\text{and}\quad 
    cG_\rho^\la(z_0)=G_{c\rho}^\la(z_0)
\]
 for $c>0$. We also consider parabolic cylinders with arbitrary scaling in time and denote
 \begin{align}\label{def_ell}
     Q_{r,s}(z_0)=B_r(x_0)\times \ell_{s}(t_0),\quad \ell_s(t_0)=(t_0-s,t_0+s),\quad r,s>0.
 \end{align}
 
 The $(n+1)$-dimensional Lebesgue measure of a set $E\subset\RR^{n+1}$ is denoted as $|E|$.
For $f\in L^1(\Omega_T,\RR^N)$ and a measurable set $E\subset\Om_T$ with $0<|E|<\infty$, we denote the integral average of $f$ over $E$ as
\[
	(f)_{E}=\frac{1}{|E|}\iint_{E}f\,dz=\fiint_{E}f\,dz.
\]

\subsection{Setup of the proof}\label{sec_setup}
The rest of this paper is devoted to proving the higher integrability result in Theorem~\ref{HI_theorem}. The lower bound for the integrability deficit in the theorem, $\de_0$, will be bounded from below throughout the argument. In particular $\de_0$ will be determined in the proofs of Lemma~\ref{lem_caccioppoli} and the reverse Hölder estimates in Lemma~\ref{lem_reverse_Hölder} and Lemma~\ref{lem_reverse_Hölder_pq}. Note that $\de>\de_0$ and we assume at least that $2-q(1-\delta_0)>0$ and $\delta_0>\tfrac{q-1}{p}$.

The first part of the higher integrability proof is to show a reverse Hölder inequality in intrinsic cylinders. For this we fix an intrinsic cylinder and show a reverse Hölder inequality in it in sections \ref{sec_caccioppoli} and \ref{sec_rev_hölder}. After that we show in Section~\ref{sec_HI} that a suitable collection of intrinsic cylinders exists and finalize the higher integrability proof. 

We start by introducing the setting of the reverse Hölder inequality. Let $z_0=(x_0,t_0)\in\Om_T$, with $x_0\in\Omega$ and $t_0\in(0,T)$, be a Lebesgue point of $H(z,|\na u(z)|)^\de$ satisfying
\[
	H(z_0,|\na u(z_0)|)>\Pi
\]
for some
$\Pi>  \mfc $, where $\mfc>1$  is determined in \eqref{def_mfc}. Let 
$\pi>1$
be such that
\[
\Pi = \pi^p+a(z_0) \pi^q.
\]

We prove the reverse Hölder inequality in $p$-intrinsic and $(p,q)$-intrinsic phases. In the $p$-intrinsic case, we consider a cylinder $Q_{16\rho}^\pi(z_0)\subset \Om_T$ defined as in \eqref{def_Q_cylinder} and assume the following: 
 \begin{align}\label{eq_pcase}
     \pi^p \geq a(z_0)\pi^q,
 \end{align}
\begin{align}\label{eq_p1}
    \pi^{\de p}\leq \fiint_{Q_\rho^\pi(z_0)}  ( H(z,|\na u|)+H(z,|F|) )^\de  \,dz
\end{align}
and 
\begin{align}\label{eq_p2}
    \fiint_{Q_{16\rho}^\pi(z_0)} ( H(z,|\na u|)+H(z,|F|) )^\de \,dz\leq \pi^{\de p}.
\end{align}
   The existence of $\rho$ in \eqref{eq_p1} and \eqref{eq_p2} for the $p$-intrinsic case \eqref{eq_pcase} will be proved in Lemma~\ref{p_stopping}. Furthermore as shown in Lemma~\ref{lemma_decay},
we have in the $p$-intrinsic case that
\begin{align}\label{eq_rho_decay}
        \rho^\al \leq \pi^{p-q}. 
    \end{align}

In the $(p,q)$-intrinsic case we consider a cylinder $G_{16\rho}^\pi(z_0)\subset \Om_T$ defined as in \eqref{def_G_cylinder} and assume the following: 
 \begin{align}\label{eq_pqcase}
     \pi^p < a(z_0)\pi^q,
 \end{align}
 \begin{align}\label{eq_pq2}
     \frac{a(z_0)}{2}\leq a(z) \leq 2a(z_0) \text{ for any $z\in Q_{10\rho}(z_0)$,}
 \end{align}
\begin{align}\label{eq_pq3}
    \Pi^{\de}\leq \fiint_{G_\rho^\pi(z_0)} ( H(z,|\na u|)+H(z,|F|) )^\de  \,dz
\end{align}
and 
\begin{align}\label{eq_pq4}
    \fiint_{G_{16\rho}^\pi(z_0)}  ( H(z,|\na u|)+H(z,|F|) )^\de  \,dz\leq \Pi^\de.
\end{align}
The comparability of $a$ in \eqref{eq_pq2} and the existence of $\rho$ in \eqref{eq_pq3} and \eqref{eq_pq4} for the $(p,q)$-intrinsic case \eqref{eq_pqcase} will be verified in Lemma~\ref{p_q_comp} and Lemma~\ref{pq_stopping} respectively.

The reverse H\"older inequality will be derived from the Caccioppoli inequality in Lemma~\ref{lem_caccioppoli} under the condition of the $p$-intrinsic case \eqref{eq_pcase}-\eqref{eq_p2} or the $(p,q)$-intrinsic case \eqref{eq_pqcase}-\eqref{eq_pq4}. For this, we consider the following two cylinders.
 
For $\rho_1, \rho_2 \in [\rho, 4\rho]$  satisfying $\rho_1<\rho_2$, we denote
\[
\Q_1 = Q_{\rho_1,\sig_1}(z_0), \qquad \Q_2 = Q_{\rho_2,\sig_2}(z_0),
\]
where
\begin{align}\label{def_Qi}
    \Q_i = 
    \begin{cases}
        Q_{\rho_i}^\pi(z_0),\qquad \text{ if $\pi^p\geq a(z_0)\pi^q$,}\\ 
        G_{\rho_i}^\pi(z_0),\qquad \text{ if $\pi^p < a(z_0)\pi^q$,}
    \end{cases}
\end{align}
for $i\in\{1,2\}$. Moreover, we denote the bottom ball and the time interval of these cylinders as
\begin{align}\label{def_mathcal}
    \Q_i=\B_i\times \ti_i.
\end{align}
We also use the notation 
\begin{align}\label{def_Pizo}
    \Pizo = \begin{cases}
        \pi^p, \qquad  \text{ if $\pi^p\geq a(z_0)\pi^q$}\\ 
        \Pi, \qquad \text{ if $\pi^p < a(z_0)\pi^q$.}      
    \end{cases}
\end{align}

\subsection{Auxiliary lemmas}
In general, the time derivative of a very weak solution in Definition~\ref{def_weak} exists only in the distributional sense. 
Steklov averages are applied to mollify the function in the time direction. Let $f\in L^1(\Omega_T)$ and $0<h<T$. We define the Steklov average $[f]_h(x,t)$ for all $0<t<T$ as
\begin{align*}
	[f]_h(x,t)=
	\begin{cases}
		\fint_t^{t+h}f(x,s)\,ds,&0<t<T-h,\\
		0,&T-h\le t.
	\end{cases}
\end{align*}
We refer to \cite{MR1230384} for standard properties of the Steklov average. 
In particular, we note that for any $f\in L^2(\RR^{n+1})$ we have $[f]_h\in W^{1,2}(\RR;L^2(\RR^{n}))$ with
\[
	\pa_t[f]_h=\frac{f(x,t+h)-f(x,t)}{h}.
\]

We can derive from Definition~\ref{def_weak} the following Steklov averaged weak formulation on time-slices. For details we refer to \cite{verena_thesis}. Let $u$ be a very weak solution to \eqref{11}. We have
\begin{align}\label{steklov_averaged_wf}
\begin{split}
    &\int_\Omega\pa_t[u]_h(x,t)\cdot\varphi(x)\,dx +\int_\Omega  [\mA(\cdot,\na u)]_h(x,t)\cdot\na\varphi(x)\,dx \\
    &= \int_\Omega  [|F|^{p-2}F+a|F|^{q-2}F]_h(x,t)\cdot \na \varphi(x)   \,dx,
\end{split}
\end{align}
for all $\varphi\in W_0^{1,\infty}(\Omega,\mathbb{R}^N)$ and a.e. $t \in (0,T-h)$.

The following lemma is proved in \cite[Lemma~2.5]{KKM} and the same proof applies for very weak solutions.
\begin{lemma}\label{lem_parabolic_poincare}
    Let $u$ be a very weak solution to \eqref{11}. There exists a constant $c=c(n,N,m,L_2)$ such that
	\begin{align*}
		\begin{split}
			&\fiint_{Q_{r,s}(z_0)}\frac{|u-(u)_{Q_{r,s}(z_0)}|^{\theta m}}{r^{\theta m}}\,dz \\
            &\le c\fiint_{Q_{r,s}(z_0)}|\na u|^{\theta m}\,dz +c\left(\frac{s}{r^2}\fiint_{Q_{r,s}(z_0)} (|\na u|^{p-1}+a(z)|\na u|^{q-1}) \,dz\right)^{\theta m}\\
            &\qquad+c\left(\frac{s}{r^2}\fiint_{Q_{r,s}(z_0)}(|F|^{p-1}+a(z)|F|^{q-1})  \,dz\right)^{\theta m}
		\end{split}
	\end{align*}
 for every $Q_{r,s}(z_0)=B_{r}(x_0)\times (t_0-s,t_0+s)\subset\Omega_T$ with $r,s>0$, $m\in(1,q]$ and $\theta\in(1/m,1]$. 
\end{lemma}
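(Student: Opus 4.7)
My plan is to follow the classical parabolic Poincaré argument, verifying that each step survives for very weak solutions because the Steklov-averaged weak formulation \eqref{steklov_averaged_wf} accepts any test function in $W_0^{1,\infty}(\Omega,\mathbb{R}^N)$. The idea is to decompose $u(z)-(u)_{Q_{r,s}}$ into a spatial fluctuation on each time slice, handled by classical Poincaré on $B_r$, plus a slow time oscillation of a weighted spatial average of $u$, which is controlled by testing the equation against a fixed $\eta$ constant in $t$. The new point, relative to the weak case of \cite{KKM}, is only to check that $\eta\in W_0^{1,\infty}$ is admissible and that the $h\to 0$ limit passes; both are direct from Definition~\ref{def_weak} and the integrability of $\mA(\cdot,\na u)$ and $F$, and since coercivity of $\mA$ is never used, this explains the absence of $L_1$ from the constant.

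For the time oscillation, fix a nonnegative $\eta\in C_c^\infty(B_r(x_0))$ with $\int\eta\,dx=1$, $\|\eta\|_\infty\le c(n)r^{-n}$, and $\|\na\eta\|_\infty\le c(n)r^{-(n+1)}$, and set $(u)_\eta(t)=\int_{B_r}u(x,t)\eta(x)\,dx$. Testing \eqref{steklov_averaged_wf} with $\varphi=\eta$, integrating over any $(t_1,t_2)\subset\ell_s(t_0)$, and letting $h\to 0$ yields
\[
(u)_\eta(t_2)-(u)_\eta(t_1)=-\int_{t_1}^{t_2}\!\int_{B_r}\mA(z,\na u)\cdot\na\eta\,dx\,dt+\int_{t_1}^{t_2}\!\int_{B_r}(|F|^{p-2}F+a|F|^{q-2}F)\cdot\na\eta\,dx\,dt.
\]
Using the upper bound in \eqref{12}, $|\na\eta|\le c/r^{n+1}$, and $|Q_{r,s}|=c(n)r^ns$, I obtain
\[
\frac{|(u)_\eta(t_2)-(u)_\eta(t_1)|}{r}\le c\,\frac{s}{r^2}\fiint_{Q_{r,s}(z_0)}\!\bigl(|\na u|^{p-1}+a(z)|\na u|^{q-1}+|F|^{p-1}+a(z)|F|^{q-1}\bigr)\,dz,
\]
for every $t_1,t_2\in\ell_s(t_0)$, with $c=c(n,L_2)$.

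To finish, for $z=(x,t)\in Q_{r,s}$, writing $(u)_{Q_{r,s}}=\fint_{\ell_s}(u)_{B_r}(\tau)\,d\tau$ and using the triangle inequality gives
\[
|u(z)-(u)_{Q_{r,s}}|\le|u(x,t)-(u)_{B_r}(t)|+\fint_{\ell_s}\!\bigl(|(u)_{B_r}(t)-(u)_\eta(t)|+|(u)_\eta(t)-(u)_\eta(\tau)|+|(u)_\eta(\tau)-(u)_{B_r}(\tau)|\bigr)\,d\tau.
\]
The differences $|(u)_{B_r}(\cdot)-(u)_\eta(\cdot)|$ are bounded by $c\fint_{B_r}|u-(u)_{B_r}|\,dx$, so after dividing by $r$, raising to the power $\theta m\ge 1$, and averaging over $Q_{r,s}$, the pure spatial contributions are absorbed by the classical Poincaré inequality in $L^{\theta m}(B_r)$ into $c\fiint|\na u|^{\theta m}\,dz$, while the time-oscillation contribution from the previous paragraph reproduces the two remaining terms on the right-hand side, yielding the lemma with $c=c(n,N,m,L_2)$. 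The main obstacle to anticipate would be justifying the $h\to 0$ passage without assuming $u\in C(0,T;L^2)$, but the boundary terms converge at almost every pair $(t_1,t_2)$ to $\int u(\cdot,t_i)\eta\,dx$, and the identity extends to every pair by the absolute continuity in $t$ of the integrated PDE, so this subtlety is resolved.
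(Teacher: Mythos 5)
Your proof is correct and follows the same strategy the paper invokes by citing \cite[Lemma~2.5]{KKM}: slice-wise spatial Poincar\'e combined with control of the time oscillation of a weighted spatial average $(u)_\eta$, obtained by testing the Steklov-averaged formulation \eqref{steklov_averaged_wf} against the fixed cut-off $\eta\in C_c^\infty(B_r)\subset W_0^{1,\infty}(B_r,\mathbb{R}^N)$ and using only the upper growth bound in \eqref{12}, which explains the absence of $L_1$. Your added remark that coercivity is never used and that the $h\to 0^+$ passage requires only the absolute continuity of the integrated right-hand side is precisely the observation that lets the argument carry over verbatim from weak to very weak solutions, exactly as the paper asserts without detail.
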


Finally, we provide a standard iteration lemma, see \cite[Lemma 8.3]{MR1962933}.
\begin{lemma}\label{lem_iter}
	Let $0<r<R<\infty$ and $h:[r,R]\longrightarrow\RR$ be a non-negative and bounded function. Suppose there exist $\vartheta\in(0,1)$, $A,B\ge0$ and $\gamma>0$ such that
	\begin{align*}
		h(r_1)\le \vartheta h(r_2)+\frac{A}{(r_2-r_1)^\gamma}+B
		\quad\text{for all}\quad
		0<r\le r_1<r_2\le R.
	\end{align*}
	Then there exists a constant $c=c(\vartheta,\gamma)$ such that
	\begin{align*}
		h(r)\le c\left(\frac{A}{(R-r)^\gamma}+B\right).
	\end{align*}
\end{lemma}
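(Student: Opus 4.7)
The plan is a standard geometric-iteration argument: I build a sequence of radii $r_i$ increasing from $r$ to $R$, apply the recursive inequality step-by-step, and tune the spacings so that the resulting series converge. Since $\vartheta<1$, I fix $\tau\in(0,1)$ with $\vartheta\tau^{-\gamma}<1$ (any $\tau>\vartheta^{1/\gamma}$ works) and define
\[
r_0=r,\qquad r_{i+1}=r_i+(1-\tau)\tau^i(R-r),\qquad i\ge 0,
\]
so that $r_i\in[r,R)$ is strictly increasing with $r_i\to R$ and $r_{i+1}-r_i=(1-\tau)\tau^i(R-r)$.

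Applying the hypothesis to the pair $(r_i,r_{i+1})$ yields
\[
h(r_i)\le \vartheta\, h(r_{i+1})+\frac{A}{(1-\tau)^\gamma\tau^{i\gamma}(R-r)^\gamma}+B,
\]
and iterating this inequality $N$ times and telescoping the $\vartheta^i$ factors gives
\[
h(r)\le \vartheta^N h(r_N)+\frac{A}{(1-\tau)^\gamma(R-r)^\gamma}\sum_{i=0}^{N-1}(\vartheta\tau^{-\gamma})^i+B\sum_{i=0}^{N-1}\vartheta^i.
\]
Because $h$ is bounded on $[r,R]$ and $\vartheta<1$, the first term tends to $0$ as $N\to\infty$; because $\vartheta\tau^{-\gamma}<1$ and $\vartheta<1$, both geometric series converge. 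Passing to the limit produces
\[
h(r)\le \frac{1}{1-\vartheta\tau^{-\gamma}}\cdot\frac{A}{(1-\tau)^\gamma(R-r)^\gamma}+\frac{B}{1-\vartheta},
\]
which is the claimed estimate with $c=c(\vartheta,\gamma)$ obtained by taking the larger of the two prefactors.

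The argument is essentially routine; the only real choice is the parameter $\tau$, which must be large enough to ensure $\vartheta\tau^{-\gamma}<1$ (so the series of penalty terms converges) yet smaller than $1$ (so the radii $r_i$ genuinely exhaust $[r,R]$). The boundedness hypothesis on $h$ enters only when passing to the limit $N\to\infty$; in the applications of later sections this is automatic because $h$ will arise as an integral of a fixed $L^\delta$ function over a subcylinder of finite measure.
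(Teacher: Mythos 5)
Your proof is correct, and it is exactly the standard geometric-iteration argument of Giusti's Lemma 8.3, which is the reference the paper cites for this lemma (the paper does not supply its own proof). The choice $\tau>\vartheta^{1/\gamma}$, the geometric spacing of the radii so that $r_N=r+(R-r)(1-\tau^N)\to R$, the telescoping iteration, and the use of boundedness of $h$ to kill $\vartheta^N h(r_N)$ all match the textbook argument, and the final constant $c=\max\bigl\{\tfrac{1}{(1-\vartheta\tau^{-\gamma})(1-\tau)^\gamma},\,\tfrac{1}{1-\vartheta}\bigr\}$ indeed depends only on $\vartheta$ and $\gamma$ once $\tau$ is fixed in terms of $\vartheta$ and $\gamma$.
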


\section{Caccioppoli inequality}\label{sec_caccioppoli} 
In this section we prove a Caccioppoli inequality that is used in the proof of the reverse Hölder inequality. Normally proving the Caccioppoli inequality involves using a solution as a test function. However, for very weak solutions this is not possible due to the integrability deficit. To overcome this problem we use a Lipschitz truncation of the solution as a test function and obtain the Caccioppoli inequality with a self-improving argument.

The following maximal function is essential in constructing the Lipschitz truncation. We define the strong maximal function for $f\in L^1_{\loc}(\RR^{n+1})$ as
\begin{align} \label{strong_M}
	Mf(z)=\sup_{z \in Q }\fiint_{Q} |f|\,dw,
\end{align}
where the supremum is taken over all parabolic cylinders $Q$ as in \eqref{def_ell} satisfying $z\in Q$. 
It follows immediately that
\[
	Mf(z)\le \sup_{t\in (t_1,t_2)} \fint_{t_1}^{t_2} \sup_{x\in B}\fint_{B} |f|\,dy \,ds =M_t(M_x(|f|))(z),
\]
where $M_x$ and $M_t$ are the noncentered Hardy--Littlewood maximal operators with respect to space and time. 
By repeated application of the Hardy--Littlewood--Wiener maximal function theorem we obtain the following bound for the strong maximal function.
\begin{lemma}\label{str}
Let $1<\sig<\infty$ and $f\in L^{\sigma}(\RR^{n+1})$. There exists a constant $c=c(n,\sig)$ such that
\[
	\iint_{\RR^{n+1}}|Mf|^{\sig}\,dz\le c\iint_{\RR^{n+1}}|f|^\sig\,dz.
\]
\end{lemma}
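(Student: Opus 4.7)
The plan is to exploit the pointwise bound already noted in the excerpt, $Mf(z)\le M_t(M_x(|f|))(z)$, and then apply the one-variable Hardy--Littlewood--Wiener maximal function theorem successively in the time and space variables via Fubini's theorem.

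First, I would observe that it suffices to establish
\[
\iint_{\RR^{n+1}} |M_t(M_x(|f|))|^\sigma\,dz \le c\iint_{\RR^{n+1}}|f|^\sigma\,dz,
\]
since raising the pointwise inequality to the $\sigma$-th power and integrating over $\RR^{n+1}$ gives the desired bound for $Mf$. Here measurability of $M_x(|f|)$ and of its iterated maximal function $M_t(M_x(|f|))$ on $\RR^{n+1}$ is standard, and Fubini's theorem applies throughout.

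Next, for each fixed $x\in\RR^n$, the function $t\mapsto M_x(|f|)(x,t)$ is a nonnegative measurable function on $\RR$, and $M_t$ is the one-dimensional noncentered Hardy--Littlewood maximal operator in $t$. The classical Hardy--Littlewood--Wiener theorem in one variable (for $1<\sigma<\infty$) yields a constant $c=c(\sigma)$ such that
\[
\int_\RR |M_t(M_x(|f|))(x,t)|^\sigma\,dt \le c\int_\RR |M_x(|f|)(x,t)|^\sigma\,dt.
\]
Then, for each fixed $t\in\RR$, the $n$-dimensional Hardy--Littlewood--Wiener theorem applied to the function $x\mapsto f(x,t)$ yields a constant $c=c(n,\sigma)$ such that
\[
\int_{\RR^n}|M_x(|f|)(x,t)|^\sigma\,dx \le c\int_{\RR^n}|f(x,t)|^\sigma\,dx.
\]

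Finally, I would integrate the first estimate in $x$, apply Fubini's theorem, insert the second estimate, and apply Fubini again to obtain
\[
\iint_{\RR^{n+1}}|M_t(M_x(|f|))|^\sigma\,dz\le c\iint_{\RR^{n+1}}|f|^\sigma\,dz,
\]
which together with the pointwise bound $Mf\le M_t(M_x(|f|))$ completes the proof. There is no real obstacle here; the only subtlety is verifying joint measurability so that the iterated application of Fubini is legal, but this is a routine consequence of the lower semicontinuity of the noncentered maximal operators.
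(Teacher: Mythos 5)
Your argument is correct and matches the paper's approach exactly: the paper also deduces the lemma from the pointwise bound $Mf\le M_t(M_x(|f|))$ together with repeated application of the Hardy--Littlewood--Wiener maximal theorem in the time and space variables via Fubini. Nothing further is needed.
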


Next, we introduce the setup that we use throughout this section. Let
\begin{align}\label{def_g}
    \g= 
    \left (|\na u|+ \frac{|u-u_0|}{\rho}+|F| \right )\chi_{\Q_2},
    \quad u_0=(u)_{\Q_2},
\end{align}
where $\Q_1$ and $\Q_2$ are defined in \eqref{def_Qi}. 
As the coefficient function $a$ was initially defined only in $\Om_T$, we extend it to $\RR^{n+1}$. Moreover, we truncate the extension by its maximum and minimum in $\Q_2$. This is utilized in the proof of Lemma~\ref{lem_theta_2}.
Since $s\mapsto s^\alpha$ is a concave and subadditive function on $\RR^+$, the extended function 
\[
	\tilde{a}(z)=\inf_{w=(y,s)\in \Om_T}\left\{a(w)\rchi_{\Q_2}+[a]_{\alpha}(|x-y|^\alpha+|t-s|^{\frac{\alpha}{2}})\right\}, 
 \quad z=(x,t)\in\RR^{n+1},
\]
belongs to $C^{\alpha,\alpha/2}(\RR^{n+1})$ (see \cite[Theorem 2.7]{MR4306765} for the details).
With slight abuse of notation we denote 
\begin{align}\label{extended_a}
    a(z) = \max \left\{   \min\left\{  \Tilde{a}(z), \sup_{w\in\mathcal{Q}_2}a(w) \right\},  \inf_{w\in\mathcal{Q}_2}a(w)  \right\}.
\end{align} 
We point out that if $z\in \Om_T\setminus \Q_2$, then $a(z)$ in \eqref{extended_a} may differ from originally given $a(z)$ in \eqref{11}.
With this extension, note that for any $z\in \mathbb{R}^{n+1}$, there exists $w$ belonging to closure of $\mathcal{Q}_2$ such that 
\[
a(z)=a(w),
\]
which may not be true for $a(z)$ defined in \eqref{11}. As in \eqref{def_g}, we have restricted our attention only to $\Q_2$. However, intrinsic cylinders of the Whitney covering below may be placed outside of $\Q_2$ or even $\Om_T$.

Let $\La\geq \theta\Pi$, where $\theta = \theta(\data)\in(0,1)$ is determined later in the proof of Lemma~\ref{lem_caccioppoli}. We denote
\begin{align}\label{def_m}
 \Upsilon (z)=\left(M(\g^{q-1}+(a\g^q)^\frac{q-1}{p})(z)\right)^\frac{p}{q-1},
\end{align}
where $M(\cdot)$ is the strong maximal operator defined in \eqref{strong_M}, and let
\begin{align}\label{def_E}
	E(\La)=\left\{z\in \RR^{n+1}: \Upsilon(z)\le\La\right\}.
\end{align}
Note that by Lemma~\ref{str}, Lemma~\ref{coro_p_phase} or Lemma~\ref{sec4:lem:5}, and \eqref{eq_p2} or \eqref{eq_pq4}, there exists $c=c(\data)$ such that
    \begin{align}\label{eq_est_mz}
        \iint_{\RR^{n+1}} \Upsilon(z)^\de \,dz\leq c\iint_{\Q_2} H(z,\g)^\de\,dz \leq c\Pizo^\de|\Q_2|.
    \end{align}
Therefore, we have by Chebyshev's inequality that
\begin{align}\label{bound_bad_set}
    |E(\La)^c|\leq \frac{1}{\La^\de} \iint_{\RR^{n+1}} \Upsilon(z)^\de\,dz<\infty.
\end{align}
Moreover, since the strong maximal function is lower semicontinuous, $E(\La)^c=\RR^{n+1}\setminus E(\La)$
is an open subset of $\RR^{n+1}$. 

Assuming $E(\La)^c\ne\emptyset$, we construct a Whitney decomposition of $E(\La)^c$. Since $s\mapsto s^p+a(z)s^q$ is a strictly increasing function on $\mathbb{R}^+$, for every $z\in E(\La)^c$  there exists a unique $\la_z>1$ such that $\la_z^p+a(z)\la_z^q=\La$.
Let 
\[
K=\left(\frac{ 1+\|a\|_\infty}{|B_1|}\iint_{\RR^{n+1}} \Upsilon(z)^{\mu}\,dz\right)^\frac{\alpha}{n+2},
\]
where $\mu\in(0,1)$ is chosen to satisfy
\begin{align}\label{def_mu}
    \max\left\{  1-\frac{2}{q}+\frac{(n+2)(q-p)}{q\alpha} , \frac{q-1}{p} \right\}<\mu<1
\end{align}
and
\begin{align}\label{def_mu_2}
    \left( \frac{1}{\mu} -1 \right)\frac{2n}{(n+2)q} +\frac{n}{n+2} \le \frac{p}{q}.
\end{align}
Note that since we have $q<p+\tfrac{2\alpha}{n+2}$, there holds
\[
1-\frac{2}{q}+\frac{(n+2)(q-p)}{q\alpha}<1-\frac{2}{q}+\frac{2}{q}=1.
\]
On the other hand, since we have
\[
\frac{n}{n+2}\le \frac{p}{p+ \frac{2}{n+2}} <\frac{p}{q},
\] 
\eqref{def_mu_2} holds provided $\mu$ is sufficiently close to $1$.

We will take $\mfs\in(0,1)$ to satisfy
\begin{align}\label{def_mfs}
    p-q-\mfs> -\frac{2\alpha}{n+2}+\frac{\alpha q}{n+2}(1-\mu) .
\end{align}
Now, let $\mfc>1$ be sufficiently large to have
\begin{align}\label{def_mfc}
    K\left( \frac{\theta \mfc}{2(1+\|a\|_{\infty})} \right)^{-\frac{\mfs}{p}} \le \frac{1}{800(1+[a]_\alpha
    )}.
\end{align}
In particular, since we have by Lemma~\ref{coro_p_phase} or Lemma~\ref{sec4:lem:5} that
\[
\iint_{\mathbb{R}^{n+1}} \Upsilon^\mu\,dz\le c \iint_{\Om_T} (H(z,|\na u|)+H(z,|F|) )^{\mu}\,dz
\]
for some $c=c(\data)$,
we can take $\mfc$ to satisfy \eqref{def_mfc} depending on $\theta=\theta(\data)$, $\data$ and $\| (H(z,|\na u|)+H(z,|F|)+1 )\|_{L^\delta(\Om_T)}$.

We consider a family of metrics $\{d_z(\cdot,\cdot)\}_{z\in E(\La)^c}$ given by
\begin{align*}
    d_z(z_1,z_2)=
    \begin{cases}
        \max\left\{|x_1-x_2|,\sqrt{\la_z^{p-2}|t_1-t_2|}\right\},&\text{if}\ \la_z^p\ge a(z)\la_z^q,\\
        \max\left\{|x_1-x_2|,\sqrt{\La\la_z^{-2}|t_1-t_2|}\right\},&\text{if}\ \la_z^p< a(z)\la_z^q.
    \end{cases}
\end{align*}
For every $z\in E(\La)^c$ we define a distance from $z$ to $E(\La)$ as
\[
4r_z= d_z(z,E(\La))=\inf_{w\in E(\La)}d_z(z,w).
\]
Let $\{U_z^\La\}_{z\in E(\La)^c}$ be a family of cylinders defined as
\begin{equation*}
	U_z^{\La}=
 \begin{cases}
 Q^{\la_z}_z,&\text{if}\ \la_z^p\ge a(z)\la_z^q,\\
G^{\la_z}_z, &\text{if}\ \la_z^p< a(z)\la_z^q,
\end{cases}
\end{equation*}
where $Q^{\la_z}_z=B_{r_z}(x)\times I^{\la_z}_{r_z}(t)$ is defined as in \eqref{def_Q_cylinder} and $G^{\la_z}_z=B_{r_z}(x)\times J^{\la_z}_{r_z}(t)$ as in \eqref{def_G_cylinder}.
Note that by \eqref{bound_bad_set} we can argue as in \cite{KKS} to claim that with  $\La$ fixed, the radii $r_z$ are uniformly bounded, which is essential for the Vitali covering argument.
Indeed, since $|E(\La)^c|$ is finite by \eqref{bound_bad_set}, and $B_{r_z}(x)\times (t-\La^{-1}r_z^2, t+\La^{-1}r_z^2)\subset U_z^\La$, we have that $2\La^{-1} r_z^{n+2}\le |E(\La)^c|$. As $\La>1$ is fixed, it follows that $r_z$ are uniformly bounded in $z\in E(\La)^c$. Therefore, we are able to apply the Vitali covering argument in \cite{KKS}.

Next, we provide a Whitney  covering of $E(\La)^c$ with cylinders of the above type. To make the notation simpler, we denote here, and in the rest of this section
\begin{equation}\label{def_Ui_37}
U_i =Q_{r_i,s_i}(z_i) = B_i \times I_i = 
 \begin{cases} 
 Q_i,&\text{if}\ \lai^p \geq a(z_i)\lai^q,\\
 G_i,&\text{if}\ \lai^p < a(z_i)\lai^q,
 \end{cases}
\end{equation}
where $i\in\mathbb{N}$ and
\[
	r_i=r_{z_i},\quad s_i=\frac{1}{2}|I_i|,\quad \la_i=\la_{z_i},\quad  Q_i=Q^{\la_i}_{r_i}(z_i),\quad G_i=G_{r_i}^{\la_i}(z_i).
\]
Moreover, we write the following subfamily of $\mathbb{N}$ as
\[
\mathcal{I}_i=\{j\in\mathbb{N}: 2U_i\cap 2U_j\ne\emptyset\}
\]
and the distance from $U_i$ to $E(\La)$ as
\[
d_i(U_i,E(\La))=\inf_{z\in U_i,w\in E(\La)}d_{z_i}(z,w) .
\]

\begin{proposition} \label{prop_whitney} 
Let $E(\La)$ be as in \eqref{def_E}. There exists a collection $\{U_{i}\}_{i\in\mathbb{N}}$ of cylinders defined as in \eqref{def_Ui_37} satisfying the following properties:
	\begin{enumerate}[label=(\roman*),series=theoremconditions]
		\item[(W1)] $\cup_{i\in\mathbb{N}}U_i=E(\La)^c$.
		\item[(W2)] $\tfrac{1}{8}U_i\cap\tfrac{1}{8}U_j= \emptyset$ for every $i,j\in \mathbb{N}$ with $i\ne j$.
		\item[(W3)] $3r_i\le d_i(U_i,E(\La))\le 4r_i$ for every $i\in \mathbb{N}$.
		\item[(W4)] $4U_i\subset E(\La)^c$ and $5U_i\cap E(\La)\ne\emptyset$ for every $i\in \mathbb{N}$.
		\item[(W5)]\label{v} $(12)^{-1}r_j\le r_i\le 12r_j$ for every $i\in \NN$, $j\in \mathcal{I}_i$.
		\item[(W6)] $2^{-\frac{1}{p}}\la_j\le \la_i\le 2^\frac{1}{p}\la_j$ for every $j\in \mathcal{I}_i$.
  \item[(W7)]\label{Uij}There exists a constant $c = c(n)$ such that $|U_i|\leq c|U_j|$, for every $i\in \mathbb{N}$ and $j\in \mathcal{I}_i$.
 \item[(W8)]\label{vii} $2U_j \subset 50U_i$ for every $i\in \NN$, $j\in \mathcal{I}_i$.
 
      \item[(W9)]\label{UUi} If $U_i=Q_i$, there holds $r_i^\alpha\la_i^q\le \tfrac{1}{800(1+[a]_\alpha)}\la_i^p$.

		\item[(W10)]\label{viii} If $U_i=G_{i}$, then 
			$\tfrac{a(z_i)}{2}\le a(z)\le 2a(z_i)$ for every $z\in 200 Q_{r_i}(z_i)$.
		\item[(W11)]\label{ix} For any $i\in\mathbb{N}$, the cardinality of $\mathcal{I}_i$, denoted by $|\mathcal{I}_i|$, is finite. Moreover, there exists a constant $c= c(n)$, such that $|\mathcal{I}_i|\le c$.
	\end{enumerate}
	Additionally, there exists a partition of unity $\{\om_i\}_{i\in\mathbb{N}}$ subordinate to the Whitney decomposition $\{2U_i\}_{i\in \mathbb{N}}$  with the following properties:
	\begin{enumerate}[resume*=theoremconditions]
		\item[(W12)]\label{x} $0\le \om_i\le 1$, $\om_i\in C_0^\infty(2U_i)$ for every $i\in\mathbb{N}$ and $\sum_{i\in\mathbb{N}}\om_i=1$ on $E(\La)^c$.
        \item[(W13)]\label{xi}There exists a constant $c=c(n)$ such that $\lVert\na \om_j\rVert_{\infty}\le cr_i^{-1}$,
   for every $i\in\mathbb{N}$ and $j\in \mathcal{I}_i$.
		\item[(W14)]\label{xii} There exists a constant $c=c(n)$ such that 
  \[
  \lVert \pa_t\om_j\rVert_{\infty}\le cs_i^{-1}
  \]
  for every $i\in\mathbb{N}$ and $j\in \mathcal{I}_i$.
	\end{enumerate}
\end{proposition}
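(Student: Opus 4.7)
The plan is to build the covering by a Vitali-type argument tailored to the intrinsic distances $d_z$, then verify the geometric properties phase by phase, with the calibration \eqref{def_mu}-\eqref{def_mfc} doing the heavy lifting.

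First I extract the cylinders. For each $z\in E(\La)^c$ the distance $4r_z=d_z(z,E(\La))$ is strictly positive since $E(\La)$ is closed, and the radii $r_z$ are uniformly bounded once $\La$ is fixed by the inclusion $B_{r_z}(x)\times(t-\La^{-1}r_z^2,t+\La^{-1}r_z^2)\subset U_z^\La\subset E(\La)^c$ together with \eqref{bound_bad_set}. This enables a Vitali extraction in the spirit of \cite{KKS}: from $\{\tfrac18 U_z\}_{z\in E(\La)^c}$ pick a countable, pairwise disjoint subfamily $\{\tfrac18 U_i\}$ whose enlargements $\{U_i\}$ still cover $E(\La)^c$. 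Properties (W1)-(W4) are then immediate from the construction.

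The comparabilities (W5)-(W8) and the packing bound (W11) all reduce to the single claim $\la_i\sim\la_j$ for $j\in\mathcal{I}_i$. Picking $w\in 2U_i\cap 2U_j$ and combining $d_{z_i}(z_i,w)\le 2r_i$, $d_{z_j}(z_j,w)\le 2r_j$ with (W3), one shows that the induced oscillation in $\la_\cdot$, controlled via the defining relation $\la^p+a(z)\la^q=\La$, yields (W6) with the sharp constant $2^{1/p}$ coming from the dilation factor $2$ in (W2). The radius comparability (W5) follows since the time-length of $U_i$ is determined by $\la_i$ (and, in the $(p,q)$-phase, $a(z_i)$), and (W7), (W8), (W11) then follow by the standard packing argument on the disjoint cores $\tfrac18 U_i$. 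A minor subtlety is that neighbours may live in different phases; there the monotonicity of $\la\mapsto\la^p+a(z)\la^q$ allows the two geometries to be compared across the transition.

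The main obstacle is (W9). In the $p$-phase one has $|U_i|=c(n)r_i^{n+2}\la_i^{2-p}$ and $\La\le 2\la_i^p$, so from $U_i\subset E(\La)^c$, Chebyshev, \eqref{eq_est_mz}, and \eqref{bound_bad_set},
\[
c(n)r_i^{n+2}\la_i^{2-p}\le |E(\La)^c|\le \La^{-\mu}\iint_{\RR^{n+1}}\Upsilon^\mu\,dz.
\]
Raising to the power $\al/(n+2)$, absorbing the lower bound $\la_i^p\ge\theta\mfc/2$ into a slightly negative power of $\la_i$, and invoking the definition of $K$ together with the ranges \eqref{def_mu}-\eqref{def_mu_2} and \eqref{def_mfs} of $\mu$ and $\mfs$ produce
\[
r_i^\al\la_i^{q-p}\le c K\bigl(\tfrac{\theta\mfc}{2(1+\|a\|_\infty)}\bigr)^{-\mfs/p},
\]
and the calibration \eqref{def_mfc} of $\mfc$ forces the right-hand side below $(800(1+[a]_\al))^{-1}$. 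Property (W10) is proved along the same lines: in the $(p,q)$-phase $a(z_i)\ge\la_i^{p-q}$ and $|U_i|=c(n)r_i^{n+2}\la_i^2/H(z_i,\la_i)$ give the analogous bound on $r_i^\al$, after which Hölder continuity of the truncated extension \eqref{extended_a} yields $|a(z)-a(z_i)|\le [a]_\al(200r_i)^\al\le \tfrac12 a(z_i)$ on $200Q_{r_i}(z_i)$.

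Finally, the partition of unity $\{\om_i\}$ is standard: choose $\eta_i\in C_0^\infty(2U_i)$ with $\eta_i\equiv 1$ on $\tfrac32 U_i$, $|\na\eta_i|\le cr_i^{-1}$, $|\pa_t\eta_i|\le cs_i^{-1}$, and set $\om_i=\eta_i/\sum_j\eta_j$. The finite-overlap estimate (W11) together with the comparabilities (W5), (W7) converts these pointwise bounds into (W12)-(W14).
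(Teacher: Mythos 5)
Your proposal is correct and follows essentially the same approach as the paper: both identify (W9) and (W10) as the only non-standard items (the rest deferred to \cite{KKS}) and derive them from the inclusion $U_i\subset E(\La)^c$ together with the calibration of $\mu$, $\mfs$, $K$, and $\mfc$. The only cosmetic differences are that you bound $|U_i|$ by $|E(\La)^c|$ and apply Chebyshev with exponent $\mu$ where the paper averages $\Upsilon^\mu$ over $U_i$ directly, and that you obtain (W10) by a direct use of $\la_i^{p-q}<a(z_i)$ rather than the paper's contradiction argument.
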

The proof of Proposition~\ref{prop_whitney} is otherwise the same as in \cite[Proposition~3.2]{KKS}, but there is difference in the proofs of \ref{UUi} and \ref{viii}. Thus, we provide only the proofs of \ref{UUi} and \ref{viii}.
\begin{proof}[Proof of \ref{UUi}]
    Since  we have $Q_i\subset E(\La)^c$, there holds
    \[
    \la_i^{p\mu}<\La^\mu            < \fiint_{Q_i}   \Upsilon^\mu\,dz=\frac{\la_i^{p-2}}{2|B_1|r_i^{n+2}} \iint_{\mathbb{R}^{n+1}} \Upsilon^{\mu}\,dz.
    \]
    Multiplying  both sides by $\la_i^{-p\mu}r_i^{n+2}$ and then taking exponent $\tfrac{\alpha}{n+2}$, we get
    \[
    r_i^\alpha < K\la_i^{-\frac{2\alpha}{n+2}+\frac{\alpha p}{n+2} (1-\mu)}.
    \] 
    Note that by the choice of $\mu$ in \eqref{def_mu} and $\mfs$ in \eqref{def_mfs}, it follows that
    \[
        -\frac{2\alpha}{n+2}+\frac{\alpha p}{n+2} (1-\mu)
<-\frac{2\alpha}{n+2}+\frac{\alpha q}{n+2} (1-\mu)<p-q-\mfs
    \]
    and therefore,
    \[
       r_i^\alpha < K \la_i^{-\mfs}\la_i^{p-q}.
    \]
    Moreover, since we have $\theta \mfc<\theta\Pi<\La<2\la_i^p$ and \eqref{def_mfc}, it follows that
    \[
    r_i^\alpha < K \la_i^{-\mfs}\la_i^{p-q}\le K \left(  \frac{\theta \mfc}{2} \right)^{-\frac{\mfs}{p}}\la_i^{p-q}\le \frac{1}{800(1+[a]_\alpha)}\la_i^{p-q}.
    \]
\end{proof}
\begin{proof}[Proof of \ref{viii}]
   We claim that
   \[
   2[a]_\alpha (200r_i)^\alpha < a(z_i).
   \]
   Once the claim holds true, then we have
   \[
   2[a]_\alpha (200r_i)^\alpha < a(z_i)< \inf_{z\in 200 Q_{r_i}(z_i)} a(z)+ [a]_\alpha (200 r_i)^\alpha.
   \]
   and therefore,
   \[
   [a]_\alpha (200 r_i)^\alpha< \inf_{z\in 200 Q_{r_i}(z_i)} a(z)
   \]
   Moreover, we get
   \begin{align*}
       \begin{split}
           \sup_{z\in 200 Q_{r_i}(z_i)} a(z)\le  \inf_{z\in 200 Q_{r_i}(z_i)} a(z)+ [a]_\alpha (200 r_i)^\alpha< 2\inf_{z\in 200 Q_{r_i}(z_i)} a(z)
       \end{split}
   \end{align*}
   and thus, the proof is completed.
    To prove the claim, we argue as in the proof of \ref{UUi}. We have
    \[
    (a(z_i)\la_i^{q})^\mu <  \frac{\La\la_i^{-2}}{2|B_1|r_i^{n+2}}\iint_{\mathbb{R}^{n+1}}\Upsilon^\mu\,dz<\frac{a(z_i)\la_i^{q-2}}{|B_1|r_i^{n+2}}\iint_{\mathbb{R}^{n+1}}\Upsilon^\mu\,dz,
    \]
    where to obtain the last inequality, we used the fact that $\la_i^p<a(z_i)\la_i^q$. Multiplying both sides by $(a(z_i)\la_i^{q})^{-\mu} r_i^{n+2}$ and then taking exponent $\tfrac{\alpha}{n+2}$, we have
        \[
    r_i^\alpha <  \la_i^{-\frac{2\alpha}{n+2}+\frac{\alpha q}{n+2}(1-\mu)} \left(\frac{a(z_i)^{1-\mu}}{|B_1|} \iint_{\mathbb{R}^{n+1}} \Upsilon^\mu \,dz  \right)^\frac{\alpha}{n+2}
    <K \la_i^{-\mfs} \la_i^{p-q}.
    \]
    Again, since $\theta\mfc<\theta\Pi<\La\le 2a(z_i)\lai^q \le 2\|a\|_\infty \lai^q $ and \eqref{def_mfc} holds, we get
    \[
    r_i^\alpha < K\left(  \frac{\theta \mfc}{2\|a\|_\infty} \right)^{-\frac{\mfs}{q}}\lai^{p-q}<\frac{1}{800[a]_\alpha}\lai^{p-q}.
    \]

    Now, if the claim is false, then we have
    \[
    a(z_i)\le 2[a]_\alpha (200 r_i)^\alpha
    \]
    and this leads to a contradiction since the right hand side of the assumption $ \la_i^p<a(z_i)\la_i^q$ is estimated as
    \[
    a(z_i)\la_i^q\le 2[a]_\alpha (200 r_i)^\alpha\la_i^q \le 400[a]_\alpha  r_i^\alpha \la_i^q\le \frac{1}{2} \la_i^p.
    \]
    This completes the proof.
\end{proof}

\subsection{Lipschitz truncation} \label{sec_liptr}
Next, we define the Lipschitz truncated function that will be used as a test function in the proof of the Caccioppoli inequality. Let $0<h_0<\tfrac{\sig_2-\sig_1}{100}$ be small enough and $\eta\in C_0^\infty(B_{\rho_2}(x_0))$ and $\zeta\in C_0^\infty(\ell_{ (\sig_2-h_0 ) }(t_0))$ be cutoff functions satisfying $0\le \eta\le 1$, $0\le \zeta\le 1$, $\eta\equiv 1\ \text{in}\ B_{ (\rho_2+\rho_1)/2 }(x_0)$, $\zeta\equiv 1\ \text{in}\ \ell_{ ((\sig_1+\sig_2)/2)}(t_0)$ and
\begin{align}\label{42_1}
\lVert\na \eta\rVert_{\infty}\le \frac{3}{\rho_2-\rho_1},\quad
\lVert\pa_t\zeta\rVert_{\infty}\le \frac{3}{\sig_2-\sig_1}.
\end{align}
For $0<h<h_0$, let 
\[
    v_h = [u-u_0]_h\eta\zeta.
\]
We define the Lipschitz truncation of $\vh$ as
\begin{align}\label{t_421}
	\vhla(z)=\vh(z)-\sum_{i\in\mathbb{N}}(\vh(z)-v_h^i)\om_i(z),
\end{align}
where $z\in \RR^{n+1}$ and 
\begin{align*}
	v_h^i=
	\begin{cases}
		(\vh)_{2U_i},&\text{if}\  2U_i\subset \Q_2, \\
		0,&\text{otherwise}.
	\end{cases}
\end{align*}
Similarly, we denote
\begin{align}\label{t_421_2}
	v=(u-u_0)\eta\zeta 
 \quad\text{and}\quad
 \vla(z)=v(z)-\sum_{i\in\mathbb{N}}(v-v^i)\om_i,
\end{align}
where
\[
v^i=
	\begin{cases}
		(v)_{2U_i},&\text{if}\ 2U_i\subset \Q_2,  \\
		0,&\text{otherwise}.
	\end{cases}
\]

As $\vh$ vanishes outside $\Q_2$, the definition of $\vh^i$ implies that $\vhla\equiv 0$ in  $\Q_2^c$. Note as well that each term on the right-hand side of \eqref{t_421} belongs to $W^{1,2}(\ell_{\sig_2-h_0}(t_0); \allowbreak L^2(B_{\rho_2}(x_0),\RR^N))$ and by \ref{ix} only finitely many of these terms are nonzero on a neighborhood of any point. Thus 
\[
v_h^\La \in W_0^{1,2}(\ell_{\sig_2-h_0}(t_0);L^2(B_{\rho_2}(x_0),\RR^N)).
\]
We also note that from the properties in Proposition~\ref{prop_whitney} follows that
\begin{align} \label{t_325_1}
	v^\Lambda(z) =
	\begin{cases}
		\sum_{j\in \mathcal{I}_i} v^j\om_j(z), &\text{if}\ z\in 2U_i\ \text{for some}\ i\in \mathbb{N},\\
		v(z),&\text{if}\ z\in E(\La).  
	\end{cases}
\end{align}

\subsection{Lipschitz regularity}
In the following proposition we have collected properties that allow us to use the Lipschitz truncation as a test function. 

\begin{proposition} \label{prop_liptr1}
    Let $E(\La)$ be as in \eqref{def_E}. Then functions $\{v_h^\La \}_{0<h<h_0}$ and $v^\La$ defined in \eqref{t_421} and \eqref{t_421_2} have the following properties:
    \begin{enumerate}[label=(\roman*),series=theoremconditions]
    \item[(L1)]\label{p2-1} $v_h^\La\in W_0^{1,2}(\ell_{\sig_2-h_0}(t_0);L^2(   \B_2  ,\RR^N))\cap L^\infty(\ell_{\sig_2-h_0}(t_0);W^{1,\infty}_{0}(   \B_2  ,\RR^N))$. Moreover, there exist a constant $\co=\co(\data, \La, \rho_1,\rho_2,\sig_1,\sig_2)$ not depending on $h>0$ such that
    \[
    \sup_{z,w\in Q_{r,r^2(w_0)}}|v_h^\La(z)-v_h^\La(w)|\le \co r,
    \]
    for all   arbitrary   $r>0$ and $w_0 \in \RR^{n+1}$.
    \item[(L2)] $v^\La\in L^\infty(  \ti_2  (t_0);W^{1,\infty}_{0}(  \B_2  ,\RR^N))$.
    \item[(L3)] $v_h^\La \to v^\La$ in $L^\infty(  \Q_2  ,\RR^N)$ as $h\to0^+$, taking a subsequence if necessary.
    \item[(L4)]\label{p3-2} $\na v_h^\La\to \na v^\La$ and $\pa_t v_h^\La\to \pa_t v^\La$ a.e. in $E(\La)^c$ as $h\to0^+$.
\end{enumerate}
\end{proposition}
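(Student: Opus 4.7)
The plan is to analyze $v_h^\La$ separately on $E(\La)$ and on $E(\La)^c$. On $E(\La)$ every $\om_i$ vanishes since $\om_i\in C_0^\infty(2U_i)$ and $2U_i\subset E(\La)^c$ by (W4), hence $v_h^\La \equiv v_h$ there. On $E(\La)^c$, the partition of unity with locally finite overlap (W11) gives an explicit representation on each Whitney cylinder $2U_i$: using $\sum_{j\in\mathcal{I}_i}\om_j \equiv 1$ on $2U_i$ one finds $v_h^\La = \sum_{j\in\mathcal{I}_i} v_h^j\om_j$ there, so that
\[
\na v_h^\La = \sum_{j\in \mathcal{I}_i}(v_h^j - v_h^i)\,\na \om_j,
\qquad
\pa_t v_h^\La = \sum_{j\in \mathcal{I}_i}(v_h^j - v_h^i)\,\pa_t\om_j.
\]
Since $\vh$ vanishes outside $\Q_2$ by the choice of cutoff functions, boundary values are automatically controlled.

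By (W11) these sums are finite, and by (W13)--(W14) the derivatives of $\om_j$ are bounded by $c/r_i$ and $c/s_i$, respectively. The differences $|v_h^j - v_h^i|$ are estimated by Poincaré in space and Lemma~\ref{lem_parabolic_poincare} in time applied to $v_h$ on the enlarged cylinder $50U_i$, which contains both $2U_i$ and $2U_j$ by (W8). Since $50U_i$ meets $E(\La)$, at the meeting point the averages of $\g^{p-1}$ and $a\g^{q-1}$ are controlled in terms of $\Upsilon\le \La$. Combining these with the cutoff bounds \eqref{42_1} and the inequalities $\la_i^p\le\La$ and $a(z_i)\la_i^q\le\La$ gives a pointwise bound on $|\na v_h^\La|+|\pa_t v_h^\La|$ that is uniform in $h$ and depends only on $\data$, $\La$, $\rho_2-\rho_1$, and $\sig_2-\sig_1$. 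Integrating along a path inside $Q_{r,r^2}(w_0)$ yields (L1). Statement (L2) follows either by applying the same argument to $v$ via the decomposition \eqref{t_325_1}, or by letting $h\to 0^+$ in (L1). For (L3), the uniform $W^{1,\infty}$-bound of (L1) combined with the $L^2$-convergence $[u-u_0]_h\to u-u_0$ gives equicontinuity of $\{v_h^\La\}_h$ on $\Q_2$, so Arzelà--Ascoli produces a subsequence converging in $L^\infty(\Q_2,\RR^N)$ to $v^\La$. For (L4), the finite-sum representations of the derivatives above show that the coefficients $v_h^j - v_h^i$ converge pointwise a.e.\ as $h\to 0^+$ by Lebesgue differentiation for Steklov averages, so $\na v_h^\La$ and $\pa_t v_h^\La$ converge pointwise a.e.\ on $E(\La)^c$.

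The main obstacle is the Poincaré step on Whitney cylinders in the $(p,q)$-intrinsic case, where the time scale is $s_i=(\la_i^2/H(z_i,\la_i))r_i^2$ rather than $\la_i^{2-p}r_i^2$. One must exploit (W10), which ensures that $a$ oscillates by a factor at most $2$ on $200Q_{r_i}(z_i)$, so that $H(z,\cdot)\sim H(z_i,\cdot)$ throughout $50U_i$, and then use the structural inequality $\la_i^p\le a(z_i)\la_i^q$ to absorb the factor $s_i/r_i^2$ produced by Lemma~\ref{lem_parabolic_poincare}. Once this bookkeeping is done, the $p$-intrinsic case follows from the same argument without invoking (W10), and the remainder of the proof is routine.
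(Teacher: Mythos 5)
Your structural outline matches the argument the paper defers to (\cite[Prop.~4.11]{KKS}): the local representation $v_h^\La=\sum_{j\in\mathcal{I}_i}v_h^j\om_j$ on $2U_i$, the differentiation formulas obtained from $\sum_j\na\om_j=\sum_j\pa_t\om_j=0$, the bound on $|v_h^j-v_h^i|$ via $5U_i\cap E(\La)\neq\emptyset$ and \ref{prop2_subintrinsic}-type averages, and the role of (W10) in the $(p,q)$-phase to make $H(z,\cdot)$ comparable across $50U_i$. However, two points in your sketch would need genuine repair before this becomes a proof.

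First, the claim that one obtains \emph{a pointwise bound on $|\na v_h^\La|+|\pa_t v_h^\La|$ uniform over $E(\La)^c$, depending only on $\data,\La,\rho_2-\rho_1,\sig_2-\sig_1$} is false for the time derivative. On $U_i$ one has $|\pa_t v_h^\La|\lesssim s_i^{-1}\max_{j\in\mathcal I_i}|v_h^j-v_h^i|\lesssim r_i\la_i/s_i$; in the $p$-intrinsic case this equals $\la_i^{p-1}/r_i$, which blows up as $r_i\to 0$ near $E(\La)$. Only the spatial gradient admits a bound of the form $c\La^{1/p}$ that is uniform in $z$ and $h$. Consequently, \emph{integrating along a path} inside $Q_{r,r^2}(w_0)$ does not directly produce $|v_h^\La(z)-v_h^\La(w)|\le\co r$: the time contribution is $|\pa_t v_h^\La|\,r^2\approx\la_i^{p-1}r^2/r_i$, and making this $\lesssim r$ requires restricting to $r\lesssim\sqrt{s_i}=\la_i^{(2-p)/2}r_i$, i.e., exploiting the parabolic scaling of each $U_i$ ($s_i\le r_i^2$ and $|\pa_t v_h^\La|\,s_i\lesssim\la_i r_i$), followed by a chaining argument through adjacent Whitney cylinders using (W5)--(W8). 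Your one-line justification skips exactly the part where the intrinsic scaling does the work.

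Second, your argument says nothing about the case $z,w\in E(\La)$, where $v_h^\La=v_h=[u-u_0]_h\eta\zeta$. There $\pa_t[u-u_0]_h=h^{-1}(u(\cdot,t+h)-u(\cdot,t))$ is not bounded uniformly in $h$, so the derivative-integration strategy is unavailable. The oscillation of $v_h$ on $E(\La)$ has to be extracted from the strong maximal bound $\Upsilon\le\La$, which controls averages over parabolic cylinders containing the point, together with the parabolic Poincar\'e mechanism transferring time oscillation of spatial averages to the gradient via the equation; this is a genuinely different mechanism from the one on the bad set, and it also relies on Lemma~\ref{lem_parabolic_poincare} being applied to the Steklov-averaged formulation \eqref{steklov_averaged_wf} rather than literally to $v_h$ as you wrote. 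The remaining items (L2)--(L4) are handled correctly: (L2) from \eqref{t_325_1} or by passing to the limit in (L1), (L3) from the uniform parabolic H\"older and $L^\infty$ bounds plus Arzel\`a--Ascoli, and (L4) from local finiteness of the sums and $v_h^j\to v^j$. Note that the paper itself gives no details here, citing \cite{KKS} and flagging only the change $|u-u_0|\mapsto|u-u_0|/\rho$ in the definition of $g$; your attempt is more explicit than the paper, but the two gaps above are precisely where the real work in \cite{KKS} lies.
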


The proofs of \ref{p2-1}-\ref{p3-2} are as in \cite[Proposition 4.11]{KKS}. The only difference in the calculations comes from $\tfrac{|u-u_0|}{\rho}$ in \eqref{def_g}. Indeed, $|u-u_0|$ is used to define $g$ in \cite{KKS}. Note that in the proof of the Caccioppoli inequality for very weak solutions, all constant dependencies remain until the end. For this reason, we provide finer estimates compared to \cite{KKS} on the bad set $E(\La)^c$.

\subsection{Estimates on the bad set}
As we already mentioned in the previous subsection, we prove estimates for terms including $\vla$ that are needed in the proof of the Caccioppoli inequality. We will divide into cases by considering whether $\pa_t\zeta\equiv 0$ holds or not. Recall that $\zeta\equiv 1$ in $(t_0-\tfrac{\sig_1+\sig_2}{2}, t_0+\tfrac{\sig_1+\sig_2}{2} )$.
We set
\begin{align*}
    \begin{split}
        \mc_1&=\mathbb{R}^n\times \left(  t_0-\frac{4\sig_1+\sig_2}{5},t_0+ \frac{4\sig_1+\sig_2}{5}\right),  \\
        \mc_2&=\mathbb{R}^n\times \left(  t_0-\frac{3\sig_1+2\sig_2}{5},t_0+ \frac{3\sig_1+2\sig_2}{5}\right)
    \end{split}
\end{align*}
and among $\{U_i\}_{i\in\mathbb{N}}$, we consider
\begin{align*}
    \begin{split}
        \Theta&=\{i\in \mathbb{N}: 50U_i\cap \mc_1\ne\emptyset \},\\
        \Theta_1&= \{i\in \Theta: 50 U_i\subset \mc_2 \},\\
        \Theta_2&= \Theta\setminus \Theta_1.
    \end{split}
\end{align*}
Note that if $i\in \Theta_2$, then we have
\begin{align}\label{The2_ineq}
    100 s_i> \frac{\sig_2-\sig_1}{5}.
\end{align}
Otherwise, $i\in \Theta_1$ holds. Moreover, with the notions in \eqref{def_mathcal} we have following inclusions
\[
\mathbb{R}^n\times \ti_1 \subset \mc_1 \subset \mc_2    \subset \mathbb{R}^n\times \ti_2.
\]

We start with certain auxiliary estimates involving the very weak solution $u$.
\begin{proposition} \label{prop_3_4}
     There exists a constant $c=c(\data)$ such that
    \begin{enumerate}[label=(\roman*),series=theoremconditions]
\item\label{prop2_subintrinsic} $\begin{aligned}[t]
    \fiint_{200 U_i} \g^s\,dz&\le \lai^s, \quad\text{for any $s\in[1,q-1]$},
    \end{aligned}$
\item\label{prop2_est_nau_p-1} $\begin{aligned}[t]
    \fiint_{50 U_i}(\g^{p-1}+a\g^{q-1})\, dz \leq c \la_i^{-1}\La,
    \end{aligned}$
\item\label{prop2_poincare_u} $\begin{aligned}[t]
  \fiint_{50U_i}|u-(u)_{50U_i}|\,dz \leq cr_i\la.
    \end{aligned}$
\end{enumerate}
\end{proposition}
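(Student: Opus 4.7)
All three estimates follow from combining the Whitney property that $5U_i$ meets $E(\La)$ with a case analysis based on whether $U_i=Q_i$ or $U_i=G_i$. By (W4) there exists $z^{*}\in 5U_i\cap E(\La)$, and since $5U_i\subset 200U_i$ is admissible in the strong maximal function, the definition of $\Upsilon$ in \eqref{def_m} yields
\[
\fiint_{200U_i}\bigl(\g^{q-1}+(a\g^q)^{(q-1)/p}\bigr)\,dz \le \Upsilon(z^{*})^{(q-1)/p} \le \La^{(q-1)/p}.
\]
This is the master inequality driving part \ref{prop2_subintrinsic}. If $U_i=Q_i$, the case hypothesis $\lai^p\ge a(z_i)\lai^q$ gives $\La\le 2\lai^p$, so the $\g^{q-1}$ piece alone yields $\fiint_{200U_i}\g^{q-1}\,dz\le c\lai^{q-1}$ and Jensen extends this to $s\in[1,q-1]$. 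If $U_i=G_i$, we first verify $200G_i\subset 200Q_{r_i}(z_i)$ (using $\lai\ge 1$ and $p\ge 2$ to obtain $\lai^2/H(z_i,\lai)\le 1$), so \ref{viii} gives $a(z)\ge a(z_i)/2$ on $200U_i$; combined with $\La\le 2a(z_i)\lai^q$ the $(a\g^q)^{(q-1)/p}$ piece yields $\fiint_{200U_i}\g^{q(q-1)/p}\,dz\le c\lai^{q(q-1)/p}$, and since $p<q$ forces $q(q-1)/p\ge q-1\ge s$, Jensen again closes the case.

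For part \ref{prop2_est_nau_p-1}, the $\g^{p-1}$ contribution is immediate from part \ref{prop2_subintrinsic} with $s=p-1$ together with $\lai^p\le\La$. For $a\g^{q-1}$, note that $50U_i\subset 50Q_{r_i}(z_i)$ in both cases. When $U_i=Q_i$, Hölder continuity of $a$ combined with \ref{UUi} gives $a(z)\le a(z_i)+c\lai^{p-q}$ on $50U_i$, which together with part \ref{prop2_subintrinsic} and the case hypothesis $a(z_i)\lai^{q-p}\le 1$ produces $\fiint_{50U_i}a\g^{q-1}\,dz\le c\lai^{p-1}\le c\lai^{-1}\La$. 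When $U_i=G_i$, \ref{viii} gives $a(z)\le 2a(z_i)$ on $50U_i$, so part \ref{prop2_subintrinsic} and $a(z_i)\lai^q\le\La$ immediately give the bound.

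For part \ref{prop2_poincare_u}, we plan to apply Lemma~\ref{lem_parabolic_poincare} on $50U_i$ with $m$ slightly larger than $1$ and $\theta=1$, so that $\theta m\in(1,q-1]$ and the required integrability is covered by $\de>(q-1)/p$. After bringing $r_i^{\theta m}$ to the right-hand side and taking a $1/(\theta m)$-root, the Poincar\'e inequality gives
\[
\Bigl(\fiint_{50U_i}|u-(u)_{50U_i}|^{\theta m}\,dz\Bigr)^{1/(\theta m)} \le cr_i\Bigl(\fiint_{50U_i}\g^{\theta m}\,dz\Bigr)^{1/(\theta m)} + cr_i\,\frac{s_i}{r_i^2}\fiint_{50U_i}(\g^{p-1}+a\g^{q-1})\,dz.
\]
Part \ref{prop2_subintrinsic} bounds the first summand by $cr_i\lai$. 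In the second, $s_i/r_i^2$ equals $\lai^{2-p}$ when $U_i=Q_i$ and $\lai^2/\La$ when $U_i=G_i$ (since $H(z_i,\lai)=\La$), so in both phases the factor $\lai^{-1}\La$ coming from part \ref{prop2_est_nau_p-1} cancels exactly against $s_i/r_i^2$, producing $cr_i\lai$. A final Hölder inequality lowers the exponent from $\theta m$ down to $1$.

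The main obstacle is the bound on $a$ inside the Whitney cylinder $50U_i$ in part \ref{prop2_est_nau_p-1}: it is genuinely different in the two phases, and one must use the tailored Whitney properties \ref{UUi} and \ref{viii}, both constructed so that the deviation of $a$ from $a(z_i)$ is compatible with the intrinsic scaling encoded in $\lai^p+a(z_i)\lai^q=\La$. This is exactly where the scale $K$ built into the Whitney decomposition plays its role.
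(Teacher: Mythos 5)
Your proposal is correct and follows essentially the same route as the paper. Part~\ref{prop2_subintrinsic} is proved exactly by the ``master inequality'' you state (use \ref{x}'s companion property (W4) to locate $z^{*}\in 5U_i\cap E(\La)$, feed $200U_i$ into the strong maximal function, then split by phase using $\La\le 2\lai^p$ or $\La\le 2a(z_i)\lai^q$ and (W10)); the paper defers this to \cite[Lemma~4.2]{KKS} and your reconstruction matches it. Part~\ref{prop2_est_nau_p-1} uses, as you do, H\"older continuity of $a$ plus (W9) in the $p$-case and (W10) in the $(p,q)$-case; your slightly more aggressive use of the case hypothesis $a(z_i)\lai^{q-p}\le 1$ to absorb $a(z_i)\lai^{q-1}$ into $\lai^{p-1}$ is a cosmetic variant of the paper's rearrangement $c\lai^{p-1}+a(z_i)\lai^{q-1}=c\lai^{-1}\La$. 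Part~\ref{prop2_poincare_u} is proved, as you propose, via Lemma~\ref{lem_parabolic_poincare} on $50U_i$ followed by parts~\ref{prop2_subintrinsic} and \ref{prop2_est_nau_p-1} and the exact cancellation of $s_i/r_i^2$ against $\lai^{-1}\La$ in both phases. The only small mismatch is that your argument for~\ref{prop2_subintrinsic} produces a constant factor (e.g.\ $2^{(q-1)/p}$ in the $p$-phase) while the proposition as printed has none; this is harmless since the constant is $c(\data)$ and is absorbed when \ref{prop2_subintrinsic} is invoked in \ref{prop2_est_nau_p-1} and \ref{prop2_poincare_u}.
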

\begin{proof}
    The proof of \ref{prop2_subintrinsic} is as in \cite[Lemma 4.2.]{KKS}. To show \ref{prop2_est_nau_p-1} we assume first that $U_i =Q_i$. By \eqref{eq_holder_reg}, \ref{prop2_subintrinsic} and \ref{UUi}, we have
    \begin{align*}
    \begin{split}
        &\fiint_{50 Q_i}(\g^{p-1}+a(z)\g^{q-1})\, dz \\
        &\leq \fiint_{50 Q_i} \g^{p-1}\,dz+ a(z_i)\fiint_{50 Q_i} \g^{q-1}\,dz+[a]_\alpha (50 r_i)^\al \fiint_{ 50 Q_i} \g^{q-1}\,dz\\
        &\leq \lai^{p-1}+ a(z_i)\lai^{q-1}+cr_i^\al\lai^{q-1}\\
        &\leq c\lai^{p-1}+a(z_i)\lai^{q-1}\\
        &= c\lai^{-1}\La.
    \end{split}
    \end{align*}
    Then assume that $U_i = G_i$. By \ref{prop2_subintrinsic} and \ref{viii} we have
    \[
    \fiint_{50 G_i}(\g^{p-1}+a(z)\g^{q-1})\, dz \leq c\lai^{p-1}+2a(z_i)\fiint_{50 G_i}\g^{q-1}\, dz\leq c\lai^{-1}\La.
    \]

To show \ref{prop2_poincare_u}, observe that by Lemma~\ref{lem_parabolic_poincare}, \ref{prop2_subintrinsic} and \ref{prop2_est_nau_p-1} we have
    \begin{align*}
        \begin{split}
			&\fiint_{50 U_i}|u-(u)_{50 U_i}|\,dz\\
            &\le cr_i\left(\fiint_{50 U_i}|\na u|\,dz+\left(\frac{s_i}{r_i^2}\fiint_{50 U_i}(|\na u|^{p-1}+a(z)|\na u|^{q-1})\,dz\right)\right)\\
            &\qquad +cr_i\left(\frac{s_i}{r_i^2}\fiint_{50 U_i}(|F|^{p-1}+a(z)|F|^{q-1})\,dz\right)\\
            &\leq cr_i\lai.
		\end{split}
    \end{align*}
The proof is completed.
\end{proof}

\begin{proposition}
There exists $c=c(\data)$ such that the following hold.
    \begin{enumerate}
    \item[(P1)]\label{v_poincare}Suppose $50 B_i\subset \B_2$ and $i\in \Theta_1$, then
    \begin{align*}
        \begin{split}
            \fiint_{50 U_i} |v-(v)_{50 U_i}|\,dz
            &\le c\left( \frac{\rho}{\rho_2-\rho_1} \right) r_i \fiint_{50 U_i} g\,dz\\
            &\qquad+c \left( \frac{\rho}{\rho_2-\rho_1} \right) s_ir_i^{-1}\fiint_{50 U_i} (g^{p-1}+ag^{q-1} )\,dz.
        \end{split}
    \end{align*}
    \item[(P2)]\label{prop2_poincare_internal} 
Suppose $50 B_i\subset \B_2$ and $i\in \Theta_1$, then
    \[
    \fiint_{50 U_i}|v-(v)_{50 U_i}|\,dz \leq c\left( \frac{\rho}{\rho_2-\rho_1} \right) r_i\lai.
    \]
    \item[(P3)]\label{prop2_poincare_boundary} 
    Suppose $50 B_i\cap \B_2^c\ne\emptyset$, then
    \[
    \fiint_{50 U_i}|v|\,dz \leq c\left( \frac{\rho}{\rho_2-\rho_1} \right) r_i\lai.
    \]
\end{enumerate}
\end{proposition}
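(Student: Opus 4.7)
My approach is to prove (P1) directly from the parabolic Poincaré inequality (Lemma~\ref{lem_parabolic_poincare}) applied to $u$ on $50U_i$ together with the explicit product structure $v=(u-u_0)\eta\zeta$, and then deduce (P2) and (P3) from (P1) by combining it with the subintrinsic bounds of Proposition~\ref{prop_3_4}. Two observations are used throughout. First, since $\rho_1,\rho_2\in[\rho,4\rho]$ we have $\rho/(\rho_2-\rho_1)\ge 1/3$, so this factor may be freely inserted whenever an absolute constant appears. Second, for $i\in\Theta_1$ the inclusion $50U_i\subset\mc_2$ forces the time projection of $50U_i$ inside $(t_0-(3\sigma_1+2\sigma_2)/5,t_0+(3\sigma_1+2\sigma_2)/5)\subset(t_0-(\sigma_1+\sigma_2)/2,t_0+(\sigma_1+\sigma_2)/2)$ where $\zeta\equiv 1$; combined with $50B_i\subset\B_2$ this yields $50U_i\subset\Q_2$ and $v=(u-u_0)\eta$ on $50U_i$.

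For (P1), the starting point is the algebraic identity
\[
v(z)-v(z')=\eta(x)\bigl(u(z)-u(z')\bigr)+\bigl(u(z')-u_0\bigr)\bigl(\eta(x)-\eta(x')\bigr),
\]
which after averaging in $z'$ and integrating in $z$ over $50U_i$ gives
\[
\fiint_{50U_i}|v-(v)_{50U_i}|\,dz\le 2\fiint_{50U_i}|u-(u)_{50U_i}|\,dz+\lVert\nabla\eta\rVert_\infty\cdot 100 r_i\fiint_{50U_i}|u-u_0|\,dz.
\]
To the first summand I apply Lemma~\ref{lem_parabolic_poincare} with $\theta m=1$ on $50U_i$, as was already done in the proof of Proposition~\ref{prop_3_4}\ref{prop2_poincare_u}, which yields $c r_i\fiint g+c s_i r_i^{-1}\fiint(g^{p-1}+ag^{q-1})$ since $|\nabla u|,|F|\le g$ on $\Q_2$. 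For the second summand, the definition \eqref{def_g} gives $|u-u_0|\le\rho g$ pointwise on $\Q_2$, and with $\lVert\nabla\eta\rVert_\infty\le 3/(\rho_2-\rho_1)$ this contributes $c(\rho/(\rho_2-\rho_1))r_i\fiint g$. Absorbing the pure constant of the first summand into the factor $\rho/(\rho_2-\rho_1)$ yields (P1).

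For (P2), I substitute $\fiint_{50U_i}g\le c\lai$ (from Proposition~\ref{prop_3_4}\ref{prop2_subintrinsic} with $s=1$, after a trivial enlargement from $200U_i$ to $50U_i$) and $\fiint_{50U_i}(g^{p-1}+ag^{q-1})\le c\lai^{-1}\La$ (from Proposition~\ref{prop_3_4}\ref{prop2_est_nau_p-1}) into (P1). It then remains to verify $s_i\La\le c(r_i\lai)^2$: in the $p$-intrinsic case $s_i=\lai^{2-p}r_i^2$ with $\La\le 2\lai^p$, and in the $(p,q)$-intrinsic case $s_i=\lai^2 r_i^2/H(z_i,\lai)$ with $\La=H(z_i,\lai)$, so both cases give the required bound. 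For (P3), the assumption $50B_i\cap\B_2^c\ne\emptyset$ provides $x^*\in 50B_i$ with $\eta(x^*)=0$, so Lipschitz continuity gives $|\eta(x)|\le 300r_i/(\rho_2-\rho_1)$ throughout $50B_i$; since $v$ vanishes outside $\Q_2$ and $|u-u_0|\le\rho g$ on $\Q_2$, this produces $|v|\le (300r_i\rho/(\rho_2-\rho_1))g$ pointwise on $50U_i$, and the conclusion follows from $\fiint g\le c\lai$. The main point of delicacy is the unified bookkeeping between the $p$-intrinsic and $(p,q)$-intrinsic scalings in the step $s_i\La\le c(r_i\lai)^2$, which hinges on the identity $\La=H(z_i,\lai)$ built into the Whitney decomposition.
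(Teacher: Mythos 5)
Your proof is correct, but for (P1) it takes a genuinely different and shorter route than the paper's. The paper proves (P1) by plugging the tailored test function $\psi=\varphi\eta\zeta\zeta_\delta$ (with a spatial bump $\varphi$ normalized by $(\varphi)_{50B_i}=1$ and a piecewise-linear time cutoff $\zeta_\delta$) directly into the weak formulation, which yields a time-oscillation bound of the form $\esssup_{\tau_1,\tau_2\in 50 I_i}|(v\varphi)_{50B_i}(\tau_1)-(v\varphi)_{50B_i}(\tau_2)|\le c(\rho/(\rho_2-\rho_1))s_ir_i^{-1}\fiint_{50U_i}(g^{p-1}+ag^{q-1})\,dz$, and then combines this with a spatial Poincar\'e inequality for $v$. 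You instead write the product decomposition $v(z)-v(z')=\eta(x)(u(z)-u(z'))+(u(z')-u_0)(\eta(x)-\eta(x'))$, average and integrate, and feed the first summand into the packaged parabolic Poincar\'e inequality for $u$ (Lemma~\ref{lem_parabolic_poincare}), which the paper already invokes in exactly the same way in the proof of Proposition~\ref{prop_3_4}\ref{prop2_poincare_u}. Both proofs exploit the equation — Lemma~\ref{lem_parabolic_poincare} is itself established via a similar test-function computation — so neither is more fundamental, but yours avoids re-deriving the time-oscillation estimate from scratch. The trade-off is that in your argument the factor $\rho/(\rho_2-\rho_1)$ on the first integral of (P1) is synthetic (inserted using $\rho/(\rho_2-\rho_1)\ge 1/3$) rather than naturally produced by $\nabla\eta$ as in the paper; this does not matter here because (P1) is only used in conjunction with $\fiint_{50U_i}g\le c\lai$. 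Your derivations of (P2) from (P1) and of (P3) directly from the vanishing of $\eta$ at a point of $50B_i\cap\B_2^c$ match in spirit what the paper delegates to \cite[Lemma~4.5]{KKS}, and your verification of the unified bound $s_i\La\le c(r_i\lai)^2$ across the $p$- and $(p,q)$-intrinsic scalings is the right bookkeeping. One small caveat you share with the paper: Lemma~\ref{lem_parabolic_poincare} is stated for $\theta m\in(1,q]$, so the $\theta m=1$ application requires either a limiting argument or the observation that the proof in \cite{KKM} extends to the endpoint; since the paper's own proof of Proposition~\ref{prop_3_4}\ref{prop2_poincare_u} makes the same silent step, this is not a gap attributable to you.
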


The proofs of \ref{prop2_poincare_internal} and \ref{prop2_poincare_boundary} follow as in \cite[Lemma 4.5.]{KKS} by applying \ref{v_poincare} and \ref{prop2_subintrinsic} and \ref{prop2_est_nau_p-1} in Proposition~\ref{prop_3_4}. Below we provide the proof of \ref{v_poincare}.

\begin{proof}[Proof of \ref{v_poincare}] 
    Let $\tau_1, \tau_2 \in 50 I_i$ be arbitrary with $\tau_1<\tau_2$ and $\zeta_\delta\in W^{1,\infty}_0(50 I_i)$ be defined as
\begin{align*}
	\zeta_{\delta}(t)=
	\begin{cases}
	    0, &  t \in (-\infty, \tau_1-\delta),\\
		1+\frac{t-\tau_1}{\delta},& t\in [\tau_1-\delta, \tau_1]\\
		1,&t\in (\tau_1,\tau_2),\\
		1-\frac{t-\tau_2}{\delta},&t \in [\tau_2,\tau_2+\delta],\\
		0,& t \in (\tau_2+\delta, \infty).
	\end{cases}
	\end{align*}
Additionally, let $\vp\in C_0^\infty(50 B_i)$ be a nonnegative function such that
    \[
        \fint_{50 B_i} \vp \,dx = 1, \quad \lVert\na\vp\rVert_\infty \le \frac{c(n)}{r_i}, \quad \lVert\vp\rVert_\infty \le c(n). 
    \]
    Recalling $\eta$ and $\zeta$ are defined in \eqref{42_1}, we take $\psi = \vp\eta\zeta \zeta_\delta\in W_0^{1,\infty}(50 U_i)$ as a test function to weak formulation of \eqref{11}. Then,
    \[
        	\iint_{50U_i} -(u-u_0) \cdot\vp\eta\pa_t(\zeta \zeta_\delta) \,dz = \iint_{50U_i} \left(-\mathcal{A}(z,\na u)+|F|^{p-2}F+a|F|^{q-2}F\right) \cdot\na\psi\,dz
    \]
    and we get 
    \begin{align*}
        \begin{split}
            \mathrm{I} 
            &= \left| \iint_{50U_i}-(u-u_0)\cdot\eta \vp \zeta \pa_t\zeta_\delta\,dz\right|\\
            &\leq \left|\iint_{50U_i}(u-u_0)\cdot\eta \vp \zeta_\delta\pa_t\zeta \,dz\right| \\
            &\qquad+(1+L_2) \iint_{50U_i}\left(|\na u|^{p-1}+a|\na u|^{q-1}+|F|^{p-1}+a|F|^{q-1}\right) |\na\psi| \,dz \\
            &\leq \left| \iint_{50U_i}-(u-u_0) \cdot\eta \vp \zeta_\delta\pa_t\zeta \,dz\right|
            +c\left|\iint_{50U_i}\left( g^{p-1}+ag^{q-1}\right) |\na\psi| \,dz\right| \\
            &= \mathrm{II}+\mathrm{III}.
        \end{split}
    \end{align*}
 Note that since we have $i\in \Theta_1$, $\zeta\equiv1$ in $50 U_i$. Therefore, we get $\mathrm{II} =0.$
    For $\mathrm{III}$, we get
    \begin{align*}
        \begin{split}
            \mathrm{III}&\leq c\iint_{50U_i} (g^{p-1}+ag^{q-1} ) |\vp\na\eta+\eta\na\vp| \,dz \\
            & \leq c \iint_{50U_i} (g^{p-1}+ag^{q-1}) (\vp+\eta)(|\na\vp|+|\na\eta|) \,dz\\
            &\leq cs_ir_i^n\left(\frac{1}{\rho_2-\rho_1}+\frac{1}{r_i}\right) \fiint_{50U_i} (g^{p-1}+ag^{q-1})\,dz.
        \end{split}
    \end{align*}
    Moreover, since we have $50B_i\subset \B_2$, we have $cr_i\le \rho$ for some $c=c(\data)$ and thus,
    \[
    \mathrm{III}\le c\left(\frac{\rho}{\rho_2-\rho_1}\right)s_ir_i^{n-1}\fiint_{50U_i} (g^{p-1}+ag^{q-1})\,dz.
    \]
    Meanwhile, by the one-dimensional Lebesgue differentiation theorem, we have
\begin{align*}
\begin{split}
    \lim_{\delta \to 0^+} \mathrm{I}
    &=\left|\lim_{\delta \to 0^+}\iint_{50U_i}(u-u_0)\eta \vp \zeta \pa_t\zeta_\delta\,dz \right|\\
    &=\left| \int_{50 B_i\times \{\tau_1\}} v\vp\,dz - \int_{50 B_i\times \{\tau_2\}} v\vp\,dz \right|\\
    &=r_i^n\left|(v\vp)_{50 B_i}(\tau_1)-(v\vp)_{50 B_i}(\tau_2)\right|
    \end{split}
\end{align*}
and thus, we get
\begin{equation} \label{t_320_1}
\begin{split}
    &\esssup_{\tau_1,\tau_2 \in 50 I_i}|(v\vp)_{B_i}(\tau_1)-(v\vp)_{B_i}(\tau_2)| \\
    &\leq c\left(\frac{\rho}{\rho_2-\rho_1}\right) s_ir_i^{-1} \fiint_{50 Q_i} (g^{p-1}+ag^{q-1})\,dz .
\end{split}
\end{equation}

To complete the proof we estimate the left-hand side. We use the Poincar\'e inequality in the spatial direction. Then,
	\begin{align*} 
	\begin{split}
		\fiint_{50U_i}|v-(v)_{50U_i}|\,dz
		&\le \fiint_{50U_i}|v-(v)_{50B_i}(t)|\,dz+ \fiint_{50U_i}|(v)_{50B_i}(t)-(v)_{50U_i}|\,dz\\
		&\le c r_i\fiint_{50U_i}|\na v|\,dz+\fiint_{50U_i}|(v)_{50B_i}-(v)_{50U_i}|\,dz.
		\end{split}
	\end{align*}
 By using \eqref{42_1}, the first term on the right hand side is estimated as 
	\begin{align}\label{426_5}
			\begin{split}
			    \fiint_{50U_i}|\na v|\,dz 
                &= \fiint_{50U_i}\left|\na u \eta\zeta+(u-\uu)\na \eta \zeta\right| \,dz \\
                &\le c\left( \frac{\rho}{\rho_2-\rho_1}\right)\fiint_{50U_i}g\,dz.
			\end{split}
	\end{align}
For the second term on the right-hand side, we have
\begin{align*}
    \begin{split}
        &\fiint_{50U_i}|(v)_{50B_i}(t)-(v)_{50U_i}|\,dz\\
        &= \fint_{50I_i}\left|\fint_{50I_i}(v)_{50B_i}(t)-(v)_{50B_i}(\tau)\,dt\right|\,d\tau\\
        &\leq \fint_{50I_i}2|(v)_{50B_i}(t)-(v\vp)_{50B_i}(t)|\,dt + \esssup_{t,\tau\in 50I_i}|(v\vp)_{50B_i}(t)-(v\vp)_{50B_i}(\tau)|.
    \end{split}
\end{align*}
The second term above can be estimated by using \eqref{t_320_1}. In order to estimate the remaining term, we recall that $(\vp)_{B_r}=1$. The Poincar\'e inequality in the spatial direction and \eqref{426_5} give
\begin{align*} 
   \begin{split}
    \fint_{50I_i}|(v)_{50B_i}(t)-(v\vp)_{50B_i}(t)|\,dt
       &=\fint_{50I_i}\left|\fint_{50B_i\times \{t\}} v\vp\,dx - (v)_{50B_i}(t)\fint_{50B_i}\vp\,dx \right|\,dt\\
       &=\fint_{50I_i}\left|\fint_{50B_i\times \{t\}}\vp(v-(v)_{50B_i})\,dx \right|\,dt\\
        &\leq \lVert\vp\rVert_\infty \fint_{50I_i}\fint_{50B_i}|v-(v)_{50B_i}|\,dx\,dt\\
        &\leq c\left( \frac{\rho}{\rho_2-\rho_1}\right)r_i\fiint_{50U_i}g\,dz.
    \end{split}
\end{align*}   
\end{proof}

Next, we provide estimates for $|\vla|$ and its derivatives.
\begin{lemma}\label{pre_The2}
    There exists $c=c(\data)$ such that for any $z\in E(\La)^c$, we have
    \[
      |\vla(z)|\le c\rho\lai.
    \]
\end{lemma}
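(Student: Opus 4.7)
The plan is to exploit the explicit formula for $\vla$ on the bad set coming from the Whitney partition of unity, together with the Poincar\'e-type estimates (P1)--(P3) already established.

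Let $z\in E(\La)^c$ and choose $i\in\NN$ with $z\in 2U_i$, which is possible by (W1). From \eqref{t_325_1},
\[
\vla(z)=\sum_{j\in\mathcal{I}_i}v^j\om_j(z),
\]
so $|\vla(z)|\le\sum_{j\in\mathcal{I}_i}|v^j|$. By (W6) and (W11), both $|\mathcal{I}_i|$ and the ratio $\la_j/\lai$ are bounded by constants depending only on $\data$, so it suffices to show $|v^j|\le c\rho\la_j$ for every $j\in\mathcal{I}_i$. If $2U_j\not\subset\Q_2$ then $v^j=0$ by definition. Otherwise $v^j=(v)_{2U_j}$, and by the obvious measure comparison $|50U_j|\le c(n)|2U_j|$,
\[
|v^j|\le\fiint_{2U_j}|v|\,dz\le c\fiint_{50U_j}|v|\,dz.
\]

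To bound $\fiint_{50U_j}|v|\,dz$ we split into cases according to the position of $50U_j$. If $50B_j\cap\B_2^c\ne\emptyset$, estimate (P3) bounds the integral directly. If $50B_j\subset\B_2$ we decompose
\[
\fiint_{50U_j}|v|\,dz\le\fiint_{50U_j}|v-(v)_{50U_j}|\,dz+|(v)_{50U_j}|,
\]
handling the oscillation by (P2) when $j\in\Theta_1$, or by an argument based on \eqref{The2_ineq} and the vanishing of $\zeta$ outside $\ell_{\sig_2-h_0}(t_0)$ when $j\in\Theta_2$; the average $|(v)_{50U_j}|$ is then treated separately. Finally, to convert the resulting bounds into $c\rho\la_j$ we rely on the a priori estimate $r_j\le c(\data)\rho$, which follows from $U_j\subset E(\La)^c$ combined with \eqref{eq_est_mz} and $\La\ge\theta\Pi$, so that the factor $r_j$ appearing in (P2)--(P3) is always dominated by $\rho$.

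The main obstacle is the estimate on $|(v)_{50U_j}|$ in the interior case ($50B_j\subset\B_2$ with $j\in\Theta_1$), because there $v$ does not vanish on any part of $50U_j$ and a direct Poincar\'e argument is not available. We plan to handle this by telescoping averages along a sequence of nested parabolic cylinders linking $50U_j$ to a larger cylinder that reaches $E(\La)$, using (W4) to locate a point $z^*\in 5U_j\cap E(\La)$ where $\Upsilon(z^*)\le\La$ controls the maximal function at each stage, and applying Lemma~\ref{lem_parabolic_poincare} to estimate the successive differences of averages.
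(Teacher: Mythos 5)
Your approach is substantially more complicated than it needs to be, and it has a genuine gap. The lemma is essentially a one-liner once you notice that the definition \eqref{def_g} of $g$ contains the term $\tfrac{|u-u_0|}{\rho}$: on $\Q_2$ one has $|v|=|(u-u_0)\eta\zeta|\le|u-u_0|\le\rho\,g$ pointwise. Since $2U_j\subset 50U_i$ for $j\in\mathcal{I}_i$ by (W8), with comparable measure by (W7), this gives $|v^j|\le\fiint_{2U_j}|u-u_0|\,dz\le c\rho\fiint_{50U_i}g\,dz\le c\rho\lai$, the last step being Proposition~\ref{prop_3_4}\ref{prop2_subintrinsic} with $s=1$. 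No Poincar\'e inequality, no $\Theta_1/\Theta_2$ split, and no telescoping is needed. This is precisely why the paper adds $\tfrac{|u-u_0|}{\rho}$ into $g$ (as the introduction remarks), and you have not used it.

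Concretely, your proposed route has three problems. First, (P1)--(P3) produce bounds carrying the factor $\tfrac{\rho}{\rho_2-\rho_1}$, which is not present in the lemma's conclusion and is not bounded by a constant in general; the lemma is used later also in contexts where $\rho_2-\rho_1$ may be small, so you cannot simply absorb it. Second, the central step — bounding $|(v)_{50U_j}|$ in the interior case — is only sketched as a telescoping/chaining argument through $E(\La)$ and is never carried out; this is not a gap-free proof. Third, the claim that $r_j\le c(\data)\rho$ ``follows from $U_j\subset E(\La)^c$ combined with \eqref{eq_est_mz} and $\La\ge\theta\Pi$'' is not a correct derivation (those facts control measures, not a pointwise radius bound independent of $\La$); when $2U_j\subset\Q_2$ the bound $r_j\le 4\rho$ is instead an immediate geometric consequence of $B_{2r_j}(x_j)\subset B_{\rho_2}(x_0)$. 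In short: invoke the pointwise inequality $|u-u_0|\le\rho g$ on $\Q_2$ and Proposition~\ref{prop_3_4}\ref{prop2_subintrinsic}, and the rest of your machinery can be discarded.
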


\begin{proof}
    Since $z\in E(\La)^c$, there exists $i\in\mathbb{N}$ such that $z\in U_i$. Moreover, we have by \eqref{t_325_1} that 
    \[
    \vla(z)=\sum_{j\in\mathcal{I}_i}v^j\om_j(z).
    \]
    since $|\om_j|\le 1$ and the cardinality of $\mathcal{I}_i$ is bounded by \ref{ix}, we are enough to estimate $|v^j|$ when $2U_j\subset \Q_2$. Recalling $v=(u-u_0)\eta\zeta$ with $|\eta|,|\zeta|\le 1$, it follows that
    \[
    |v^j|\le \fiint_{2U_j} |v|\,dz \le \fiint_{2U_j} |u-u_0|\,dz \le c\rho\fiint_{50U_i} g\,dz\le c\rho\lai,
    \]
    where we used \ref{vii} and Proposition~\ref{prop_3_4}\ref{prop2_subintrinsic}.
    
\end{proof}

Next, we state estimates of $|\na \vla|$ and $|\pa_t \vla|$ for special cases. Estimates for the remaining case will be presented in Lemma~\ref{lem_ts_2}.

\begin{lemma}\label{lem_410}
Suppose $z\in U_i$ and $i\in \Theta_1$. Then, there exists $c=c(\data)$ such that the following hold.
	\begin{align}  \label{t_382}
  |\na \vla(z)|\le c\left( \frac{\rho}{\rho_2-\rho_1} \right)\la_i,\quad |\pa_t\vla(z)|\le c\left( \frac{\rho}{\rho_2-\rho_1} \right) s_i^{-1}r_i\lai.
	\end{align}
\end{lemma}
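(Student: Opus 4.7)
The plan is to exploit the representation \eqref{t_325_1}, which on $U_i$ gives $\vla(z) = \sum_{j\in\mathcal{I}_i} v^j \om_j(z)$. Since each $v^j$ is a constant, differentiation yields
\[
\na\vla(z) = \sum_{j\in\mathcal{I}_i} v^j \na\om_j(z), \qquad \pa_t\vla(z) = \sum_{j\in\mathcal{I}_i} v^j \pa_t\om_j(z).
\]
The key cancellation is that from \ref{x}, $\sum_{j\in\mathcal{I}_i}\om_j\equiv 1$ on $U_i$, so $\sum_{j\in\mathcal{I}_i}\na\om_j = 0$ and $\sum_{j\in\mathcal{I}_i}\pa_t\om_j = 0$ on $U_i$. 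Therefore for any constant $c_0\in\RR^N$,
\[
\na\vla(z) = \sum_{j\in\mathcal{I}_i}(v^j-c_0)\na\om_j(z), \qquad \pa_t\vla(z) = \sum_{j\in\mathcal{I}_i}(v^j-c_0)\pa_t\om_j(z).
\]
Combining with the gradient bound $\|\na\om_j\|_\infty\le c r_i^{-1}$ from \ref{xi}, the time derivative bound $\|\pa_t\om_j\|_\infty\le c s_i^{-1}$ from \ref{xii}, and $|\mathcal{I}_i|\le c(n)$ from \ref{ix}, both claimed estimates in \eqref{t_382} will follow as soon as one establishes
\[
|v^j-c_0|\le c\Big(\tfrac{\rho}{\rho_2-\rho_1}\Big)r_i\la_i \quad\text{for all } j\in\mathcal{I}_i,
\]
for an appropriate choice of $c_0$ depending on the geometric position of $U_i$.

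The choice of $c_0$ splits into two cases. First, if $50B_i\subset \B_2$, then for each $j\in\mathcal{I}_i$ we have $2B_j\subset 50B_i\subset \B_2$ by \ref{vii}, and since $i\in\Theta_1$ gives $50U_i\subset \mc_2\subset \RR^n\times\ti_2$, also the time slab of $2U_j$ lies in $\ti_2$. Hence $2U_j\subset \Q_2$ and $v^j = (v)_{2U_j}$. Picking $c_0 = (v)_{50U_i}$, the comparability $|50U_i|/|2U_j|\le c(n)$ from \ref{Uij} and the inclusion $2U_j\subset 50U_i$ yield
\[
|v^j - (v)_{50U_i}|\le \fiint_{2U_j}|v-(v)_{50U_i}|\,dz\le c\fiint_{50U_i}|v-(v)_{50U_i}|\,dz,
\]
and the right-hand side is bounded by $c(\tfrac{\rho}{\rho_2-\rho_1})r_i\la_i$ by \ref{prop2_poincare_internal}. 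Second, if $50B_i\cap \B_2^c\ne\emptyset$, one chooses $c_0 = 0$. For each $j\in\mathcal{I}_i$, either $2U_j\not\subset\Q_2$ (so $v^j=0$), or else $|v^j|\le \fiint_{2U_j}|v|\,dz\le c\fiint_{50U_i}|v|\,dz$, which is controlled by $c(\tfrac{\rho}{\rho_2-\rho_1})r_i\la_i$ via \ref{prop2_poincare_boundary}. In either case the required bound on $|v^j-c_0|$ is obtained.

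The main obstacle is the boundary case $50B_i\cap \B_2^c\ne\emptyset$, where the partition-of-unity cancellation is less informative because $v^i$ and some $v^j$ may simply vanish by definition, preventing the standard Poincaré-type differencing across neighboring Whitney cylinders. This is precisely what \ref{prop2_poincare_boundary} is designed to handle: the vanishing of the cutoff $\eta\zeta$ near $\pa\B_2$ forces a pointwise smallness of $v$ on $50U_i$, compensating for the loss of cancellation. Once both cases are reduced to a uniform bound $|v^j-c_0|\le c(\tfrac{\rho}{\rho_2-\rho_1})r_i\la_i$, multiplication by $\|\na\om_j\|_\infty$ and $\|\pa_t\om_j\|_\infty$ and summation over the finitely many $j\in\mathcal{I}_i$ finishes both inequalities in \eqref{t_382}.
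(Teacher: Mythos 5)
Your argument is correct and uses the same core mechanism as the paper: apply the partition-of-unity cancellation $\sum_{j\in\mathcal{I}_i}\na\om_j=0$ on $U_i$ (from \ref{x}) to the representation \eqref{t_325_1}, reduce matters to estimating $|v^j-c_0|$, and invoke \ref{prop2_poincare_internal} or \ref{prop2_poincare_boundary} together with \ref{Uij}, \ref{ix}, \ref{xi} and \ref{xii}. The only real difference is the choice of reference value $c_0$ and the accompanying dichotomy: the paper fixes $c_0=v^i$ and splits on whether $2U_i\subset\Q_2$, whereas you split on whether $50B_i\subset\B_2$, taking $c_0=(v)_{50U_i}$ in the interior case and $c_0=0$ at the boundary. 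Your interior case is slightly more careful: by verifying that $2U_j\subset\Q_2$ for \emph{every} $j\in\mathcal{I}_i$ (using \ref{vii} and $i\in\Theta_1$) you guarantee that each $v^j$ is a genuine average of $v$, which is precisely what the oscillation bound $|v^j-(v)_{50U_i}|\le c\fiint_{50U_i}|v-(v)_{50U_i}|\,dz$ needs, whereas the hypothesis $2U_i\subset\Q_2$ alone does not by itself exclude the possibility that some neighboring $v^j$ vanishes by definition. Both approaches land in the same place; yours rules out that edge case explicitly.
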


\begin{proof}
By \eqref{t_325_1} we have 
\[
    \na\vla(z)=\na\biggl(\sum_{j\in \mathcal{I}_i}v^j\om_j(z)\biggr) = \sum_{j\in \mathcal{I}_i}v^j\na\om_j(z).
\]
 Since 
\[
0 = \na\biggl(\sum_{j\in \mathbb{N}}\om_j(z)\biggr)= \na\biggl(\sum_{j\in \mathcal{I}_i}\om_j(z)\biggr)=\sum_{j\in \mathcal{I}_i}\na\om_j
\]
 by \ref{x}, we get
	\[
		\na\vla(z)=\sum_{j\in \mathcal{I}_i}v^j\na\om_j(z)=\sum_{j\in \mathcal{I}_i}v^j\na\om_j(z)-v^i\sum_{j\in \mathcal{I}_i}\na\om_j(z)=\sum_{j\in \mathcal{I}_i}(v^j-v^i)\na\om_j(z).
	\]
Since the cardinality of $|\mathcal{I}_i|$ is finite and $\|\na \om_j\|_{\infty}\le cr_i^{-1}$ for some $c=c(\data)$, it is enough to estimate $|v^j-v^i|$. 
If $2U_i \not\subset \Q_2$, we have by $i\in \Theta_1$ that $50B_i\cap \B_2^c\ne\emptyset$. Therefore, as $2U_j\subset 50U_i$ by \ref{vii}, it follows from \ref{prop2_poincare_boundary} that
\[
|v^j-v^i| = |v^j| \leq c\fiint_{50U_i}|v|\,dz \leq c\left( \frac{\rho}{\rho_2-\rho_1} \right) r_i\lai.
\]
On the other hand, if $2U_i \subset \Q_2$ we have by \ref{vii} that
\[
|v^j-v^i|
        \le |v^j-(v)_{50U_i}|+|(v)_{50U_i}-v^i|\le c\fiint_{50U_i} |v-(v)_{50U_i}|\,dz,
\]
where to obtain the last inequality, we applied the inequality $|U_j|\approx|U_i|$ by \ref{Uij}. 
Finally, applying \ref{prop2_poincare_internal} the first inequality in \eqref{t_382} follows. 

To show the second inequality, we follow the same argument as above and get
\[
    \pa_t\vla(z)=\sum_{j\in \mathcal{I}}\vh^j\pa_t\om_j(z)=\sum_{j\in \mathcal{I}}(v^j-v^i)\pa_t\om_j(z)
\]
and therefore, the conclusion follows along with \ref{xii}.
\end{proof}

Next, we provide estimates in the case $i\in\Theta_2$. In this case we use the following lemma.
\begin{lemma}\label{lem_theta_2}
	Suppose $i\in \Theta_2$ and $50U_i\cap \Q_2\ne\emptyset$. Then there exists $c=c(\data)$ such that 
    \[
    \rho_2-\rho_1 \leq c\theta^{\frac
    {2-q}{p}}r_i.
    \]
\end{lemma}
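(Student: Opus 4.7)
The plan is to combine $i\in\Theta_2$ (which by \eqref{The2_ineq} gives $s_i>(\sigma_2-\sigma_1)/500$) with matching bounds on $\sigma_2-\sigma_1$ and on $s_i$, and then solve for $\rho_2-\rho_1$ in terms of $r_i$. The lower bound on $\sigma_2-\sigma_1$ is immediate: since $\rho_2^2-\rho_1^2\geq(\rho_2-\rho_1)^2$ (as $\rho_1\geq\rho>0$) and the temporal scaling of $\mathcal{Q}_2$ is at least $\pi^2/\Pi$ in either phase (because $\Pi\geq\pi^p$ forces $\pi^{2-p}\geq\pi^2/\Pi$), we have
\[
\sigma_2-\sigma_1\;\geq\;\tfrac{\pi^2}{\Pi}(\rho_2-\rho_1)^2.
\]

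The heart of the proof is the matching upper bound $s_i\leq c(\data)\,\theta^{(2-q)/p}\,(\pi^2/\Pi)\,r_i^2$, and this requires a four-way case analysis on the types of $U_i$ and of $\mathcal{Q}_2$. When $U_i=Q_i$, the inequality $a(z_i)\lambda_i^q\leq\lambda_i^p$ yields $\lambda_i^p\geq\Lambda/2\geq(\theta/2)\Pi\geq(\theta/2)\pi^p$, hence $\lambda_i\gtrsim\theta^{1/p}\pi$ and $s_i=\lambda_i^{2-p}r_i^2\lesssim\theta^{(2-p)/p}\pi^{2-p}r_i^2$; in the $(p,q)$-phase one additionally exploits the forced upper bound $\pi\lesssim\theta^{-1/q}a(z_0)^{-1/(q-p)}$ (coming from $\theta a(z_0)\pi^q\leq\Lambda\leq 2\lambda_i^p\leq c\,a(z_0)^{-p/(q-p)}$, which follows because $U_i=Q_i$ also gives $\lambda_i\leq(2/a(z_0))^{1/(q-p)}$) to rewrite $\pi^{2-p}$ in terms of $\pi^2/\Pi$. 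When $U_i=G_i$, the inequality $a(z_i)\lambda_i^q\geq\Lambda/2$ would give $\lambda_i\gtrsim\theta^{1/q}\pi$ provided we can first bound $a(z_i)$ from above: in the $(p,q)$-phase this follows from \eqref{eq_pq2} (extended to $\mathbb{R}^{n+1}$ by \eqref{extended_a}), whereas in the $p$-phase we combine the Hölder continuity \eqref{eq_holder_reg} of $a$ with the key decay \eqref{eq_rho_decay} to conclude $\sup_{\mathcal{Q}_2}a\leq c\pi^{p-q}$ (using that the spatial and temporal diameters of $\mathcal{Q}_2=Q_{\rho_2}^\pi$ are both $\lesssim\rho\lesssim\pi^{(p-q)/\alpha}$). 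In each sub-case a short exponent comparison using $2<p<q$ shows that the resulting power of $\theta$ dominates $(2-q)/p$; for instance in the ``mixed'' $Q_i$-in-$(p,q)$-phase sub-case one needs $2/p+p/q-2\geq(2-q)/p$, which is exactly AM-GM.

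The principal obstacle is the $G_i$-in-$p$-phase sub-case, since there is neither a direct quantitative control on $a(z_i)$ from \eqref{eq_pq2} nor an unconditional lower bound on $\lambda_i$; one must use the decay \eqref{eq_rho_decay} together with the Hölder modulus of $a$ to force $a(z_i)$ to be small. Once the four sub-cases are dispatched, combining the two bounds with $s_i>(\sigma_2-\sigma_1)/500$ gives
\[
c\,\theta^{(2-q)/p}\,\tfrac{\pi^2}{\Pi}\,r_i^2\;\geq\; s_i\;>\;\tfrac{1}{500}\,\tfrac{\pi^2}{\Pi}(\rho_2-\rho_1)^2,
\]
so $(\rho_2-\rho_1)^2\leq c'\theta^{(2-q)/p}r_i^2$ and hence $\rho_2-\rho_1\leq c'\theta^{(2-q)/(2p)}r_i\leq c'\theta^{(2-q)/p}r_i$, the last inequality holding because $\theta\in(0,1)$ and $(2-q)/(2p)>(2-q)/p$.
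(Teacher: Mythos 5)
Your proof is correct and uses the same ingredients as the paper's (the comparison $\Lambda\ge\theta\Pi$, the extension \eqref{extended_a} of $a$, the Hölder modulus together with \eqref{eq_rho_decay} in the $p$-phase, \eqref{eq_pq2} in the $(p,q)$-phase, and \eqref{The2_ineq}), but the organization is different. The paper argues by contradiction: it first isolates a single uniform comparison $\pi\le c\,\theta^{-1/p}\lambda_i$ (proved in just two cases, on the type of $\Q_2$ only, not $U_i$), then assumes $\rho_2-\rho_1>100\,c^{(q-2)/2}\theta^{(2-q)/(2p)}r_i$ and converts that comparison directly into $\sigma_2-\sigma_1>100^2 s_i$, contradicting \eqref{The2_ineq}; the $U_i$-type distinction never becomes a separate case because $s_i\le\lambda_i^{2-p}r_i^2$ holds for both $Q_i$ and $G_i$ (since $\Lambda\ge\lambda_i^p$), which lets them absorb it in one line. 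Your version is a direct argument that proves the matching bound $s_i\lesssim\theta^{(2-q)/p}(\pi^2/\Pi)\,r_i^2$ explicitly in a four-way case split on the types of $U_i$ and $\Q_2$, and then combines it with the lower bound $\sigma_2-\sigma_1\ge(\pi^2/\Pi)(\rho_2-\rho_1)^2$ and \eqref{The2_ineq}. The content of your ``hard'' sub-case ($U_i=G_i$, $\Q_2$ in the $p$-phase, where you bound $\sup_{\Q_2}a\le c\pi^{p-q}$ from \eqref{eq_rho_decay} and the Hölder modulus) is precisely what the paper's Case~1 of \eqref{eq_pi_lai} encodes; and your mixed sub-case exponent check $2/p+p/q-2\ge(2-q)/p\Leftrightarrow(p-q)^2\ge0$ is the algebra hidden in the paper's estimate $c^{q-p}\theta^{(p-q)/p}\ge1$. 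The paper's factoring of the argument through the single comparison $\pi\lesssim\theta^{-1/p}\lambda_i$ is somewhat more economical; your direct version makes the power of $\theta$ more transparent at the cost of more cases.
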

\begin{proof}
For the proof, we claim
\begin{align}\label{eq_pi_lai}
    \pi\leq c\theta^{-\frac{1}{p}}\lai,
\end{align}
where $c = c(\data)$. We divide into cases.

\textit{Case $1$}. First we consider the case that $\mathcal{Q}_2$ is $p$-intrinsic. Recalling the extension of $a$ in \eqref{extended_a}, there exists $w_i$ in the closure of $\mathcal{Q}_2$ such that $a(w_i)=a(z_i)$. Therefore, it follows from \eqref{eq_holder_reg} that
\[
|a(z_0)-a(z_i)|\le [a]_\alpha (4\rho)^\alpha.
\]
Since we have
\[
\theta(\pi^p+a(z_0)\pi^q)=\theta\Pi<\La=\la_i^p+a(z_i)\la_i^q,
\]
it follows that
\[
\pi^p+a(z_0)\pi^q<  \theta^{-1}(\la_i^p+a(z_0)\la_i^q+[a]_\alpha\rho^\alpha\lambda_i^q).
\]
On the other hand, we employ \eqref{eq_rho_decay} to have
\[
   \frac{1}{2}\pi^p+c^{-1}\rho^\alpha\pi^q \le \frac{1}{2}\pi^p+\frac{1}{2}\pi^p=\pi^p,
\]
where $c=c(\data)$.
Recall $\theta\in(0,1)$.
Estimating the left hand side of the previous inequality by using the above inequality, we get
\[
\pi^p+a(z_0)\pi^q+\rho^\al\pi^q\leq c\theta^{-1}(\lai^p+a(z_0)\lai^q+\rho^\alpha\lai^q).
\]
This leads to \eqref{eq_pi_lai}.

\textit{Case $2$}. Next, we consider the case when $\mathcal{Q}_2$ is $(p,q)$-intrinsic. Again, we recall that there exists $w_i$ as in the previous case. Moreover, we deduce from \eqref{eq_pq2} that
\[
\frac{a(z_0)}{2}\le a(z_i)=a(w_i) \le 2a(z_0).
\]
Therefore, we obtain
\[
\theta(\pi^p+a(z_0)\pi^q)=\theta\Pi<\La=\la_i^p+a(z_i)\la_i^q\le 2(\lai^p+a(z_0)\lai^q ).
\]
Thus, we get $ \pi\le 2^\frac{1}{p}\theta^{-\frac{1}{p}}\lai $ and \eqref{eq_pi_lai} follows.

Finally, to prove the lemma, assume for contradiction that
\[
\rho_2-\rho_1>100 c^\frac{q-2}{2}\theta^\frac{2-q}{2p}r_i,
\]
where $c$ is in \eqref{eq_pi_lai}. We consider two cases. In the above inequality, we have multiplied $\tfrac{1}{2}$ in the exponent of $\theta$ to make calculations below simpler.
When $\Q_2 $ is $p$-intrinsic, we have by \eqref{eq_pi_lai} and the counterassumption that
\[
\sig_2-\sig_1 = (\rho_2^2-\rho_1^2) \pi^{2-p}  > c^{2-p} \theta^\frac{p-2}{p}(\rho_2-\rho_1)^2 \lai^{2-p}\geq 100^2 s_i,
\]
which is a contradiction with $i\in\Theta_2$ as \eqref{The2_ineq} is not valid. Thus, the estimate in the lemma holds.
On the other hand, if $\Q_2$ is $(p,q)$-intrinsic, we again have
\[
\frac{a(z_0)}{2}\le a(z_i) \le 2a(z_0).
\]
Note that by \eqref{eq_pq2} and \ref{viii}, $a(\cdot)$ is comparable in $\Q_2$ and $200U_i$. Since we have $200 U_i\cap \Q_2\ne \emptyset$, it follows from the counterassumption that
\begin{align*}
\begin{split}
    \sig_2-\sig_1&=(\rho_2^2-\rho_1^2)\frac{\pi^2}{\pi^p+a(z_0)\pi^q}\\
    & > c^{2-q}\theta^\frac{q-2}{p} (\rho_2-\rho_1)^2 \frac{\lai^2}{\lai^p+2a(z_i)\lai^q}\\
    &\geq 50^2 s_i,
\end{split}
\end{align*}
where to obtain the last inequality, we have used
\[
\frac{\lai^2}{\lai^p+2a(z_i)\lai^q}\ge \frac{\lai^2}{2\La}
\]
and if $U_i$ is $p$-intrinsic, 
\[
\frac{\lai^2}{\lai^p+2a(z_i)\lai^q}\ge \frac{1}{4}\lai^{2-p}.
\]
Again, this is a contradiction and the proof is completed.
\end{proof}

\begin{lemma}\label{lem_ts_2}
    Let $z\in U_i$ and $i\in\Theta_2$. There exists $c=c(\data)$ such that
    \[
    |\vla|(z) \le c\left( \frac{\rho}{\rho_2-\rho_1} \right) \theta^\frac{2-q}{p}r_i\lai
    \]
    and
    \[
    |\na \vla|(z)\le c\left( \frac{\rho}{\rho_2-\rho_1} \right) \theta^\frac{2-q}{p}\lai.
    \]
\end{lemma}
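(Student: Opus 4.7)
The plan is to bound $|\vla(z)|$ and $|\na\vla(z)|$ by the crude factor $\rho\lai$ first, and then to convert the resulting geometric dependence on $r_i$ into the prescribed dependence on $\rho_2-\rho_1$ by invoking Lemma~\ref{lem_theta_2}. The reason this is the right approach is that, unlike in Lemma~\ref{lem_410}, for $i\in\Theta_2$ the Poincaré-type improvements \ref{v_poincare}--\ref{prop2_poincare_boundary} are not at our disposal; Lemma~\ref{lem_theta_2} supplies exactly the geometric substitute, at the price of the factor $\theta^{(2-q)/p}$.

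First I would dispose of the trivial case $50U_i\cap \Q_2=\emptyset$: by \ref{vii} every $j\in\mathcal{I}_i$ satisfies $2U_j\subset 50U_i$, which then forces $2U_j\not\subset \Q_2$, so $v^j=0$ for all $j\in\mathcal{I}_i$. By \eqref{t_325_1} this gives $\vla\equiv 0$ on $U_i$ and both inequalities hold trivially. We may thus assume $50U_i\cap \Q_2\neq\emptyset$, which is the hypothesis of Lemma~\ref{lem_theta_2}; that lemma yields
\[
   \frac{1}{r_i}\le \frac{c\,\theta^{\frac{2-q}{p}}}{\rho_2-\rho_1}.
\]

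For the first estimate I would simply invoke Lemma~\ref{pre_The2} to get $|\vla(z)|\le c\rho\lai$, and then multiply by the trivial inequality $1\le c\,\theta^{(2-q)/p} r_i/(\rho_2-\rho_1)$ coming from the display above. This immediately gives
\[
   |\vla(z)|\le c\,\theta^{\frac{2-q}{p}}\frac{\rho}{\rho_2-\rho_1}\,r_i\,\lai.
\]

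For the gradient I would reproduce the partition-of-unity computation from Lemma~\ref{lem_410}: using \eqref{t_325_1} and $\sum_{j\in\mathcal{I}_i}\na\om_j\equiv 0$ on $U_i$ (from \ref{x}),
\[
   \na\vla(z)=\sum_{j\in\mathcal{I}_i}(v^j-v^i)\na\om_j(z).
\]
Since the refined Poincaré estimates are unavailable on $\Theta_2$, I would fall back on the crude bound $|v^j|,|v^i|\le c\rho\lai$ already used inside the proof of Lemma~\ref{pre_The2}; it follows from $|v|\le|u-u_0|\le \rho g$ on $\Q_2$ together with $2U_j\subset 50U_i$ (by \ref{vii}), the volume comparability \ref{Uij}, and Proposition~\ref{prop_3_4}\ref{prop2_subintrinsic}. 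Combining with $\|\na\om_j\|_{\infty}\le c/r_i$ from \ref{xi} and the uniformly bounded cardinality of $\mathcal{I}_i$ from \ref{ix}, this produces $|\na\vla(z)|\le c\rho\lai/r_i$, and a final application of Lemma~\ref{lem_theta_2} delivers the stated bound with factor $\theta^{(2-q)/p}/(\rho_2-\rho_1)$.

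The main obstacle is conceptual rather than computational: for $i\in\Theta_1$ the improvement $\rho/(\rho_2-\rho_1)$ is gotten directly through the parabolic Poincaré argument that tests the very weak formulation of \eqref{11} (giving the estimates \ref{v_poincare}--\ref{prop2_poincare_boundary}), while for $i\in\Theta_2$ one cannot afford this test because the time cutoff $\zeta$ has not been exhausted. The workaround is to accept the cruder preliminary factor $\rho\lai$ and compensate through the geometric comparison in Lemma~\ref{lem_theta_2}, which is precisely where the factor $\theta^{(2-q)/p}$ enters.
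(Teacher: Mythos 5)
Your proof is correct and follows essentially the same route as the paper's: bound $|v^j|\le c\rho\lai$ via Lemma~\ref{pre_The2} (resp.\ Proposition~\ref{prop_3_4}\ref{prop2_subintrinsic}), then upgrade $\rho$ to $(\rho/(\rho_2-\rho_1))\theta^{(2-q)/p}r_i$ via Lemma~\ref{lem_theta_2}, and sum over the finitely many $j\in\mathcal{I}_i$ with $\|\na\om_j\|_\infty\le c/r_i$ for the gradient. Your bookkeeping is in fact slightly tidier than the paper's: by disposing of the case $50U_i\cap\Q_2=\emptyset$ up front you can invoke Lemma~\ref{lem_theta_2} directly on the index $i$ (which lies in $\Theta_2$ by hypothesis), whereas the paper's displayed chain passes through $r_j$ without remarking that $j\in\Theta_2$ is what Lemma~\ref{lem_theta_2} formally requires.
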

\begin{proof}
For the first inequality, we modify the proof of Lemma~\ref{pre_The2}.
We have
\[
|\vla|(z) = \biggl|\sum_{j\in \mathcal{I}_i}\omega_j(z)v^j \biggr|\le \sum_{j\in \mathcal{I}_i}|v^j|.
\]
We may assume that $50U_j\cap \Q_2\ne\emptyset$, as otherwise $v^j=0$ since $2U_j\subset 50 U_j$. As in the proof of Lemma~\ref{pre_The2}, we get
\[
|v^j|\le c\rho\la_i
\]
for some $c=c(\data)$. Now, we apply Lemma~\ref{lem_theta_2} to have
\[
|v^j|\le c\rho\la_i=c \left( \frac{\rho}{\rho_2-\rho_1} \right)(\rho_2-\rho_1) \la_i\le c \left( \frac{\rho}{\rho_2-\rho_1} \right) \theta^\frac{2-q}{p}r_j\lai.
\]
Finally, using \ref{v}, we get
\[
|v^j|\le c \left( \frac{\rho}{\rho_2-\rho_1} \right) \theta^\frac{2-q}{p}r_i\lai.
\]
The conclusion follows from the fact that the cardinality of $\mathcal{I}_i$ is finite by \ref{xi}. For the second inequality, note that 
\[
|\na \vla|(z)=  \biggl|\sum_{j\in \mathcal{I}_i} \na\om_j(z) v^j \biggr| .
\]
Therefore, using the same argument and \ref{xi}, we get the conclusion.
\end{proof}

We close this subsection with the estimate.

\begin{lemma}\label{time_deri_est}
    There exists a constant $c=c(\data)$ such that
    \begin{align*}
            \iint_{\mc_1} | \pa_t\vla\cdot (\vla-v) |\,dz
            \le c \left( \frac{\rho}{\rho_2-\rho_1} \right) \La |E(\La)^c|+ c\left(\frac{\rho}{\rho_2-\rho_1}\right)^2\iint_{ \Q_2 } \frac{|v |^2}{\sig_2}\,dz.
    \end{align*}
\end{lemma}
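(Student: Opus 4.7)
Since $\vla=v$ on $E(\La)$ by \eqref{t_325_1}, the integrand vanishes outside $E(\La)^c$, and the Whitney covering (W1) reduces the integral to a sum over the index set $\Theta$ of integrals on the cylinders $U_i$. I would split $\Theta=\Theta_1\cup\Theta_2$ so that on each $U_i$ I can use either Lemma~\ref{lem_410} (for $i\in\Theta_1$) or Lemma~\ref{lem_ts_2} combined with Lemma~\ref{lem_theta_2} (for $i\in\Theta_2$).

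On each $U_i$ I decompose $|\pa_t\vla\cdot(\vla-v)|\le|\pa_t\vla||\vla|+|\pa_t\vla||v|$. For the second summand I would apply Young's inequality with the weight $\sig_2(\rho_2-\rho_1)^2/\rho^2$:
\[
|\pa_t\vla||v|\le\tfrac{(\rho_2-\rho_1)^2}{2\rho^2}\sig_2|\pa_t\vla|^2+\tfrac{1}{2}\left(\tfrac{\rho}{\rho_2-\rho_1}\right)^2\tfrac{|v|^2}{\sig_2}.
\]
Integrating the last piece over $\mc_1\subset\mathbb{R}^n\times\ti_2$, where $v$ is supported in $\Q_2$, directly produces the $(\rho/(\rho_2-\rho_1))^2\iint_{\Q_2}|v|^2/\sig_2\,dz$ term of the target. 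For $|\pa_t\vla||\vla|$ together with the residual $\sig_2|\pa_t\vla|^2$-piece, I would use the $L^\infty$-estimate $|\pa_t\vla|\le c(\rho/(\rho_2-\rho_1))s_i^{-1}r_i\lai$ (from Lemma~\ref{lem_410} in $\Theta_1$; derived analogously in $\Theta_2$ from $\pa_t\vla=\sum_{k\in\mathcal{I}_i}(v^k-v^i)\pa_t\om_k$, (W14), and the $|v^j|$-bound in the proof of Lemma~\ref{lem_ts_2}), together with the complementary $|\vla|$-bound given by the triangle inequality $|\vla|\le|v^i|+c(\rho/(\rho_2-\rho_1))r_i\lai$ via (P2) in $\Theta_1$, or by Lemma~\ref{lem_ts_2} in $\Theta_2$, where Lemma~\ref{lem_theta_2} absorbs one factor $\rho/(\rho_2-\rho_1)$ into $\theta^{(2-q)/p}$ multiplying $r_i$.

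The cylinder-wise estimate then reduces, via the algebraic identity $s_i^{-1}r_i^2\lai^2=\La$ (valid for both $U_i=Q_i$ and $U_i=G_i$ since $\La=\lai^p+a(z_i)\lai^q=H(z_i,\lai)$ in \eqref{def_Q_cylinder}, \eqref{def_G_cylinder}), to a bound of the form $c(\rho/(\rho_2-\rho_1))\La|U_i|$; summing with the bounded-overlap property from (W2) and the Vitali argument gives $\sum_{i\in\Theta}|U_i|\le c|E(\La)^c|$, producing the first term of the target. The main obstacle is that the Poincaré bound (P2) and the $\pa_t\vla$-estimate in Lemma~\ref{lem_410} each carry a factor $\rho/(\rho_2-\rho_1)$, so a naive product bound would yield $(\rho/(\rho_2-\rho_1))^2$ rather than the sharper first power in front of $\La|E(\La)^c|$; achieving the stated power requires carefully balancing these factors between the $|\vla|$-bound (absorbing the $|v^i|$-piece into the $|v|^2/\sig_2$ term through a further Young's step tuned by $\sig_2$) and the Lemma~\ref{lem_theta_2} absorption in $\Theta_2$.
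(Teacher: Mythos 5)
Your proposal has a genuine gap, and it is exactly where you would need the most care: the set $\Theta_1$.

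The paper does \emph{not} decompose $|\vla-v|\le|\vla|+|v|$. It keeps the structural identity $\vla-v=-\sum_{j\in\mathcal{I}_i}(v-v^j)\om_j$ coming from \eqref{t_421_2}, so on $U_i$ one estimates $\iint_{U_i}|\pa_t\vla||\vla-v|\le c(\rho/(\rho_2-\rho_1))s_i^{-1}r_i\lai\sum_{j}\iint_{U_i}|v-v^j|$ and then controls the \emph{oscillation} $\iint_{50U_i}|v-(v)_{50U_i}|$ (respectively $\iint_{50U_i}|v|$ if $50B_i\cap\B_2^c\ne\emptyset$) by the Poincar\'e estimates (P2)/(P3), giving $\sim(\rho/(\rho_2-\rho_1))r_i\lai|U_i|$. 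Multiplying, using $s_i^{-1}r_i^2\lai^2\approx\La$, and summing over the pairwise almost-disjoint $U_i$ yields directly $(\rho/(\rho_2-\rho_1))^2\La|E(\La)^c|$, which is $\le(\rho/(\rho_2-\rho_1))\La|E(\La)^c|$ on the admissible range $\rho_2-\rho_1\ge\rho$. The cancellation $v\mapsto v-v^j$ is essential: on a cylinder $U_i$ with $i\in\Theta_1$ the time-scale $s_i$ can be arbitrarily small compared to $\sig_2$, and only the Poincar\'e oscillation, not $v$ itself, is small at scale $r_i\lai$.

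Your Young-inequality step with the \emph{fixed} weight $\sig_2(\rho_2-\rho_1)^2/\rho^2$ breaks exactly here. The residual $\frac{(\rho_2-\rho_1)^2}{\rho^2}\sig_2\iint_{U_i}|\pa_t\vla|^2\,dz$ is, by your own $L^\infty$ bound, $\approx\sig_2 s_i^{-2}r_i^2\lai^2|U_i|=\sig_2 s_i^{-1}\La|U_i|$. For $i\in\Theta_1$ the ratio $\sig_2/s_i$ has no upper bound, so $\sum_{i\in\Theta_1}\sig_2 s_i^{-1}\La|U_i|$ is not controlled by $\La|E(\La)^c|$. (Choosing the variable weight $s_i$ instead merely transfers the problem: then the other Young piece becomes $s_i^{-1}\iint_{U_i}|v|^2$, and $\sum_{i\in\Theta_1}s_i^{-1}\iint_{U_i}|v|^2$ is not bounded by $\sig_2^{-1}\iint_{\Q_2}|v|^2$ for the same reason.) The lower bound $s_i\gtrsim\sig_2-\sig_1$ from \eqref{The2_ineq} that would rescue such an argument is available only on $\Theta_2$; and indeed this is precisely why, in the paper's proof, the $\iint_{\Q_2}|v|^2/\sig_2$ term appears \emph{only} from the $\Theta_2$ contribution, while $\Theta_1$ is handled entirely through Poincar\'e. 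In short: you cannot trade the oscillation $v-v^j$ for $|v|$ on $\Theta_1$, and no choice of Young weight repairs this.

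A secondary point: Lemma~\ref{lem_410} gives the $\pa_t\vla$ bound only for $i\in\Theta_1$; for $i\in\Theta_2$ the paper does not use such a pointwise bound at all but instead goes through $|\pa_t\vla|\le cs_i^{-1}\sum_j\fiint_{2U_j}|v|$, replaces $s_i^{-1}$ by $(\sig_2-\sig_1)^{-1}$ via \eqref{The2_ineq}, and closes with $L^2$ averages and H\"older. Your sketch of the $\Theta_2$ bound via Lemma~\ref{lem_ts_2} and (W14) is plausible, but it is the $\Theta_1$ branch that is the load-bearing part and where your decomposition fails.
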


\begin{proof}
    Recall $\vla=v$ in $E(\La)$ and $v^\La$ is supported in $\Q_2$.
    Since we have $E(\La)^c=\cup_{i\in\mathbb{N}} U_i$, there holds
    \begin{align*}
        \begin{split}
            \iint_{\mc_1} | \pa_t\vla\cdot (\vla-v) |\,dz
            &=\iint_{\mc_1\cap \Q_2\cap E(\La)^c } | \pa_t\vla\cdot (\vla-v) |\,dz\\
            &\le \sum_{i\in \Theta}\iint_{\mc_1\cap \Q_2\cap U_i } | \pa_t\vla\cdot (\vla-v) |\,dz\\
            &\le \sum_{i\in \Theta_1}\iint_{ \mc_1\cap \Q_2\cap U_i  } | \pa_t\vla\cdot (\vla-v) |\,dz\\
            &\qquad+ \sum_{i\in \Theta_2}\iint_{ \mc_1\cap \Q_2\cap U_i  } | \pa_t\vla\cdot (\vla-v) |\,dz.
        \end{split}
    \end{align*}
    By using the definition of $\vla$ in \eqref{t_421_2}, for the case $i\in \Theta_1$ we apply \eqref{t_382} to have
    \begin{align*}
        \begin{split}
            \iint_{ U_i } | \pa_t\vla\cdot (\vla-v) |\,dz
            &\le\sum_{j\in \mathcal{I}_i }\iint_{ U_i } | \pa_t\vla| |v-v^j |\,dz\\
            &\le c\left( \frac{\rho}{\rho_2-\rho_1} \right) s_i^{-1}r_i\lai \sum_{j\in \mathcal{I}_i }\iint_{ U_i }  |v-v^j |\,dz.
        \end{split}
    \end{align*}
If $2U_j\not\subset \Q_2$, we have by \ref{vii} that $50U_i\cap \Q_2 \neq \emptyset$. As $i \in \Theta_1$, it follows that $50B_i\cap \B_2^c\ne\emptyset$. Therefore, we get from \ref{prop2_poincare_boundary}, along with \ref{ix}, that
\begin{align*}
\begin{split}
     \iint_{ U_i } | \pa_t\vla\cdot (\vla-v) |\,dz &\leq c\left( \frac{\rho}{\rho_2-\rho_1} \right) s_i^{-1}r_i\lai \iint_{ 50 U_i } |v|\,dz\\
     &\leq c\left( \frac{\rho}{\rho_2-\rho_1} \right)^2 s_i^{-1}r_i^2\lai^2|U_i|.
\end{split}
\end{align*}
On the other hand, if $2U_j\subset \Q_2$, using adding and subtracting $(v)_{50U_i}$ and then \ref{vii} and \ref{ix}, it follows that
\begin{align*}
    \begin{split}
        \iint_{ U_i } | \pa_t\vla\cdot (\vla-v) |\,dz
        &\le c \left( \frac{\rho}{\rho_2-\rho_1} \right) s_i^{-1}r_i\lai \iint_{ 50U_i }  |v-(v)_{50U_i} |\,dz.
    \end{split}
\end{align*}
Employing \ref{prop2_poincare_internal}, we conclude that
\begin{align*}
    \begin{split}
        \sum_{i\in \Theta_1}\iint_{ U_i } | \pa_t\vla\cdot (\vla-v) |\,dz
        &\le c \left( \frac{\rho}{\rho_2-\rho_1} \right)\sum_{i\in \Theta_1} s_i^{-1}r_i^2\lai^2 | U_i|\\
        &\le c \left( \frac{\rho}{\rho_2-\rho_1} \right)\sum_{i\in \Theta_1} \La | U_i|.
    \end{split}
\end{align*}
Since $|U_i|\le 8^{n+2}|\tfrac{1}{8}U_i|$ and $\tfrac{1}{8}U_i$ are disjoint from each other, we get
\[
\sum_{i\in \Theta_1}\iint_{ U_i } | \pa_t\vla\cdot (\vla-v) |\,dz\le c\left( \frac{\rho}{\rho_2-\rho_1} \right)\La|E(\La)^c|.
\]

On the other hand, to estimate the term when $i\in \Theta_2$, note that for $z\in U_i$ we have by \ref{xii}
\[
|\pa_t\vla(z)|=\biggl| \sum_{j\in\mathcal{I}_i } v^j\pa_t\om_j (z) \biggr|\le  cs_i^{-1} \sum_{j\in\mathcal{I}_i } |v^j|\le cs_i^{-1} \sum_{j\in\mathcal{I}_i } \fiint_{2U_j} |v|\,dz.
\]
Similarly, we use \ref{Uij} to have $|U_i|\approx |U_j|$ and therefore
\begin{align*}
    \begin{split}
        \iint_{ U_i } | \vla-v |\,dz
        &\le \sum_{j\in\mathcal{I}_i } \iint_{ U_i } | v-v^j |\,dz\\
        &\le c|U_i|\sum_{j\in\mathcal{I}_i} \left( \fiint_{ 2 U_i } |v |\,dz + \fiint_{ 2U_j } |v |\,dz  \right)\\
        &\le c|U_i|\left( \fiint_{ 2 U_i } |v |\,dz + \sum_{j\in\mathcal{I}_i}\fiint_{ 2U_j } |v |\,dz  \right).
    \end{split}
\end{align*}
Therefore, using the fact that the cardinality of $\mathcal{I}_i$ is uniformly bounded, $|U_i|\approx|U_j|$ and \eqref{The2_ineq}, we obtain
\begin{align*}
    \begin{split}
        &\sum_{i\in\Theta_2}\iint_{ U_i } | \pa_t\vla\cdot (\vla-v) |\,dz\\
        &\le c\sum_{i\in\Theta_2} s_i^{-1} |U_i|\left( \fiint_{ 2 U_i } |v |\,dz + \sum_{j\in\mathcal{I}_i}\fiint_{2 U_j } |v |\,dz  \right)^2\\
        &\le c\sum_{i\in\Theta_2} (\sig_2-\sig_1)^{-1} |U_i|\left( \fiint_{ 2 U_i } |v |^2\,dz + \sum_{j\in\mathcal{I}_i}\fiint_{2 U_j } |v |^2\,dz  \right)\\
        &\le c\sum_{i\in\mathbb{N}} (\sig_2-\sig_1)^{-1} \iint_{ 2 U_i } |v |^2\,dz\\
        &\le c\left(\frac{\sig_2}{\sig_2-\sig_1}\right)\iint_{ \mathbb{R}^{n+1} } \frac{|v |^2}{\sig_2}\,dz\\
        &=c\left(\frac{\sig_2}{\sig_2-\sig_1}\right)\iint_{ \Q_2 } \frac{|v |^2}{\sig_2} \,dz.
    \end{split}
\end{align*}
Moreover, canceling scaling factors in numerator and denominator of the fraction, the conclusion follows.
\end{proof}

\subsection{Time-slice estimate}
To prove the Caccioppoli inequality in the next subsection, we need the following estimate in time slices. Recall the notation $\Q_i=\B_i\times \ti_i$ in \eqref{def_mathcal}.

\begin{lemma}\label{lem_timeslice}
For a.e. $t\in \ti_1$ there holds
    \begin{align*}
    \begin{split}
        \int_{(\B_2\times\{t\})\cap E(\La)^c}(|v|^2-|\vla-v|^2)\,dx
        &\ge - c\theta^{\frac
    {2-q}{p}} \left ( \frac{\rho}{\rho_2-\rho_1} \right)^2 \La|E(\La)^c|\\
     &\qquad -c \left( \frac{\rho}{\rho_2-\rho_1}  \right)^2 \iint_{\Q_2}\frac{|u-u_0|^2}{\sig_2}\,dz.
    \end{split}
\end{align*}
\end{lemma}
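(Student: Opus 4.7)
My plan is to reduce the claim to an upper bound on the slice integral of $|v^\La - v|^2$, then estimate it via the Whitney decomposition introduced in Section~\ref{sec_caccioppoli}. From the identity $|v|^2 - |v^\La - v|^2 = -|v^\La|^2 + 2 v \cdot v^\La$ and Young's inequality with a parameter $\ve > 0$, one has
\[
|v|^2 - |v^\La - v|^2 \ge -(1 + \ve^{-1})|v^\La|^2 - \ve |v|^2.
\]
Hence it suffices to control $\int |v^\La|^2\,dx$ on the slice, while the $|v|^2$-type remainder is either absorbed by the small $\ve$ or handled using $|v|^2 \le |u - u_0|^2$. Since $v^\La = \sum_j v^j \om_j$ on $E(\La)^c$, the slice integral decomposes along those Whitney cylinders $U_i$ intersecting $\{t\}$.

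I would split the sum according to whether $i \in \Theta_1$ or $i \in \Theta_2$, in the spirit of the proof of Lemma~\ref{time_deri_est}. For $i \in \Theta_2$ (so that $100 s_i > (\sig_2 - \sig_1)/5$), Lemma~\ref{lem_ts_2} yields the pointwise bound $|v^\La(z)| \le c(\rho/(\rho_2-\rho_1))\theta^{(2-q)/p} r_i \la_i$. Combining with the intrinsic identity $r_i^2 \la_i^2 \lesssim s_i \La$ (read off from the definitions of $p$- and $(p,q)$-intrinsic cylinders) and the disjointness of $\{\tfrac{1}{8} U_i\}$, the $|v^\La|^2$ contribution sums to the first term on the right-hand side of the lemma. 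The accompanying $|v|^2$-type contributions from expanding $v^\La - v$ are handled through Jensen's inequality $|v^j|^2 \le \fint_{2U_j} |u - u_0|^2\,dz$ together with the bounded overlap of $\{2U_j\}_{j\in\mathcal{I}_i}$; since $s_j \gtrsim \sig_2 - \sig_1$ for $j \in \Theta_2$, these sums produce the $\iint_{\Q_2} |u - u_0|^2/\sig_2\,dz$ factor. For $i \in \Theta_1$, where $50 U_i \subset \mc_2$ and the time extent $s_i$ is small, the analogous estimate uses Lemma~\ref{pre_The2}'s cruder bound $|v^\La(z)| \le c\rho \la_i$ together with the Poincaré estimates \ref{v_poincare}--\ref{prop2_poincare_boundary} applied to $v$, yielding the per-index contribution $\lesssim (\rho/(\rho_2-\rho_1))^2 \La |U_i|$, which again sums into the first term on the right-hand side.

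The principal obstacle is obtaining the exponent $\theta^{(2-q)/p}$ rather than the squared exponent $\theta^{2(2-q)/p}$ that would arise from naively squaring Lemma~\ref{lem_ts_2}'s pointwise bound. I expect the resolution is to write $|v^\La|^2 = |v^\La| \cdot |v^\La|$ and apply Lemma~\ref{pre_The2} (giving the factor $c\rho\la_i$) to one copy while invoking Lemma~\ref{lem_ts_2} (giving $c(\rho/(\rho_2-\rho_1)) \theta^{(2-q)/p} r_i \la_i$) to the other, so that only a single power of $\theta^{(2-q)/p}$ appears. The spare $\rho$ from Lemma~\ref{pre_The2} is then absorbed via Lemma~\ref{lem_theta_2} ($\rho_2 - \rho_1 \le c \theta^{(2-q)/p} r_i$ for $\Theta_2$ indices), leaving the clean scaling demanded by the statement.
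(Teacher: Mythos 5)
Your proposal omits the essential ingredient of the paper's proof: the Steklov-averaged weak formulation \eqref{steklov_averaged_wf} is tested against $\eta\zeta\om_i v_h^\La$ for $i\in\Theta_2$, and against $\eta\om_i v_h^\La$ for $i\in\Theta_1$ through Lemma~\ref{lem_ts_theta1}, which converts the slice integral of the cross term $\om_i v\cdot\vla$ into space--time integrals of $\mA(z,\na u)\cdot\na(\cdot)$, $F$-terms and $(u-u_0)\cdot\pa_t(\cdot)$. Without using the equation, this cross term cannot be controlled on a single time slice. The Young inequality you invoke, $|v|^2-|\vla-v|^2\ge -(1+\ve^{-1})|\vla|^2-\ve|v|^2$, leaves the remainder $-\ve\int_{(\B_2\times\{t\})\cap E(\La)^c}|v|^2\,dx$, which is not bounded by the right-hand side of the lemma: a single time slice of $|v|^2$ is not controlled by the time average $\iint_{\Q_2}|u-u_0|^2/\sig_2\,dz$, and invoking $\sup_{t}\int|v|^2$ would be circular, since that supremum is precisely what Lemma~\ref{lem_caccioppoli} is trying to estimate.

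Your diagnosis of where $\theta^{(2-q)/p}$ originates is also off-target, and the proposed fix does not repair the exponent. In the paper, $\sum_{i\in\Theta_2}\int\om_i|\vla|^2\,dx$ carries no $\theta$ at all: it is bounded via Jensen's inequality $|v^j|^2\le\fint_{2U_j}|v|^2\,dz$, the comparability $|U_i|\approx|U_j|$ from \ref{Uij}, and $s_i\ge c(\sig_2-\sig_1)$ for $i\in\Theta_2$, and it lands in the $\iint|u-u_0|^2/\sig_2$ error term. The $\theta^{(2-q)/p}$ factor instead arises from Lemma~\ref{lem_ts_2}'s bounds on $|\vla|$ and $|\na\vla|$ inside the gradient integrals produced after testing the equation. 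Your mixed bound does not avoid the squared exponent: combining Lemma~\ref{pre_The2} with Lemma~\ref{lem_ts_2} and the intrinsic scaling $\la_i^2 r_i^2\le \La s_i$ gives $\int_{2B_i}|\vla|^2\,dx\le c\,\theta^{(2-q)/p}\rho^2\La|U_i|/((\rho_2-\rho_1)r_i)$, and the leftover $1/r_i$ can only be converted to $1/(\rho_2-\rho_1)$ via Lemma~\ref{lem_theta_2} at the cost of a second $\theta^{(2-q)/p}$, returning you to $\theta^{2(2-q)/p}$. Finally, for $\Theta_1$ the paper adds and subtracts $|\vla-v^i|^2$ to expose the positive term $|v^i|^2$, which is then used to absorb the $\ve|B_i||v^i|^2$ error from Lemma~\ref{lem_ts_theta1} through the further split $\Theta_1=\Theta_{1,1}\cup\Theta_{1,2}$; this self-improving structure is absent from your sketch.
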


Before proving the lemma, we state two auxiliary lemmas.

\begin{lemma}\label{lem_ts_3}
    Let $z\in U_i$ and $i\in\Theta_1$. For any $\varepsilon\in (0,1)$ there exists $c=c(\data)$ such that we have
    \[
    |\vla(z)|\leq cr_i\lai+|v^i|  \leq \frac{c}{\varepsilon}r_i\la_i+ \frac{\varepsilon}{r_i\la_i}|v^i|^2
    \]
    and
    \[
    |\na \vla(z)|\leq \frac{c}{\varepsilon}\la_i+ \frac{\varepsilon}{\la_i}|v^i|^2
    \]
\end{lemma}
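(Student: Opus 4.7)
The plan is to exploit the partition of unity identity in conjunction with the Poincar\'e inequality for $u$ from Proposition~\ref{prop_3_4}\ref{prop2_poincare_u}, which is free of the factor $\rho/(\rho_2-\rho_1)$ present in the analogous Poincar\'e inequality for $v$ (Proposition~\ref{prop2_poincare_internal}). Since $z\in U_i\subset E(\La)^c$ and $\sum_{j\in\mathcal{I}_i}\om_j(z)=1$ by \ref{x}, we have
\begin{align*}
\vla(z)-v^i=\sum_{j\in\mathcal{I}_i}(v^j-v^i)\om_j(z),\qquad \na\vla(z)=\sum_{j\in\mathcal{I}_i}(v^j-v^i)\na\om_j(z).
\end{align*}
Combined with $|\om_j|\le 1$, $\lVert\na\om_j\rVert_\infty\le c/r_i$ by \ref{xi}, and $|\mathcal{I}_i|\le c(n)$ by \ref{ix}, the lemma reduces to establishing the pointwise bound $|v^j-v^i|\le cr_i(\lai+|v^i|)$ for every $j\in\mathcal{I}_i$, from which the pre-Young forms $|\vla(z)|\le cr_i\lai+c|v^i|$ and $|\na\vla(z)|\le c\lai+c|v^i|$ follow.

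For the pointwise bound I split into cases. If $2U_j\not\subset\Q_2$ then $v^j=0$, hence $|v^j-v^i|=|v^i|$, already of the required form. Otherwise $2U_j\subset\Q_2$, and the hypothesis $i\in\Theta_1$ gives $\zeta\equiv 1$ on $50I_i\supset 2I_i\cup 2I_j$, so $v=(u-u_0)\eta$ on $2U_i\cup 2U_j$ and the time cut-off drops out. Applying the identity $\fiint fg\,dz=(f)(g)+\fiint(f-(f))(g-(g))\,dz$ on each of $2U_i,2U_j$ decomposes
\begin{align*}
v^j-v^i&=(\eta)_{2U_j}\bigl((u-u_0)_{2U_j}-(u-u_0)_{2U_i}\bigr)\\
&\quad+\bigl((\eta)_{2U_j}-(\eta)_{2U_i}\bigr)(u-u_0)_{2U_i}+(E_j-E_i),
\end{align*}
where $E_i,E_j$ denote the ``covariance'' corrections $\fiint(\eta-(\eta))((u-u_0)-(u-u_0))\,dz$ over $2U_i,2U_j$. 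The first term is bounded by $cr_i\lai$ via Proposition~\ref{prop_3_4}\ref{prop2_poincare_u}. The second uses the Lipschitz bound $\lVert\na\eta\rVert_\infty\le 3/(\rho_2-\rho_1)$ to estimate $|(\eta)_{2U_j}-(\eta)_{2U_i}|\le cr_i/(\rho_2-\rho_1)$, while $|(u-u_0)_{2U_i}|$ is reconstructed from the identity $v^i=(\eta)_{2U_i}(u-u_0)_{2U_i}+E_i$ and thereby expressed through $|v^i|$ and $|E_i|$. The corrections $E_i,E_j$ are controlled by the product of the oscillations of $\eta$ and $u-u_0$ over the Whitney cylinders. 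In each case the residual factor $r_i/(\rho_2-\rho_1)$ is paired with the Whitney radius $r_i$ from the $u$-oscillation, so combining with $r_i\le c(\rho_2-\rho_1)$-type control on the Whitney radii yields the claimed bound.

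The two stated forms of the lemma then follow from Young's inequality applied to $|v^i|$: the estimate $|v^i|\le\tfrac{\varepsilon}{r_i\lai}|v^i|^2+\tfrac{r_i\lai}{\varepsilon}$ inserted into $|\vla(z)|\le cr_i\lai+c|v^i|$ yields $|\vla(z)|\le\tfrac{c}{\varepsilon}r_i\lai+\tfrac{\varepsilon}{r_i\lai}|v^i|^2$, and $|v^i|\le\tfrac{\varepsilon}{\lai}|v^i|^2+\tfrac{\lai}{\varepsilon}$ inserted into $|\na\vla(z)|\le c\lai+c|v^i|$ yields $|\na\vla(z)|\le\tfrac{c}{\varepsilon}\lai+\tfrac{\varepsilon}{\lai}|v^i|^2$. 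The main obstacle is the bookkeeping in step~2: the natural Poincar\'e inequality for $v$ would carry the unwanted factor $\rho/(\rho_2-\rho_1)$, so one must instead reconstruct the amplitude $|(u-u_0)_{2U_i}|$ from $v^i$ itself and ensure that the cut-off Lipschitz constant $1/(\rho_2-\rho_1)$ is paired only with the Whitney radius $r_i$, to be absorbed into the clean $r_i(\lai+|v^i|)$ envelope.
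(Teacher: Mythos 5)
Your top-level skeleton — the triangle-inequality decomposition $|\vla(z)|\le|v^i|+\sum_{j\in\mathcal{I}_i}\om_j(z)|v^j-v^i|$, the corresponding identity $\na\vla(z)=\sum_{j\in\mathcal{I}_i}(v^j-v^i)\na\om_j(z)$, and the closing Young's inequality applied to $|v^i|$ — matches the paper's proof. Where you diverge is the estimate for $|v^j-v^i|$: the paper simply cites the proof of Lemma~\ref{lem_410}, which bounds $|v^j-v^i|$ directly through the $v$-Poincar\'e estimates \ref{prop2_poincare_internal} (for $2U_i\subset\Q_2$) and \ref{prop2_poincare_boundary} (for $50B_i\cap\B_2^c\ne\emptyset$), and is finished in two lines. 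You instead attempt a bilinear ``covariance'' decomposition of $v^j-v^i$ built on the $u$-Poincar\'e (Proposition~\ref{prop_3_4}\ref{prop2_poincare_u}), with the stated aim of avoiding the factor $\rho/(\rho_2-\rho_1)$.

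Two steps of that alternative do not close. First, the reconstruction $(u-u_0)_{2U_i}=(v^i-E_i)/(\eta)_{2U_i}$ requires $(\eta)_{2U_i}$ to be bounded below, which fails exactly when $2U_i$ lies in the annulus near $\pa B_{\rho_2}(x_0)$ where $\eta$ decays to $0$; the paper's Lemma~\ref{lem_410} treats this configuration as a separate boundary case via \ref{prop2_poincare_boundary}, a case your decomposition never confronts. Second, the ``$r_i\le c(\rho_2-\rho_1)$-type control on the Whitney radii'' you invoke to absorb the factors $r_i/(\rho_2-\rho_1)$ is not a Whitney-geometry fact: what one actually has is $r_i\le c\rho$ (from $2U_j\subset\Q_2$ for some $j\in\mathcal{I}_i$, together with \ref{v}), which says nothing when $\rho_2-\rho_1$ is small. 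With $r_i\le c\rho$, the correction term already gives $|E_i|\le c\,\tfrac{r_i}{\rho_2-\rho_1}\,r_i\lai\le c\,\tfrac{\rho}{\rho_2-\rho_1}\,r_i\lai$, so the factor you set out to remove reappears and the extra machinery buys nothing. (There is also a bookkeeping slip: your target $|v^j-v^i|\le cr_i(\lai+|v^i|)$ has $r_i$ multiplying $|v^i|$, but the second term of the decomposition produces a contribution of order $|v^i|$, not $r_i|v^i|$; the correct target is $cr_i\lai+c|v^i|$, which still feeds into Young's inequality.) The paper's intended proof is simply to re-use Lemma~\ref{lem_410}'s bound $|v^j-v^i|\le cr_i\lai$ and then apply Young.
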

\begin{proof}
From \ref{x} and the triangle inequality we get
\[
|\vla(z)| = \biggl|\sum_{j\in \mathcal{I}_i}\omega_j(z)v^j \biggr| \leq \sum_{j\in \mathcal{I}_i}\omega_j(z)|v^j-v^i|+|v^i|.
\]
With Young's inequality we obtain
\[
|v^i|=|v^i|(r_i\la_i)^\frac{1}{2}(r_i\la_i)^\frac{-1}{2}\leq c\frac{r_i\la_i}{\varepsilon} + \frac{\varepsilon}{r_i\la_i}|v^i|^2
\]
    and as in the proof of Lemma~\ref{lem_410}, we have
    \[
    \sum_{j\in \mathcal{I}_i}\omega_j(z)|v^j-v^i| \leq cr_i\la_i.
    \]
This proves the first inequality. The second inequality follows from the same argument and \ref{xi}.
\end{proof}

\begin{lemma}\label{lem_ts_theta1}
Suppose $i\in \Theta_1$. For any $\varepsilon > 0$ there exists $c=c(\data)$ such that 
\[
\int_{\B_2\times\{t\}}\om_i\vla\cdot(v-v^i) \,dx \leq  \frac{c}{\varepsilon}  \left(\frac{\rho}{\rho_2-\rho_1}\right)^2\La|U_i|+ \varepsilon|B_i||v^i|^2
\]
for a.e. $t\in \ti_1$.
\end{lemma}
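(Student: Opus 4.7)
The plan is to establish a pointwise upper bound for $|\vla|$ on the support of $\om_i$ and then apply Cauchy--Schwarz and Young's inequality with parameter $\varepsilon$ to obtain the required splitting.

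Since $\om_i$ is supported in $2U_i \subset 4U_i \subset E(\La)^c$ by property (W4) of Proposition~\ref{prop_whitney}, on $\mathrm{supp}(\om_i)$ the representation $\vla(z) = \sum_{k \in \mathcal{I}_i} \om_k(z) v^k$ from \eqref{t_325_1} applies. First I would write $v^k = v^i + (v^k - v^i)$ and use $\sum_k \om_k \equiv 1$ on $E(\La)^c$ (property \ref{x}), together with the Poincaré-type bound $|v^k - v^i| \leq c r_i \la_i$ obtained from \ref{prop2_poincare_internal} via \ref{vii} and \ref{Uij}, and the finite cardinality from \ref{ix}, to obtain the pointwise bound $|\vla(z)| \leq c r_i \la_i + |v^i|$ on $\mathrm{supp}(\om_i)$; this is the pointwise version of Lemma~\ref{lem_ts_3} extended from $U_i$ to $2U_i$.

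Substituting this bound yields
\[
\int_{\B_2 \times \{t\}} \om_i \vla \cdot (v - v^i) \, dx \leq c (r_i \la_i + |v^i|) \int \om_i |v - v^i| \, dx.
\]
Controlling the remaining integral by $c |B_i| (r_i \la_i + |v^i|)$ via a pointwise bound $|v(z) - v^i| \leq c (r_i \la_i + |v^i|)$ on $\mathrm{supp}(\om_i)$ (see the next paragraph), and then applying Young's inequality with parameter $\varepsilon$ to the cross term $r_i \la_i \cdot |v^i|$, we arrive at the bound $\tfrac{c}{\varepsilon} r_i^2 \la_i^2 |B_i| + \varepsilon |B_i| |v^i|^2$. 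The desired inequality then follows from the scaling identity $r_i^2 \la_i^2 |B_i| \approx \La |U_i|$, which holds in both the $p$-intrinsic case ($s_i = \la_i^{2-p} r_i^2$ and $\La \approx \la_i^p$) and the $(p,q)$-intrinsic case ($s_i = \la_i^2 r_i^2 / \La$) by the definitions \eqref{def_Q_cylinder} and \eqref{def_G_cylinder}. The additional factor $(\rho/(\rho_2 - \rho_1))^2$ is absorbed into the constant via the refined Poincaré estimate in \ref{prop2_poincare_internal}, which produces this factor from the cutoff derivatives of $\eta$.

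The main obstacle will be the pointwise control $|v(z) - v^i| \leq c(r_i \la_i + |v^i|)$ on $\mathrm{supp}(\om_i)$, since the parabolic Poincaré inequality of Lemma~\ref{lem_parabolic_poincare} applies only to full cylindrical integrals and $v$ is not pointwise bounded a priori. I would derive it by exploiting that $5U_i \cap E(\La) \neq \emptyset$ by (W4), so that the strong maximal function defining $E(\La)$ in \eqref{strong_M}--\eqref{def_E} controls averages of $g$ (and hence of $|u-u_0|/\rho$) by $\la_i$ on Whitney cylinders containing $\mathrm{supp}(\om_i)$. Transferring these cylindrical averages to a pointwise bound will be the delicate step, potentially requiring additional use of the Steklov-averaged weak formulation \eqref{steklov_averaged_wf} to handle time oscillations of spatial averages.
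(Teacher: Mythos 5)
Your proposal has a genuine gap and, more fundamentally, attempts a strategy that cannot work for very weak solutions. You correctly identify the obstacle — a pointwise bound $|v(z)-v^i|\le c(r_i\la_i+|v^i|)$ on $\mathrm{supp}(\om_i)$ — but this bound simply does not hold: $v=(u-u_0)\eta\zeta$ is not in $L^\infty$, and the Whitney/maximal-function machinery only controls integral averages of $g$ over intrinsic cylinders, never pointwise values of $u$. The route you sketch (strong maximal function plus Steklov averaging) can give you information about spatial averages of $v$ on time slices, but those averages do not upgrade to a pointwise bound on a set of full measure. This is not merely a delicate step left for later; it is the load-bearing step, and it fails.

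There is a second, independent problem even granting the pointwise bound. Multiplying $|\vla|\le c(r_i\la_i+|v^i|)$ by $|v-v^i|\le c(r_i\la_i+|v^i|)$ and integrating $\om_i$ over the slice produces $c|B_i|(r_i\la_i+|v^i|)^2$, which expands to $c|B_i|r_i^2\la_i^2 + c|B_i| r_i\la_i|v^i| + c|B_i||v^i|^2$. Your Young's inequality applied to the cross term handles the middle piece, but the last piece is $c|B_i||v^i|^2$ with a \emph{universal} constant $c=c(\data)$, not $\varepsilon|B_i||v^i|^2$. Since the whole point of the Lemma is that the $|v^i|^2$-contribution can be made arbitrarily small (to be absorbed later in the proof of Lemma~\ref{lem_timeslice}), this destroys the estimate. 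The refinement in Lemma~\ref{lem_ts_3}, which Young-splits $|v^i|$ inside the $\vla$-bound into $\tfrac{c}{\varepsilon}r_i\la_i+\tfrac{\varepsilon}{r_i\la_i}|v^i|^2$, only helps when it is multiplied against quantities that are $O(r_i\la_i)$ and carry no extra $|v^i|$; if you multiply it against $c(r_i\la_i+|v^i|)$ you even generate $|v^i|^3$.

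The paper avoids both issues by using the equation. It tests the Steklov-averaged weak formulation \eqref{steklov_averaged_wf} with $\eta\om_i\vhla$, integrates in time over $(t_i-4s_i,t)$, and uses the fundamental theorem of calculus to express the time-slice integral $\int_{\B_2\times\{t\}}\om_i\eta\vla\cdot(u-\xi)\,dx$ as a sum of \emph{space-time} integrals involving $\mA(z,\na u)$, the source $F$, and $\pa_t(\eta\om_i\vla)$. Those integrands involve only $\na(\eta\om_i\vla)$ and $\pa_t(\eta\om_i\vla)$ (which are bounded pointwise by the Whitney/partition-of-unity construction and Lemma~\ref{lem_ts_3}) paired against $g^{p-1}+ag^{q-1}$ or $|u-\xi|$ (which are controlled in averaged form by Proposition~\ref{prop_3_4}). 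The choice of $\xi$ ($u_0$ versus $(u)_{50U_i}$) handles the boundary versus interior case, and the $\varepsilon$-splitting is applied so that $|v^i|^2$ only ever appears multiplied by $\varepsilon$. In short: the paper never needs any pointwise information about $v$, and the lemma is an instance of an energy identity, not a Cauchy--Schwarz argument.
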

\begin{proof} 
Throughout the proof, note that $\om_i(\cdot,\tau)>0$ implies $\zeta(\tau)=1$  since $i\in\Theta_1$. 
Also note that it is enough to show the claim for $i\in\Theta_1$ such that $t>t_i-4s_i$, as otherwise the left hand side of the claim is zero. By Proposition~\ref{prop_liptr1} we have that $\varphi(\cdot,t) = \eta\om_i\vhla(\cdot,t)\in W_0^{1,\infty}(\B_2)$ is an admissible test function in the Steklov averaged weak formulation \eqref{steklov_averaged_wf}. Taking $\varphi(\cdot,t)$ as a test function and integrating over $(t_i-4s_i,t)$ in time, we obtain for any constant vector $\xi\in \RR^N$ that
\begin{align*}
\begin{split}
        &\int_{t_i-4s_i}^t\int_{\B_2}\pa_t[u-\xi]_h\cdot\eta \om_i\vhla\,dx\,d\tau \\
        &= -\int_{t_i-4s_i}^t\int_{\B_2}[\mA(\cdot,\na u)]_h \cdot\na(\eta \om_i\vhla)\,dx\,d\tau\\
        &\qquad+\int_{t_i-4s_i}^t\int_{\B_2}[|F|^{p-2}F+a|F|^{q-2}F]_h \cdot\na(\eta \om_i\vhla)\,dx\,d\tau
\end{split}
\end{align*}
 Using the product rule, fundamental theorem of calculus and the fact that $\om_i(\cdot,t_i-4s_i)= 0$, we obtain
\begin{align*}
\begin{split}
    &\int_{t_i-4s_i}^t\int_{\B_2}\pa_t[u-\xi]_h \cdot\eta \om_i\vhla\,dx\,d\tau \\
    &= \int_{t_i-4s_i}^t\int_{\B_2}\pa_t([u-\xi]_h \cdot\eta \om_i\vhla)\,dx\,d\tau -\int_{t_i-4s_i}^t\int_{\B_2}[u-\xi]_h \cdot \pa_t(\eta \om_i\vhla)\,dx\,d\tau\\
    &= \int_{\B_2\times\{t\}} [u-\xi]_h\cdot\eta \om_i\vhla  \,dx - \int_{t_i-4s_i}^t\int_{\B_2}[u-\xi]_h \cdot \pa_t(\eta \om_i\vhla)\,dx\,d\tau. 
\end{split}
\end{align*}
We let $h\to 0^+$ to get 
\begin{align}\label{ts_518}
\begin{split}
    &\int_{\B_2\times\{t\}} \om_i\eta\vla \cdot (u-\xi)  \,dx \\
    &= -\int_{t_i-4s_i}^t\int_{\B_2}\mA(z,\na u) \cdot\na(\eta \om_i\vla)\,dx\,d\tau\\
    &\qquad+\int_{t_i-4s_i}^t\int_{\B_2}  (|F|^{p-2}F+a|F|^{q-2}F) \cdot\na(\eta \om_i\vla)\,dx\,d\tau\\
    &\qquad +\int_{t_i-4s_i}^t\int_{\B_2}(u-\xi)\cdot \pa_t(\eta \om_i\vla)\,dx\,d\tau,
\end{split}
\end{align}
where we denoted $z=(x,\tau)$ for simplicity.
We consider first the case when $2B_i\not\subset \B_2$. In this case $v^i=0$ and by choosing $\xi=u_0$ the previous display becomes
\begin{align}\label{ts_case1_1}
    \begin{split}
            &\int_{\B_2\times\{t\}}(\om_i\vla \cdot v) \,dx \\
            &= -\int_{t_i-4s_i}^t\int_{\B_2}\mA(z,\na u) \cdot\na(\eta \om_i\vla)\,dx\,d\tau\\
            &\qquad+\int_{t_i-4s_i}^t\int_{\B_2}  (|F|^{p-2}F+a|F|^{q-2}F) \cdot\na(\eta \om_i\vla)\,dx\,d\tau\\
    &\qquad +\int_{t_i-4s_i}^t\int_{\B_2}v\cdot \pa_t( \om_i\vla)\,dx\,d\tau,
    \end{split}
\end{align}
where the left hand side is the same as in the claim of the lemma.
We estimate the first term on the right hand side first.
Using the growth condition of $\mA$, we get that
\begin{align*}
\begin{split}
    &-\int_{t_i-4s_i}^t\int_{\B_2}\mA(z,\na u) \cdot\na(\eta \om_i\vla)\,dx\,d\tau \\
    &\leq \int_{t_i-4s_i}^t\int_{\B_2}|\mA(z,\na u)| |\na(\eta \om_i\vla)|\,dx\,d\tau\\
    &\leq c\iint_{\Q_2\cap 2U_i}(|\na u|^{p-1}+a(z)|\na u|^{q-1})|\na(\eta \om_i\vla)|\,dz.
\end{split}
\end{align*}
By the product rule we have in $U_i$ that
\begin{align*}
\begin{split}
    |\na(\eta \om_i\vla)| &\leq |\vla||\na \eta|+|\vla||\na\om_i|+|\na\vla| \\
    &\leq c\max\{r_i^{-1},(\rho_2-\rho_1)^{-1}\}|\vla|+|\na\vla|.
\end{split}
\end{align*}
Assume first that $\rho_2-\rho_1 \leq r_i$. Then we get by 
Lemma~\ref{pre_The2}, \eqref{t_382} and Proposition~\ref{prop_3_4}\ref{prop2_est_nau_p-1}
the desired estimate
\begin{align*}
	\begin{split}
		&\iint_{\Q_2\cap 2U_i}(|\na u|^{p-1}+a(z)|\na u|^{q-1})|\na(\eta \om_i\vla)|\,dz\\
        &\leq c (\rho_2-\rho_1)^{-1}\iint_{\Q_2\cap 2U_i}(|\na u|^{p-1}+a(z)|\na u|^{q-1})|\vla|\,dz\\
        &\qquad+ \iint_{\Q_2\cap 2U_i}(|\na u|^{p-1}+a(z)|\na u|^{q-1})|\na\vla|\,dz\\
		&\leq c\left(\frac{\rho}{\rho_2-\rho_1}\right)\lai\iint_{\Q_2\cap 2U_i}(|\na u|^{p-1}+a(z)|\na u|^{q-1})\,dz\\
		&\leq c\left(\frac{\rho}{\rho_2-\rho_1}\right)\La|U_i|.
	\end{split}
\end{align*}
Now assume that $r_i\leq \rho_2-\rho_1$. Then we have by 
Lemma~\ref{lem_ts_3} and Proposition~\ref{prop_3_4}\ref{prop2_est_nau_p-1}
that
\begin{align*}
    \begin{split}
        &\iint_{\Q_2\cap 2U_i}(|\na u|^{p-1}+a(z)|\na u|^{q-1})|\na(\eta \om_i\vla)|\,dz \\
        &\leq c r_i^{-1}\iint_{\Q_2\cap 2U_i}(|\na u|^{p-1}+a(z)|\na u|^{q-1})|\vla|\,dz\\
        &\qquad+ \iint_{\Q_2\cap 2U_i}(|\na u|^{p-1}+a(z)|\na u|^{q-1})|\na\vla|\,dz\\
        &\leq \left(\frac{c\lai}{\varepsilon}+\frac{\varepsilon}{r_i^2\lai}|v^i|^2\right)\lai^{-1}\La|U_i|,
    \end{split}
\end{align*}
for any $\varepsilon >0$.
Note that if $U_i=G_i$, we have $(\lai r_i)^{-2}\La|U_i|=2|B_i| $ and if $U_i = Q_i$, we have $(\lai r_i)^{-2}\La|U_i|\leq 2\lai^{p-2}r_i^{-2}|Q_i| \leq 4|B_i| $. Therefore we conclude from the above estimate that
\[
\iint_{\Q_2\cap 2U_i}(|\na u|^{p-1}+a(z)|\na u|^{q-1})|\na(\eta \om_i\vla)|\,dz
       \leq \frac{c}{\varepsilon}\La|U_i|
       +\varepsilon|B_i||v^i|^2.
\]
The same arguments holds for the second term on the right hand side of \eqref{ts_case1_1} and thus
\[
\iint_{\Q_2\cap 2U_i}(|F|^{p-1}+a(z)|F|^{q-1})|\na(\eta \om_i\vla)|\,dz
       \leq \frac{c}{\varepsilon}\left(\frac{\rho}{\rho_2-\rho_1}\right)\La|U_i|
       +\varepsilon|B_i||v^i|^2.
\]

Then we estimate the third term on the right hand side of \eqref{ts_case1_1}. By 
\ref{xii} and \eqref{t_382}
we have  in $U_i$ that
\begin{align}\label{ts_616}
     |\pa_t( \om_i\vla)| \leq  |\pa_t\om_i||\vla| + |\pa_t\vla| \leq \frac{c|\vla|}{s_i}+\frac{cr_i\lai}{s_i}.
\end{align} 
Therefore we have by 
\eqref{ts_616}, Lemma~\ref{lem_ts_3} and \ref{prop2_poincare_boundary}
that
\begin{align*}
    \begin{split}
       \iint_{\Q_2\cap 2U_i}|v||\pa_t( \om_i\vla)|\,dz 
       &\leq \left(\frac{c r_i\lai}{\varepsilon s_i}+\frac{\varepsilon|v^i|^2}{r_is_i\lai}\right)|U_i|\fiint_{2U_i}|v|\,dz\\
       &\leq \frac{c}{\varepsilon}\La|U_i| + \varepsilon|v^i|^2|B_i|.
    \end{split}
\end{align*}
This finishes the proof in the case $2B_i\not\subset \B_2$. 

Then we consider the case $2B_i\subset \B_2$. In this case the last term on the left hand side of \eqref{ts_518} is not exactly what we need. Therefore, we start by observing that by adding and subtracting we have
\begin{align}\label{ts_lem3_1}
\begin{split}
    &\int_{\B_2\times\{t\}}\om_i\vla\cdot(v-v^i) \,dx \\
    &= \int_{\B_2\times\{t\}}\om_i\vla\cdot((u-u_0)\eta-v^i)  \,dx \\
    & =\int_{\B_2\times\{t\}}\om_i\eta\vla\cdot(u-(u)_{50U_i})  \,dx \\
    &\qquad+  \int_{(\B_2\cap 2B_i) \times \{t\}}(((u)_{50U_i}-u_0)\eta-v^i)\cdot\om_i\vla  \,dx.
    \end{split}
\end{align}
The first term on the right hand side of \eqref{ts_lem3_1} is estimated as in the case $2B_i \not\subset \B_2$ by using \eqref{ts_518} with $\xi=(u)_{50U_i}$. The only difference is that we use Proposition~\ref{prop_3_4}\ref{prop2_poincare_u} instead of \ref{prop2_poincare_boundary} at the end of the argument.

It remains to estimate the second term on the right hand side of \eqref{ts_lem3_1}. For any $x\in \B_2\cap 2B_i$ we have
\begin{align}\label{eq_ts_lem3_3}
\begin{split}
    |((u)_{50U_i}-u_0)\eta(x)-v^i| 
    &\leq \left| ((u)_{50U_i}-u_0)\fint_{2B_i} ( \eta(y)-\eta(x)  ) \,dy \right| \\
    &\qquad + \left|\fint_{2B_i}  ( v^i-((u)_{50U_i}-u_0)\eta(y)  )  \,dy \right|.
\end{split}
\end{align}
For the first term in the right hand side of \eqref{eq_ts_lem3_3} we get with Proposition~\ref{prop_3_4}\ref{prop2_subintrinsic} that 
\begin{align*}
\begin{split}
    \left|((u)_{50U_i}-u_0)\fint_{2B_i}\eta(y)-\eta(x)\,dy \right| &\leq cr_i\sup_{y\in 2B_i}|\na \eta(y)| \fiint_{50U_i}|u-u_0|\,dz \\
    &\leq c\left(\frac{\rho}{\rho_2-\rho_1}\right)r_i\fiint_{50U_i}\g\,dz \\
    &\leq c\left(\frac{\rho}{\rho_2-\rho_1}\right)r_i\lai.
\end{split}
\end{align*}
For the second term in the right hand side of \eqref{eq_ts_lem3_3}, we obtain by Fubini's theorem and $v = (u-u_0)\eta$ in $2U_i$, that
\begin{align*}
    \begin{split}
        & \left|\fint_{2B_i} ( v^i-((u)_{50U_i}-u_0)\eta(y) ) \,dy \right| \\
        &= \left|\fiint_{2U_i}\left(v(z)-\fint_{2B_i}((u)_{50U_i}-u_0)\eta(y)\,dy\right)\,dz\right|\\
        &= \left|\fint_{2I_i}\left(\fint_{2B_i}v(y,\tau)\,dy-\fint_{2B_i}((u)_{50U_i}-u_0)\eta(y)\,dy\right)\,d\tau \right|\\
        &= \left|\fiint_{2U_i}(u-(u)_{50U_i})\eta\,dz \right|.
    \end{split}
\end{align*}
We have by Proposition~\ref{prop_3_4}\ref{prop2_poincare_u} that
\[
\left|\fiint_{2U_i}(u-(u)_{50U_i})\eta\,dz\right|\le \fiint_{2U_i}|u-(u)_{50U_i}|\,dz\le cr_i\lai.
\]

With this and Lemma~\ref{lem_ts_3} we get for the second term on the right hand side \eqref{ts_lem3_1} the desired estimate
\begin{align*}
\begin{split}
    &\int_{(\B_2\cap 2B_i) \times \{t\}}(((u)_{50U_i}-u_0)\eta-v^i)\cdot\om_i\vla \,dx \\
    &\leq c\left(\frac{\rho}{\rho_2-\rho_1}\right)r_i\lai\int_{2B_i}|\vla  |\,dx\\
    &\leq c\left(\frac{\rho}{\rho_2-\rho_1}\right) r_i\lai ( r_i\lai +|v^i|)|B_i| \\
    &\leq  \frac{c}{\varepsilon}  \left(\frac{\rho}{\rho_2-\rho_1}\right)^2 \La|U_i|+ \varepsilon|B_i||v_i|^2,
\end{split}
\end{align*}
where to obtain the last inequality, Young's inequality is used.
This completes the proof.
\end{proof}

Now we are ready to prove the time-slice estimate.

\begin{proof}[Proof of Lemma~\ref{lem_timeslice}]
Recalling the properties of $\om_i$ in Proposition~\ref{prop_whitney} and the definitions of $\Theta_1$ and $\Theta_2$, we have
\begin{align*}
	\begin{split}
	&\int_{ (\B_2\times\{t\} ) \cap E(\La)^c}(|v|^2-|\vla-v|^2)\,dx \\
    &= \sum_{i\in\NN}\int_{\B_2\times\{t\}}\om_i(|v|^2-|\vla-v|^2)\,dx\\
	&= \sum_{i\in\Theta_1}\int_{\B_2\times\{t\}}\om_i(|v|^2-|\vla-v|^2)\,dx
    +\sum_{i\in\Theta_2}\int_{\B_2\times\{t\}}\om_i(|v|^2-|\vla-v|^2)\,dx\\
	&=\mathrm{I}+\mathrm{II}.
	\end{split}
\end{align*}
Note that we may assume further that $50U_i\cap \Q_2\ne\emptyset$. Otherwise, $\vla\equiv0$ in $2U_i$.
We estimate $\mathrm{I}$ first. Note that by adding and subtracting we have 
\begin{align*}
	\begin{split}
		\mathrm{I}
        &=\sum_{i\in\Theta_1}\int_{\B_2\times\{t\}}\om_i(|v|^2-|\vla-v|^2)\,dx\\
		&= \sum_{i\in\Theta_1}\int_{\B_2\times\{t\}}\om_i(|\vla-v^i|^2-|\vla-v^i|^2+|v|^2-|\vla-v|^2)\,dx \\
		&= \sum_{i\in\Theta_1}\int_{\B_2\times\{t\}}\om_i(|v^i|^2+2\vla \cdot (v-v^i))\,dx
        -\sum_{i\in\Theta_1}\int_{\B_2\times\{t\}}\om_i|\vla-v^i|^2\,dx \\
		&= \mathrm{I}_1 - \mathrm{I}_2.
	\end{split}
\end{align*}
We have by \ref{ix}  and \ref{prop2_poincare_internal} or \ref{prop2_poincare_boundary} as in the proof of Lemma~\ref{lem_410},
that for $z=(x,t)$ with $x\in 2B_i$ and $t\in \ti_1$
\begin{align*}
	|\vla-v^i| = \biggl|\sum_{j\in \mathcal{I}_i}\om_jv^j-\sum_{j
		\in \mathcal{I}_i}\om_jv^i\biggr|\leq \sum_{j\in\mathcal{I}_i}\om_j|v^j-v^i|\leq c\left(  \frac{\rho}{\rho_2-\rho_1}\right)r_i\lai,
\end{align*}
and therefore we have by \ref{ix} that
\begin{align*}
	\begin{split}
		\mathrm{I}_2 
        &\leq  c\left(  \frac{\rho}{\rho_2-\rho_1}\right)^2\sum_{i\in\Theta_1}|B_i|r_i^2\lai^2 \\
        &\leq c\left(  \frac{\rho}{\rho_2-\rho_1}\right)^2\sum_{i\in\Theta_1}\La|U_i| \\
        &\leq c\left(  \frac{\rho}{\rho_2-\rho_1}\right)^2\La|E(\La)^c|.
	\end{split}
\end{align*}

Then we estimate $\mathrm{I}_1$. By Lemma~\ref{lem_ts_theta1} we have for any $\varepsilon \in (0,1)$ that
\begin{align*}
	\begin{split}
	    &\int_{\B_2\times\{t\}}\om_i(|v^i|^2+2\vla \cdot (v-v^i))\,dx \\
        &\geq \int_{\B_2\times\{t\}}\om_i|v^i|^2\,dx-\varepsilon|B_i||v^i|^2-\frac{c}{\varepsilon}\left(  \frac{\rho}{\rho_2-\rho_1}\right)^2\La|U_i|.
	\end{split}
\end{align*}
We will first estimate
\[
\sum_{i\in\Theta_1}\int_{\B_2\times\{t\}}\om_i|v^i|^2\,dx-\varepsilon\sum_{i\in\Theta_1}|B_i||v^i|
	^2
\]
To estimate the last term, we consider a subset of $\Theta_1$ and its complement defined as
\begin{align*}
    \begin{split}
        \Theta_{1,1}=\{ i\in\mathbb{N}: j\in \Theta_1\quad\text{for all}\quad j\in\mathcal{I}_i \},\quad\Theta_{1,2}=\Theta_1\setminus \Theta_{1,2}.
    \end{split}
\end{align*}
Then as in the proof of Lemma~\ref{lem_410}, we have
\begin{align*}
    \begin{split}
     \sum_{i\in\Theta_{1,1}}|B_i||v^i|^2
     &=\sum_{i\in\Theta_{1,1}}\sum_{j\in \mathcal{I}_i} \fint_{2\B_i\times \{ t \} }\om_j|B_i||v^i|^2 \,dx\\
     &\le   \sum_{i\in\Theta_{1,1}}\sum_{j\in \mathcal{I}_i} \fint_{2\B_i\times \{ t \} } 2\om_j ( |B_i||v^i-v^j|^2+|B_i||v^j|^2) \,dx\\
     &\le\sum_{i\in\Theta_{1,1}}\sum_{j\in \mathcal{I}_i} \fint_{2\B_i\times \{ t \} }\om_j \left( c \left( \frac{\rho}{\rho_2-\rho_1} \right)^2 |B_i|r_i^2\lai^2 +|B_i||v^j|^2  \right) \,dx\\
     &\le c\left( \frac{\rho}{\rho_2-\rho_1} \right)^2\sum_{i\in\Theta_{1,1}}\La|U_i|+c\sum_{i\in\Theta_{1,1}}\sum_{j\in \mathcal{I}_i} \int_{2\B_j\times \{t \} } \om_j|v^j|^2 \,dx
    \end{split}
\end{align*}
where $c=c(\data)$ and to get the last inequality, we used \ref{v}. Next, we apply \ref{ix} to both terms on the right hand side in order to obtain
\begin{align*}
    \begin{split}
        \sum_{i\in\Theta_{1,1}}|B_i||v^i|^2
        &\le c\left( \frac{\rho}{\rho_2-\rho_1} \right)^2 \La|E(\La)^c|+c\sum_{i\in\Theta_1}\int_{\B_2 \times \{t \} } \om_i|v^i|^2 \,dx.
    \end{split}
\end{align*}
On the other hand, if $i\in\Theta_{1,2}$, then there exists $j\in \mathcal{I}_i\cap  \Theta_2$. It follows from \ref{v}, \ref{Uij} and \eqref{The2_ineq} that
\[
cs_i> \sig_2-\sig_1
\]
for some $c=c(\data)$. Therefore, we have
\begin{align*}
    \begin{split}
        \sum_{i\in\Theta_{1,2}}|B_i||v^i|^2
        &\le \sum_{i\in\Theta_{1,2}}|B_i|\fiint_{2U_i} |v|^2\, dz\\
        &\le \sum_{i\in\Theta_{1,2}} \frac{1}{2s_i}\iint_{2U_i} |v|^2\, dz\\
        &\le c\sum_{i\in \Theta_1} \left(\frac{\sig_2}{\sig_2-\sig_1} \right)\iint_{2U_i} \frac{|v|^2}{\sig_2} \, dz.
    \end{split}
\end{align*}
Again using \ref{ix} and canceling the scaling factors in the fractions, we get
\[
\sum_{i\in\Theta_{1,2}}|B_i||v^i|^2\le \left(\frac{\rho}{\rho_2-\rho_1} \right)^2 \iint_{\Q_2} \frac{|v|^2}{\sig_2} \, dz.
\]
Combining these estimates, we obtain
\begin{align*}
    \begin{split}
        &\sum_{i\in\Theta_1}\int_{\B_2\times\{t\}}\om_i|v^i|^2\,dx-\varepsilon\sum_{i\in\Theta_1}|B_i||v^i|^2 \\
    &\ge \sum_{i\in\Theta_1}\int_{\B_2\times\{t\}}\om_i|v^i|^2\,dx -c\varepsilon\sum_{i\in\Theta_1}\int_{\B_2 \times \{t \} } \om_i|v^i|^2 \,dz\\
    &\qquad -c\varepsilon\left( \frac{\rho}{\rho_2-\rho_1} \right)^2\La|E(\La)^c|-c\varepsilon\left(\frac{\rho}{\rho_2-\rho_1} \right)^2 \iint_{\Q_2} \frac{|v|^2}{\sig_2} \, dz.
    \end{split}
\end{align*}
Taking $\varepsilon=c^{-1}$, we get
\begin{align*}
    \begin{split}
       \mathrm{I}_1
       \ge-c\left( \frac{\rho}{\rho_2-\rho_1} \right)^2\La|E(\La)^c|-c \left(\frac{\rho}{\rho_2-\rho_1} \right)^2 \iint_{\Q_2} \frac{|v|^2}{\sig_2} \, dz.
    \end{split}
\end{align*}
We conclude the desired estimate
\[
\mathrm{I} \ge-c\left( \frac{\rho}{\rho_2-\rho_1} \right)^2\La|E(\La)^c|-c \left(\frac{\rho}{\rho_2-\rho_1} \right)^2 \iint_{\Q_2} \frac{|v|^2}{\sig_2} \, dz.
\]

Then we estimate $\mathrm{II}$. Using $|\vla-v|^2 =|\vla|^2-2\vla\cdot v + |v|^2$, we have
\begin{align}\label{ts1}
\begin{split}
    \mathrm{II} 
    &= \sum_{i\in\Theta_2}\int_{\B_2\times\{t\}}\om_i(|v|^2-|\vla-v|^2)\,dx\\
    &= -\sum_{i\in\Theta_2}\int_{\B_2\times\{t\}} \om_i |\vla|^2  \,dx+2\sum_{i\in\Theta_2}\int_{\B_2\times\{t\}}   \om_i v\cdot \vla  \,dx.
\end{split}
\end{align}
First we consider the second term on the right hand side. Taking $\varphi = \eta \zeta\om_i\vhla$ as the test function in the Steklov averaged weak formulation and integrating over $(t_i-4s_i,t)$ in time (again notice that the integral is zero if $t<t_i-4s_i$), we obtain
\begin{align*}
    \begin{split}
        &\int_{t_i-4s_i}^t\int_{\B_2}\pa_t[u-\uu]_h\cdot\eta \zeta\om_i\vhla\,dx\,d\tau \\
        &= -\int_{t_i-4s_i}^t\int_{\B_2}[\mA(\cdot,\na u)]_h \cdot\na(\eta \zeta\om_i\vhla)\,dx\,d\tau\\
        &\qquad+\int_{t_i-4s_i}^t\int_{\B_2}[|F|^{p-2}F+a|F|^{q-2}F]_h \cdot\na(\eta \zeta\om_i\vhla)\,dx\,d\tau.
    \end{split}
\end{align*}
Using the product rule, fundamental theorem of calculus and the fact that $\om_i(\cdot,t_i-4s_i)= 0$, we obtain
\begin{align*}
\begin{split}
    &\int_{t_i-4s_i}^t\int_{\B_2}\pa_t[u-\uu]_h \cdot\eta \zeta\om_i\vhla\,dx\,d\tau \\
    &= \int_{t_i-4s_i}^t\int_{\B_2}\pa_t([u-\uu]_h \cdot\eta \zeta\om_i\vhla)\,dx\,d\tau -\int_{t_i-4s_i}^t\int_{\B_2}[u-\uu]_h \pa_t(\eta \zeta\om_i\vhla)\,dx\,d\tau\\
    &= \int_{\B_2\times\{t\}}[u-\uu]_h\cdot\eta \zeta\om_i\vhla  \,dx - \int_{t_i-4s_i}^t\int_{\B_2}[u-\uu]_h \cdot \pa_t(\eta \zeta\om_i\vhla)\,dx\,d\tau. 
\end{split}
\end{align*}
Taking the limit $h \to 0$ and recalling that $v = (u-\uu)\eta\zeta$, we conclude that 
\begin{align*}
\begin{split}
    \int_{\B_2\times\{t\}}  \om_iv\cdot \vla  \,dx 
    &= -\int_{t_i-4s_i}^t\int_{\B_2}\mA(z,\na u) \cdot\na(\eta \zeta\om_i\vla)\,dx\,d\tau\\
    &\qquad+\int_{t_i-4s_i}^t\int_{\B_2} (|F|^{p-2}F+a|F|^{q-2}F ) \cdot\na(\eta \zeta\om_i\vhla)\,dx\,d\tau \\
    &\qquad +\int_{t_i-4s_i}^t\int_{\B_2}(u-u_0) \cdot \pa_t(\eta \zeta\om_i\vla)\,dx\,d\tau.
\end{split}
\end{align*}
 Taking absolute values of both sides and using the growth condition of $\mA$, we get that
\begin{align}\label{ts5}
\begin{split}
        \left|\int_{\B_2\times\{t\}}   \om_iv\cdot \vla \,dx\right| 
        &\leq c\iint_{\Q_2}(|\na u|^{p-1}+a(z)|\na u|^{q-1})|\na(\eta \zeta\om_i\vla)|\,dz\\
        &\qquad+ c\iint_{\Q_2}(|F|^{p-1}+a(z)|F|^{q-1})|\na(\eta \zeta\om_i\vla)|\,dz\\
        &\qquad+\iint_{\Q_2}|(u-u_0)  \cdot \pa_t(\eta \zeta\om_i\vla)|\,dx\,d\tau.
\end{split}
\end{align}

We start by estimating $|\na(\eta \zeta\om_i\vla)|$ where $i\in\Theta_2$. 
There holds
\[
|\na(\eta \zeta\om_i\vla)| \leq |\vla\na \eta|+|\vla\na\om_i|+|\na\vla|.
\]
To estimate the first and the second term on the right hand side, we apply Lemma~\ref{lem_theta_2} and Lemma~\ref{pre_The2} to get
\[
|\vla\na \eta|+|\vla\na\om_i|\le c\left ( \frac{\rho}{\rho_2-\rho_1} \right)\lai + c\frac{\rho}{r_i}\la_i\le c\theta^\frac{2-q}{p}\left ( \frac{\rho}{\rho_2-\rho_1} \right)\lai,
\]
while for the third term on the right hand side the same estimate holds by Lemma~\ref{lem_ts_2}.
Then with Proposition~\ref{prop_3_4}\ref{prop2_est_nau_p-1} and the fact that $\om_i$ vanishes outside $2U_i$, we have
\begin{align*}
    \iint_{\Q_2}(|\na u|^{p-1}+a(z)|\na u|^{q-1})|\na(\eta \zeta\om_i\vla)|\,dz \leq c\theta^{\frac
    {2-q}{p}}\left ( \frac{\rho}{\rho_2-\rho_1} \right) \La|2U_i|.
\end{align*}
Similarly, we get
\[
 \iint_{\Q_2}(|F|^{p-1}+a(z)|F|^{q-1})|\na(\eta \zeta\om_i\vla)|\,dz \leq c\theta^{\frac
    {2-q}{p}}\left ( \frac{\rho}{\rho_2-\rho_1} \right) \La|2U_i|.
\]

Next, we consider the third term in \eqref{ts5}. 
As $i\in\Theta_2$, we have $\sig_2-\sig_1<c s_i$. 
Thus, from the definition of $\vla$ in \eqref{t_325_1}, the product rule and \ref{xii}, we obtain that in $2U_i$
\begin{align*}
     |\pa_t(\zeta\om_i\vla)| = |\sum_{j\in \mathcal{I}_i} v^j\pa_t(\zeta\om_i\om_j)|\leq c\sum_{j\in \mathcal{I}_i}|v^j| \left( \frac{1}{\sig_2-\sig_1}+\frac{1}{s_i} \right )\leq \frac{c}{\sig_2-\sig_1}\sum_{j\in \mathcal{I}_i}|v^j|.
\end{align*}
Recall that the cardinality of $\mathcal{I}_i$ is bounded by \ref{ix}. With the above estimate and Young's inequality, we get for $i\in \Theta_2$ that
\begin{align*}
    \begin{split}
        \iint_{\Q_2}|(u-u_0)\pa_t(\zeta\om_i\vla)|\,dz&\leq \frac{c}{\sig_2-\sig_1}\iint_{\Q_2\cap 2U_i}\sum_{j\in\mathcal{I}_i}|u-u_0||v^j|\,dz\\
        &\leq \frac{c}{\sig_2-\sig_1}\iint_{\Q_2\cap 2U_i} \biggl( \sum_{j\in\mathcal{I}_i}|u-u_0|^2+|v^j|^2 \biggr) \,dz \\
        &\leq \frac{c}{\sig_2-\sig_1} \iint_{\Q_2\cap 2U_i}|u-u_0|^2\,dz  \\
        &\qquad  + \frac{c}{\sig_2-\sig_1}\sum_{j\in\mathcal{I}_i}\iint_{2U_j}|v|^2\,dz,
    \end{split}
\end{align*}
where to obtain the last inequality, we applied H\"older's inequality and the fact that $|U_j|\approx |U_i|$.
Combining these estimates, \eqref{ts5} becomes
\begin{align*}
\begin{split}
    &\sum_{i\in\Theta_2}\int_{\B_2\times\{t\}}   \om_i v\cdot \vla \,dx\\
    &\leq c\theta^{\frac
    {2-q}{p}}\left ( \frac{\rho}{\rho_2-\rho_1} \right)\sum_{i\in\Theta_2}\La|2U_i| \\
    &\qquad +\sum_{i\in\Theta_2} \frac{1}{\sig_2-\sig_1} \left( \iint_{\Q_2\cap 2U_i}|u-u_0|^2\,dz + \sum_{j\in\mathcal{I}_i}\iint_{2U_j}|v|^2\,dz \right)\\
    &\leq c\theta^{\frac
    {2-q}{p}} \left ( \frac{\rho}{\rho_2-\rho_1} \right)\La|E(\La)^c| + \frac{c}{\sig_2-\sig_1}\iint_{\Q_2}|u-u_0|^2\,dz.
\end{split}
\end{align*}

Finally we estimate the first term in \eqref{ts1}.  
From the definition of $\vla$, Hölder's inequality, \ref{Uij} and \ref{ix} we obtain
\begin{align*}
\begin{split}
    \int_{\B_2\times\{t\}}  \om_i |\vla|^2 \,dx 
    &= \int_{\B_2\times\{t\}}    \om_i \biggl|\sum_{j\in\mathcal{I}_i}v^j\om_j  \biggr|^2    \,dx\\
    &\leq c\int_{ 2B_i} \sum_{j\in\mathcal{I}_i}|v^j|^2\,dx\\
    &\leq c\sum_{j\in\mathcal{I}_i}\frac{|2B_i|}{|2U_j|}\iint_{2U_j}|v|^2\,dz\\
    &\leq \frac{c}{s_i} \sum_{j\in\mathcal{I}_i} \iint_{2U_j}|v|^2\,dz.
\end{split}
\end{align*}
As we have $\sig_2-\sig_1 \leq c s_i$ for $i\in\Theta_2$ and \ref{ix}, we conclude from the above inequality that 
\begin{align*}
\begin{split}
\sum_{i\in\Theta_2}\int_{\B_2\times\{t\}} \om_i |\vla|^2  \,dx &\leq  c \sum_{i\in\Theta_2}\left( \frac{c}{s_i} \sum_{j\in\mathcal{I}_i} \iint_{2U_j}|v|^2\,dz\right) \\
&\leq \frac{c}{\sig_2-\sig_1}\sum_{i\in \mathbb{N}}\iint_{2U_i}|v|^2\,dz \\
&\leq \frac{c}{\sig_2-\sig_1}\iint_{\Q_2}|v|^2\,dz.
\end{split}
\end{align*}
This completes the proof.
\end{proof}

\subsection{Proof of the Caccioppoli inequality}

Now we are ready to prove the Caccioppoli inequality that is used to prove the reverse Hölder inequalities in Lemma~\ref{lem_reverse_Hölder} and Lemma~\ref{lem_reverse_Hölder_pq}. For technical reasons we get an unwanted truncated maximal function as a weight in the Caccioppoli inequality. We denote
\begin{align}\label{def_mh}
    \mh(z)=\max\{\Upsilon(z),\theta\Pi\},
\end{align}
where $\theta=\theta(\data)>0$ is determined in the proof of the lemma and $\Upsilon(z)$ is defined in \eqref{def_m}. Also recall the definition of $\Pizo$ in \eqref{def_Pizo}.
Note that the technical assumption $\rho_2-\rho_1\geq \rho$ is used only at the end of the proof. 
We recall $\mu\in(0,1)$ is chosen to satisfy \eqref{def_mu} and \eqref{def_mu_2} depending on $n,p,q$ and $\alpha$.

\begin{lemma}\label{lem_caccioppoli}
Let $u$ be a very weak solution to \eqref{11}. Suppose $\rho\le \rho_1< \rho_2\le 4\rho$ with $\rho_2-\rho_1\ge\rho$. Then there exists $\delta_1=\delta_1(\data)\in(\mu,1)$ and $c=c(\data)$ such that
\begin{align}\label{claim_cac}
    \begin{split}
     &   \sup_{t\in\ti_1}
    \fint_{\B_1\times \{t\}} \frac{|u-u_{\Q_1}|^2}{\sig_1\mh^{1-\de}}\,dx+\Pizo^\de \\
    &\leq        c\fiint_{\Q_2}\frac{|u-u_{\Q_2}|^2}{\Pi^{1-\delta}\sig_2}\,dz  + c\fiint_{\Q_2}H\left(z,\frac{|u-u_{\Q_2}|}{\rho_2}\right)^\de\,dz \\
    &\qquad +c  \fiint_{\Q_2} H(z,|F|)^\delta\,dz,
    \end{split}
\end{align}
when $\de \in (\de_1,1)$
\end{lemma}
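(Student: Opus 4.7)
The plan is to test the Steklov-averaged weak formulation \eqref{steklov_averaged_wf} against the Lipschitz truncation $v_h^\Lambda$ from \eqref{t_421}, send $h\to 0^+$ using Proposition~\ref{prop_liptr1}, and then integrate the resulting $\Lambda$-dependent family of inequalities against $\Lambda^{\delta-2}\,d\Lambda$ on $[\theta\Pi,\infty)$ so as to produce the weight $\widehat{\Upsilon}^{\delta-1}$ on the left of \eqref{claim_cac}. The level-set identity driving this last step is
\begin{equation*}
    \int_{\theta\Pi}^\infty \Lambda^{\delta-2}\,\mathbf{1}_{E(\Lambda)}(z)\,d\Lambda
    = \frac{\widehat{\Upsilon}(z)^{\delta-1}}{1-\delta},
\end{equation*}
while $\int_{\theta\Pi}^\infty \Lambda^{\delta-1}|E(\Lambda)^c|\,d\Lambda \le c\iint \Upsilon^\delta \le c\Pizo^\delta|\Q_2|$, coming from \eqref{bound_bad_set}--\eqref{eq_est_mz}, feeds the $\Pizo^\delta$ term on the left, and $\int_{\theta\Pi}^\infty \Lambda^{\delta-2}\,d\Lambda=\tfrac{(\theta\Pi)^{\delta-1}}{1-\delta}$ produces the $\Pi^{1-\delta}$ weight on the right.

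First, fix $\Lambda \ge \theta\Pi$ and a.e.\ $\tau \in \ti_2$. Testing \eqref{steklov_averaged_wf} with $\varphi=v_h^\Lambda(\cdot,t)$ on $(-\infty,\tau)\cap \ti_2$ and sending $h\to 0^+$ produces four pieces: (a) a boundary term $\tfrac12 \int_{\B_2\times\{\tau\}} (|v|^2 - |v^\Lambda-v|^2)\,dx$ which is estimated from below by Lemma~\ref{lem_timeslice}; (b) a bad-set time-integral $\iint(v-v^\Lambda)\cdot \pa_t v^\Lambda$ controlled by Lemma~\ref{time_deri_est}; (c) the good-set elliptic term $\iint_{E(\Lambda)} \mA(z,\na u)\cdot \na v$, from which the structure condition \eqref{12} and Young's inequality applied to $\na v = \na u\,\eta\zeta + (u-u_0)\na\eta\,\zeta$ extract the positive gradient contribution together with the $H(z,|u-u_0|/\rho_2)^\delta$ and $H(z,|F|)^\delta$ terms on the right of \eqref{claim_cac}; (d) the bad-set elliptic and $F$-contributions, controlled through Proposition~\ref{prop_3_4}\ref{prop2_est_nau_p-1} combined with the pointwise bounds on $|v^\Lambda|$, $|\na v^\Lambda|$, $|\pa_t v^\Lambda|$ from Lemmas~\ref{pre_The2}--\ref{lem_ts_2}. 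Every bad-set error ends up in one of three categories: (i) $c\tfrac{\rho}{\rho_2-\rho_1}\Lambda|E(\Lambda)^c|$, (ii) a factor $\theta^{(2-q)/p}$ multiplying a left-hand-side-type quantity, or (iii) $c(\tfrac{\rho}{\rho_2-\rho_1})^2 \iint_{\Q_2} |u-u_0|^2/\sig_2\,dz$.

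Next, integrate the pointwise-in-$\Lambda$ inequality against $\Lambda^{\delta-2}\,d\Lambda$ on $[\theta\Pi,\infty)$ and invoke Fubini as above. Type-(i) terms yield $c\tfrac{\rho}{\rho_2-\rho_1}\Pizo^\delta|\Q_2|$; type-(ii) terms are absorbed into the left-hand side by choosing $\theta=\theta(\data)$ sufficiently small; and type-(iii) terms, which upon taking the supremum in $\rho_1$ produce a left-hand-side contribution on $\Q_1$ multiplied by a small coefficient, are removed by the iteration Lemma~\ref{lem_iter} applied in the variable $\rho_1\in[\rho,4\rho]$. The hypothesis $\rho_2-\rho_1\ge \rho$ enters precisely here, ensuring $\rho/(\rho_2-\rho_1)\le 1$ so that the iteration scheme closes. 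Finally, $\delta_1$ is chosen close to $1$ so that $\delta>\mu$ (needed for the Whitney construction of Proposition~\ref{prop_whitney} and for the phase analysis) and so that $2-q(1-\delta)>0$.

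The main obstacle is the trichotomy of Step~1(d). In the weak-solution case of \cite{KKS} one relies on $\Lambda|E(\Lambda)^c|\to 0$ as $\Lambda\to\infty$, which is unavailable here; one only has integrability of $\Lambda^{1-\delta}|E(\Lambda)^c|$ against $\Lambda^{\delta-2}\,d\Lambda$, so some bad-set errors must be absorbed via the geometric gain $\theta^{(2-q)/p}$ produced by Lemma~\ref{lem_theta_2} for Whitney cylinders $U_i$ with $i\in\Theta_2$ that escape the slice $\mc_1$. Making these three absorption mechanisms fit together consistently across the $p$-intrinsic and $(p,q)$-intrinsic phases of $\Q_2$ and across both phase types of the Whitney cylinders $U_i$ is the technical novelty relative to \cite{KKS,KL_veryweak}, and is the delicate bookkeeping step of the proof.
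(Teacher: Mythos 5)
Your outline correctly identifies the architecture of the proof — test \eqref{steklov_averaged_wf} against the Lipschitz truncation $v_h^\Lambda$, pass $h\to 0^+$, split the bad-set error terms, and then integrate the $\Lambda$-parametrized family of inequalities against $\Lambda^{\delta-2}\,d\Lambda$ on $[\theta\Pi,\infty)$ to create the weight $\mh^{\delta-1}$ via Fubini. The level-set identities you state are correct. However, three parts of the bookkeeping do not match what actually has to happen.

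First, the term $(\rho/(\rho_2-\rho_1))^2\iint_{\Q_2}|u-u_0|^2/\sig_2$ is \emph{not} removed by Lemma~\ref{lem_iter}. Since it is an integral over $\Q_2$ (not $\Q_1$) and the entire inequality \eqref{claim_cac} is proved for a single fixed pair $\rho\le\rho_1<\rho_2\le 4\rho$, there is no hole-filling argument available here: the term is simply left on the right-hand side, and indeed it appears verbatim in \eqref{claim_cac} as $c\fiint_{\Q_2}|u-u_{\Q_2}|^2/(\Pi^{1-\delta}\sig_2)$, with the $\Pi^{1-\delta}$ coming from $\int_{\theta\Pi}^\infty\Lambda^{\delta-2}\,d\Lambda$. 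The hypothesis $\rho_2-\rho_1\ge\rho$ is used only to turn all the prefactors $(\rho/(\rho_2-\rho_1))^k$ into universal constants at the end, not to feed an iteration. Invoking Lemma~\ref{lem_iter} here would in fact be incompatible with the constraint $\rho_2-\rho_1\ge\rho$, which prevents $\rho_1$ from ranging over a genuine interval once $\rho_2$ is fixed.

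Second, the absorption mechanism for the $\theta^{(2-q)/p}$-weighted terms is stated backwards. Since $p>2$ and $q>p$, we have $(2-q)/p<0$, so $\theta^{(2-q)/p}\to\infty$ as $\theta\to 0^+$; making $\theta$ small does not shrink this factor, it inflates it. What actually makes the $\theta^{(2-q)/p}\,\Lambda|E(\Lambda)^c|$ terms absorbable is that the $\Lambda$-integration produces an extra factor $(1-\delta)/\delta$ (after multiplying the Fubini output by $1-\delta$), so the coefficient is $(1-\delta)\theta^{(2-q)/p}$, which is killed by taking $\delta_1$ close to $1$ \emph{after} $\theta$ has been fixed. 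The small-$\theta$ choice is needed for the separate terms $\theta^\mu$ and for the phase comparison of Lemma~\ref{lem_theta_2}; it is not what controls $\theta^{(2-q)/p}$.

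Third, the step that actually puts $\Pizo^\delta$ on the left-hand side is missing. After the Fubini step the gradient contribution appears as $\iint_{\Q_1}H(z,|\nabla u|)/\mh^{1-\delta}$, still weighted; to convert this into $\Pizo^\delta|\Q_2|$ one splits $\Q_1$ along the set $E=\{H(z,|\nabla u|)\ge(1-\delta)\Upsilon,\ \Upsilon\ge\theta\Pi\}$, uses \eqref{eq_est_mz} on $\Q_1\setminus E$, and then feeds in the stopping-time lower bound \eqref{eq_p1} (or \eqref{eq_pq3}) to turn $\iint_{\Q_1}H(z,|\nabla u|)^\delta$ into $\Pizo^\delta|\Q_2|$ up to small error. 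Without this weight-removal and the use of \eqref{eq_p1}, the left-hand side of \eqref{claim_cac} cannot be produced from the Caccioppoli-type energy estimate.
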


\begin{proof}
Let $\La \geq \theta\Pi$. Let $h>0$ be small enough and let $\vhla$ be the Lipschitz truncation of $\vh$, defined as in Section~\ref{sec_liptr}. Taking $\eta \vhla$ as a test function in \eqref{steklov_averaged_wf} and integrating with respect to time over $(t_1,t)\subset (t_0-\tfrac{4\sig_1+\sig_2}{5},t_0+\sig_1) $, where $t_1\in (t_0-\tfrac{4\sig_1+\sig_2}{5},t_0-\sig_1)  \subset \ti_2$ is chosen later and $t \in \ti_1$, we get
\begin{align*}
\begin{split}
    0 &= \int_{t_1}^t\int_{\B_2} \pa_\tau\vh \cdot \vhla \, \,dx \,d\tau + \int_{t_1}^t\int_{\B_2} [\mA(\cdot,\na u)]_h\cdot\na(\eta \vhla) \, \,dx\, d\tau\\
    &  \qquad + \int_{t_1}^t\int_{\B_2} [|F|^{p-2}F+a|F|^{q-2}F]_h\cdot\na(\eta \vhla) \, \,dx \,d\tau  \\
    & = \mathrm{I}_h+\mathrm{II}_h+\mathrm{III}_h,
\end{split}
\end{align*}
where we used the fact that $\zeta\equiv1$ in $(t_1,t)$.
Next, we estimate both terms of the above inequality. First we consider $\mathrm{I}_h$. After adding and subtracting and using the chain rule, we get
\begin{align*}
    \begin{split}
        \mathrm{I}_h &= \int_{t_1}^t\int_{\B_2} \pa_\tau\vh \cdot \vhla \, dx\,d\tau \\
        &= \int_{t_1}^t\int_{\B_2} (\pa_\tau\vh \cdot \vhla - \pa_\tau\vh\cdot\vh+\pa_\tau\vh\cdot\vh )  \,dx\,d\tau\\
        &=\int_{t_1}^t\int_{\B_2} \left( \pa_\tau\vh \cdot (\vhla - \vh)+\frac{1}{2}\pa_\tau|\vh|^2 \right) \,dx\,d\tau.
    \end{split}
\end{align*}
Again, by adding and subtracting and using the chain rule, we get 
\begin{align*}
    \begin{split}
        \mathrm{I}_h &= \int_{t_1}^t\int_{\B_2} \left( \pa_\tau\vh \cdot (\vhla - \vh)+\frac{1}{2}\pa_\tau|\vh|^2 \right)\,dx\,d\tau\\
        &\qquad + \int_{t_1}^t\int_{\B_2} \left( \pa_\tau\vhla\cdot(\vhla-\vh) - \pa_\tau\vhla\cdot(\vhla-\vh)  \right) \,dx\,d\tau\\
        & = \int_{t_1}^t\int_{\B_2} \left( \frac{1}{2}\pa_\tau|\vh|^2 - \frac{1}{2}\pa_\tau |\vhla-\vh|^2 + \pa_\tau\vhla\cdot(\vhla-\vh)  \right) \,dx\,d\tau  \\
        & = \frac{1}{2} \int_{t_1}^t\int_{\B_2} \pa_\tau(|\vh|^2-|\vhla-\vh|^2) \,dx\,d\tau+ \int_{t_1}^t\int_{\B_2\cap E(\La)^c} \pa_\tau\vhla\cdot(\vhla-\vh) \,dx\,d\tau,
    \end{split}
\end{align*}
where we shorten $(\B_2 \times\{\tau\}) \cap E(\La)^c$ as $\B_2\cap E(\La)^c$.
As $\vh$ and $\vhla$ are differentiable in time, we have by the fundamental theorem of calculus that
\begin{align*}
      \mathrm{I}_h = \frac{1}{2} \int_{\B_2\times \{\tau\}} \Biggr|_{t_1}^t(|\vh|^2-|\vhla-\vh|^2) \, dx+ \int_{t_1}^t\int_{\B_2\cap E(\La)^c} \pa_\tau\vhla\cdot(\vhla-\vh) \,dx\,d\tau.
\end{align*}

We let $h\to 0^+$ and get
\begin{align*}
      \lim_{h\to 0}\mathrm{I}_h = \frac{1}{2} \int_{\B_2\times \{\tau\}} \Biggr|_{t_1}^t(|v|^2-|\vla-v|^2) \, dx + \int_{t_1}^t\int_{\B_2\cap E(\La)^c} \pa_\tau\vla\cdot(\vla-v) \,dx\,d\tau.
\end{align*}
After estimating the second term with  Lemma~\ref{time_deri_est},
we obtain
\begin{align}\label{cac_first_est_Ih}
      \begin{split}
          \lim_{h\to 0}\mathrm{I}_h 
          &\geq -\frac{1}{2} \int_{\B_2\times \{t_1\}}(|v|^2-|\vla-v|^2) \, dx +\frac{1}{2} \int_{\B_2\times \{t\}}(|v|^2-|\vla-v|^2) \, dx\\
          &\qquad- c \left( \frac{\rho}{\rho_2-\rho_1} \right) \La |E(\La)^c|- c\left(\frac{\rho}{\rho_2-\rho_1}\right)^2\iint_{ \Q_2 } \frac{|v |^2}{\sig_2}\,dz.
      \end{split}
\end{align}
To estimate the first term above, we choose $t_1 \in  (t_0-\tfrac{4\sig_1+\sig_2}{5},t_0-\sig_1) $ such that
\begin{align*}
\begin{split}
    \int_{\B_2 \times \{t_1\} }(|v|^2-|\vla-v|^2) \, dx 
    &\leq \fint_{t_0-\tfrac{4\sig_1+\sig_2}{5}}^{t_0-\sig_1}\int_{\B_2}(|v|^2-|\vla-v|^2) \, dx\,d\tau \\
    &\leq \frac{5}{\sig_2-\sig_1}\iint_{\Q_2}(|v|^2+|\vla-v|^2)\,dz\\
    &\leq \frac{c}{\sig_2-\sig_1}\left(\iint_{\Q_2}|v|^2\,dz+\iint_{\Q_2\cap E(\La)^c}|\vla|^2\,dz\right).
\end{split}
\end{align*}
From the definition of $\vla$, Hölder's inequality and \ref{ix}, we obtain that
\begin{align*}
    \begin{split}
        \iint_{\Q_2\cap E(\La)^c}|\vla|^2\,dz 
        &= \iint_{\Q_2\cap E(\La)^c} \biggl|\sum_{i\in\NN}v^i\om_i \biggr|^2\,dz \\
        &\leq c\sum_{i\in\NN}|\Q_2\cap 2U_i||v^i|^2 \\
        &\leq c\sum_{i\in\NN}\frac{|\Q_2\cap 2U_i|}{|2U_i|}\iint_{2U_i}|v|^2\,dz \\
        &\leq c\iint_{\Q_2}|v|^2\,dz,
    \end{split}
\end{align*}
and therefore
\begin{align*}
    \int_{\B_2\times \{t_1\}}(|v|^2-|\vla-v|^2)\, dx \leq c\left(\frac{\rho}{\rho_2-\rho_1}\right)^2\iint_{ \Q_2 } \frac{|v |^2}{\sig_2}\,dz.
\end{align*}
Finally, we split the second term on the right hand side of \eqref{cac_first_est_Ih} into sets $E(\La)$ and $E(\La)^c$ and apply Lemma~\ref{lem_timeslice} to obtain
\begin{align*}
    \begin{split}
        \int_{\B_2\times \{t\}}(|v|^2-|\vla-v|^2) \, dx 
        &\geq \int_{  (\B_2\times \{t\} )\cap E(\La)}|v|^2 \, dx - c\theta^{\frac
    {2-q}{p}} \left ( \frac{\rho}{\rho_2-\rho_1} \right)^2 \La|E(\La)^c|\\
    &\qquad -c \left( \frac{\rho}{\rho_2-\rho_1}  \right)^2 \iint_{\Q_2}\frac{|u-u_0|^2}{\sig_2}\,dz.
    \end{split}
\end{align*}
We conclude that
\begin{align}\label{est_for_Ih}
          \begin{split}
              \lim_{h\to 0^+}\mathrm{I}_h &\geq \int_{ (\B_2 \times \{t\} )\cap E(\La)}|v|^2 \, dx - c\theta^{\frac
    {2-q}{p}} \left ( \frac{\rho}{\rho_2-\rho_1} \right)^2 \La|E(\La)^c|\\
    &\qquad -c \left( \frac{\rho}{\rho_2-\rho_1}  \right)^2 \iint_{\Q_2}\frac{|u-u_0|^2}{\sig_2}\,dz.
          \end{split}
\end{align}

Next, we consider the term $\mathrm{II}_h$. Letting $h\to 0^+$ and again dividing into the  sets $E(\La)$ and $E(\La)^c$, we get
\begin{align*}
\begin{split}
    \lim_{h\to 0^+} \mathrm{II}_h &= \int_{t_1}^t\int_{\B_2} \mA(z,\na u)\cdot\na(\eta \vla) \,d\tau \,dx \\
    &= \int_{t_1}^t\int_{\B_2\cap E(\La)} \mA(z,\na u)\cdot\na(\eta v) \,d\tau \,dx\\
    &\qquad + \int_{t_1}^t\int_{\B_2\cap E(\La)^c} \mA(z,\na u)\cdot\na(\eta \vla) \,d\tau \,dx.
\end{split}
\end{align*}
Leaving the first term, we estimate the second term. Using \eqref{12}, we have
\begin{align*}
\begin{split}
    &\left|\int_{t_1}^t\int_{\B_2\cap E(\La)^c} \mA(z,\na u)\cdot\na(\eta \vla) \,dz\right| \\
    &\leq 
    \sum_{i\in\NN}\iint_{\Q_2\cap 2U_i} |\mA(z,\na u)||\na(\eta \vla)| \,dz \\
    &\leq c\sum_{i\in\NN}\iint_{\Q_2\cap 2U_i} (|\na u|^{p-1}+a(z) |\na u|^{q-1})|\na(\eta \vla)| \,dz\\
    &\leq c\sum_{i\in\NN}\iint_{\Q_2\cap 2U_i} (|\na u|^{p-1}+a(z)|\na u|^{q-1})\left(\frac{|\vla|}{\rho_2-\rho_1}+|\na\vla|\right) \,dz. 
\end{split}
\end{align*}
We estimate $|\vla|$ using Lemma~\ref{pre_The2} and the term $(|\na u|^{p-1}+a|\na u|^{q-1})$ using Proposition~\ref{prop_3_4}\ref{prop2_est_nau_p-1}. Regarding $|\na\vla|$, we apply Lemma~\ref{lem_410} if $i\in\Theta_1$ and Lemma~\ref{lem_ts_2} if $i\in \Theta_2$.
Then we get
\begin{align*}
    \begin{split}
        \left|\int_{t_1}^t\int_{\B_2\cap E(\La)^c} \mA(z,\na u)\cdot\na(\eta \vla) \,dz\right|
        &\le c\theta^{\frac
    {2-q}{p}}\left ( \frac{\rho}{\rho_2-\rho_1} \right) \sum_{i\in\mathbb{N}} \La|2U_i|\\
    &\le c\theta^{\frac{2-q}{p}}   \left ( \frac{\rho}{\rho_2-\rho_1} \right) \La|E(\La)^c|.
    \end{split}
\end{align*}
We conclude that 
\begin{align}\label{est_for_IIh}
\begin{split}
    \lim_{h\to 0^+}\mathrm{II}_h 
    &\geq \int_{t_1}^t\int_{\B_2\cap E(\La)}\mA(z,\na u)\cdot\na (v\eta)\,d\tau\,dx\\
    &\qquad-c\theta^{\frac{2-q}{p}}   \left ( \frac{\rho}{\rho_2-\rho_1} \right) \La|E(\La)^c|.
\end{split}
\end{align}
Similarly, we have
\begin{align*}
    \begin{split}
        \lim_{h\to 0^+}\mathrm{III}_h 
    &\geq \int_{t_1}^t\int_{\B_2\cap E(\La)} (|F|^{p-2}F+a|F|^{q-2}F)\cdot\na (v\eta)\,d\tau\,dx\\
    &\qquad-c\theta^{\frac{2-q}{p}}   \left ( \frac{\rho}{\rho_2-\rho_1} \right) \La|E(\La)^c|.
    \end{split}
\end{align*}

From \eqref{est_for_Ih}, \eqref{est_for_IIh} and the above display, we conclude that
\begin{align*}
    \begin{split}
        &\int_{(\B_2\times \{t\})\cap E(\La)}|v|^2 \, dx+\int_{t_1}^t\int_{\B_2\cap E(\La)}\mA(z,\na u)\cdot\na (v\eta)\,dx\,d\tau\\
        &\qquad+ \int_{t_1}^t\int_{\B_2\cap E(\La)} (|F|^{p-2}F+a|F|^{q-2}F)\cdot\na (v\eta)\,d\tau\,dx\\
        &\le  c\theta^{\frac
    {2-q}{p}} \left ( \frac{\rho}{\rho_2-\rho_1} \right)^2 \La|E(\La)^c| +c \left( \frac{\rho}{\rho_2-\rho_1}  \right)^2 \iint_{\Q_2}\frac{|u-u_0|^2}{\sig_2}\,dz.
    \end{split}
\end{align*}
 
Now we multiply both sides by $\La^{-1-(1-\de)}$ and integrate with respect to $\La$ over $(\theta\Pi,\infty)$. This gives
\begin{align}\label{integrated_la}
\begin{split}
    &\int_{\theta\Pi}^\infty \La^{-2+\de} \int_{(\B_2\times \{t\})\cap E(\La)} |v |^2\,dx\,d\La \\
    &\qquad+\int_{\theta\Pi}^\infty \La^{-2+\de} \int_{t_1}^t\int_{\B_2\cap E(\La)}\mA(z,\na u)\cdot\na (v\eta)\,dx\,d\tau\,d\La \\
    &\qquad+\int_{\theta\Pi}^\infty \La^{-2+\de} \int_{t_1}^t\int_{\B_2\cap E(\La)} (|F|^{p-2}F+a|F|^{q-2}F)\cdot\na (v\eta)\,dx\,d\tau\,d\La\\
    &\leq c\theta^{\frac
    {2-q}{p}} \left ( \frac{\rho}{\rho_2-\rho_1} \right)^2 \int_{\theta\Pi}^\infty\La^{-1+\delta}|E(\La)^c| \,d\La \\
    &\qquad +  c \left( \frac{\rho}{\rho_2-\rho_1}  \right)^2 \iint_{\Q_2}\frac{|u-u_0|^2}{\sig_2}\,dz\int_{\theta\Pi}^\infty \La^{-2+\de} \,d\La.
\end{split}
\end{align}
For the first term in \eqref{integrated_la} we have by Fubini's theorem and the definition of $\mh$ in \eqref{def_mh}, that
\begin{align*}
\begin{split}
    \int_{\theta\Pi}^\infty\La^{-2+\de} \int_{(\B_2\times \{t\})\cap E(\La)}|v|^2\,dx\,d\La 
    &= \int_{\B_2\times \{t\}} |v|^2 \int_{\theta\Pi}^\infty\La^{-2+\de}\chi_{[\Upsilon(x,t),\infty)} \,d\La \,dx\\
    &=\int_{\B_2\times \{t\}} |v|^2 \int_{\mh(x,t)}^\infty\La^{-2+\de}\,d\La \,dx\\
    &=\frac{1}{1-\de}\int_{\B_2\times \{t\}} \frac{|v|^2 }{\mh^{1-\de}}\,dx.
\end{split}
\end{align*}
Similarly, we have for the second term in \eqref{integrated_la} that
\begin{align*}
    \begin{split}
        &\int_{\theta\Pi}^\infty\La^{-2+\de} \int_{t_1}^t\int_{\B_2\cap E(\La)}\mA(z,\na u)\cdot\na (v\eta)\,dx\,d\tau\,d\La \\ 
        & = \frac{1}{1-\de}\int_{t_1}^t\int_{\B_2} \frac{\mA(z,\na u)\cdot\na (v\eta)}{\mh^{1-\de}}\,dx\,d\tau
    \end{split}
\end{align*}
and for the third term in \eqref{integrated_la} that
\begin{align*}
    \begin{split}
        &\int_{\theta\Pi}^\infty\La^{-2+\de} \int_{t_1}^t\int_{\B_2\cap E(\La)} (|F|^{p-2}F+a|F|^{q-2}F)\cdot\na (v\eta)\,dx\,d\tau\,d\La \\ 
        & = \frac{1}{1-\de}\int_{t_1}^t\int_{\B_2} \frac{(|F|^{p-2}F+a|F|^{q-2}F)\cdot\na (v\eta)}{\mh^{1-\de}}\,dx\,d\tau.
    \end{split}
\end{align*}

Next, we estimate the right hand side of \eqref{integrated_la}, that is
\begin{align*}
    \begin{split}
    &c\theta^{\frac
    {2-q}{p}} \left ( \frac{\rho}{\rho_2-\rho_1} \right)^2 \int_{\theta\Pi}^\infty\La^{-1+\delta}|E(\La)^c| \,d\La\\
    &\qquad +  c\left( \frac{\rho}{\rho_2-\rho_1}  \right)^2 \iint_{\Q_2}\frac{|u-u_0|^2}{\sig_2}\,dz\int_{\theta\Pi}^\infty \La^{-2+\de} \,d\La.
    \end{split}
\end{align*}
For the first term we have by Fubini's theorem and \eqref{eq_est_mz} that
\begin{align*}
\begin{split}
    \int_{\theta \Pi}^\infty \La^{-1+\de}|E(\La)^c| \,d\La 
    &= \int_{\theta \Pi}^\infty \La^{-1+\de}\iint_{\RR^{n+1}}\chi_{\{z\in \RR^{n+1}: \Upsilon(z) >\La \}}\,dz \,d\La\\
    &=\iint_{\RR^{n+1}}\int_{\theta \Pi}^{\Upsilon(z)} \La^{-1+\de} \,d\La\,dz\\
    &\leq \iint_{\RR^{n+1}}\int_{0}^{\Upsilon(z)} \La^{-1+\de} \,d\La\,dz\\
    &= \frac{1}{\de}\iint_{\RR^{n+1}} \Upsilon(z)^{\de}\,dz \\
    &\leq \frac{c}{\de} \Pizo^\de|\Q_2|,
\end{split}
\end{align*}
where $c=c(n,p,q)$. 
For the second term we have again by Fubini's theorem that 
\begin{align*}
    \int_{\theta\Pi}^\infty\La^{-2+\de} \iint_{\Q_2}  \frac{|u-u_0|^2}{\sig_2}   \,dz \,d\La = \frac{1}{(1-\de)  (\theta\Pi)^{1-\de}}\iint_{Q_2}\frac{|u-u_0|^2}{\sig_2}\,dz.
\end{align*}

Inserting the previously obtained estimates into \eqref{integrated_la} and multiplying both sides with $1-\de$, we obtain
\begin{align}\label{after_fubini}
\begin{split}
    &\int_{\B_2\times\{t\} } \frac{|v |^2 }{\mh^{1-\de}}\,dx + \int_{t_1}^t\int_{\B_2} \frac{\mA(z,\na u)\cdot\na (v\eta)}{\mh^{1-\de}}\,dx\,d\tau \\
    &\qquad+ \int_{t_1}^t\int_{\B_2} \frac{(|F|^{p-2}F+a|F|^{q-2}F)\cdot\na (v\eta)}{\mh ^{1-\de}}\,dx\,d\tau\\
    &\leq c\left( \frac{\rho}{\rho_2-\rho_1} \right)^2   \left( \frac{1-\de}{\delta} \right)  \theta^\frac{2-q}{p}  \Pi_{z_0}^{\de}  |\Q_2| \\
    &\qquad+ c\left( \frac{\rho}{\rho_2-\rho_1} \right)^2 \frac{1}{ (\theta\Pi)^{1-\de}}\iint_{\Q_2}\frac{|u-u_0|^2}{\sig_2}\,dz,
\end{split}
\end{align}
where $c=c(\data)$. Note that the right hand side is finite.

Next, we estimate the term containing $\mA$. By the product rule and \eqref{12}, we get
\begin{align*}
\begin{split}
    &\int_{t_1}^t\int_{\B_2} \frac{\mA(z,\na u)\cdot\na (v\eta)}{\mh^{1-\de}}\,dx\,d\tau \\
    &= \int_{t_1}^t\int_{\B_2} \frac{\mA(z,\na u)\cdot\na ((u-u_0)\eta^2)}{\mh^{1-\de}}\,dx\,d\tau \\
    &\geq c \int_{t_1}^t\int_{\B_2}\frac{ (|\na u|^p+a(z)|\na u|^q) \eta^2  }{\mh^{1-\de}}\,dx\,d\tau \\
    &\qquad- c\int_{t_1}^t\int_{\B_2}\frac{(|\na u|^{p-1}+a(z)|\na u|^{q-1})\eta|u-u_0|}{(\rho_2-\rho_1)\mh^{1-\de}}\,dx\,d\tau
\end{split}
\end{align*}
We apply Young's inequality to the second term on the right hand side to get
\begin{align*}
\begin{split}
    \int_{t_1}^t\int_{\B_2} \frac{\mA(z,\na u)\cdot\na (v\eta)}{\mh^{1-\de}}\,dx\,d\tau
    &\geq c \int_{t_1}^t\int_{\B_2}\frac{ (|\na u|^p+a(z)|\na u|^q) \eta^2  }{\mh^{1-\de}} \,dx\,d\tau\\
    &\qquad- c\int_{t_1}^t\int_{\B_2} \frac{|u-u_0|^p}{(\rho_2-\rho_1)^p\mh^{1-\de}} \,dx\,d\tau\\
    &\qquad- c\int_{t_1}^t\int_{\B_2} \frac{a|u-u_0|^q}{(\rho_2-\rho_1)^q\mh^{1-\de}} \,dx\,d\tau
\end{split}
\end{align*}

We estimate further the second term on the right hand side of the above inequality. As 
\[
\mh\geq \frac{|u-u_0|^p}{\rho^p}+a\frac{|u-u_0|^q}{\rho^q}+|F|^p+a|F|^q
\]
we get 
\begin{align*}
    \begin{split}
        \int_{t_1}^t\int_{\B_2} \frac{\mA(\cdot,\na u)\cdot\na (v\eta)}{\mh ^{1-\de}}\,dz
    &\geq c\int_{t_1}^t\int_{\B_2}   \frac{H(z,|\na u|)}{\mh^{1-\delta}} \eta^2 \,dx\,d\tau \\
    &\qquad-c\left( \frac{\rho}{\rho_2-\rho_1} \right)^q \iint_{\Q_2}H\left(z,\frac{ |u-u_0|  }{\rho}\right)^\de\,dz.
    \end{split}
\end{align*}
Similarly, we have
\begin{align*}
    \begin{split}
        &\int_{t_1}^t\int_{\B_2} \frac{(|F|^{p-2}F+a|F|^{q-2}F)\cdot\na (v\eta)}{\mh ^{1-\de}}\,dx\,d\tau  \\
        &\ge  -\frac{c}{2}\int_{t_1}^t\int_{\B_2}   \frac{H(z,|\na u|)}{\mh^{1-\delta}} \eta^2 \,dx\,d\tau  -c_2  \iint_{\Q_2} H(z,|F|)^\delta\,dz  \\
        &\qquad- c_2\left( \frac{\rho}{\rho_2-\rho_1} \right)^q \iint_{\Q_2}H\left(z, \frac{ |u-u_0| }{\rho}\right)^\de\,dz,
    \end{split}
\end{align*}
where the constant $c = c(\data)$ is the same as in the previous display in order to absorb the first term on the right hand side of the above display.

Inserting the above estimates to \eqref{after_fubini}, we get 
\begin{align*}
    \begin{split}
        &\int_{\B_2 \times \{t\}} \frac{|v|^2}{\mh^{1-\de}}\,dx + \int_{t_1}^t\int_{\B_1}\frac{H(z,|\na u|)}{\mh^{1-\de}}\,dx\,d\tau\\ 
        &\leq c\left( \frac{\rho}{\rho_2-\rho_1} \right)^2   \left( \frac{1-\de}{\delta} \right)  \theta^\frac{2-q}{p}  \Pi_{z_0}^{\de}  |\Q_2| \\
         &\qquad +   c\left( \frac{\rho}{\rho_2-\rho_1} \right)^2 \frac{1}{ (\theta\Pi)^{1-\de}}\iint_{\Q_2}\frac{|u-u_0|^2}{\sig_2}\,dz \\
        &\qquad+ c\left( \frac{\rho}{\rho_2-\rho_1} \right)^q \iint_{\Q_2}H\left(z,\frac{ |u-u_0| }{\rho}  \right)^\de\,dz +c  \iint_{\Q_2} H(z,|F|)^\delta\,dz.
    \end{split}
\end{align*}
As the above inequality holds for any $t\in \ti_1$ and $t_1\leq t_0-\sigma_1$, we have
\begin{align}\label{after_A}
     \begin{split}
        &\sup_{t\in\ti_1}\int_{\B_1 \times \{t\}} \frac{|u-u_0|^2}{\mh^{1-\de}}\,dx + \iint_{\Q_1}\frac{H(z,|\na u|)}{\mh^{1-\de}}\,dz\\ 
        &\leq c\left( \frac{\rho}{\rho_2-\rho_1} \right)^2   \left( \frac{1-\de}{\delta} \right)  \theta^\frac{2-q}{p}  \Pi_{z_0}^{\de}  |\Q_2| \\
         &\qquad +   c\left( \frac{\rho}{\rho_2-\rho_1} \right)^2 \frac{1}{ (\theta\Pi)^{1-\de}}\iint_{\Q_2}\frac{|u-u_0|^2}{\sig_2}\,dz \\
        &\qquad+ c\left( \frac{\rho}{\rho_2-\rho_1} \right)^q \iint_{\Q_2}H\left(z,\frac{|u-u_0|}{\rho}\right)^\de\,dz +c  \iint_{\Q_2} H(z,|F|)^\delta\,dz.
    \end{split}
\end{align}

Next, we get rid of the weight $\mh^{-1+\de}$ in the second term on the left-hand side of \eqref{after_A}. Let
\begin{align*}
E=\{z\in \Q_1 : H(z,|\na u|) \geq (1-\de) \Upsilon(z)\;\text{and}\; \Upsilon(z)\geq \theta\Pi\}.
\end{align*}
Note that in $E$ we have $\Upsilon=\mh$, while for a.e. $z \in \Q_1/E$, either $H(z,|\na u|)< (1-\de) \Upsilon (z)$ or $H(z,|\na u|)\leq \Upsilon(z)< \theta \Pi$. Therefore
\begin{align*}
    \begin{split}
        \iint_{\Q_1}H(z,|\na u|)^\de\,dz &= \iint_EH(z,|\na u|)^\de\,dz+\iint_{\Q_1\setminus E}H(z,|\na u|)^\de\,dz\\
        &= \iint_E\frac{H(z,|\na u|)}{H(z,|\na u|)^{1-\de}}\,dz+\iint_{\Q_1\setminus E}H(z,|\na u|)^\de\,dz\\
        &\leq (1-\de)^{-1+\de}\iint_E \frac{H(z,|\na u|)}{\mh ^{1-\de}}\,dz+ \iint_{\Q_1\setminus E}((1-\de) \Upsilon)^{\de}\,dz\\
        &\qquad+\iint_{\Q_1\setminus E}(\theta\Pi)^\de\,dz\\
        &\leq 2\iint_{\Q_1} \frac{H(z,|\na u|)}{\mh^{1-\de}}\,dz+ (1-\de)^\delta\iint_{\RR^{n+1}} \Upsilon ^\de\,dz+(\theta\Pi)^\de|\Q_2|,
    \end{split}
\end{align*}
where we also used the elementary fact that $(1-\de)^{-1+\de}\leq 2$ for any $\de \in (0,1)$. From \eqref{eq_est_mz} we get
\[
(1-\de)^\delta\iint_{\RR^{n+1}} \Upsilon(z)^\de\,dz \leq c (1-\de)^\de \Pizo^\de|\Q_2|.
\]
From \eqref{eq_p1} and the two inequalities above we obtain
\begin{align*}
\begin{split}
    \Pizo^\de|\Q_2| 
    &\leq c\iint_{\Q_1}H(z,|\na u|)^\de\,dz \\
    &\leq c\iint_{\Q_1} \frac{H(z,|\na u|)}{\mh(z)^{1-\de}}\,dz+ c(\theta^\de+(1-\de)^\de) \Pizo^\de|\Q_2|.
\end{split}
\end{align*}

Now we are ready to estimate the left hand side of the original claim in \eqref{claim_cac}. By the previous estimate and \eqref{after_A} we have 
\begin{align*}
    \begin{split}
        &\sup_{t\in\ti_1}\int_{\B_1\times \{t\}} \frac{|u-u_0|^2}{\mh^{1-\de}}\,dx + \Pizo^\de|\Q_2|\\
        &\leq        c\left( \frac{\rho}{\rho_2-\rho_1} \right)^2 \frac{1}{ (\theta\Pi)^{1-\de}}\iint_{\Q_2}\frac{|u-u_0|^2}{\sig_2}\,dz \\
        &\qquad+ c\left( \frac{\rho}{\rho_2-\rho_1} \right)^q \iint_{\Q_2}H\left(z,\frac{ |u-u_0|  }{\rho}\right)^\de\,dz +c  \iint_{\Q_2} H(z,|F|)^\delta\,dz\\
        &\qquad+c\left( \frac{\rho}{\rho_2-\rho_1} \right)^2   \left( \frac{1-\de}{\delta} \right)  \theta^\frac{2-q}{p}  \Pi_{z_0}^{\de}  |\Q_2| + c(\theta^\de+(1-\de)^\de) \Pizo^\de|\Q_2|.
    \end{split}
\end{align*}
Now, we use the assumption that $\rho_2-\rho_1\ge \rho$ so that 
\begin{align*}
    \begin{split}
        &\sup_{t\in\ti_1}\int_{\B_1\times \{t\}} \frac{|u-u_0|^2}{\mh^{1-\de}}\,dx + \Pizo^\de|\Q_2|\\
        &\leq        c\frac{1}{ (\theta\Pi)^{1-\de}}\iint_{\Q_2}\frac{|u-u_0|^2}{\sig_2}\,dz  + c\iint_{\Q_2}H\left(z, \frac{ | u-u_0|  }{\rho} \right)^\de\,dz +c  \iint_{\Q_2} H(z,|F|)^\delta\,dz\\
        &\qquad+c \left( \left( \frac{1-\de}{\delta} \right) \theta^\frac{2-q}{p} + \theta^\de+(1-\de)^\de \right) \Pizo^\de|\Q_2|.
    \end{split}
\end{align*}
Then, we take $\delta_1\in (\mu,1)$ to have for $\delta\in(\delta_1,1)$
\[
\left( \frac{1-\de}{\delta} \right) \theta^\frac{2-q}{p} + \theta^\de+(1-\de)^\de
\le   \mu^{-1}(1-\delta)  \theta^\frac{2-q}{p} + \theta^\mu+(1-\de)^\mu.
\]
Now, take $\theta$ small enough and then $\delta_1$ sufficiently close to $1$ depending on $\data$ to obtain 
\[
c\left( \left( \frac{1-\de}{\delta} \right) \theta^\frac{2-q}{p} + \theta^\de+(1-\de)^\de\right)\le \frac{1}{2}
\]
for any $\delta\in(\delta_1,1)$. Then we have
\begin{align*}
    \begin{split}
        &\sup_{t\in\ti_1}\int_{\B_1\times \{t\}} \frac{|u-u_0|^2}{\mh^{1-\de}}\,dx + \Pizo^\de|\Q_2|\\
        &\leq        c\iint_{\Q_2}\frac{|u-u_0|^2}{\Pi^{1-\delta}\sig_2}\,dz  + c\iint_{\Q_2}H\left(z, \frac{|u-u_0|}{\rho}\right)^\de\,dz +c  \iint_{\Q_2} H(z,|F|)^\delta\,dz.
    \end{split}
\end{align*}
Finally, recall $u_0=u_{\Q_2}$ and $\theta \Pi\le \mh$ and observe that
\begin{align*}
    \begin{split}
        \sup_{t\in\ti_1}\int_{\B_1\times \{t\}} \frac{|u-u_{\Q_1}|^2}{\mh^{1-\delta}}\,dx
        &\le 2\sup_{t\in\ti_1}\int_{\B_1\times \{t\}} \frac{|u-u_{\Q_2}|^2}{\mh^{1-\delta}}\,dx+ c|\B_1|\frac{|u_{\Q_1} - u_{\Q_2}|}{ \Pi^{1-\delta}}\\
        &\le 2\sup_{t\in\ti_1}\int_{\B_1\times \{t\}} \frac{|u-u_{\Q_2}|^2}{\mh^{1-\delta}}\,dx + c\iint_{\Q_1} \frac{|u-u_{\Q_2}|}{\Pi^{1-\delta}\sig_1}\,dz\\
        &\le 2\sup_{t\in\ti_1}\int_{\B_1\times \{t\}} \frac{|u-u_{\Q_2}|^2}{\mh^{1-\delta}}\,dx + c\iint_{\Q_2} \frac{|u-u_{\Q_2}|}{\Pi^{1-\delta}\sig_2}\,dz.
    \end{split}
\end{align*}
Substituting this estimate to the previous one and using the fact that $|\Q_1| \approx |\Q_2|$ up to
$c = c(n)$ finishes the proof.
\end{proof}

\section{Reverse Hölder inequality}\label{sec_rev_hölder}
In this section, we prove the reverse Hölder inequality.
The proof is based on parabolic Poincar\'e inequalities in intrinsic cylinders, the Caccioppoli inequality in Lemma~\ref{lem_caccioppoli}, and the following standard Sobolev-Gagliardo-Nirenberg inequality.
\begin{lemma}\label{lem_sobo}
	Let $B_{\rho}(x_0)\subset\RR^n$, $\sig,s,r\in[1,\infty)$ and $\vartheta\in(0,1)$ be such that 
	\begin{align*}
		-\frac{n}{\sig}\le \vartheta\left(1-\frac{n}{s}\right)-(1-\vartheta)\frac{n}{r}.
	\end{align*}
	Then there exists a constant $c=c(n,\sig)$ such that
	\begin{align*}
		\fint_{B_{\rho}(x_0)}\frac{|v|^\sig}{\rho^\sig}\,dx
		\le c\left(\fint_{B_{\rho}(x_0)}\left(\frac{|v|^s}{\rho^s}+|\na v|^s\right)\,dx\right)^\frac{\vartheta \sig}{s}\left(\fint_{B_{\rho}(x_0)}\frac{|v|^r}{\rho^r}\,dx\right)^\frac{(1-\vartheta)\sig}{r}
	\end{align*}
  for every $v\in W^{1,s}(B_{\rho}(x_0))$.
\end{lemma}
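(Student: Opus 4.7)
The inequality is the standard Gagliardo--Nirenberg interpolation between $W^{1,s}$ and $L^r$, and the proof would proceed in three steps: rescaling to the unit ball, an application of the classical Sobolev embedding, and a Hölder-type interpolation between $L^{s^\ast}$ and $L^r$.

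First, I would normalize $\rho = 1$ by the change of variables $w(y)=v(x_0+\rho y)$, for which $\na_y w(y) = \rho\,\na_x v(x_0+\rho y)$. The weights $\rho^{-\sigma}$, $\rho^{-s}$ and $\rho^{-r}$ are precisely those which make the three averaged quantities scale-invariant, so the statement reduces to proving
$$
\left(\fint_{B_1}|w|^\sigma\,dy\right)^{1/\sigma} \le c\left(\fint_{B_1}\bigl(|w|^s+|\na w|^s\bigr)\,dy\right)^{\vartheta/s}\left(\fint_{B_1}|w|^r\,dy\right)^{(1-\vartheta)/r}.
$$

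Second, pick a Sobolev conjugate exponent $s^\ast$ for $s$: set $s^\ast = ns/(n-s)$ if $s<n$, any sufficiently large finite $s^\ast$ if $s=n$, and $s^\ast=\infty$ if $s>n$. In each case the classical Sobolev embedding on $B_1$ yields
$$
\|w\|_{L^{s^\ast}(B_1)} \le c\bigl(\|w\|_{L^s(B_1)}+\|\na w\|_{L^s(B_1)}\bigr).
$$
Let $p_0$ be defined by $1/p_0 = \vartheta/s^\ast + (1-\vartheta)/r$. The hypothesis $-n/\sigma\le\vartheta(1-n/s)-(1-\vartheta)n/r$ rewrites exactly as $1/\sigma\ge 1/p_0$, i.e.\ $\sigma \le p_0$, so on the bounded ball Hölder yields $\|w\|_{L^\sigma(B_1)}\le c\|w\|_{L^{p_0}(B_1)}$, while the log-convexity of $L^p$-norms (interpolation) gives
$$
\|w\|_{L^{p_0}(B_1)} \le \|w\|_{L^{s^\ast}(B_1)}^\vartheta \|w\|_{L^r(B_1)}^{1-\vartheta}.
$$
Concatenating these three estimates, raising to the $\sigma$-th power, converting between integrals and averages on $B_1$ (a factor depending only on $n$), and finally scaling back to $B_\rho(x_0)$ produces the asserted inequality.

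The argument is elementary and there is no serious obstacle; the only mildly delicate point is the borderline case $s\ge n$, where $s^\ast$ must be chosen by hand rather than by the formula $ns/(n-s)$. Any sufficiently large finite (or infinite) choice such that $1/s^\ast \le \vartheta^{-1}(1/\sigma -(1-\vartheta)/r)$ suffices, and the remainder of the argument is unchanged.
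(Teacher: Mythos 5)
The paper treats this as a known Sobolev--Gagliardo--Nirenberg inequality and does not prove it, so there is no internal proof to compare against. Your route --- rescale to $B_1$, Sobolev embedding $W^{1,s}\hookrightarrow L^{s^*}$, then Hölder and log-convexity interpolation --- is the standard one, and it is correct in the case $s<n$: then $1/s^*=1/s-1/n$, the hypothesis is literally $1/\sigma\ge\vartheta/s^*+(1-\vartheta)/r=1/p_0$, and the chain $\|w\|_{L^\sigma(B_1)}\le c\|w\|_{L^{p_0}(B_1)}\le \|w\|_{L^{s^*}(B_1)}^\vartheta\|w\|_{L^r(B_1)}^{1-\vartheta}\le c\|w\|_{W^{1,s}(B_1)}^\vartheta\|w\|_{L^r(B_1)}^{1-\vartheta}$ closes.

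The gap is your treatment of $s\ge n$. The condition you impose on $s^*$, namely $1/s^*\le\vartheta^{-1}\bigl(1/\sigma-(1-\vartheta)/r\bigr)$, can only be met with a legitimate Sobolev exponent if $1/\sigma>(1-\vartheta)/r$ strictly; if equality holds (which the hypothesis allows when $s=n$, since then $1-n/s=0$) you are forced to $s^*=\infty$, but $W^{1,n}\not\hookrightarrow L^\infty$. For $s>n$ it is worse: since $1/s-1/n<0$, the hypothesis $1/\sigma\ge\vartheta(1/s-1/n)+(1-\vartheta)/r$ permits $1/\sigma<(1-\vartheta)/r$, and then no choice of $s^*$, finite or infinite, satisfies your inequality. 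So the assertion that the hypothesis ``rewrites exactly as $1/\sigma\ge 1/p_0$'' is simply false once $s\ge n$, and the closing remark acknowledging this case does not actually fix it. Nor is this range hypothetical for this paper: the lemma is invoked with $s=\delta p$ or $s=\delta^* p$, where $p>2$ and $\delta,\delta^*$ are close to $1$, so $s>n$ always when $n=2$, for example. Covering $s\ge n$ requires the genuine Gagliardo--Nirenberg argument (Gagliardo's slicing, or extension to $\RR^n$ together with the whole-space interpolation inequality), not Sobolev-embedding-plus-Hölder; the parameters in the paper's actual applications do happen to give $1/\sigma>(1-\vartheta)/r$ strictly, so the lemma could be weakened to avoid the issue, but your proof as written does not establish the lemma in the generality stated.
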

 
 For the simplicity of calculations, we redefine $\delta_1=\max\{\delta_1,\tfrac{2}{p}\}$ so that for any $\delta\in(\delta_1,1)$, we have
\[
\delta p>2.
\]
Indeed, the calculations for the case $\delta p\le 2$ is similar to the singular case in this context. Moreover, we will assume that $\rho\in(0,R_0)$, where  $R_0\in (0,1)$ depending on $\data$, $\delta$, $\| H(z,|\na u|) \|_{L^\delta(\Om_T)}$, $\| H(z,|F|) \|_{L^\delta(\Om_T)}$ and $\|u\|_{C(0,T;L^2(\Omega,\mathbb{R}^N))}$ will be chosen in Lemma~\ref{lem_KKM_4_3}, Lemma~\ref{lemma_decay} and Lemma~\ref{p_q_comp}.

\subsection{The $p$-intrinsic case}
We begin by proving a parabolic Poincar\'e inequality in the $p$-intrinsic case.
First we estimate the last term in Lemma~\ref{lem_parabolic_poincare}.
\begin{lemma}\label{sec4:lem:1}
	Let $u$ be a very weak solution to \eqref{11}. Then, for $\theta\in((q-1)/p,\de]$ and $s\in[2\rho,4\rho]$, 
	there exists a constant $c=c(\data)$ such that
	\begin{align*}
		\begin{split}
			&\fiint_{Q_{s}^\pi(z_0)}(|\na u|^{p-1}+a(z)|\na u|^{q-1}+|F|^{p-1} + a(z) |F|^{q-1} )\,dz\\
			&\le c\fiint_{Q_{s}^\pi(z_0)} (|\na u| + |F| )^{p-1}\,dz\\
            &\qquad +c\pi^{-1+\frac{p}{q}}\fiint_{Q_{s}^\pi(z_0)}a(z)^\frac{q-1}{q} ( |\na u|^{q-1} + |F|^{q-1} )\,dz\\
		&\qquad+c \left(\fiint_{Q_{s}^\pi(z_0)}  (  |\na u| + |F|   )^{\theta p}\,dz\right)^{\frac{p-1}{\theta p}},
		\end{split}
	\end{align*}
 whenever $Q_{16\rho}^{\pi}(z_0)\subset\Omega_T$ satisfies \eqref{eq_pcase}-\eqref{eq_p2}.
\end{lemma}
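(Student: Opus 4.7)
The plan is to exploit the $p$-intrinsic regime to reduce the coefficient $a(z)$ to an essentially constant factor on the cylinder, after which the estimate follows from elementary inequalities. For the two unweighted $(p-1)$-power terms on the left-hand side, the trivial bound $|\na u|^{p-1}+|F|^{p-1}\le c(|\na u|+|F|)^{p-1}$ yields the first term on the right-hand side after averaging.

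The main work is to handle the $a(z)$-weighted terms, and the key point is a uniform pointwise bound for $a(z)$ on $Q_s^\pi(z_0)$. Using the Hölder regularity \eqref{eq_holder_reg} of $a$, for any $z=(x,t)\in Q_s^\pi(z_0)$ with $s\in[2\rho,4\rho]$, we have $|x-x_0|\le 4\rho$ and $|t-t_0|\le \pi^{2-p}(4\rho)^2$; since $p>2$ and $\pi>1$ give $\pi^{(2-p)\alpha/2}\le 1$, the parabolic time distance contribution to $|t-t_0|^{\alpha/2}$ is controlled by $c\rho^\alpha$. Combining with the $p$-intrinsic assumption \eqref{eq_pcase} (so $a(z_0)\le\pi^{p-q}$) and the decay estimate \eqref{eq_rho_decay} ($\rho^\alpha\le\pi^{p-q}$), we obtain
\[
a(z)\le a(z_0)+c[a]_\alpha\,\rho^\alpha\le c\pi^{p-q}\quad\text{uniformly on }Q_s^\pi(z_0).
\]
Writing $a(z)=a(z)^{(q-1)/q}\cdot a(z)^{1/q}$ and using $a(z)^{1/q}\le c\pi^{(p-q)/q}=c\pi^{-1+p/q}$, we get the pointwise bound
\[
a(z)\bigl(|\na u|^{q-1}+|F|^{q-1}\bigr)\le c\pi^{-1+p/q}\,a(z)^{(q-1)/q}\bigl(|\na u|^{q-1}+|F|^{q-1}\bigr),
\]
and averaging over $Q_s^\pi(z_0)$ produces the second term on the right-hand side.

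The third term, involving the higher moment $\theta p$, is recovered from the first term by Hölder's inequality with exponent $\theta p/(p-1)>1$, which is admissible since $\theta>(q-1)/p>(p-1)/p$; its inclusion in the statement presumably anticipates downstream flexibility in the reverse Hölder argument. The only substantive step is the uniform $L^\infty$ bound on $a$, which relies crucially on the restriction $p>2$ (so the $p$-intrinsic time scaling $\pi^{2-p}$ does not inflate the Hölder correction) and on the decay \eqref{eq_rho_decay}; the remainder is a direct chain of elementary inequalities rather than a genuine obstacle.
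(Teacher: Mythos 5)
Your proof is correct, and it takes a genuinely different route from the paper's. The paper decomposes $a(z)\le \inf_{w\in Q_s^\pi(z_0)}a(w)+[a]_\alpha s^\alpha$, treats the infimum part by factoring $\inf a=(\inf a)^{1/q}(\inf a)^{(q-1)/q}$ with $(\inf a)^{1/q}\le a(z_0)^{1/q}\le\pi^{(p-q)/q}$ (giving the second RHS term), and then must handle the leftover oscillation term $s^\alpha\fiint|\na u|^{q-1}$ separately via \eqref{eq_rho_decay}, H\"older interpolation, and \eqref{eq_p2} — this is precisely where the third RHS term $(\fiint(|\na u|+|F|)^{\theta p})^{(p-1)/(\theta p)}$ is genuinely produced. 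You instead establish the uniform pointwise bound $a(z)\le a(z_0)+c[a]_\alpha\rho^\alpha\le c\pi^{p-q}$ on the whole cylinder (using \eqref{eq_holder_reg}, $p>2$ and $\pi>1$ to tame the time-direction distance, plus \eqref{eq_pcase} and \eqref{eq_rho_decay}), factor $a(z)=a(z)^{(q-1)/q}a(z)^{1/q}$, and bound only $a(z)^{1/q}\le c\pi^{(p-q)/q}$. This collapses the whole $a$-weighted estimate into the second RHS term alone, so the oscillation split — and hence the interpolation step and the use of \eqref{eq_p2} — never arises; the third term can simply be appended as a nonnegative quantity. Your approach is shorter and proves a strictly stronger inequality (terms one and two on the right suffice), at the cost of resting on the full $L^\infty$ bound for $a$ on the cylinder rather than on the infimum; both versions use the same ingredients \eqref{eq_holder_reg}, \eqref{eq_pcase}, \eqref{eq_rho_decay}, but yours bypasses \eqref{eq_p2} in this lemma. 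One minor remark: your closing observation that the third term dominates the first by Jensen/H\"older is a correct aside, but what actually matters for the deduction is just that the third term is nonnegative, so adding it preserves the inequality you proved.
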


\begin{proof}
	We estimate the term involving $|\na u|$ first.
    It follows from \eqref{eq_range_q} that $q-1<p$. By \eqref{eq_holder_reg} there exists a  constant $c=c([a]_{\alpha})$ such that
	\begin{equation}\label{sec4:41}
		\begin{split}
			&\fiint_{Q_{s}^\pi(z_0)}(|\na u|^{p-1}+a(z)|\na u|^{q-1})\,dz\\
			&\le c\fiint_{Q_{s}^\pi(z_0)}|\na u|^{p-1}\,dz+c\fiint_{Q_{s}^\pi(z_0)}\inf_{w\in Q_{s}^\pi(z_0)}a(w)|\na u|^{q-1}\,dz\\
			&\qquad+cs^\alpha\fiint_{Q_{s}^\pi(z_0)}|\na u|^{q-1}\,dz.
		\end{split}
	\end{equation}
	We apply \eqref{eq_pcase} to estimate the second term on the right-hand side of \eqref{sec4:41} and obtain
	\begin{align*}
		\begin{split}
			&\fiint_{Q_{s}^\pi(z_0)}\inf_{w\in Q_{s}^\pi(z_0)}a(w)(|\na u|+|F|)^{q-1}\,dz\\
			&\le \pi^{-1+\frac{p}{q}}\fiint_{Q_{s}^\pi(z_0)}\inf_{w\in Q_{s}^\pi(z_0)}a(w)^\frac{q-1}{q}(|\na u|+|F|)^{q-1}\,dz\\
			&\le \pi^{-1+\frac{p}{q}}\fiint_{Q_{s}^\pi(z_0)}a(z)^\frac{q-1}{q}(|\na u|+|F|)^{q-1}\,dz.
		\end{split}
	\end{align*}
	In order to estimate the last term on the right-hand side of \eqref{sec4:41}, we have by \eqref{eq_rho_decay} and Hölder's inequality that
    \begin{align*}
        \begin{split}
            s^\alpha\fiint_{Q_{s}^\pi(z_0)}|\na u|^{q-1}\,dz 
            &\leq \pi^{p-q} \left(\fiint_{Q_{s}^\pi(z_0)}|\na u|^{\theta p}\,dz\right)^\frac{q-1}{\theta p}\\
            &\leq \pi^{p-q} \left(\fiint_{Q_{s}^\pi(z_0)}|\na u|^{\delta p}\,dz\right)^{\frac{q-p}{\delta p}}  \left(\fiint_{Q_{s}^\pi(z_0)}|\na u|^{\theta p}\,dz\right)^{\frac{p-1}{\theta p}}\\
            &\le c \left(\fiint_{Q_{s}^\pi(z_0)}|\na u|^{\theta p}\,dz\right)^\frac{p-1}{\theta p}.
        \end{split}
    \end{align*}
    The same calculations hold for the term involving $|F|$.
This completes the proof.
\end{proof}

\begin{lemma}\label{sec4:lem:2}
	Let $u$ be a very weak solution to \eqref{11}. Then, for $\theta\in((q-1)/p,\de]$ and $s\in[2\rho,4\rho]$, 
	there exists a constant $c=c(\data)$ such that 
	\begin{equation*}
		\begin{split}
			\fiint_{Q_{s}^\pi(z_0)}\frac{|u-u_{Q_{s}^\pi(z_0)}|^{\theta p}}{s^{\theta p}}\,dz
			\le c\fiint_{Q_{s}^\pi(z_0)} ( H(z,|\na u|)+ H(z,|F|) )^\theta\,dz
		\end{split}
	\end{equation*}
whenever $Q_{16\rho}^{\pi}(z_0)\subset\Omega_T$ satisfies \eqref{eq_pcase}-\eqref{eq_p2}.
\end{lemma}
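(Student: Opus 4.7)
The approach is to reduce to the parabolic Poincar\'e inequality of Lemma~\ref{lem_parabolic_poincare}. I would apply it with $m=p$ and the given $\theta \in ((q-1)/p,\delta]$, noting that the admissibility condition $\theta > 1/m$ holds since $(q-1)/p > 1/p$. On the $p$-intrinsic cylinder $Q_s^\pi(z_0)$, matching the notation of Lemma~\ref{lem_parabolic_poincare} with $r=s$ and $\tilde s = \pi^{2-p}s^2$, the ratio $\tilde s/r^2$ equals $\pi^{2-p}$. This yields
\[
\fiint_{Q_s^\pi(z_0)}\frac{|u-u_{Q_s^\pi(z_0)}|^{\theta p}}{s^{\theta p}}\,dz
\le c\fiint_{Q_s^\pi(z_0)}|\na u|^{\theta p}\,dz + c\,\pi^{(2-p)\theta p}\,(\mathrm{I}+\mathrm{II})^{\theta p},
\]
where $\mathrm{I} = \fiint_{Q_s^\pi(z_0)}(|\na u|^{p-1}+a(z)|\na u|^{q-1})\,dz$ and $\mathrm{II}$ is the analogous expression with $F$. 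The first term on the right is directly dominated by $c\fiint H(z,|\na u|)^\theta\,dz$ because $|\na u|^{\theta p}\le H(z,|\na u|)^\theta$.

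Next I would insert Lemma~\ref{sec4:lem:1} to decompose $\mathrm{I}$ and $\mathrm{II}$ into three pieces: a pure $(p-1)$-power average of $|\na u|+|F|$; a mixed piece carrying the factor $\pi^{-1+p/q}$ with integrand $a^{(q-1)/q}(|\na u|^{q-1}+|F|^{q-1})$; and a $\theta p$-power average term. For the pure $(p-1)$- and $\theta p$-pieces, H\"older's inequality (valid since $\theta p > p-1$, from $\theta > (q-1)/p > (p-1)/p$) reduces everything to the quantity $I := \fiint_{Q_s^\pi}(|\na u|+|F|)^{\theta p}\,dz$. By \eqref{eq_p2} together with Jensen's inequality (admissible since $\theta \le \delta$) we have $I \le c\,\pi^{\theta p}$. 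Writing $I^{p-1} = I\cdot I^{p-2}$ and using $I^{p-2}\le c\,\pi^{\theta p(p-2)}$ makes the outer factor $\pi^{(2-p)\theta p}$ cancel exactly, leaving a contribution of size $cI \le c\fiint (H(z,|\na u|)^\theta + H(z,|F|)^\theta)\,dz$.

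For the mixed piece, I would rewrite $a^{(q-1)/q}|\na u|^{q-1} = (a|\na u|^q)^{(q-1)/q}$ and apply H\"older with exponent $\theta q/(q-1) > 1$ (valid since $\theta > (q-1)/p > (q-1)/q$) to obtain a bound in terms of $J := \fiint_{Q_s^\pi}(a|\na u|^q)^\theta\,dz \le \fiint H(z,|\na u|)^\theta\,dz \le c\,\pi^{\theta p}$. Splitting $J^{p(q-1)/q} = J\cdot J^{p(q-1)/q -1}$ (the exponent is nonnegative since $p\ge 2$ and $q/(q-1)<2$) and using $J^{p(q-1)/q-1} \le c\,\pi^{\theta p\,[p(q-1)/q - 1]}$, one checks that the compound $\pi$-exponent $\theta p\bigl[(2-p) + (-1+p/q) + p(q-1)/q - 1\bigr]$ equals exactly $0$; hence this piece is also bounded by $cJ \le c\fiint H^\theta$. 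The $F$-analog proceeds identically.

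The main obstacle is simply bookkeeping: tracking all $\pi$-exponents through the cascade of H\"older and Jensen steps and verifying that each is admissible under $\theta \in ((q-1)/p,\delta]$. The lower bound $\theta > (q-1)/p$ is precisely what makes the reverse H\"older steps controlling $(|\na u|+|F|)^{p-1}$ and $a^{(q-1)/q}|\na u|^{q-1}$ work, while $\theta \le \delta$ is what allows Jensen to invoke the $p$-intrinsic bound \eqref{eq_p2}. No new analytic ingredient is required beyond Lemma~\ref{lem_parabolic_poincare} and Lemma~\ref{sec4:lem:1}.
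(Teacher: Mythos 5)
Your proposal is correct and follows essentially the same route as the paper: apply Lemma~\ref{lem_parabolic_poincare} to obtain the display \eqref{sec4:42}, insert Lemma~\ref{sec4:lem:1}, and then use H\"older together with \eqref{eq_p2} to make the $\pi$-exponents cancel in each of the three pieces. The compound-exponent check you carry out for the mixed piece, $\theta p\bigl[(2-p)+(-1+p/q)+p(q-1)/q-1\bigr]=0$, is exactly the cancellation the paper performs in two steps for that term.
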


\begin{proof}
	By Lemma~\ref{lem_parabolic_poincare} and Lemma~\ref{sec4:lem:1}, there exists a constant $c=c(\data)$ such that
	\begin{align}\label{sec4:42}
		\begin{split}
			&\fiint_{Q_{s}^\pi(z_0)}\frac{|u-u_{Q_{s}^\pi(z_0)}|^{\theta p}}{s^{\theta p}}\,dz \\
			&\le c\fiint_{Q_{s}^\pi(z_0)}|\na u|^{\theta p}\,dz +c\left(\pi^{2-p}\fiint_{Q_{s}^\pi(z_0)}|\na u|^{p-1}  + |F|^{p-1}  \,dz\right)^{\theta p}\\
			&\qquad+c\left(\pi^{1-p+\frac{p}{q}}\fiint_{Q_{s}^\pi(z_0)}a(z)^\frac{q-1}{q} (|\na u|  +  |F|  )^{q-1}\,dz\right)^{\theta p}\\
			&\qquad+c\pi^{ (2-p )\theta p}\left(\fiint_{Q_{s}^\pi(z_0)}  ( |\na u| +  |F|   )^{\theta p}\,dz\right)^{p-1}.
		\end{split}
	\end{align}

    We estimate the terms involving $|\na u|$ first.
	To estimate the second term on the right-hand side of \eqref{sec4:42}, we use \eqref{eq_p2} and Hölder's inequality to obtain
	\begin{align*}
		\begin{split}
		    \pi^{(2-p)\theta p}\left(\fiint_{Q_{s}^\pi(z_0)}|\na u|^{p-1}\,dz\right)^{\theta p}
            &\le \pi^{(2-p)\theta p} \left( \fiint_{Q_{s}^\pi(z_0)} |\na u|^{\theta p}\,dz \right)^{p-1} \\
            &\le c\fiint_{Q_{s}^\pi(z_0)}H(z,|\na u|)^\theta\,dz.
		\end{split}
	\end{align*}
where $c=c(n,p)$.
Similarly, the third term on the right-hand side of \eqref{sec4:42} is estimated with \eqref{eq_p2} and Hölder's inequality to obtain
\begin{align*}
		& \pi^{\left(1-p+\frac{p}{q}\right)\theta p}\left(\fiint_{Q_{s}^\pi(z_0)}a(z)^\frac{q-1}{q}|\na u|^{q-1}\,dz\right)^{\theta p}\\
        &\le \pi^{\left(1-p+\frac{p}{q}\right)\theta p}\left(\fiint_{Q_{s}^\pi(z_0)} a(z)^{\theta}|\na u|^{\theta q}\,dz\right)^{\frac{(q-1)p}{q}} \\
        & \leq \pi^{\left(1-p+\frac{p}{q}\right)\theta p}\left(\fiint_{Q_{s}^\pi(z_0)}a(z)^\delta |\na u|^{\delta q}\,dz\right)^{ \left( \frac{(q-1)p}{q}-1 \right) \frac{\theta}{\delta} }  \left(\fiint_{Q_{s}^\pi(z_0)}a(z)^\theta|\na u|^{\theta q}\,dz\right)\\
		& \le c\fiint_{Q_{s}^\pi(z_0)}H(z,|\na u|)^\theta\,dz,
\end{align*}
where $c=c(n,p,q)$. The same calculations work with the terms involving $|F|$.
This finishes the proof.
\end{proof}

\begin{lemma}\label{sec4:lem:3}
	Let $u$ be a very weak solution to \eqref{11}. Then, for $\theta\in((q-1)/p,\de]$ and $s\in[2\rho,4\rho]$ and , 
	there exists a constant $c=c(\data)$ such that 
	\begin{equation*}
		\begin{split}
			\fiint_{Q_{s}^\pi(z_0)}\inf_{w\in Q_{s}^\pi(z_0)}a(w)^{\theta}\frac{|u-u_{Q_{s}^\pi(z_0)}|^{\theta q}}{s^{\theta q}}\,dz
			&\le c\fiint_{Q_{s}^\pi(z_0)}   (H(z,|\na u|)+H(z,|F|)  )^\theta\,dz.
		\end{split}
	\end{equation*}
    whenever $Q_{16\rho}^{\pi}(z_0)\subset\Omega_T$ satisfies \eqref{eq_pcase}-\eqref{eq_p2}.
\end{lemma}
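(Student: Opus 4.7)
The plan is to mirror the strategy of Lemma~\ref{sec4:lem:2}, replacing the exponent $\theta p$ by $\theta q$ and absorbing the extra weight $\inf_{w\in Q_s^\pi(z_0)}a(w)^\theta$. First I would apply Lemma~\ref{lem_parabolic_poincare} with $m=q$ -- which is legitimate because $\theta>(q-1)/p>(q-1)/q>1/q$, using $p<q$ and $q>2$ -- to control $\fiint_{Q_s^\pi(z_0)}|u-u_{Q_s^\pi(z_0)}|^{\theta q}/s^{\theta q}\,dz$ by $\fiint|\na u|^{\theta q}\,dz$ plus the time-scaled terms $\bigl(\pi^{2-p}\fiint(|\na u|^{p-1}+a|\na u|^{q-1})\bigr)^{\theta q}$ and the analogous one for $F$. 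I would then invoke Lemma~\ref{sec4:lem:1} to decompose the mixed $p$-$q$ integrand into three pieces built from $(|\na u|+|F|)^{p-1}$, $\pi^{-1+p/q}a^{(q-1)/q}(|\na u|+|F|)^{q-1}$, and an $L^{\theta p}$-average of $|\na u|+|F|$.

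Next I would multiply the whole inequality by $(\inf_{Q_s^\pi}a)^\theta$. The $p$-intrinsic assumption \eqref{eq_pcase} yields $\inf_{Q_s^\pi}a\le a(z_0)\le \pi^{p-q}$, which supplies the extra $\pi^{(p-q)\theta}$ factor to be combined with the scaling factors $\pi^{(2-p)\theta q}$ and $\pi^{(-1+p/q)\theta q}$ already present. The leading term $(\inf a)^\theta\fiint|\na u|^{\theta q}\,dz$ is immediately bounded by $\fiint a(z)^\theta|\na u|^{\theta q}\,dz\le \fiint H(z,|\na u|)^\theta\,dz$ via the pointwise bound $\inf a\le a(z)$. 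The two $|\na u|^{p-1}$-type pieces are handled exactly as in Lemma~\ref{sec4:lem:2}: use Hölder to raise the inner exponent from $p-1$ to $\theta p$, invoke \eqref{eq_p2} to bound the excess $(p-1)q/p-1=(pq-q-p)/p$ power by $\pi^{\theta(pq-q-p)}$, and verify the identity $(p-q)\theta+(2-p)\theta q+\theta(pq-q-p)=0$.

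The main obstacle is the term weighted by $a^{(q-1)/q}$. The key manipulation is to rewrite
\[
a^{(q-1)/q}(|\na u|+|F|)^{q-1}=\bigl(a(|\na u|+|F|)^q\bigr)^{(q-1)/q}
\]
and apply Hölder with exponent $\theta q/(q-1)\ge 1$ (again valid because $\theta>(q-1)/p>(q-1)/q$) to obtain
\[
\Bigl(\fiint a^{(q-1)/q}(|\na u|+|F|)^{q-1}\,dz\Bigr)^{\theta q}\le \Bigl(\fiint (a(|\na u|+|F|)^q)^\theta\,dz\Bigr)^{q-1}.
\]
Splitting the exponent $q-1=(q-2)+1$, using $(a(|\na u|+|F|)^q)^\theta\le c(H(z,|\na u|)+H(z,|F|))^\theta$ and \eqref{eq_p2} to bound the $(q-2)$-th factor by $\pi^{\theta p(q-2)}$, and checking the cancellation $\theta p(2-q)+\theta p(q-2)=0$, one recovers the target $c\fiint(H(z,|\na u|)+H(z,|F|))^\theta\,dz$. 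Once this Hölder rewriting is in place, the remaining work is the same exponent bookkeeping carried out in Lemma~\ref{sec4:lem:2}, so I expect the full proof to be short.
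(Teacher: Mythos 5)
Your proposal is correct and follows essentially the same route as the paper: apply Lemma~\ref{lem_parabolic_poincare} with $m=q$ together with Lemma~\ref{sec4:lem:1}, multiply through by $(\inf_{Q_s^\pi}a)^\theta$, use \eqref{eq_pcase} in the form $\inf a\le a(z_0)\le\pi^{p-q}$ to absorb the weight, and close each of the three resulting pieces with H\"older, \eqref{eq_p2}, and the exponent cancellations you compute. The cancellations you verify ($(p-q)\theta+(2-p)\theta q+\theta(pq-q-p)=0$ and $\theta p(2-q)+\theta p(q-2)=0$) match the paper's bookkeeping, so the argument is complete.
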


\begin{proof}
		By Lemma~\ref{lem_parabolic_poincare} and Lemma~\ref{sec4:lem:1} there exists a constant $c=c(\data)$ such that
	\begin{equation}\label{sec4:43}
		\begin{split}
			&\fiint_{Q_{s}^\pi(z_0)}\inf_{w\in Q_{s}^{\pi}(z_0)}a(w)^{\theta}\frac{|u-u_{Q_{s}^\pi(z_0)}|^{\theta q}}{s^{\theta q}}\,dz \\
			& \le c\fiint_{Q_{s}^\pi(z_0)}\inf_{w\in Q_{s}^\pi(z_0)}a(w)^{\theta}|\na u|^{\theta q}\,dz\\
			&\qquad+ c\inf_{w\in Q_{s}^{\pi}(z_0)}a(w)^{\theta}\left(\pi^{2-p}\fiint_{Q_{s}^\pi(z_0)}  ( |\na u|  + |F|   )^{p-1}\,dz\right)^{\theta q}\\
			&\qquad+c\inf_{w\in Q_{s}^{\pi}(z_0)}a(w)^{\theta}\left(\pi^{1-p+\frac{p}{q}}\fiint_{Q_{s}^\pi(z_0)}a(z)^\frac{q-1}{q} ( |\na u|  + |F|   )^{q-1}\,dz\right)^{\theta q}\\
			&\qquad+c\inf_{w\in Q_{s}^\pi(z_0)}a(w)^{\theta}\pi^{ ( 2-p )\theta q}\left(\fiint_{Q_{s}^\pi(z_0)}  ( |\na u|  + |F|   )^{\theta p}\,dz\right)^{ \frac{q(p-1)}{p} }.
		\end{split}
	\end{equation}
     It is easy to see that
     \begin{align*}
         \begin{split}
             \fiint_{Q_{s}^\pi(z_0)}\inf_{w\in Q_{s}^\pi(z_0)}a(w)^{\theta}|\na u|^{\theta q}\,dz
             \le \fiint_{Q_{s}^\pi(z_0)} H(z,|\na u|)^\theta \,dz
         \end{split}
     \end{align*}
For the rest of terms, we estimate the integrals with $|\na u|$ first. 
	By \eqref{eq_pcase} we have $a(z_0)\leq \pi^{p-q}$ and we obtain with Hölder's inequality and \eqref{eq_p2} that
\begin{align*}
	\begin{split}
    &\inf_{w\in Q_{s}^{\pi}(z_0)}a(w)^{\theta}\left(\pi^{2-p}\fiint_{Q_{s}^\pi(z_0)}|\na u|^{p-1}\,dz\right)^{\theta q}\\
		& \leq \inf_{w\in Q_{s}^\pi(z_0)}a(w)^{\theta}\pi^{(2-p)\theta q}\left(\fiint_{Q_{s}^\pi(z_0)}|\na u|^{\theta p}\,dz\right)^{\frac{(p-1)q}{p}}\\
		&\le \pi^{(p-q)\theta}\pi^{(2-p)\theta q+(p-1)q\theta-\theta p}\fiint_{Q_{s}^\pi(z_0)}|\na u|^{\theta p}\,dz\\
		&\le c\fiint_{Q_{s}^\pi(z_0)}|\na u|^{\theta p}\,dz,
	\end{split}
\end{align*}
where we used the fact that
\[
\frac{(p-1)q}{p}> p-1>0.
\]
The same calculations hold for the fourth term  on the right-hand side  of \eqref{sec4:43}.
The third one can be estimated similarly as
\begin{equation*}
	\begin{split}
        &\inf_{w\in Q_{s}^{\pi}(z_0)}a(w)^{\theta}\left(\pi^{1-p+\frac{p}{q}}\fiint_{Q_{s}^\pi(z_0)}a(z)^\frac{q-1}{q}|\na u|^{q-1}\,dz\right)^{\theta q}\\
		&\leq\inf_{w\in Q_{s}^\pi(z_0)}a(w)^{\theta}\pi^{(p+q-pq)\theta }\left(\fiint_{Q_{s}^\pi(z_0)}(a(z)|\na u|^q)^{\theta}\,dz\right)^{q-1}\\
		&\le c\fiint_{Q_{s}^\pi(z_0)}(a(z)|\na u|^q)^{\theta}\,dz.
	\end{split}
\end{equation*}
The analogous calculations work for the terms with $|F|$.
This finishes the proof.
\end{proof}

Recall from \eqref{def_mu} and \eqref{def_mu_2} that $\mu\in(0,1)$ satisfies
\[
\max\left\{  1-\frac{2}{q}+\frac{(n+2)(q-p)}{q\alpha} , \frac{q-1}{p} \right\}<\mu<1
\]
and
\[
\left( \frac{1}{\mu} -1 \right)\frac{2n}{(n+2)q} +\frac{n}{n+2} \le \frac{p}{q}.
\]

\begin{lemma}\label{lem_KKM_4_3}
    Let $u$ be a very weak solution to \eqref{11}. There exist constants $c=c(\data)$ and $R_0=R_0(\data, \delta ,\| u\|_{C_{\loc}(0,T;L_{\loc}^2(\Om,\mathbb{R}^N))} )\in (0,1)$ such that
    \begin{align*}
		S(u,Q_{2\rho}^\pi(z_0))=\sup_{I_{2\rho}^\pi(t_0)}\fint_{B_{2\rho}(x_0)}\frac{|u-u_{Q_{2\rho}^\pi}|^2}{(2\rho)^2\mh^{1-\de}}\,dx\le c\pi^{2-p(1-\de)},
	\end{align*}
    whenever $Q_{16\rho}^{\pi}(z_0)\subset\Omega_T$ satisfies \eqref{eq_pcase}-\eqref{eq_p2} with $\rho\in(0,R_0)$. Here  $\mh$ is defined as in \eqref{def_mh} with $\Q_1 = Q_{2\rho}^\pi(z_0)$ and $\Q_2 = Q_{4\rho}^\pi(z_0)$.
\end{lemma}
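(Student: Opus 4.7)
The plan is to apply the Caccioppoli inequality of Lemma~\ref{lem_caccioppoli} with $\Q_1 = Q_{2\rho}^\pi(z_0)$, $\Q_2 = Q_{4\rho}^\pi(z_0)$, where the admissibility condition $\rho_2-\rho_1 \ge \rho$ is verified by $4\rho - 2\rho = 2\rho$. Since $\sig_1 = \pi^{2-p}(2\rho)^2$ and $\Pizo = \Pi$ with $\pi^p \le \Pi \le 2\pi^p$ in the $p$-intrinsic regime (by \eqref{def_Pizo} and \eqref{eq_pcase}), the left-hand side of \eqref{claim_cac} equals $\pi^{p-2}S(u,Q_{2\rho}^\pi(z_0)) + \Pi^\delta$. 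Consequently, it suffices to bound each of the three right-hand side terms of \eqref{claim_cac} by $c\pi^{p\delta}$; multiplying through by $\pi^{2-p}$ then yields the desired bound $S(u, Q_{2\rho}^\pi(z_0)) \le c\pi^{2-p(1-\delta)}$.

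The source term $\fiint_{\Q_2}H(z,|F|)^\delta\,dz$ is controlled immediately by \eqref{eq_p2}. For $\fiint_{\Q_2} \frac{|u-u_{\Q_2}|^2}{\Pi^{1-\delta}\sig_2}\,dz$, I apply Jensen's inequality (using $\delta p \ge 2$) to pass to the $L^{\delta p}$ norm, then use Lemma~\ref{sec4:lem:2} with $\theta = \delta$ to obtain $\fiint |u-u_{\Q_2}|^2\,dz \le c\rho_2^2\pi^2$; dividing by $\Pi^{1-\delta}\sig_2 \approx \pi^{2-p\delta}\rho_2^2$ yields the bound $c\pi^{p\delta}$. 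For the most delicate term $\fiint_{\Q_2}H(z,|u-u_{\Q_2}|/\rho_2)^\delta\,dz$, I use subadditivity $(a+b)^\delta \le a^\delta + b^\delta$ for $\delta \in (0,1)$ to split it into the $p$-part (bounded by Lemma~\ref{sec4:lem:2}) and the $q$-part
\[
\fiint_{\Q_2} a(z)^\delta\frac{|u-u_{\Q_2}|^{\delta q}}{\rho_2^{\delta q}}\,dz;
\]
the Hölder continuity of $a$ together with subadditivity gives $a(z)^\delta \le (\inf_{\Q_2}a)^\delta + c[a]_\alpha^\delta \rho^{\alpha\delta}$, and Lemma~\ref{sec4:lem:3} bounds the $(\inf_{\Q_2}a)^\delta$-weighted part by $c\pi^{p\delta}$.

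The main obstacle, and the step where smallness of $R_0$ in terms of $\|u\|_{C_{\loc}(0,T;L^2_{\loc}(\Om,\RR^N))}$ becomes essential, is the residual $c\rho^{\alpha\delta}\fiint_{\Q_2}(|u-u_{\Q_2}|/\rho_2)^{\delta q}\,dz$, since Lemma~\ref{sec4:lem:3} controls this quantity only after multiplication by $(\inf_{\Q_2}a)^\delta$. I close it by applying the parabolic Sobolev--Gagliardo--Nirenberg inequality of Lemma~\ref{lem_sobo} on spatial slices with $\sigma = \delta q$, $s = \delta p$, $r = 2$, and interpolation parameter $\vartheta = p/q$, so that $\vartheta\sigma/s = 1$ and the interpolation condition reduces to $q-p \le 2p/n$, which holds by \eqref{eq_range_q}. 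Integrating in time and combining Lemma~\ref{sec4:lem:2} with the local $L^\infty(0,T;L^2)$-bound
\[
\esssup_t \fint_{\B_2 \times \{t\}}\frac{|u-u_{\Q_2}|^2}{\rho_2^2}\,dx \le c\,\frac{\|u\|_{L^\infty(0,T;L^2(\B_2))}^2}{\rho^{n+2}}
\]
produces a residual of size $c\|u\|^{(q-p)\delta}\rho^{\delta\beta}\pi^{p\delta}$ with $\beta = \alpha - (n+2)(q-p)/2$. The strict inequality in \eqref{eq_range_q} guarantees $\beta > 0$, so choosing $R_0$ small enough, depending only on $\data$, $\delta$, and $\|u\|_{C_{\loc}(0,T;L^2_{\loc}(\Om,\RR^N))}$, ensures $R_0^{\delta\beta}\|u\|^{(q-p)\delta} \le 1$ and the residual is absorbed into $c\pi^{p\delta}$, completing the argument.
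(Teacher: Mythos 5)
Your proof is correct and takes essentially the same route as the paper: apply the Caccioppoli inequality of Lemma~\ref{lem_caccioppoli} with $\Q_1 = Q_{2\rho}^\pi$, $\Q_2 = Q_{4\rho}^\pi$, bound the source and the $L^2$-term via Lemma~\ref{sec4:lem:2} and $\delta p > 2$, split the $H(z,|u-u_{\Q_2}|/\rho)^\delta$ term by subadditivity of $s\mapsto s^\delta$ and the Hölder continuity of $a$, control the $\inf a$-weighted piece by Lemma~\ref{sec4:lem:3}, and absorb the residual $\rho^{\alpha\delta}\fiint(|u-u_{\Q_2}|/\rho)^{\delta q}$ by the Sobolev--Gagliardo--Nirenberg interpolation against the $L^\infty(L^2)$ norm of $u$ and the smallness of $R_0$. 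The only cosmetic difference is your choice of parameters in Lemma~\ref{lem_sobo}: you take $\sigma=\delta q$, $s=\delta p$, $r=2$, $\vartheta=p/q$ directly, which collapses the admissibility condition to the $\delta$-free inequality $q-p\le 2p/n$, whereas the paper first applies Jensen's inequality in time, then uses $\sigma=q$, $s=\delta p$, $r=2$, $\vartheta=p/q$, landing on the $\delta$-dependent condition $\left(\tfrac{1}{\delta}-1\right)\tfrac{2n}{(n+2)q}+\tfrac{n}{n+2}\le\tfrac{p}{q}$ ensured by \eqref{def_mu_2}. Both lead to the same interpolation inequality on slices, so the two are equivalent for the purpose of this lemma; yours is marginally cleaner because the required range condition is independent of $\delta$.
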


\begin{proof}
    By Lemma~\ref{lem_caccioppoli} there exists a constant $c=c(\data)$ such that 
	\begin{align}\label{sec5:3}
		\begin{split}
			&\pi^{p-2}S(u,Q_{2\rho}^\pi(z_0))\\
			&\le c\fiint_{Q_{4\rho}^\pi(z_0)} \frac{|u-u_{Q_{4\rho}^\pi(z_0)}|^2}{\Pi^{1-\de}\pi^{2-p}\rho^2 } \,dz\\
             &\qquad + c\fiint_{Q_{4\rho}^\pi(z_0)}\left( \frac{|u-u_{Q_{4\rho}^\pi(z_0)}|^{\de p}}{\rho^{\de p}}+a(z)^\de \frac{|u-u_{Q_{4\rho}^\pi(z_0)}|^{\de q}}{\rho^{\de q}}\right)\,dz\\
             &\qquad + c\fiint_{Q_{4\rho}^\pi(z_0)} H(z,|F|)^\delta\,dz.
		\end{split}
	\end{align}
  We have by Lemma~\ref{sec4:lem:2} and \eqref{eq_p2}, that
	\begin{align*}
    \begin{split}
        \fiint_{Q_{4\rho}^\pi(z_0)}\frac{|u-u_{Q_{4\rho}^\pi(z_0)}|^{\de p}}{\rho^{\de p}}\,dz &\leq c\fiint_{Q_{4\rho}^\pi(z_0)} H(z,|\na u|)^\delta\,dz \\
        &\qquad + c\fiint_{Q_{4\rho}^\pi(z_0)} H(z,|F|)^\delta\,dz\\
        &\le c\pi^{\delta p},
    \end{split}
	\end{align*}
 where $c=c(data)$. Moreover, as we have $\de p> 2$, it follows from Hölder's inequality and the above estimate that 
    \[
    \fiint_{Q_{4\rho}^\pi(z_0)}\frac{|u-u_{Q_{4\rho}^\pi(z_0)}|^2}{\Pi^{1-\de}\pi^{2-p}\rho^2}\,dz\le \pi^{\de p-2}\left(\fiint_{Q_{4\rho}^\pi(z_0)}\frac{|u-u_{Q_{4\rho}^\pi(z_0)}|^{\de p}}{ \rho^{\de p}}\,dz\right)^\frac{2}{\de p} \le c\pi^{\de p},
    \]
where we used the fact that
\[
 \frac{1}{\Pi^{1-\delta}\pi^{2-p}}\le \frac{1}{\pi^{p(1-\delta)}\pi^{2-p}}=\pi^{\delta p-2}.
\]
For the third term on the right hand side of \eqref{sec5:3}, we apply \eqref{eq_holder_reg} and triangle inequality to obtain
    \begin{align*}
    \begin{split}
        \fiint_{Q_{4\rho}^\pi(z_0)}a(z)^\de \frac{|u-u_{Q_{4\rho}^\pi(z_0)}|^{\de q}}{ \rho^{\de q}}\,dz   &\leq c\fiint_{Q_{4\rho}^\pi(z_0)}\inf_{w\in Q_{4\rho}^\pi(z_0)}a(w)^\de \frac{|u-u_{Q_{4\rho}^\pi(z_0)}|^{\de q}}{ \rho^{\de q}}\,dz \\
        &\qquad +c\fiint_{Q_{4\rho}^\pi(z_0)}\rho^{\de \al} \frac{|u-u_{Q_{4\rho}^\pi(z_0)}|^{\de q}}{ \rho^{\de q}}\,dz.
    \end{split}
    \end{align*}
     By Lemma~\ref{sec4:lem:3} and \eqref{eq_p2} we have that
     \[
     \fiint_{Q_{4\rho}^\pi(z_0)}\inf_{w\in Q_{4\rho}^\pi(z_0)}a(w)^\de \frac{|u-u_{Q_{4\rho}^\pi(z_0)}|^{\de q}}{(4\rho)^{\de q}}\,dz \leq  c\pi^{\de p}.
     \]
    To estimate the second term, we use Lemma~\ref{lem_sobo} with $\sigma =  q$, $s = \de p$, $r=2$ and $\vartheta = p/ q$. Indeed, we have
    \begin{align*}
        \begin{split}
            -\frac{n}{q}\le \frac{ p}{q}\left( 1-\frac{n}{\delta p} \right)- \left( 1-\frac{ p}{q} \right)\frac{n}{2} 
            &\iff \left( \frac{1}{\delta} -1 \right)\frac{n}{q} +\frac{n}{2} \le \frac{(n+2)p}{2q}\\
            &\iff \left( \frac{1}{\delta} -1 \right)\frac{2n}{(n+2)q} +\frac{n}{n+2} \le \frac{p}{q},
        \end{split}
    \end{align*}
    where we used \eqref{def_mu_2} and the fact that $\mu<\delta$.
    This gives 
    \begin{align*}
        \begin{split}
            \rho^{\de \al}\fiint_{Q_{4\rho}^\pi(z_0)} \frac{|u-u_{Q_{4\rho}^\pi(z_0)}|^{\de q}}{\rho^{\de q}}\,dz
            &\le \rho^{\de \al} \fint_{I_{4\rho}^\pi(t_0) }\left( \fint_{B_{4\rho}(x_0)} \frac{|u-u_{Q_{4\rho}^\pi(z_0)}|^{ q}}{\rho^{ q}}\,dx \right)^\delta \,dt \\
            &\leq c \rho^{\de \al}\fiint_{Q_{4\rho}^\pi(z_0)} \left(\frac{|u-u_{Q_{4\rho}^\pi(z_0)}|^{\de p}}{\rho^{\de p}}+|\na u|^{\de p}\right)\,dz \\
            &\qquad \times \left(\sup_{I^\pi_{4\rho}(t_0)}\fint_{B_{4\rho}(x_0)} \frac{|u-u_{Q_{4\rho}^\pi(z_0)}|^2}{\rho^2}\,dx\right)^\frac{\de(q-p)}{2}.
        \end{split}
    \end{align*}
    Observe that by the triangle inequality we have
    \begin{align*}
        \begin{split}
            &\rho^{\de\al}\left(\sup_{I^\pi_{4\rho}(t_0)}\fint_{B_{4\rho}(x_0)} \frac{|u-u_{Q_{4\rho}^\pi(z_0)}|^2}{\rho^2}\,dx\right)^\frac{\de(q-p)}{2} \\
        &\leq c\rho^{\de\al}\left(\sup_{I^\pi_{4\rho}(t_0)}\fint_{B_{4\rho}(x_0)} \frac{|u|^2}{\rho^2}\,dx\right)^\frac{\de(q-p)}{2} \\
        &= c\rho^{\delta \left( \alpha-\frac{(q-p)(n+2)}{2} \right) } \left(\sup_{I^\pi_{4\rho}(t_0)}\int_{B_{4\rho}(x_0)} |u|^2 \,dx\right)^\frac{\de(q-p)}{2}.
        \end{split}
    \end{align*}
Note that since we have by \eqref{eq_range_q} that
\[
\alpha -\frac{(q-p)(n+2)}{2} >0
\]
and $\rho\le R_0$, we may take $R_0\in(0,1)$ small enough so that
\[
R_0^{\mu \left( \alpha-\frac{(q-p)(n+2)}{2} \right) } \|u\|_{C_{\loc}(0,T;L^2_{\loc}(\Om,\mathbb{R}^N))}^{\de(q-p)}\le \frac{1}{8c}.
\]
Therefore, we get with Lemma~\ref{sec4:lem:2} and \eqref{eq_p2} and the above inequalities that 
     \begin{align}\label{radius_absorb_1}
         c\rho^{\de \al}\fiint_{Q_{4\rho}^\pi(z_0)} \frac{|u-u_{Q_{4\rho}^\pi(z_0)}|^{\de q}}{(4\rho)^{\de q}}\,dz \leq \frac{1}{8}\pi^{\de p }.
     \end{align}
     Finally, \eqref{eq_p2} gives
     \[
     \fiint_{Q_{4\rho}^\pi(z_0)} H(z,|F|)^\delta\,dz\le \pi^{\delta p}
     \]
We conclude from \eqref{sec5:3} that
\[
S(u,Q_{2\rho}^\pi(z_0)) \leq c\pi^{2-p(1-\de))},
\]
which finishes the proof.
\end{proof}

We remark that we have calculated the following in the proof of the previous lemma.
\begin{lemma}\label{coro_p_phase}
    Let $u$ be a very weak solution to \eqref{11}. Then for $\theta\in(\mu,\de]$ and $s\in[2\rho,4\rho]$, there exist constants $c=c(\data)$ and $R_0=R_0(\data, \delta ,\| u\|_{C_{\loc}(0,T;L_{\loc}^2(\Om,\mathbb{R}^N))} )\in (0,1)$ such that
	\begin{equation*}
		\begin{split}
			\fiint_{Q_{s}^\pi(z_0)} H \left(  z, \frac{|u-u_{Q_{s}^\pi(z_0)}| }{s}  \right)^\theta \,dz
			&\le c\fiint_{Q_{s}^\pi(z_0)}H(z,|\na u|)^\theta\,dz\\
			&\qquad+ c\left( \fiint_{Q_{s}^\pi(z_0)}  H(z,|F|)^\delta  \,dz  \right)^\frac{\theta}{\delta}
		\end{split}
	\end{equation*}
    whenever $Q_{16\rho}^{\pi}(z_0)\subset\Omega_T$ satisfies \eqref{eq_pcase}-\eqref{eq_p2} with $\rho\in(0,R_0)$.
\end{lemma}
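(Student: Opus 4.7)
The plan is to extract from the proof of Lemma~\ref{lem_KKM_4_3} the estimates that are already done at the level of $|u-u_{Q_s^\pi(z_0)}|$, and to verify that nothing in that argument forced the exponent to be exactly $\delta$. First, since $\theta\le 1$ the subadditivity of $t\mapsto t^\theta$ gives
\[
 H\!\left(z,\tfrac{|u-u_{Q_s^\pi(z_0)}|}{s}\right)^\theta
 \le \frac{|u-u_{Q_s^\pi(z_0)}|^{\theta p}}{s^{\theta p}}+a(z)^\theta\frac{|u-u_{Q_s^\pi(z_0)}|^{\theta q}}{s^{\theta q}},
\]
so it suffices to bound the two terms on the right separately.

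For the first term I would apply Lemma~\ref{sec4:lem:2} directly, which is available because $\mu>(q-1)/p$ by \eqref{def_mu} and hence any $\theta\in(\mu,\delta]$ lies in the admissible range $((q-1)/p,\delta]$. This yields
\[
 \fiint_{Q_s^\pi(z_0)}\!\frac{|u-u_{Q_s^\pi(z_0)}|^{\theta p}}{s^{\theta p}}\,dz
 \le c\fiint_{Q_s^\pi(z_0)}\!(H(z,|\na u|)+H(z,|F|))^\theta\,dz,
\]
which after Hölder's inequality in the $|F|$-term already has the claimed form.

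For the second term, I would use the Hölder continuity of $a$ in \eqref{eq_holder_reg} to write $a(z)^\theta\le c\bigl(\inf_{w\in Q_s^\pi(z_0)}a(w)^\theta+s^{\theta\alpha}\bigr)$. The infimum part is absorbed by Lemma~\ref{sec4:lem:3}, again because $\theta\in((q-1)/p,\delta]$. The remaining term $s^{\theta\alpha}\fiint|u-u_{Q_s^\pi(z_0)}|^{\theta q}/s^{\theta q}\,dz$ is the place where the smallness of $R_0$ enters. I would apply the Sobolev--Gagliardo--Nirenberg inequality in Lemma~\ref{lem_sobo} with the parameters $\sigma=q$, $s=\theta p$, $r=2$, $\vartheta=p/q$. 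The required condition
\[
 \Bigl(\tfrac{1}{\theta}-1\Bigr)\tfrac{2n}{(n+2)q}+\tfrac{n}{n+2}\le \tfrac{p}{q}
\]
is guaranteed by \eqref{def_mu_2} together with $\theta>\mu$. This yields an estimate of the form
\[
 s^{\theta\alpha}\fiint_{Q_s^\pi(z_0)}\!\tfrac{|u-u_{Q_s^\pi(z_0)}|^{\theta q}}{s^{\theta q}}\,dz
 \le c\,\rho^{\theta(\alpha-\tfrac{(q-p)(n+2)}{2})}\Bigl(\sup_{I_s^\pi(t_0)}\!\int_{B_s(x_0)}\!|u|^2\,dx\Bigr)^{\!\theta(q-p)/2}\,\fiint_{Q_s^\pi(z_0)}\!\bigl(\tfrac{|u-u_{Q_s^\pi(z_0)}|^{\theta p}}{s^{\theta p}}+|\na u|^{\theta p}\bigr)\,dz.
\]
By \eqref{eq_range_q} the exponent $\alpha-(q-p)(n+2)/2$ is strictly positive, so choosing $R_0$ small depending on $\data$, $\delta$ and $\|u\|_{C(0,T;L^2(\Omega,\mathbb{R}^N))}$ as in \eqref{radius_absorb_1} makes the prefactor small enough to absorb the first summand using the bound for $|u-u_{Q_s^\pi(z_0)}|^{\theta p}/s^{\theta p}$ already obtained. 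The main (only) obstacle is this last step: one must make sure that the absorption is performed before the first bound is invoked, so that the eventual constant depends only on $\data$, and that the choice of $R_0$ is legitimate, which uses $\theta>\mu$ so that $R_0^{\theta(\cdots)}\le R_0^{\mu(\cdots)}$ is uniform in $\theta$.
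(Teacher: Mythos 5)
Your proposal is correct and takes essentially the same approach as the paper: the lemma is indeed a packaging of the sub-computations performed inside the proof of Lemma~\ref{lem_KKM_4_3}, namely the subadditivity decomposition of $H(z,\cdot)^\theta$, Lemma~\ref{sec4:lem:2} for the $p$-power, the splitting $a(z)^\theta\lesssim\inf a^\theta+s^{\theta\alpha}$ followed by Lemma~\ref{sec4:lem:3}, and Lemma~\ref{lem_sobo} plus the $R_0$-smallness to handle the $s^{\theta\alpha}$ term, with \eqref{def_mu_2} and $\theta>\mu$ ensuring the applicability of the Sobolev--Gagliardo--Nirenberg step and the uniform-in-$\theta$ choice of $R_0$. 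One small wording issue: the step you describe as an absorption is really a substitution of the already proved bound for $\fiint|u-u_{Q_s^\pi(z_0)}|^{\theta p}/s^{\theta p}\,dz$ once the prefactor $c\rho^{\theta(\alpha-(q-p)(n+2)/2)}\|u\|^{\theta(q-p)}_{C(L^2)}$ has been made $O(1)$ via $R_0$; this is exactly \eqref{radius_absorb_1} at the general exponent $\theta$, so the logic is sound.
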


Now we are ready to prove the reverse Hölder inequality. We denote upper level sets with respect to $|\na u|$ and $|F|$ as
 \begin{align}\label{sec2:2}
\begin{split}
    \Psi(\La)&=\{ z\in \Omega_T: H(z,|\na u(z)|)>\La\},\\
    \Phi(\La)&=\{ z\in \Omega_T: H(z,| F (z)|)>\La\}.
\end{split}
\end{align}

\begin{lemma}\label{lem_reverse_Hölder}
Let $u$ be a very weak solution to \eqref{11}.
There exist constants $c=c(\data)$, $\delta_2=\delta_2(\data)\in(\delta_1,1)$ with $2-q(1-\delta_2)>0$, $\theta_0=\theta_0(\data)\in(0,\delta_2) $, $R_0=R_0(\data, \delta ,\| u\|_{C_{\loc}(0,T;L_{\loc}^2(\Om,\mathbb{R}^N))} )\in (0,1)$ such that
	\begin{align*}
		\begin{split}
			\pi^{\de p}
            \le c\left(\fiint_{Q_{2\rho}^\pi(z_0)}H(z,|\na u|)^{\theta_0}\,dz\right)^\frac{\delta}{\theta_0}+c\fiint_{Q_{2\rho}^\pi(z_0)} H(z,|F|)^\delta  \,dz,
		\end{split}
	\end{align*}	
	whenever $Q_{16\rho}^{\pi}(z_0)\subset\Omega_T$ satisfies \eqref{eq_pcase}-\eqref{eq_p2} with $\rho\in(0,R_0)$ and $\delta\in (\delta_2,1)$.
    Moreover, we have 
    \begin{align}\label{eq_revH_Psi}
		\begin{split}
			\iint_{Q_{16\rho}^\pi(z_0)}H(z,|\na u|)^\delta\,dz
			&\le c\Pi^{\delta-\theta_0}\iint_{Q_{2\rho}^\pi(z_0)\cap \Psi(c^{-1}\Pi)}H(z,|\na u|)^{\theta_0}\,dz\\
            &\qquad +c \iint_{Q_{2\rho}^\pi(z_0)\cap \Phi(c^{-1}\Pi)}H(z,|F|)^\delta \,dz,
		\end{split}
	\end{align}
 where $\Psi(\cdot)$ and $\Phi(\cdot)$ are defined in \eqref{sec2:2}.
\end{lemma}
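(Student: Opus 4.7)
The strategy combines the Caccioppoli inequality of Lemma~\ref{lem_caccioppoli} with the Sobolev-Gagliardo-Nirenberg inequality of Lemma~\ref{lem_sobo} to produce a self-improving estimate with integrability $\theta_0<\de$ on the right-hand side. I apply Caccioppoli on $\Q_1=Q_\rho^\pi(z_0)$ and $\Q_2=Q_{2\rho}^\pi(z_0)$, which is admissible since $\rho_2-\rho_1=\rho\ge\rho$. Using $\sig_2=\pi^{2-p}(2\rho)^2$, the assumption $\de p>2$, and $H(z,s)\ge s^p$, Hölder's inequality reduces the $L^2$ term in the Caccioppoli to a power of $\fiint_{Q_{2\rho}^\pi} H(z,|u-u_{Q_{2\rho}^\pi}|/\rho)^\de\,dz$, so that the estimate effectively becomes
\[
\Pi^\de \lesssim \fiint_{Q_{2\rho}^\pi} H(z,|u-u_{Q_{2\rho}^\pi}|/\rho)^\de\,dz + \fiint_{Q_{2\rho}^\pi} H(z,|F|)^\de\,dz.
\]
The $q$-part of the first integrand is controlled by its $p$-part using \eqref{eq_pcase} to get $a(z_0)\le \pi^{p-q}$, then combining the Hölder continuity of $a$ with \eqref{eq_rho_decay} to obtain $a(z)\le c\pi^{p-q}$ on $Q_{2\rho}^\pi$, mirroring the reductions carried out in the proof of Lemma~\ref{sec4:lem:3}.

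Next I apply Lemma~\ref{lem_sobo} on each time slice to $v=u-u_{Q_{2\rho}^\pi}$ with $\sigma=\de p$, $s=\theta_0 p$, $r=2$, and $\vartheta=\theta_0/\de$; the admissibility condition forces $\theta_0\ge n\de/(n+2)$, compatible with $\theta_0\in(\mu,\de)$ when $\de$ is close to $1$. This produces
\[
\fint_{B_{2\rho}}\frac{|v|^{\de p}}{\rho^{\de p}}\,dx \le c\fint_{B_{2\rho}}\Bigl(\frac{|v|^{\theta_0 p}}{\rho^{\theta_0 p}}+|\na v|^{\theta_0 p}\Bigr)dx\cdot\Bigl(\fint_{B_{2\rho}}\frac{|v|^2}{\rho^2}dx\Bigr)^{(\de-\theta_0)p/2}.
\]
After integrating in time and extracting the sup of the $L^2$ factor, Lemma~\ref{coro_p_phase} with exponent $\theta_0$ bounds the remaining space-time integral by $c\fiint H(z,|\na u|)^{\theta_0}+c(\fiint H(z,|F|)^\de)^{\theta_0/\de}$. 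The sup factor is the main technical obstacle: the analog of Lemma~\ref{lem_KKM_4_3} for this cylinder setup (proved identically) provides only the weighted bound $\sup_t\fint |v|^2/(\rho^2 \mh^{1-\de})\le c\pi^{2-p(1-\de)}$, whereas we need the unweighted $\sup_t\fint |v|^2/\rho^2\le c\pi^2$ so that the $(\de-\theta_0)p/2$-th power produces the factor $c\Pi^{\de-\theta_0}$. I would handle this by splitting $B_{2\rho}$ into a ``good'' subset $\{\mh\le c\Pi\}$ on which $\mh^{1-\de}\le c\pi^{p(1-\de)}$ and the weighted bound directly yields $c\pi^2$, and a ``bad'' subset $\{\Upsilon > c\Pi\}$, whose measure is controlled via Chebyshev's inequality together with \eqref{eq_est_mz}; for $c$ large the bad contribution produces only a small multiple of $\Pi^\de$, which can be absorbed later.

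Collecting these estimates gives $\Pi^\de \le c\Pi^{\de-\theta_0}\fiint_{Q_{2\rho}^\pi} H(z,|\na u|)^{\theta_0}\,dz + c\fiint_{Q_{2\rho}^\pi} H(z,|F|)^\de\,dz$; Young's inequality with exponents $\de/(\de-\theta_0)$ and $\de/\theta_0$ absorbs the $\Pi^{\de-\theta_0}$ factor into the left-hand side, producing the first claim of the lemma. For the level-set form \eqref{eq_revH_Psi}, I would combine this with $\iint_{Q_{16\rho}^\pi} H(z,|\na u|)^\de \le \Pi^\de |Q_{16\rho}^\pi|$ from \eqref{eq_p2} and with the bound $(\fiint H^{\theta_0})^{\de/\theta_0}\le \Pi^{\de-\theta_0}\fiint H^{\theta_0}$ (which follows from $\fiint H^{\theta_0}\le\Pi^{\theta_0}$ via Hölder and \eqref{eq_p2}), and then decompose $\fiint_{Q_{2\rho}^\pi} H^{\theta_0}$ and $\fiint H(z,|F|)^\de$ according to $\Psi(c^{-1}\Pi)$ and $\Phi(c^{-1}\Pi)$: on the complementary sets the integrands are bounded by $(c^{-1}\Pi)^{\theta_0}$ and $(c^{-1}\Pi)^\de$ respectively, producing absorbable contributions when $c$ is chosen sufficiently large.
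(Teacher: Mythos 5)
Your overall architecture matches the paper: apply the Caccioppoli inequality of Lemma~\ref{lem_caccioppoli} with $\Q_1=Q_\rho^\pi(z_0)$, $\Q_2=Q_{2\rho}^\pi(z_0)$, reduce the $L^2$-in-time term via H\"older using $\de p>2$, and then interpolate via Lemma~\ref{lem_sobo} on time slices to pass from exponent $\de$ to a smaller exponent $\theta_0$, followed by Young's inequality. However, there is a genuine gap in the interpolation step, caused by your choice of $r=2$ in Lemma~\ref{lem_sobo}.

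With $\sigma=\de p$, $s=\theta_0 p$, $\vartheta=\theta_0/\de$, $r=2$, the interpolation produces the factor $\bigl(\fint_{B_{2\rho}}|v|^2/\rho^2\,dx\bigr)^{(\de-\theta_0)p/2}$, which you then propose to take out via $\sup_t$. But the only available control is the \emph{weighted} bound of Lemma~\ref{lem_KKM_4_3}, $\sup_t\fint_B\frac{|v|^2}{\rho^2\mh^{1-\de}}\,dx\le c\pi^{2-p(1-\de)}$; no unweighted $L^\infty_t L^2_x$ bound of the form $\sup_t\fint_B|v|^2/\rho^2\,dx\le c\pi^2$ is at hand. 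Your proposed fix — split each slice $B_{2\rho}$ into $\{\mh\le c\Pi\}$ and $\{\Upsilon>c\Pi\}$ and control the bad part by Chebyshev via \eqref{eq_est_mz} — does not close this. Chebyshev together with \eqref{eq_est_mz} gives $|E(c\Pi)^c|\le C c^{-\de}|\Q_2|$ as a \emph{space-time} measure bound; it says nothing about the $n$-dimensional measure of $\{x:\Upsilon(x,t)>c\Pi\}$ for a \emph{fixed} $t$, which is what you need inside $\sup_t$. Moreover, even granting a slice-wise measure bound, you would still have to bound $\int_{\text{bad}}|v|^2\,dx$ on that slice, which requires higher integrability of $v$ than is available — precisely what one is trying to prove. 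The paper sidesteps this entirely: by choosing $r^*=2\sdel$ (effectively $r=2\de$ after Jensen), the lower-order factor becomes $\fint_B|v|^{2\de}/\rho^{2\de}\,dx$, which factors as $\fint_B\frac{|v|^{2\de}}{\rho^{2\de}\mh^{\de(1-\de)}}\cdot\mh^{\de(1-\de)}\,dx\le\bigl(\fint_B\frac{|v|^2}{\rho^2\mh^{1-\de}}\,dx\bigr)^\de\bigl(\fint_B\mh^\de\,dx\bigr)^{1-\de}$, so that after integrating in time one only needs the weighted sup from Lemma~\ref{lem_KKM_4_3} together with $\fiint_Q\mh^\de\,dz\le c\Pi^\de$ from \eqref{eq_est_mz}. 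This also forces the auxiliary H\"older exponents $\nu,\nu'$ and leads to $\theta_0=\tfrac{2\sdel}{2-q(1-\sdel)(1-\delta_2)}$; your $r=2$ choice shortcuts all of that, but at the cost of the unavailable unweighted bound.

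Two smaller points. First, the claim that the $q$-part of the Caccioppoli right-hand side is ``controlled by its $p$-part'' via $a(z)\le c\pi^{p-q}$ does not give the pointwise bound $a(z)(|v|/\rho)^q\le c(|v|/\rho)^p$ when $|v|/\rho>\pi$; the paper runs a separate $\sigma=q$ Sobolev argument for the $q$-part, with the additional $\pi^{p-q}$ factor from \eqref{eq_pcase} inserted at the right place. Second, invoking Lemma~\ref{coro_p_phase} imposes $\theta_0>\mu$, which is an unnecessary extra constraint; Lemma~\ref{sec4:lem:2} and Lemma~\ref{sec4:lem:3} only need $\theta_0>(q-1)/p$ and are what the paper actually uses.
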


\begin{proof}
For simplicity, we write $Q=Q_{2\rho}^\pi(z_0)$, $B=B_{2\rho}(x_0)$, $I=I^\pi_{2\rho}(t_0)$.
To show the first statement, we use the Caccioppoli inequality in Lemma~\ref{lem_caccioppoli} and  \eqref{eq_p1} to get
\begin{align}\label{eq_915}
	\begin{split}
	\pi^{\de p} 
    &\leq c\fiint_{Q}  \frac{|u-u_{Q}|^2}{\pi^{2-p}\Pi^{1-\de}\rho^2 }+c\fiint_{Q}\left( \frac{ |u-u_{Q}|^{\de p} }{\rho^{\de p}}+a(z)^\de\frac{ |u-u_{Q }|^{\de q}  }{\rho^{\de q}}\right)\,dz\\
    &\qquad +c\fiint_{Q} H(z,|F|)^\delta  \,dz.
	\end{split}
\end{align}
We will estimate the second term on the right hand side 
first. Before proceeding, we take $\sdel=\sdel(n,p)\in(\tfrac{n}{p+n},1)$ to satisfy
\[
\frac{n}{2}< \frac{(\sdel)^2}{1-\sdel}.
\]
 Note that for $\sig=p$, $s=\sdel p$, $r^*=2\sdel$, there exists $\vartheta\in(0,1)$ sufficiently close to 1 satisfying
 \[
 -\frac{n}{p}\le \vartheta \left( 1-\frac{n}{\sdel p}\right) -(1-\vartheta) \frac{n}{2\sdel}.
 \]
 Indeed, the above display is equivalent to
 \begin{align}\label{vartheta_choice}
     \frac{n}{2\sdel}-\frac{n}{p} \le \vartheta \left(   1-\frac{n}{\sdel p} +\frac{n}{2\sdel}  \right)
 \end{align}
 and we have
 \[
 \frac{n}{2\sdel}-\frac{n}{p}<    1-\frac{n}{\sdel p} +\frac{n}{2\sdel},
 \]
  since we have taken $\sdel \in ( \tfrac{n}{n+p},1 )$. Therefore, we can find $\vartheta\in(0,1)$ satisfying \eqref{vartheta_choice}. We will take $\vartheta=\tfrac{\sdel}{\delta}$ for $\delta\in(\delta_2,1)$ where $\delta_2\in(\sdel,1)$ will be chosen later. For this, we consider the extreme case in \eqref{vartheta_choice} when $\vartheta$ is sufficiently close to $\sdel$. We observe 
  \begin{align*}
  \frac{n}{2\sdel}-\frac{n}{p} < \sdel \left(   1-\frac{n}{\sdel p} +\frac{n}{2\sdel}  \right) \Longleftrightarrow \frac{n}{2} < \frac{(\sdel)^2}{1-\sdel},
  \end{align*}
  where we have chosen $\sdel$ so that the second inequality is true. Since $\tfrac{\sdel}{\delta}$ is decreasing function in $\delta$, we take $\delta_2\in(\max\{\delta_1,\sdel\},1)$ so that
  \[
  \frac{n}{2\sdel}-\frac{n}{p} \le \frac{\sdel}{\delta_2} \left(   1-\frac{n}{\sdel p} +\frac{n}{2\sdel}  \right),\qquad 0< 2-q(1-\delta_2).
  \]
  Furthermore, for our purpose later, we also take $\delta_2\in(0,1)$ sufficiently close to $1$ to satisfy
  \begin{align}\label{def_del_2_2}
      \frac{2\delta^*}{\delta_2}< 2-q(1-\delta^*)(1-\delta_2).
  \end{align}
  
Now, we apply Lemma~\ref{lem_sobo} with $ \sig=p$, $s=\sdel p$, $r^*=2\sdel$ and $\vartheta=\tfrac{\sdel}{\delta}$ for $\delta\in(\delta_2,1)$ to the second term on the right hand side of \eqref{eq_915}. Denoting $r=2\delta$, we have along with the fact $\sdel p= \vartheta \delta p$ that
	\begin{align*}
    \begin{split}
        \fiint_{Q }\frac{|u-u_{Q  }|^{\de p}}{\rho^{\de p}}\,dz
        &\le \fint_{I  }\left( \fint_{B  }\frac{|u-u_{Q  }|^{ p}}{\rho^{ p}}\,dx\right)^{\delta} \, dt\\
        & \le c\fint_{I  }\left(\fint_{B }\left(\frac{|u-u_{Q }|^{\vartheta (\de p)}}{\rho^{\vartheta (\de p)}}+|\na u|^{\vartheta (\de p)}\right)\,dx\right. \\
        &\qquad\times \left. \left(\fint_{B }\frac{|u-u_{Q }|^r}{\rho^r}\,dx\right)^{\frac{(1-\vartheta)(\de p)}{r}}\right)\,dt,
    \end{split}
	\end{align*}
    where $c=c(n,N,p)$.
Moreover, recall that $2-q(1-\delta)>0$.
We apply H\"older's inequality with $\nu=\tfrac{2}{2-p(1-\vartheta)(1-\de)}>1$ and its conjugate $\nu'=\tfrac{2}{p(1-\vartheta)(1-\de)}$, to obtain
	\begin{align*}
    \begin{split}
        &\fiint_{Q }\frac{|u-u_{Q }|^{\de p}}{\rho^{\de p}}\,dz\\
        &\le c\fint_{I } \left(\fint_{B }\left(\frac{|u-u_{Q }|^{\vartheta (\de p)}}{ \rho^{\vartheta (\de p)}}+|\na u|^{\vartheta (\de p)}\right)\,dx\left(\fint_{B }\frac{|u-u_{Q }|^r}{\rho^r}\,dx\right)^{\frac{(1-\vartheta)(\de p)}{r}}\right)\,dt\\
        &\le c\left(\fint_{I }\left(\fint_{B}\left(\frac{|u-u_{Q }|^{\vartheta (\de p)}}{ \rho^{\vartheta (\de p)}}+|\na u|^{\vartheta (\de p)}\right)\,dx\right)^\nu\,dt\right)^\frac{1}{\nu}\\
        &\qquad\times \left(\fint_{I }\left(\fint_{B }\frac{|u-u_{Q }|^r}{\rho^r}\,dx\right)^{\frac{\nu'(1-\vartheta)(\de p)}{r}}\,dt\right)^\frac{1}{\nu'}.
    \end{split}
	\end{align*}
Again, applying H\"older's inequality, we have    
\begin{align*}
    \begin{split}
        \fiint_{Q }\frac{|u-u_{Q }|^{\de p}}{\rho^{\de p}}\,dz
        & \le c\left(\fiint_{Q }\left(\frac{|u-u_{Q }|^{\vartheta\nu (\de p)}}{ \rho^{\vartheta\nu (\de p)}}+|\na u|^{\vartheta\nu (\de p)}\right)\,dz\right)^\frac{1}{\nu} \\
        &\qquad\times \left(\fint_{I}\left(\fint_{B}\frac{|u-u_{Q }|^r}{\rho^r}\,dx\right)^{\frac{\nu'(1-\vartheta)(\de p)}{r}}\,dt\right)^\frac{1}{\nu'}.
    \end{split}
	\end{align*}
    Here, note that $ \vartheta\nu<1$ for any $\vartheta < 1$ as there holds
\[
\vartheta < \frac{1}{\nu} \iff (2-p(1-\delta))\vartheta < 2-p(1-\de).
\]
Therefore, we employ Lemma~\ref{sec4:lem:2} to have
\begin{align*}
    \begin{split}
        \fiint_{Q }\frac{|u-u_{Q }|^{\de p}}{\rho^{\de p}}\,dz
        & \le c\left(\fiint_{Q }  \left( H(z,|\na u|)+ H(z,|F|) \right)^{\vartheta\nu \delta } \,dz\right)^\frac{1}{\nu} \\
        &\qquad\times \left(\fint_{I}\left(\fint_{B}\frac{|u-u_{Q }|^r}{\rho^r}\,dx\right)^{\frac{\nu'(1-\vartheta)(\de p)}{r}}\,dt\right)^\frac{1}{\nu'}.
    \end{split}
\end{align*}
To estimate the second integral, we recall that $r=2\delta$ and apply Hölder's inequality with the conjugate pair $(\tfrac{1}{\de}, \tfrac{1}{1-\de})$ to obtain
\begin{align*}
    \begin{split}
        \fint_{B}\frac{|u-u_{Q}|^r}{\rho^r}\,dx 
        &= \fint_{B}\frac{|u-u_{Q}|^{2\de}}{\rho^{2\de}\mh^{\de(1-\de)}}\mh^{\de(1-\de)}\,dx\\
        &\leq \left(\fint_{B }\frac{|u-u_{Q }|^2}{ \rho^2\mh^{1-\de}}\,dx\right)^\de\left(\fint_{B }\mh^\de\,dx\right)^{1-\de},
    \end{split}
\end{align*}
where $\mh$ is defined in \eqref{def_mh}. Therefore, we have 
\begin{align*}
    \begin{split}
    &\fint_{I}\left(\fint_{B}\frac{|u-u_{Q}|^r}{\rho^r}\,dx\right)^\frac{\nu'(1-\vartheta)\de p}{r}\,dt\\ 
    &\leq \fint_{I}\left(\left(\fint_{B}\frac{|u-u_{Q}|^2}{\rho^2\mh^{1-\de}}\,dx\right)^\frac{\nu'(1-\vartheta)\de p}{2}\left(\fint_{B}\mh^\de\,dx\right)^\frac{\nu'(1-\vartheta)(1-\de) p}{2}\right)\,dt\\
    &=\fint_{I} \left(\left(\fint_{B }\frac{|u-u_{Q }|^2}{\rho^2\mh^{{1-\de}}}\,dx\right)^\frac{\de}{1-\de}\fint_{B }\mh^\de\,dx\right)\,dt\\
    &\leq \sup_{I }\left(\fint_{B}\frac{|u-u_{Q }|^2}{ \rho^2\mh^{{1-\de}}}\,dx\right)^\frac{\de}{1-\de}\fiint_{Q}\mh^\de\,dz.
    \end{split}
\end{align*}
Applying Lemma~\ref{lem_KKM_4_3} and \eqref{eq_est_mz} , we arrive at
\[
\fint_{I}\left(\fint_{B}\frac{|u-u_{Q}|^r}{\rho^r}\,dx\right)^\frac{\nu'(1-\vartheta)\de p}{r}\,dt\le c\pi^\frac{2\delta}{1-\delta},
\]
and thus
\begin{align}\label{eq_927}
    \begin{split}
        \fiint_{Q }\frac{|u-u_{Q }|^{\de p}}{\rho^{\de p}}\,dz 
        &\le c \pi^\frac{2\delta}{(1-\delta)\nu'} \left(\fiint_{Q }  \left( H(z,|\na u|)+ H(z,|F|) \right)^{\vartheta\nu \delta } \,dz\right)^\frac{1}{\nu}\\
        & \le c \pi^{\delta p (1-\vartheta)} \left(\fiint_{Q }  \left( H(z,|\na u|)+ H(z,|F|) \right)^{\vartheta\nu \delta } \,dz\right)^\frac{1}{\nu}\\
        &\le \frac{1}{8}\pi^{\delta p} +c\left(\fiint_{Q }  \left( H(z,|\na u|)+ H(z,|F|) \right)^{\vartheta\nu \delta } \,dz\right)^\frac{1}{\vartheta\nu}.
    \end{split}
\end{align}

Then we consider the third term in \eqref{eq_915}. From \eqref{eq_holder_reg} and triangle inequality we obtain
\begin{align}\label{eq_929}
    \begin{split}
        c\fiint_{Q }a(z)^\de \frac{ |u-u_{Q}|^{\de q} }{\rho^{\de q}} \,dz
        &\leq c\fiint_{Q }\inf_{w\in Q}a(w)^\de \frac{|u-u_{Q}|^{\de q}}{\rho^{\de q}}\,dz \\
        &\qquad +c\fiint_{Q }\rho^{\de \al} \frac{|u-u_{Q}|^{\de q}}{\rho^{\de q}}\,dz.
    \end{split}
    \end{align}
Note that we estimate the second term as in \eqref{radius_absorb_1} to have
\[
c\fiint_{Q }\rho^{\de \al} \frac{|u-u_{Q}|^{\de q}}{\rho^{\de q}}\,dz\le \frac{1}{8}\pi^{\delta p}.
\]

Then we consider the first term on the right hand side of \eqref{eq_929}.
    Again, the conditions of Lemma~\ref{lem_sobo} are satisfied with $\sig=q$, $s=\sdel q$, $r^*=2\sdel$ and $\vartheta=\tfrac{\sdel}{\delta}$ for $\delta\in(\delta_2,1)$, where $\delta_2$ is retaken, if necessary, to satisfy
    \[
    \frac{n}{2\sdel} -\frac{n}{q} \le \frac{\sdel}{\delta_2} \left(  1-\frac{n}{\sdel q} +\frac{n}{2\sdel}  \right).
    \]
    We have $\sdel q=\vartheta (\delta q)$. Denoting $r=2\delta$, we get
    \begin{align*}
    \begin{split}
        \fiint_{Q}\inf_{w\in Q }a(w)^\de\frac{|u-u_{Q }|^{\de q}}{\rho^{\de q}}\,dz
        &\le \fint_{I} \left( \inf_{w\in Q }a(w) \fint_{B}  \frac{|u-u_{Q }|^{ q}}{\rho^{ q}}\,dx \right)^\delta \,dt    \\
        &\le c\fint_{I }   \left(    \fint_{B } \inf_{w\in Q}a(w)^{\vartheta \delta}\left(\frac{|u-u_{Q }|^{\vartheta (\de q)}}{ \rho^{\vartheta (\de q)}}+|\na u|^{\vartheta (\de q)}\right)\,dx \right.\\
        &\qquad\times \left.\left(\inf_{w\in Q}a(w)^\frac{r}{q}\fint_{B}\frac{|u-u_{Q}|^r}{\rho^r}\,dx\right)^{\frac{(1-\vartheta)(\de q)}{r}}\right)\,dt
    \end{split}
	\end{align*}
    Moreover, we denote $\ka=\tfrac{2}{2-q(1-\vartheta)(1-\de)}>1$, the Hölder conjugate of which is $\ka'=\tfrac{2}{q(1-\vartheta)(1-\de)}$. Then we have
	\begin{align*}
    \begin{split}
        &\fiint_{Q}\inf_{w\in Q }a(w)^\de\frac{|u-u_{Q }|^{\de q}}{\rho^{\de q}}\,dz\\
        &\le c\left(\fint_{I }\left(\fint_{B }\inf_{w\in Q }a(w)^{\vartheta\de}\left(\frac{|u-u_{Q }|^{\vartheta (\de q)}}{ \rho ^{\vartheta (\de q)}}+|\na u|^{\vartheta (\de q)}\right)\,dx\right)^\ka\,dt\right)^\frac{1}{\ka}\\
        &\qquad \times \left(\fint_{I }\left(\fint_{B  }\inf_{w\in Q }a(w)^\frac{r}{q}\frac{|u-u_{Q }|^r}{ \rho ^r}\,dx\right)^{\frac{\ka'(1-\vartheta)(\de q)}{r}}\,dt\right)^\frac{1}{\ka'}\\
        & \le c\left(\fiint_{Q }\inf_{w\in Q }a(w)^{\vartheta\de\ka}\left(\frac{|u-u_{Q }|^{\vartheta\ka (\de q)}}{ \rho ^{\vartheta\ka (\de q)}}+|\na u|^{\vartheta\ka (\de q)}\right)\,dz\right)^\frac{1}{\ka}\\
        &\qquad\times \left(\fint_{I }\left(\fint_{B }\pi^\frac{r(p-q)}{q}\frac{|u-u_{Q}|^r}{\rho^r}\,dx\right)^{\frac{\ka'(1-\vartheta)(\de q)}{r}}\,dt\right)^\frac{1}{\ka'},
    \end{split}
	\end{align*}
    where we used H\"older's inequality and \eqref{eq_pcase} in obtaining the last inequality.
    Here, note that again we have $\vartheta \kappa<1$. It follows from Lemma~\ref{sec4:lem:3} that
    \begin{align*}
        \begin{split}
            &\fiint_{Q}\inf_{w\in Q }a(w)^\de\frac{|u-u_{Q }|^{\de q}}{\rho^{\de q}}\,dz\\
            &\le c\left(\fiint_{Q } (  H(z,|\na u|) + H(z,|F|)  )^{\vartheta \delta\kappa}  \,dz\right)^\frac{1}{\ka}\\
        &\qquad\times \left(\fint_{I }\left(\fint_{B }\pi^\frac{r(p-q)}{q}\frac{|u-u_{Q}|^r}{\rho^r}\,dx\right)^{\frac{\ka'(1-\vartheta)(\de q)}{r}}\,dt\right)^\frac{1}{\ka'}.
        \end{split}
    \end{align*}
We start by considering the second integral. Again by Hölder's inequality with the conjugate pair $(\tfrac{1}{\de}, \tfrac{1}{1-\de})$, we obtain
\begin{align*}
    \begin{split}
        \fint_{B }\pi^\frac{r(p-q)}{q}\frac{|u-u_{Q }|^r}{ \rho ^r}\,dx
        &= \fint_{B  }\pi^\frac{2\de(p-q)}{q}\frac{|u-u_{Q  }|^{2\de}}{ \rho ^{2\de}\mh^{\de(1-\de)}}\mh^{\de(1-\de)}\,dx\\
        &\leq \left(\fint_{B }\pi^\frac{2(p-q)}{q}\frac{|u-u_{Q }|^2}{\rho^2\mh^{1-\de}}\,dx\right)^\de\left(\fint_{B}\mh^\de\,dx\right)^{1-\de}.
    \end{split}
\end{align*} 
Along with Lemma~\ref{lem_KKM_4_3} and \eqref{eq_est_mz}, we therefore get
\begin{align*}
    \begin{split}
    &\fint_{I}\left(\fint_{B}\pi^\frac{r(p-q)}{q}\frac{|u-u_{Q }|^r}{ \rho^r}\,dx\right)^\frac{\ka'(1-\vartheta)\de q}{r}\,dt\\ 
    &\leq \fint_{I }\left(\left(\fint_{B }\pi^\frac{2(p-q)}{q}\frac{|u-u_{Q }|^2}{ \rho ^2\mh^{1-\de}}\,dx\right)^\frac{\ka'(1-\vartheta)\de q}{2}\left(\fint_{B }\mh^\de\,dx\right)^\frac{\ka'(1-\vartheta)(1-\de) q}{2}\right)\,dt\\
    &=\fint_{I } \left(\left(\fint_{B }\pi^\frac{2(p-q)}{q}\frac{|u-u_{Q }|^2}{ \rho ^2\mh^{{1-\de}}}\,dx\right)^\frac{\de}{1-\de}\fint_{B }\mh^\de\,dx\right)\,dt\\
    &\leq \pi^\frac{2\delta(p-q)}{(1-\de)q} \sup_{I }\left(\fint_{B }\frac{|u-u_{Q }|^2}{ \rho^2\mh^{{1-\de}}}\,dx\right)^\frac{\de}{1-\de} \fiint_Q \mh^\delta \,dz\\
    &\le c \pi^\frac{2\delta(p-q)}{(1-\de)q} \pi^{(\delta p +2-p)\frac{\delta}{1-\delta}} \pi^{\delta p}\\
    &= c\pi^\frac{2\delta p}{(1-\delta)q}.
    \end{split}
\end{align*}
Recalling $\ka'$, we obtain from  Young's inequality that
\begin{align*}
\begin{split}
    \fiint_{Q}\inf_{w\in Q }a(w)^\de\frac{|u-u_{Q }|^{\de q}}{\rho^{\de q}}\,dz
    &\le c\pi^{(1-\vartheta)\delta p}\left(\fiint_{Q } (  H(z,|\na u|) + H(z,|F|)  )^{\vartheta \delta\kappa}  \,dz\right)^\frac{1}{\ka}\\
    &\le \frac{1}{8}\pi^{\delta p} + c\left(\fiint_{Q } (  H(z,|\na u|) + H(z,|F|)  )^{\vartheta \delta\kappa}  \,dz\right)^\frac{1}{\vartheta\ka}.
\end{split}
\end{align*}

Finally, we consider the first term on the right hand side of \eqref{eq_915}. By Hölder's inequality, the inequality in \eqref{eq_927} and  Young's inequality, we have that
\begin{align*}
\begin{split}
    \fiint_{Q} \frac{|u-u_{Q}|^2}{\pi^{2-p}\Pi^{1-\de}\rho^2 }\,dz 
    &\leq \pi^{\de p-2}\left(\fiint_{Q}\frac{|u-u_{Q }|^{\de p}}{\rho^{\de p} }\,dz\right)^\frac{2}{\de p}\\
    &\leq  c\pi^{\de p-2+ 2(1-\vartheta) }\left(\fiint_{Q} (H(z,|\na u|) + H(z,|F|))^{\vartheta\nu\de }\,dz\right)^\frac{2}{\nu\de p}\\
    &\le \frac{1}{8} \pi^{\delta p} +  c\left(\fiint_{Q} (H(z,|\na u|) + H(z,|F|))^{\vartheta\nu\de }\,dz\right)^\frac{1}{\vartheta\nu}.
\end{split}
\end{align*}

Combining the estimates for the terms on the right hand side of \eqref{eq_915}, we have 
\begin{align*}
    \begin{split}
        \pi^{\de p}
        &\leq \frac{1}{2}\pi^{\de p}+ c\left(\fiint_{Q }  (H(z,|\na u|) + H(z,|F|)  )^{\vartheta\nu\de }\,dz\right)^\frac{1}{\vartheta\nu}\\
        &\qquad +c\left(\fiint_{Q  } (H(z,|\na u|) + H(z,|F|)  )^{\vartheta\ka\de }\,dz\right)^\frac{1}{\vartheta\ka},
    \end{split}
\end{align*}
where $c = c(\data)$. On the other hand, note that
\[
1<\nu<\kappa<\frac{2}{2-q(1-\sdel)(1-\delta_2)}.
\]
We set
\[
\theta_0= \frac{2\sdel}{2-q(1-\sdel)(1-\delta_2)}.
\]
Then $\vartheta\kappa \delta <\theta_0$ holds and moreover, it follows from \eqref{def_del_2_2} that $\theta_0<\delta_2$. We obtain
\[
        \pi^{\de p}
        \leq c\left(\fiint_{Q }  (H(z,|\na u|) + H(z,|F|)  )^{\theta_0 }\,dz\right)^\frac{\delta}{\theta_0}.\\
\]
We have finished the proof of the first inequality in the statement.

In order to prove the second statement, we have from the first statement that
\begin{align*}
    \begin{split}
        \pi^{\delta p}
        &\le  c\pi^{ (\delta  - \theta_0)p }  \fiint_Q H(z,|\na u|)^{\theta_0
        } \,dz + c \fiint_Q H(z,|F|)^{\delta } \,dz.
    \end{split}
\end{align*}
To estimate further, we observe that
\begin{align*}
    \begin{split}
        c\pi^{  (\delta-\theta_0)p }  \fiint_Q H(z,|\na u|)^{\theta_0} \,dz
        &\le \frac{ 1 }{8} \pi^{ (\delta -\theta_0 ) p } \Pi^{\theta_0}  \\
        &\qquad +  c\frac{ \pi^{ (\delta -\theta_0 ) p  } }{|Q|} \iint_{Q\cap \Psi(  (8c)^{-\frac{1}{\theta_0}} \Pi   ) } H(z,|\na u|)^{\theta_0 } \,dz\\
        &\le \frac{1}{8}\Pi^{\delta}  +  c\frac{\Pi^{\delta- \theta_0 }}{|Q|} \iint_{Q\cap \Psi(  (8c)^{-\frac{1}{\theta_0}}  \Pi   ) } H(z,|\na u|)^{\theta_0   } \,dz.
    \end{split}
\end{align*}
Similarly, we also have
\begin{align*}
    \begin{split}
        c \fiint_Q H(z,|F|)^{\delta } \,dz
        &\le \frac{1}{8}\Pi^\delta +  c\frac{1}{|Q|} \iint_{Q\cap \Phi( (8c)^{-\frac{1}{\delta} }\Pi ) } H(z,|F|)^{\delta }   \,dz.
    \end{split}
\end{align*}
Combining these estimates and using \eqref{eq_pcase}, we get
\begin{align*}
    \frac{1}{2}\Pi^\delta 
    &\le \frac{1}{4}\Pi^\delta +  c\frac{\Pi^{\delta-  \theta_0}}{|Q|} \iint_{Q\cap \Psi(  (8c)^{-\frac{1}{\theta_0}} \Pi   ) } H(z,|\na u|)^{ \theta_0  } \,dz\\
    &\qquad  +  c\frac{1}{|Q|} \iint_{Q\cap \Phi( (8c)^{-\frac{1}{\delta} }\Pi ) } H(z,|F|)^{\delta }   \,dz.
\end{align*}

Absorbing the first term on the right hand side to the left hand side and then using \eqref{eq_p1}, we get
\begin{align*}
    \begin{split}
        \fiint_{8Q} H(z,|\na u|)^\delta  \,dz
        &\le 4 c\frac{\Pi^{\delta - \theta_0 }}{|Q|} \iint_{Q\cap \Psi(  (8c)^{-\frac{1}{\theta_0}} \Pi   ) } H(z,|\na u|)^{ \theta_0 } \,dz\\
    &\qquad  + 4 c\frac{1}{|Q|} \iint_{Q\cap \Phi( (8c)^{-\frac{1}{\delta} }\Pi ) } H(z,|F|)^{\delta }   \,dz,
    \end{split}
\end{align*}
or equivalently,
\begin{align*}
    \begin{split}
        \iint_{8Q} H(z,|\na u|)^\delta  \,dz
        &\le 8^{n+2} 4 c  \Pi^{\delta -\theta_0  } \iint_{Q\cap \Psi(  (8c)^{-\frac{1}{\theta_0}} \Pi   ) } H(z,|\na u|)^{\theta_0} \,dz\\
    &\qquad  + 8^{n+2} 4 c \iint_{Q\cap \Phi( (8c)^{-\frac{1}{\delta} }\Pi ) } H(z,|F|)^{\delta }   \,dz.
    \end{split}
\end{align*}
Recalling that $\theta_0<\delta$, we replace the above constants in the upper level sets by $ (8^{n+2} 4c )^\frac{1}{\theta_0} $ to get the conclusion.

\end{proof}

\subsection{$(p,q)$-intrinsic case}

We start with a $(p,q)$-intrinsic parabolic Poincar\'e inequality. Although we are considering very weak solutions, the proof of the following lemmas is the same as in \cite{KKM}.
\begin{lemma}\label{sec4:lem:5}
	Let $u$ be a very weak solution to \eqref{11}. Then, for $\theta\in((q-1)/p,\de]$ and $s\in[2\rho,4\rho]$, 
	there exists a constant $c=c(n,N,p,q,L_2)$ such that 
	\begin{align*}
	 \begin{split}
	     \fiint_{G_{s}^\pi(z_0)}H\left(z_0,\frac{|u-u_{G_{s}^\pi(z_0)}|}{s}\right)^{\theta} \,dz
			&\le c\fiint_{G_{s}^\pi(z_0)}H(z_0,|\na u|)^{\theta}\,dz\\
            &\qquad+c\fiint_{G_{s}^\pi(z_0)}H(z_0,|F|)^{\theta}\,dz,
	 \end{split}
	\end{align*}
  whenever $G_{4\rho}^{\pi}(z_0)\subset\Omega_T$ satisfies \eqref{eq_pqcase}-\eqref{eq_pq4}.
\end{lemma}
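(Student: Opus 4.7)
The plan is to apply the parabolic Poincaré inequality of Lemma~\ref{lem_parabolic_poincare} twice on the $(p,q)$-intrinsic cylinder $G=G_s^\pi(z_0)=B_s(x_0)\times J_s^\pi(t_0)$, whose time half-length is $\pi^2 s^2/H(z_0,\pi)$, and combine the results via the pointwise split
\[
H\!\left(z_0,\tfrac{|u-u_G|}{s}\right)^\theta \le c\left(\tfrac{|u-u_G|^{\theta p}}{s^{\theta p}}+a(z_0)^\theta\tfrac{|u-u_G|^{\theta q}}{s^{\theta q}}\right).
\]
Each summand on the right is then estimated independently by using Lemma~\ref{lem_parabolic_poincare} at $m=p$ and at $m=q$.

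A preliminary geometric observation: since $p>2$ and $\pi>1$ give $\pi^2/H(z_0,\pi)\le\pi^{2-p}<1$, one has $G_{4\rho}^\pi(z_0)\subset Q_{4\rho}(z_0)\subset Q_{10\rho}(z_0)$, so~\eqref{eq_pq2} lets me replace any $a(z)$ appearing inside the computations by $a(z_0)$ at the cost of a universal constant. For the $\theta p$ summand I apply Lemma~\ref{lem_parabolic_poincare} with $m=p$ and $r=s$; the ratio $s/r^2$ in the nonlinear averages on the right equals exactly $\pi^2/H(z_0,\pi)$. The bulk term $\fiint_G|\nabla u|^{\theta p}$ is trivially dominated by $\fiint_G H(z_0,|\nabla u|)^\theta$. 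For the two nonlinear averages, I replace $a(z)$ by $a(z_0)$ via~\eqref{eq_pq2}, then apply Hölder's inequality—valid because the hypothesis $\theta>(q-1)/p$ gives $\theta p>q-1\ge p-1$—to reduce the $(p-1)$- and $(q-1)$-power averages of $|\nabla u|$ and $|F|$ to $\theta$-power averages of $H(z_0,\cdot)$. Bounding the resulting Hölder factors via~\eqref{eq_pq4} by powers of $\Pi=H(z_0,\pi)$, and invoking the $(p,q)$-phase lower bound $H(z_0,\pi)\ge a(z_0)\pi^q\ge\pi^p$ from~\eqref{eq_pqcase}, a direct computation shows that the intrinsic prefactor $(\pi^2/H(z_0,\pi))^{\theta p}$ multiplied by the Hölder powers of $H(z_0,\pi)$ collapses to a $\data$-dependent constant, leaving precisely $\fiint_G H(z_0,|\nabla u|)^\theta+\fiint_G H(z_0,|F|)^\theta$ on the right-hand side.

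The $\theta q$ summand is handled identically with $m=q$; the extra factor $a(z_0)^\theta$ on both sides enters the same cancellation via $H(z_0,\pi)\ge a(z_0)\pi^q$. The main obstacle is purely algebraic bookkeeping: one has to track the powers of $\pi$ and $a(z_0)$ through the Hölder reduction and through the intrinsic scaling so that they collapse to a $\data$-dependent constant in both cases $m=p$ and $m=q$. This is exactly the calculation already performed for weak solutions in~\cite{KKM}; the only modification needed in the very-weak-solution setting is to invoke~\eqref{eq_pq4} at exponent $\theta\le\delta$ in place of the corresponding weak-solution bound, and the exponent arithmetic is unchanged.
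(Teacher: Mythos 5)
The proposal is correct and takes essentially the same route as the paper: the paper delegates this lemma's proof to \cite{KKM}, and you reconstruct exactly that argument — split $H(z_0,\cdot)^\theta$ into its $\theta p$ and $\theta q$ parts, apply Lemma~\ref{lem_parabolic_poincare} with $m=p$ and $m=q$ on $G_s^\pi(z_0)$, use~\eqref{eq_pq2} to replace $a(z)$ by $a(z_0)$, Hölder (valid since $\theta>(q-1)/p$ forces $\theta p\ge p-1$ and $\theta q\ge q-1$), and close with~\eqref{eq_pqcase}--\eqref{eq_pq4} via $\Pi\ge\pi^p$ and $\Pi\ge a(z_0)\pi^q$ so the intrinsic prefactor $(\pi^2/\Pi)^{\theta m}$ cancels. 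One small imprecision: the Hölder reduction of the $(q-1)$-power average should be taken against the $\theta q$-power (giving $a(z_0)^{1/q}(\fiint H(z_0,|\nabla u|)^\theta)^{(q-1)/(\theta q)}$) rather than the $\theta p$-power, so that the residual powers of $a(z_0)$ cancel completely and the constant depends only on $(n,N,p,q,L_2)$ and not on $\|a\|_\infty$; the bound you cite, $\theta p>q-1$, is true but not the one needed at that step.
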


\begin{lemma}\label{sec4:lem:6}
	Let $u$ be a very weak solution to \eqref{11}. Then, for $\theta\in((q-1)/p,\de]$ and $s\in[2\rho,4\rho]$, there exists a constant $c=c(n,N,p,q,L_2)$ such that 
	\begin{align*}
			\begin{split}
			    \fiint_{G_{s}^\pi(z_0)} \frac{|u-u_{G_{s}^\pi(z_0)}|^{\theta p}}{s^{\theta p}}\,dz
			\le c\fiint_{G_{s}^\pi(z_0)} (|\na u|+|F| )^{\theta p}\,dz,
			\end{split}
	\end{align*}
whenever $G_{4\rho}^{\pi}(z_0)\subset\Omega_T$ satisfies \eqref{eq_pqcase}-\eqref{eq_pq4}.
\end{lemma}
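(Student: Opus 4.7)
The natural starting point is Lemma~\ref{lem_parabolic_poincare} applied to the $(p,q)$-intrinsic cylinder $G_s^\pi(z_0)$, viewed as a cylinder $Q_{s,s_t}(z_0)$ with $s_t = \tfrac{\pi^2}{H(z_0,\pi)}s^2$, taking $m = p$ and the given $\theta$. This immediately produces the desired $c\fiint_{G_s^\pi(z_0)}|\na u|^{\theta p}\,dz$, plus two correction terms
\[
    c\left(\tfrac{\pi^2}{H(z_0,\pi)}\fiint_{G_s^\pi(z_0)}(|\na u|^{p-1}+a(z)|\na u|^{q-1})\,dz\right)^{\theta p}
\]
and the analogous $F$-term. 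So the real work is to absorb these two corrections into $c\fiint(|\na u|+|F|)^{\theta p}\,dz$.

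For the absorption, I would first apply Hölder's inequality to lift the $(p-1)$- and $(q-1)$-power integrals up to the $\theta p$-integral; the exponents $\theta p/(p-1)$ and $\theta p/(q-1)$ both exceed $1$ because of the assumption $\theta > (q-1)/p$. Writing $a(z)\leq 2a(z_0)$ on $G_s^\pi(z_0)\subset Q_{10\rho}(z_0)$ via \eqref{eq_pq2}, and using $H(z_0,\pi)\geq \pi^p$ together with $H(z_0,\pi)\geq a(z_0)\pi^q$ to control the prefactor $\pi^2/H(z_0,\pi)$, the correction reduces to a bound of the form
\[
    c\,\pi^{-\theta p(p-2)}A^{p-1}+c\,\pi^{-\theta p(q-2)}A^{q-1},
    \qquad A:=\fiint_{G_s^\pi(z_0)}(|\na u|+|F|)^{\theta p}\,dz.
\]

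The argument closes with the sub-intrinsic estimate $A \leq c\,\pi^{\theta p}$. I would derive it in two steps: first, Hölder's inequality with exponent $\delta/\theta$ together with \eqref{eq_pq4} gives $\fiint H(z,|\na u|)^\theta\,dz \leq c\,H(z_0,\pi)^\theta$; then, isolating the $a(z)^\theta|\na u|^{\theta q}$ piece, factoring out $a(z_0)^\theta$ via \eqref{eq_pq2}, and invoking the $(p,q)$-intrinsic identity $H(z_0,\pi)\approx a(z_0)\pi^q$ from \eqref{eq_pqcase} cancels $a(z_0)$ and yields $\fiint |\na u|^{\theta q}\,dz \leq c\,\pi^{\theta q}$; a final Hölder step with exponent $q/p$ gives $\fiint|\na u|^{\theta p}\,dz\leq c\,\pi^{\theta p}$, and the same chain works verbatim for $F$. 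Inserting $A\leq c\,\pi^{\theta p}$ into $A^{p-1}=A^{p-2}\cdot A$ and $A^{q-1}=A^{q-2}\cdot A$, and using $p,q>2$, the powers of $\pi$ cancel exactly and leave $cA$.

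The delicate point is making sure the constant depends only on $(n,N,p,q,L_2)$ and not on $\|a\|_\infty$: the $(p,q)$-intrinsic scaling $H(z_0,\pi)\approx a(z_0)\pi^q$ is precisely what enables $a(z_0)$ to drop out when translating the $a(z)|\na u|^{\theta q}$ bound into a bound on $|\na u|^{\theta q}$, which is what distinguishes the $(p,q)$-intrinsic Poincaré statement (Lemma~\ref{sec4:lem:6}) from its $p$-intrinsic counterpart (Lemma~\ref{sec4:lem:2}).
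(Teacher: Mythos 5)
Your proof is correct and is precisely the argument the paper defers to \cite{KKM}: apply Lemma~\ref{lem_parabolic_poincare} on $G_s^\pi(z_0)$ with $m=p$, lift the $(p-1)$- and $(q-1)$-power integrals to the $\theta p$-level via H\"older (using $\theta>(q-1)/p$), replace $a(z)$ by $a(z_0)$ via \eqref{eq_pq2}, and cancel the prefactor $\pi^2/H(z_0,\pi)$ against the sub-intrinsic bound $\fiint(|\na u|+|F|)^{\theta p}\,dz\le c\pi^{\theta p}$ coming from \eqref{eq_pq2}, \eqref{eq_pq4} and $H(z_0,\pi)\approx a(z_0)\pi^q$. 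This is the $(p,q)$-intrinsic analogue of the explicit proofs of Lemmas~\ref{sec4:lem:1}--\ref{sec4:lem:2}, and your observation that the $(p,q)$-scaling lets $a(z_0)$ cancel exactly (so the constant avoids $\|a\|_\infty$ and $[a]_\alpha$) correctly identifies the key structural difference from the $p$-intrinsic case.
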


Note that by \eqref{eq_pqcase}, \eqref{eq_pq2} and \eqref{eq_pq4}, we have
\[
    \fiint_{G_{4\rho}^\pi(z_0)} \frac{a(z_0)^\delta }{2^\delta}   (|\na u| + |F|  )^{\de q}\,dz \leq 2a(z_0)^\delta\pi^{\de q},
\]
and therefore 
\begin{align}\label{eq_847}
    \fiint_{G_{4\rho}^\pi(z_0)}(|\na u| +  |F|  )^{\de q}\,dz \leq 4\pi^{\de q}.
\end{align}

We denote
	\begin{align*}
		S(u,G_{\rho}^\la(z_0))=\sup_{J_{\rho}^\la(t_0)}\fint_{B_{\rho}(x_0)}\frac{|u-u_{G_{\rho}^\la(z_0)}|^2}{\rho^2\mh^{1-\de}}\,dx.
	\end{align*}
 
\begin{lemma}\label{lem_KKM_4_3_pq}
	Let $u$ be a very weak solution to \eqref{11}. Then there exists a constant $c=c(n,N,p,q,L_1,L_2)$ such that 
	\begin{align*}
		S(u,G_{2\rho}^\pi(z_0))=\sup_{J_{2\rho}^\pi(t_0)}\fint_{B_{2\rho}(x_0)}\frac{|u-u_{G_{2\rho}^\pi(z_0)}|^2}{(2\rho)^2\mh^{1-\de}}\,dx\le c\pi^2\Pi^{\de-1},
	\end{align*}	
 whenever $G_{16\rho}^{\pi}(z_0)\subset\Omega_T$ satisfies \eqref{eq_pqcase}-\eqref{eq_pq4}.
\end{lemma}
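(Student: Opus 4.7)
The plan is to argue as in Lemma~\ref{lem_KKM_4_3}, but with the $p$-intrinsic cylinder replaced by the $(p,q)$-intrinsic cylinder $G_{2\rho}^\pi(z_0)$ and with the Poincar\'e-type ingredients of Lemmas~\ref{sec4:lem:5}--\ref{sec4:lem:6} playing the role of Lemmas~\ref{sec4:lem:2}--\ref{sec4:lem:3}. First I would apply the Caccioppoli inequality of Lemma~\ref{lem_caccioppoli} with $\Q_1 = G_{2\rho}^\pi(z_0)$ and $\Q_2 = G_{4\rho}^\pi(z_0)$, so that $\sig_i = (\pi^2/\Pi)\rho_i^2$ and $\Pizo = \pi^p$. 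The supremum on the left-hand side then equals $(\Pi/\pi^2)\,S(u,G_{2\rho}^\pi(z_0))$, and the target reduces to showing that the whole right-hand side of Caccioppoli is bounded by $c\Pi^\delta$.

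The $H(z,|F|)^\delta$ contribution is controlled by \eqref{eq_pq4} directly. For the deviation term $\fiint_{\Q_2} H(z, |u-u_{\Q_2}|/\rho_2)^\delta\,dz$, I would first invoke \eqref{eq_pq2} to interchange $H(z,\cdot)$ with $H(z_0,\cdot)$ up to a universal constant, then apply Lemma~\ref{sec4:lem:5} with $\theta=\delta$ and $s=4\rho$, and finally use \eqref{eq_pq4} once more; this bounds the deviation term by $c\Pi^\delta$ as well.

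The genuinely new difficulty is the $L^2$ term
\[
\fiint_{\Q_2} \frac{|u - u_{\Q_2}|^2}{\Pi^{1-\delta}\sig_2}\,dz = \frac{\Pi^\delta}{\pi^2}\fiint_{G_{4\rho}^\pi(z_0)} \frac{|u - u_{G_{4\rho}^\pi(z_0)}|^2}{(4\rho)^2}\,dz,
\]
because the $p$-exponent H\"older step used in Lemma~\ref{lem_KKM_4_3} does not deliver the right power of $\pi$ in the $(p,q)$-phase: bounding the $L^2$ norm by the $L^{\delta p}$ one would produce $\Pi^{2/p}/\pi^2$, which is not $O(1)$ when $\Pi \gg \pi^p$. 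The remedy is to exploit the $q$-side of $H(z_0,\cdot)$. Since $\delta>2/p$ and $q>p$ one has $\delta q>2$, so H\"older's inequality upgrades the $L^2$ average to an $L^{\delta q}$ average; using $H(z_0,s)^\delta \geq a(z_0)^\delta s^{\delta q}$ together with Lemma~\ref{sec4:lem:5}, \eqref{eq_pq2} and \eqref{eq_pq4} yields
\[
\fiint_{G_{4\rho}^\pi(z_0)} \frac{|u - u_{G_{4\rho}^\pi(z_0)}|^{\delta q}}{(4\rho)^{\delta q}}\,dz \leq c\, \Pi^\delta /a(z_0)^\delta.
\]
Taking the $2/(\delta q)$-power and invoking the defining inequality $\Pi \leq 2 a(z_0) \pi^q$ of the $(p,q)$-phase gives $\Pi^{2/q} \leq c\,a(z_0)^{2/q}\pi^2$, whence $\fiint |u-u_{G_{4\rho}^\pi(z_0)}|^2/(4\rho)^2\,dz \leq c\pi^2$ and the $L^2$ term is controlled by $c\Pi^\delta$. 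Combining the three contributions gives $(\Pi/\pi^2)\,S(u,G_{2\rho}^\pi(z_0)) \leq c\Pi^\delta$, which rearranges to the asserted estimate; note that no $R_0$-smallness is needed here because \eqref{eq_pq2} already forces $a(\cdot)$ to be comparable to $a(z_0)$ throughout $G_{4\rho}^\pi(z_0)$.
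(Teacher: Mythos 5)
Your proposal is correct and follows the paper's overall plan: apply Lemma~\ref{lem_caccioppoli} with $\Q_1 = G_{2\rho}^\pi(z_0)$, $\Q_2 = G_{4\rho}^\pi(z_0)$, then bound the three right-hand terms by $c\Pi^\delta$. You correctly identify the Jacobian factor $\Pi/\pi^2$ in front of the supremum, and you handle the $H(z,|F|)^\delta$ term via \eqref{eq_pq4} and the $H(z,|u-u_{\Q_2}|/\rho_2)^\delta$ term via \eqref{eq_pq2}, Lemma~\ref{sec4:lem:5}, and \eqref{eq_pq4}, exactly as the paper does.

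Your treatment of the $L^2$ term is a small but genuine variant. The paper first uses H\"older with $\delta p>2$ to pass from $L^2$ to $L^{\delta p}$, then invokes the unweighted Poincar\'e estimate of Lemma~\ref{sec4:lem:6} and a second H\"older from $\delta p$ to $\delta q$, and finally uses \eqref{eq_847} to land on $c\pi^{\delta q}$ so that the $\pi^{-2}$ cancels. You instead jump straight from $L^2$ to $L^{\delta q}$ (valid since $\delta q>\delta p>2$), read off $a(z_0)^\delta\fiint(|u-u_G|/\rho)^{\delta q}\leq \fiint H(z_0,|u-u_G|/\rho)^\delta$, apply Lemma~\ref{sec4:lem:5} plus \eqref{eq_pq2} and \eqref{eq_pq4}, and then absorb the remaining $\Pi^{2/q}/a(z_0)^{2/q}\leq c\pi^2$ via the phase condition \eqref{eq_pqcase}. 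Both routes exploit the same structural fact — that in the $(p,q)$-phase the $q$-exponent produces a $\pi^q$ that, together with $a(z_0)\sim\Pi/\pi^q$, cancels $\pi^{-2}$ exactly — so they are interchangeable; your version just dispenses with Lemma~\ref{sec4:lem:6} as an intermediary. Your closing remark that no $R_0$-smallness is required here is also correct and matches the statement.
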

\begin{proof}
	By Lemma~\ref{lem_caccioppoli}, there exists a constant $c=c(n,p,q,L_1,L_2)$ such that
 \begin{equation}\label{sec5:61}
		\begin{split}
			&\frac{\Pi}{\pi^2}\sup_{J_{2\rho}^\pi(t_0)}\fint_{B_{2\rho}(x_0)}\frac{|u-u_{G_{2\rho}^\pi(z_0)}|^2}{(2\rho)^2\mh^{1-\de}}\,dx\\
			&\le c\fiint_{G_{4\rho}^\pi(z_0)}\left(\frac{|u-u_{G_{4\rho}^\pi(z_0)}|^{\de p}}{(4\rho)^{\de p}}+a(z)^\delta\frac{|u-u_{G_{4\rho}^\pi(z_0)}|^{\de q}}{(4\rho)^{\de q}}\right)\,dz\\
            &\qquad +c\frac{\Pi^\de}{\pi^2}\fiint_{G_{4\rho}^\pi(z_0)}\frac{|u-u_{G_{4\rho}^\pi(z_0)}|^2}{(4\rho)^2}\,dz + c\fiint_{G_{4\rho}^\pi(z_0)} H(z,|F|)^\delta \,dz.
		\end{split}
  \end{equation}
	
	For the first term on the right-hand side of \eqref{sec5:61}, we apply Lemma~\ref{sec4:lem:5} together with \eqref{eq_pq2} and \eqref{eq_pq3} to obtain
	\begin{align*}
		\begin{split}
			&\fiint_{G_{4\rho}^\pi(z_0)}\left(\frac{|u-u_{G_{4\rho}^\pi(z_0)}|^{\de p}}{4\rho^{\de p}}+a(z)^\delta\frac{|u-u_{G_{4\rho}^\pi(z_0)}|^{\de q}}{4\rho^{\de q}}\right)\,dz\\
			&\le 2\fiint_{G_{4\rho}^\pi(z_0)}H\left(z_0,\frac{|u-u_{G_{4\rho}^\pi(z_0)}|}{4\rho}\right)^\de\,dz\\
   &\le c\fiint_{Q_{4\rho}^\pi(z_0)} (H(z_0,|\na u|) + H(z_0,|F|)    )^\de\,dz\\
   &\le c\Pi^\de.
		\end{split}
	\end{align*}
    
For the second term on the right-hand side of \eqref{sec5:61} we obtain with Hölder's inequality, Lemma~\ref{sec4:lem:6} and \eqref{eq_847} that
	\begin{align*}
		\begin{split}
			\frac{\Pi^\de}{\pi^2}\fiint_{G_{4\rho}^\pi(z_0)}\frac{|u-u_{G_{4\rho}^\pi(z_0)}|^2}{(4\rho)^2}\,dz
            &\leq \frac{\Pi^\de}{\pi^2}\left(\fiint_{G_{4\rho}^\pi(z_0)}\frac{|u-u_{G_{4\rho}^\pi(z_0)}|^{\de p}}{(4\rho)^{\de p}}\,dz\right)^\frac{2}{\de p}\\
			&\le \frac{c\Pi^\de}{\pi^2}\left(\fiint_{G_{4\rho}^\pi(z_0)}
             (|\na u| + |F|   )^{\de p}\,dz\right)^\frac{2}{\de p}\\
            &\le \frac{c\Pi^\de}{\pi^2}\left(\fiint_{G_{4\rho}^\pi(z_0)}
            (|\na u| + |F| )^{\de q}\,dz\right)^\frac{2}{\de q} \\
            &\leq c\Pi^\de.
		\end{split}
	\end{align*}

    Finally we apply \eqref{eq_pq4} to the last term in \eqref{sec5:61} and conclude from \eqref{sec5:61} that
 \[
 \sup_{J_{2\rho}^\pi(t_0)}\fint_{B_{2\rho}(x_0)}\frac{|u-u_{G_{2\rho}^\pi(z_0)}|^2}{(2\rho)^2\mh^{1-\de}}\,dx \leq c\pi^2\Pi^{\de-1},
 \]
\end{proof}
which finishes the proof.

\begin{lemma}\label{lem_reverse_Hölder_pq}
	Let $u$ be a very weak solution to \eqref{11}.
There exist constants $c=c(\data)$, $\delta_2=\delta_2(\data)\in(\delta_1,1)$ with $2-q(1-\delta_2)>0$, $\theta_0=\theta_0(\data)\in(0,\delta_2) $ such that
	\begin{align*}
		\begin{split}
			\Pi^\de\le c\left(\fiint_{G_{2\rho}^\pi(z_0)} H(z,|\na u|)^{\theta_0}\,dz\right)^\frac{\delta}{\theta_0} + c \fiint_{G_{2\rho}^\pi(z_0)}H(z,|F|)^{\de}\,dz,
		\end{split}
	\end{align*}	
	whenever $G_{16\rho}^{\pi}(z_0)\subset\Omega_T$ satisfies \eqref{eq_pqcase} - \eqref{eq_pq4} and $\delta\in(\delta_2,1)$. Moreover, we have 
    \begin{align}\label{eq_revH_Psi_pq}
		\begin{split}
			\iint_{G_{16\rho}^\pi(z_0)}H(z,|\na u|)\,dz
			&\le c\Pi^{\delta -\theta_0 }\iint_{G_{2\rho}^\pi(z_0)\cap \Psi(c^{-1}\Pi)}H(z,|\na u|)^{\theta_0}\,dz\\
            &\qquad +c\iint_{G_{2\rho}^\pi(z_0)\cap \Phi(c^{-1}\Pi)}H(z,|F|)^\delta\,dz
		\end{split}
	\end{align}
 where $\Psi(\cdot)$ and $\Phi(\cdot)$ are defined in \eqref{sec2:2}
\end{lemma}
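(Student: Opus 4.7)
The strategy mirrors that of Lemma~\ref{lem_reverse_Hölder}, substituting the $(p,q)$-intrinsic Poincaré estimates (Lemmas~\ref{sec4:lem:5}--\ref{sec4:lem:6}) and sup-estimate (Lemma~\ref{lem_KKM_4_3_pq}) for their $p$-intrinsic counterparts. First I apply the Caccioppoli inequality (Lemma~\ref{lem_caccioppoli}) with $\Q_1 = G_{2\rho}^\pi(z_0)$ and $\Q_2 = G_{4\rho}^\pi(z_0)$. Although Caccioppoli directly controls only $\Pizo^\delta = \pi^{\delta p}$, the argument preceding \eqref{after_A} also bounds $\fiint_{G_{2\rho}^\pi} H(z,|\na u|)^\delta$ by the Caccioppoli right-hand side; inserting this into \eqref{eq_pq3} yields the $(p,q)$-analog of \eqref{eq_915}:
\[
\Pi^\delta \le c\fiint_{G_{4\rho}^\pi}\frac{|u-u_{G_{4\rho}^\pi}|^2}{\Pi^{1-\delta}\sig_2}\,dz + c\fiint_{G_{4\rho}^\pi} H\!\left(z,\tfrac{|u-u_{G_{4\rho}^\pi}|}{4\rho}\right)^{\!\!\delta}\,dz + c\fiint_{G_{4\rho}^\pi} H(z,|F|)^\delta\,dz,
\]
where $\sig_2 = (4\rho)^2\pi^2/\Pi$.

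\textbf{Sobolev-Gagliardo-Nirenberg step.} Using \eqref{eq_pq2} to freeze $a(z)$ at $a(z_0)$, split the second term as $H(z,\tfrac{|u-u|}{4\rho})^\delta \lesssim \tfrac{|u-u|^{\delta p}}{\rho^{\delta p}} + a(z_0)^\delta\tfrac{|u-u|^{\delta q}}{\rho^{\delta q}}$. For the $|u-u|^{\delta p}$ piece, apply Lemma~\ref{lem_sobo} with $\sigma = p$, $s = \sdel p$, $r^* = 2\sdel$ and $\vartheta = \sdel/\delta$: one factor of the resulting GN product is bounded by Lemma~\ref{sec4:lem:6}, and the other (an $L^{2\delta}$-type norm of $|u-u|/\rho$) is split via Hölder with $(1/\delta, 1/(1-\delta))$ after inserting the weight $\mh^{\pm\delta(1-\delta)}$, then bounded using Lemma~\ref{lem_KKM_4_3_pq} to produce the factor $\pi^{2\delta/(1-\delta)}\Pi^\delta$. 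Young's inequality yields $\tfrac{1}{8}\Pi^\delta + c(\fiint H^{\vartheta\nu\delta})^{1/(\vartheta\nu)}$ with $\nu = 2/(2-q(1-\vartheta)(1-\delta))$. The $a(z_0)^\delta|u-u|^{\delta q}$ piece is handled identically with $\sigma = q$ and Lemma~\ref{sec4:lem:5} in place of Lemma~\ref{sec4:lem:6}, producing $\tfrac{1}{8}\Pi^\delta + c(\fiint H^{\vartheta\kappa\delta})^{1/(\vartheta\kappa)}$ with $\kappa = 2/(2-q(1-\vartheta)(1-\delta))$ larger than $\nu$. The first (quadratic) term of the display is upgraded by Hölder to exponent $\delta q$ and reduced to the previous estimates. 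Absorbing $\tfrac{1}{2}\Pi^\delta$ into the LHS and choosing $\theta_0 = 2\sdel/(2-q(1-\sdel)(1-\delta_2))$ exactly as in Lemma~\ref{lem_reverse_Hölder}, the first inequality of the statement follows.

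\textbf{Level-set version.} For \eqref{eq_revH_Psi_pq}, split $\fiint_{G_{2\rho}^\pi}H^{\theta_0}$ and $\fiint H(z,|F|)^\delta$ according to the super-level sets $\Psi(c^{-1}\Pi)$ and $\Phi(c^{-1}\Pi)$ respectively. On their complements, the integrands are bounded pointwise by constant fractions of $\Pi^{\theta_0}$ and $\Pi^\delta$; the resulting contributions to $\Pi^\delta$ are absorbed back into the LHS by \eqref{eq_pq3}, and multiplying through by $|G_{2\rho}^\pi|\approx|G_{16\rho}^\pi|$ and renaming constants yields \eqref{eq_revH_Psi_pq}.

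\textbf{Main obstacle.} The principal technical difficulty is the different time scaling $\sig_2 = (4\rho)^2\pi^2/\Pi$ of the $(p,q)$-cylinders, versus $\sig_2 = (4\rho)^2\pi^{2-p}$ for the $p$-intrinsic cylinders. The factor of $\Pi$ this introduces into the Caccioppoli right-hand side must interlock cleanly with the factor $\Pi^{\delta-1}$ from Lemma~\ref{lem_KKM_4_3_pq} so that, after the Sobolev-GN interpolation, one recovers exactly $\Pi^\delta$ on the LHS rather than the strictly weaker $\pi^{p\delta}$ that Caccioppoli alone yields. Equivalently, \eqref{eq_pq3} is what upgrades the Caccioppoli's bare output $\Pizo^\delta = \pi^{p\delta}$ to the sharp target $\Pi^\delta \approx (a(z_0)\pi^q)^\delta$, and bookkeeping these powers of $\Pi$ consistently through each invocation of Lemma~\ref{lem_sobo} is the part that has to be checked most carefully.
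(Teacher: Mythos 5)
Your proposal is correct and follows essentially the same route as the paper: apply the Caccioppoli inequality (using \eqref{eq_pq3} to obtain $\Pi^\delta$ rather than just $\Pizo^\delta=\pi^{\delta p}$ on the left), freeze $a$ at $a(z_0)$ via \eqref{eq_pq2}, run the Sobolev--Gagliardo--Nirenberg interpolation twice with Lemmas~\ref{sec4:lem:6} and \ref{sec4:lem:5} respectively and Lemma~\ref{lem_KKM_4_3_pq} for the sup-term, upgrade the quadratic term to exponent $\delta q$ by H\"older, and absorb. One typographical slip: in the $|u-u_G|^{\delta p}$ piece you wrote $\nu = 2/(2-q(1-\vartheta)(1-\delta))$, but it should be $\nu = 2/(2-p(1-\vartheta)(1-\delta))$ (with $p$, not $q$), as otherwise $\nu=\kappa$ and your remark that $\kappa>\nu$ would be vacuous; this does not affect the rest of the argument.
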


\begin{proof}
For simplicity, we write $G=G_{2\rho}^\pi(z_0)$, $B=B_{2\rho}(x_0)$, $J=J^\pi_{2\rho}(t_0)$.
By the Caccioppoli inequality in Lemma~\ref{lem_caccioppoli} and \eqref{eq_pq2} we have
\begin{align}\label{eq_915_pq}
	\begin{split}
	\Pi^\de 
    &\leq c\fiint_{G} \frac{|u-u_{G}|^2}{\pi^2\Pi^{-\de}\rho^2 }\,dz +c\fiint_{G}\left(\frac{ | u-u_{G } |^{\de p} }{\rho^{\de p}}+a(z_0)^\de\frac{ | u-u_{G }|^{\de q} }{\rho^{\de q}} \right)\,dz\\
    &\qquad+ c\fiint_{G} H(z,|F|)^\delta\,dz.
	\end{split}
\end{align}
 We estimate the second term first. As in the proof of Lemma~\ref{lem_reverse_Hölder}, we take $\sdel$, $\delta_2$ and $\vartheta=\tfrac{\sdel}{\delta}$ for $\delta\in(\delta_2,1)$. Then for
$\nu=\tfrac{2}{2-p(1-\vartheta)(1-\delta)}>1$ and $\nu'=\tfrac{2}{p(1-\vartheta)(1-\delta)}$, there holds
 \begin{align*}
    \begin{split}
        \fiint_{G}   \frac{|u-u_{G}|^{\de p}}{\rho^{\de p}}\,dz
        & \le c\left(\fiint_{G }\left(\frac{|u-u_{G }|^{\vartheta\nu (\de p)}}{\rho^{\vartheta\nu (\de p)}}+|\na u|^{\vartheta\nu (\de p)}\right)\,dz\right)^\frac{1}{\nu} \\
        &\qquad \times \left(  \sup_J \left(  \fint_B \frac{|u-u_G|^2}{\rho^2\hat{\Upsilon}^{1-\delta}} \,dx \right)^\frac{\delta}{1-\delta} \fiint_G \hat{\Upsilon}^\delta\,dz
        \right)^\frac{1}{\nu'},
    \end{split}
	\end{align*}
 where $\vartheta\nu<1$. 
To estimate further, we apply \eqref{eq_est_mz}, Lemma~\ref{sec4:lem:6}  and Lemma~\ref{lem_KKM_4_3_pq} to the right hand side and obtain 
\begin{align}\label{pq_est_p}
    \begin{split}
        \fiint_{G}   \frac{|u-u_{G}|^{\de p}}{\rho^{\de p}}\,dz
        & \le c   \pi^{\delta p (1-\vartheta)}  \left(\fiint_{G }  (|\na u|  +  |F|)   ^{\vartheta\nu (\de p)} \,dz\right)^\frac{1}{\nu}\\
        &\le \frac{1}{4} \pi^{\delta p } + \left(\fiint_{G }  (|\na u|  +  |F|)   ^{\vartheta\nu (\de p)} \,dz\right)^\frac{1}{\vartheta\nu} .
    \end{split}
\end{align}
Similarly, we also have
\begin{align*}
    \begin{split}
        & a(z_0)^\delta \fiint_{G} \frac{|u-u_{G}|^{\de q}}{\rho^{\de q}}\,dz\\
        &\le \left( \fiint_G a(z_0)^{\vartheta \delta \kappa} \left( \frac{|u-u_G|^{\vartheta \kappa (\delta q)}}{\rho^{\vartheta \kappa (\delta q)}} + |\na u|^{\vartheta \kappa (\delta q)}  \right)\,dz \right)^\frac{1}{\kappa} \\
        &\qquad \times a(z_0)^{  (1-\vartheta ) \delta }\left(   \sup_J \left(  \fint_B \frac{|u-u_G|^2}{\rho^2\hat{\Upsilon}^{1-\delta}} \,dx \right)^\frac{\delta}{1-\delta} \fiint_G \hat{\Upsilon}^\delta\,dz   \right)^\frac{1}{\kappa'},
    \end{split}
\end{align*}
where $\kappa=\tfrac{2}{2-q(1-\vartheta)(1-\delta)}>1$ and $\kappa'=\tfrac{2}{q(1-\vartheta)(1-\delta)}$. Again applying \eqref{eq_est_mz}, Lemma~\ref{sec4:lem:5} and Lemma~\ref{lem_KKM_4_3_pq}, it follows that
\begin{align*}
    \begin{split}
        a(z_0)^\delta \fiint_{G} \frac{|u-u_{G}|^{\de q}}{\rho^{\de q}}\,dz
        &\le a(z_0)^{ (1-\vartheta)  \delta } \pi^{(1-\vartheta)(\delta q)} \\
        &\qquad\times \left( \fiint_G   (H(z,|\na u|)  + H(z,|F|) )^\delta  \,dz \right)^\frac{1}{\kappa}\\
        &\le \frac{1}{4}a(z_0)^{\delta}\pi^{\delta q} \\
        &\qquad + \left( \fiint_G   (H(z,|\na u|)  + H(z,|F|) )^{\kappa\vartheta \delta}  \,dz \right)^\frac{1}{\kappa \vartheta}.
    \end{split}
\end{align*}

Finally, we consider the first term on the right hand side of \eqref{eq_915_pq}. Using \eqref{eq_pqcase}, \eqref{pq_est_p} and \eqref{eq_847}, we obtain
\begin{align*}
\begin{split}
    \fiint_{G }\frac{|u-u_{G }|^2}{\pi^2\Pi^{-\de}\rho^2 }\,dz 
    &\leq \Pi^\de\pi^{-2}\left(\fiint_{G }\frac{|u-u_{G  }|^{\de p}}{\rho^{\de p} }\,dz\right)^\frac{2}{\de p}\\
    &\le c \Pi^\de\pi^{-2+2(1-\vartheta)} \left(  \fiint_G ( |\na u| + |F|  )^{\vartheta \nu (\delta p)} \, dz \right)^\frac{2}{\nu (\delta p)}\\
    &\le c  (a(z_0)\pi^q)  ^\delta  \pi^{-\vartheta} \left(  \fiint_G ( |\na u| + |F|  )^{\vartheta \nu (\delta q)} \, dz \right)^\frac{1}{\nu (\delta q)}\\
    &= c a(z_0)^{\delta -\frac{\vartheta }{q}} \pi^{\delta q -\vartheta } \left(  \fiint_G a(z_0)^{\vartheta \nu \delta }  ( |\na u| + |F|  )^{\vartheta \nu (\delta q)} \, dz \right)^\frac{1}{\nu (\delta q)}.
\end{split}
\end{align*}
Therefore, Young's inequality gives
\[
\fiint_{G }\frac{|u-u_{G }|^2}{\pi^2\Pi^{-\de}\rho^2 }\,dz \le 
\frac{1}{4} a(z_0)^\delta \pi^{\delta q} + c\left(  \fiint_G a(z_0)^{\vartheta \nu \delta }  ( |\na u| + |F|  )^{\vartheta \nu (\delta q)} \, dz \right)^\frac{1}{ \vartheta \nu }
\]

Combining the estimates for the terms on the right hand side of \eqref{eq_915_pq} we have that
\begin{align*}
    \begin{split}
        \Pi^{\de p}
        &\leq \frac{1}{2}\Pi^{\de p}+ \left(\fiint_{G }  (H(z,|\na u|) + H(z,|F|)  )^{\vartheta\nu\de }\,dz\right)^\frac{1}{\vartheta\nu}\\
        &\qquad +c\left(\fiint_{G  } (H(z,|\na u|) + H(z,|F|)  )^{\vartheta\ka\de }\,dz\right)^\frac{1}{\vartheta\ka}.
    \end{split}
\end{align*}
The proof of the first statement in this lemma follows as in the proof of Lemma~\ref{lem_reverse_Hölder} by taking $\theta_0$.
Meanwhile, the second statement follows as in the proof of Lemma~\ref{lem_reverse_Hölder}.

\end{proof}

\section{Proof of Theorem~\ref{HI_theorem}}\label{sec_HI}
At the end of this section, we will take $\delta_0\in(\delta_2,1)$ satisfying Theorem~\ref{HI_theorem} and
\begin{align}\label{de_0_1}
    q < p+\frac{\alpha(2-q(1-\de_0))}{n+2},\qquad 2-q(1-\delta_0)>1,\qquad 2<\delta_0 p,
\end{align}
where $\delta_2\in(0,1)$ is in Lemma~\ref{lem_reverse_Hölder}. Note that the existence of a number $\de_ 0$ satisfying the above inequalities is guaranteed as \eqref{eq_range_q} holds with a strict inequality. We recall $\delta\in(\delta_0,1)$. Moreover, $R_0\in(0,1)$ will be taken in Lemma~\ref{lem_KKM_4_3}, Lemma~\ref{lemma_decay} and Lemma~\ref{p_q_comp} depending on $\data, \delta ,\| H(z,|\na u|) \|_{L^\delta(\Om_T)}, \| H(z,|F|) \|_{L^\delta(\Om_T)}$ and $\|u\|_{C(0,T;L^2(\Om,\mathbb{R}^N))}$.

\subsection{Stopping time argument}\label{sec_stopping_time}
First, we show that the intrinsic cylinders defined in Section~\ref{sec_setup} exist. Recall $\mfc>1$ defined in \eqref{def_mfc}.
Let $z_0\in \Om_T$ be such that $Q_{2R}(z_0)\subset \Om_T$ for $R\le R_0$. For $\delta\in(\delta_0,1)$, we denote
\begin{align}\label{sec6:01}
      \pi_0^{2}(\pi_0^p+\sup_{z\in Q_{2R}(z_0)}a(z)\pi_0^q)^{\de-1}=\fiint_{Q_{2R}(z_0)} ( H(z,|\na u|) + H(z,|F|)  )^\de\,dz+\mfc^\frac{2}{p}
\end{align}
and 
\[
\Pi_0=\pi_0^p+\sup_{z\in Q_{2R}(z_0)}a(z)\pi_0^q.
\]
It is easy to see that $\pi_0>\mfc^\frac{1}{p}$ and $\Pi_0>\mfc$. Therefore, the condition $\Pi>\mfc$ in Section~\ref{sec_setup} follows. 
Note that $2+q(\delta-1)>0$  by \eqref{de_0_1}.
For $\La>1$ and $\rho\in[R,2R]$, recall upper level sets $\Psi(\La)$ and $\Phi(\La)$ in \eqref{sec2:2}. We denote
\begin{align*}
    \begin{split}
        \Psi(\La,\rho)&=\Psi(\La)\cap Q_{\rho}=\{ z\in Q_{\rho}(z_0): H(z,|\na u(z)|)>\La\},\\
        \Phi(\La,\rho)&=\Phi(\La)\cap Q_{\rho}=\{ z\in Q_{\rho}(z_0): H(z,|F(z)|)>\La\}.
    \end{split}
\end{align*}

For the application of a stopping time argument, we set $R\le R_1<R_2
\le 2R$ and consider
\begin{align}\label{sec6:4}
	\Pi>\left(\frac{32 R}{R_2-R_1}\right)^\frac{q(n+2)}{2+p(\delta-1)}\Pi_0.
\end{align}
For any $w\in \Psi(\Pi,R_1)$, let $\pi_w>0$ be such that
\begin{align}\label{sec6:5}
    \Pi = \pi_w^p+a(w)\pi_w^q.
\end{align}
Note that since $0\le s\mapsto s^p+a(w)s^q$ is strictly increasing, $\pi_w$ is uniquely determined.
We claim that 
\begin{align}\label{eq_96}
    \pi_w > \left(\frac{ 32 R}{R_2-R_1}\right)^\frac{n+2}{2+p(\delta-1)}\pi_0.
\end{align}
For a contradiction, assume that \eqref{eq_96} does not hold. Then we have
\begin{align*}
	\Pi=\pi_w^p+a(w)\pi_w^q \leq \left(\frac{32 R}{R_2-R_1}\right)^{\frac{q(n+2)}{2+p(\delta-1)}}\left(\pi_0^p+a(w)\pi_0^q\right)\le \left(\frac{ 32 R}{R_2-R_1}\right)^{\frac{q(n+2)}{2+p(\delta-1)}}\Pi_0,
\end{align*}
which is a contradiction with \eqref{sec6:4} and therefore \eqref{eq_96} holds.

We now verify the properties of $p$-intrinsic cylinders in \eqref{eq_pcase}-\eqref{eq_rho_decay}. 

\begin{lemma}\label{p_stopping}
    For any $w\in \Psi(\Pi,R_1)$, there exists $\rho_{w}\in(0,(R_2-R_1)/16)$ such that
\begin{align}\label{sec6:6}
	\fiint_{Q_{\rho_{w}}^{\pi_w}(w)}  (H(z,|\na u|) +H(z,|F|)  )^\de\,dz= \pi_w^{\de p}
\end{align}
and 
\[
\fiint_{Q_s^{\pi_w}(w)}(H(z,|\na u|) +H(z,|F|)  )^\de\,dz < \pi_w^{\de p}
\]
for every $s\in(\rho_{w},R_2-R_1)$.
\end{lemma}

\begin{proof}
For any $s\in [(R_2-R_1)/16,R_2-R_1)$, we have by \eqref{sec6:01} and \eqref{eq_96} that
\begin{align*} 
	\begin{split}
		&\fiint_{Q_s^{\pi_w}(w)}( H(z,|\na u|) + H(z,|F|) )^\de \,dz\\
		&\le\pi_w^{p-2}\left(\frac{2R}{s}\right)^{n+2}\fiint_{Q_{2R}(z_0) } ( H(z,|\na u|) + H(z,|F|) )^\de \,dz\\
		&\le\left(\frac{ 32 R}{R_2-R_1}\right)^{n+2}\pi_w^{p-2}\pi_0^2(\pi_0^p+\sup_{z\in Q_{2R}(z_0) }a(z)\pi_0^q)^{\de-1}\\
        &\leq \left(\frac{ 32 R}{R_2-R_1}\right)^{n+2}\pi_w^{p-2}\pi_0^{2+p(\de -1)}\\
        &\leq \pi_w^{p-2}  \left(   \left(\frac{ 32 R}{R_2-R_1}\right)^\frac{n+2}{2+p(\delta-1)}    \pi_0  \right)^{2+p(\de -1)}\\
        &< \pi_w^{\de p}.
	\end{split}
\end{align*}
Here, to obtain the third inequality, we drop the term $\sup a(z)\pi_0^q$ since there is an exponent $\delta-1<0$, while to obtain the last inequality, we use \eqref{eq_96}.
On the other hand, since $ w\in \Psi(\pi_w^p,R_1)$, the conclusion holds by the Lebesgue differentiation theorem.
\end{proof}

The above lemma provides \eqref{eq_p1}-\eqref{eq_p2}.
We write condition \eqref{eq_pcase} in this setting as
\begin{align}\label{p_phase_w}
    \pi_w^p\ge a(w)\pi_w^q.
\end{align}
This condition proves the following estimate appearing in \eqref{eq_rho_decay}.

\begin{lemma}\label{lemma_decay}
    Suppose $w\in \Psi(\Pi,R_1)$ and \eqref{p_phase_w}. Then there exists $R_0\in(0,1)$ depending on $n,p,q,\alpha,[a]_\alpha,\delta,\| H(z,|\na u|) \|_{L^\delta(\Om_T)}, \| H(z,|F|) \|_{L^\delta(\Om_T)}$ such that    
    \[
    \rho_w^\alpha\pi_w^{q-p}\le \frac{1}{1+10[a]_\alpha}.
    \]
\end{lemma}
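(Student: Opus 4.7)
\textbf{Plan for the proof of Lemma~\ref{lemma_decay}.}
The natural starting point is the stopping time identity \eqref{sec6:6}, which pins down the value of the integral average of $(H(z,|\na u|)+H(z,|F|))^\delta$ over $Q_{\rho_w}^{\pi_w}(w)$. I would simply dominate this average by the full-domain integral, picking up the measure of the intrinsic cylinder $|Q_{\rho_w}^{\pi_w}(w)|\approx \pi_w^{2-p}\rho_w^{n+2}$ in the denominator. This yields
\[
\pi_w^{\delta p}\le \frac{c\,\pi_w^{p-2}}{\rho_w^{n+2}}\bigl\|H(z,|\na u|)+H(z,|F|)\bigr\|_{L^\delta(\Om_T)}^\delta,
\]
which, after rearranging, gives the crucial upper bound
\[
\rho_w\le c\,M^{\frac{1}{n+2}}\,\pi_w^{-\frac{2+p(\delta-1)}{n+2}},
\qquad M=\bigl\|H(z,|\na u|)+H(z,|F|)\bigr\|_{L^\delta(\Om_T)}^\delta.
\]

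Raising this to the power $\alpha$ and multiplying by $\pi_w^{q-p}$ gives a bound on the quantity of interest, and the whole question reduces to checking the sign of the $\pi_w$-exponent
\[
\beta:=(q-p)-\frac{\alpha(2+p(\delta-1))}{n+2}=(q-p)-\frac{\alpha(2-p(1-\delta))}{n+2}.
\]
Here I would exploit the strict inequality \eqref{de_0_1}: since $\delta>\delta_0$ and $p<q$,
\[
\frac{\alpha(2-p(1-\delta))}{n+2}\ge\frac{\alpha(2-q(1-\delta_0))}{n+2}>q-p,
\]
so $\beta<0$ strictly, with a margin depending only on $\data$ and $\delta$. This is the step where the strict form of the range condition \eqref{eq_range_q} (rather than the borderline) is essential.

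To actually gain smallness (rather than just boundedness), I would split $\rho_w^\alpha=\rho_w^{\alpha\theta}\cdot\rho_w^{\alpha(1-\theta)}$ for a small $\theta>0$ chosen so that
\[
(q-p)-\frac{\alpha(1-\theta)(2+p(\delta-1))}{n+2}<0,
\]
which is possible by the above margin. Applying the $\rho_w$-bound to the second factor and the trivial bound $\rho_w\le R_0$ to the first factor, and using $\pi_w\ge \pi_0\ge \mfc^{1/p}$ (from \eqref{sec6:01} and $\mfc>1$) to control the remaining negative power of $\pi_w$, one obtains
\[
\rho_w^\alpha\pi_w^{q-p}\le c\,R_0^{\alpha\theta}\,M^{\frac{\alpha(1-\theta)}{n+2}}\,\mfc^{\beta_\theta/p},
\]
where $\beta_\theta<0$ depends on $\data,\delta$. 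The right-hand side is made smaller than $1/(1+10[a]_\alpha)$ by choosing $R_0$ small in terms of the declared parameters. The only genuine obstacle is verifying that the constants $M$ and $\mfc$ do not re-introduce a dependence outside the list of allowed parameters; $M$ is explicitly in the allowed list, while $\mfc$ is fixed in \eqref{def_mfc} in terms of $\data$ and the $L^\delta$-norm data, so this is legitimate.
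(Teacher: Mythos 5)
Your proposal is correct and, at its mathematical core, is the same argument as the paper's: both start from the stopping-time identity \eqref{sec6:6}, derive the bound $\rho_w\lesssim \pi_w^{(p(1-\delta)-2)/(n+2)}\|H(\cdot,|\na u|)+H(\cdot,|F|)\|_{L^\delta(\Om_T)}^{\delta/(n+2)}$, and hinge everything on the observation (using $p<q$, $\delta>\delta_0$, and \eqref{de_0_1}) that $(q-p)-\tfrac{\alpha(2-p(1-\delta))}{n+2}<0$ strictly. The two proofs diverge only in the final bookkeeping. The paper picks $\beta\in(0,\alpha)$ so that, after applying the $\rho_w$-bound with exponent $\beta$ instead of $\alpha$, the remaining power of $\pi_w$ is \emph{exactly} zero; this removes $\pi_w$ entirely and lands on $\rho_w^\alpha\pi_w^{q-p}\le\rho_w^{\alpha-\beta}(\text{data})^{\beta/(n+2)}$, so $R_0^{\alpha-\beta}$ does all the work and the constant depends only on the parameters advertised in the lemma. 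Your variant keeps a strictly negative $\pi_w$-exponent and closes by invoking $\pi_w>\pi_0>\mfc^{1/p}$; this is fine and actually avoids solving for $\beta$, but it silently imports a dependence on $\mfc$, which by \eqref{def_mfc} and the line after it involves $\|a\|_\infty$ and the full $\data$ (including $N,L_1,L_2$). That is compatible with the dependence permitted for $R_0$ in Theorem~\ref{HI_theorem}, but it is slightly wider than the list stated in Lemma~\ref{lemma_decay} itself, which the paper's $\beta$-trick is designed to respect. If you want to land exactly on the lemma's stated dependence, switch to the paper's choice of $\beta$; otherwise your argument is sound.
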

\begin{proof}
    We have by \eqref{sec6:6} that
    \[
    \pi_w^{\delta p} =\frac{\pi_w^{p-2}}{2|B_1|\rho_w^{n+2}} \iint_{Q_{\rho_w}^{\la_w}(w)} ( H(z,|\na u|) + H(z,|F|) )^\delta \, dz
    \]
    or equivalently, 
    \[
    \rho_w^{n+2} =\frac{\pi_w^{p(1-\delta)-2} }{2|B_1|} \iint_{Q_{\rho_w}^{\la_w}(w)} ( H(z,|\na u|) + H(z,|F|) )^\delta \, dz.
    \]
    Taking both sides to power $\tfrac{1}{n+2}$, we have
    \[
    \rho_w \le \pi_w^\frac{p(1-\delta)-2}{n+2} \left(  \frac{1}{2|B_1|} \iint_{Q_{\rho_w}^{\la_w}(w)} ( H(z,|\na u|) + H(z,|F|) )^\delta \, dz \right)^\frac{1}{n+2}.
    \]
    Now, let $\beta\in(0,\alpha)$ be chosen later and use  \eqref{sec6:6} and the above inequality to obtain
    \begin{align*}
        \begin{split}
            \rho_w^\alpha \pi_w^{q-p}
            &=\rho_w^{\alpha-\beta} \rho_w^\beta \left(  \frac{\pi_w^{p-2}}{2|B_1|\rho_w^{n+2}} \iint_{Q_{\rho_w}^{\la_w}(w)} ( H(z,|\na u|) + H(z,|F|) )^\delta \, dz   \right)^\frac{q-p}{\delta p}\\
            &\le \rho_w^{\alpha-\beta} \rho_w^\beta \left(  \frac{\pi_w^{p-2}}{2|B_1|\rho_w^{n+2}} \iint_{ \Om_T } ( H(z,|\na u|) + H(z,|F|) )^\delta \, dz   \right)^\frac{q-p}{\delta p}\\
            &\le \rho_w^{\alpha-\beta} \rho_w^{\beta -\frac{ (q-p)(n+2)    }{\delta p} } \pi_w^\frac{(p-2)(q-p)}{\delta p}  \\
            &\qquad \times \left(  \frac{ 1 }{2|B_1| } \iint_{ \Om_T } ( H(z,|\na u|) + H(z,|F|) )^\delta \, dz   \right)^\frac{q-p}{\delta p}\\ 
            &\le \rho_w^{\alpha-\beta} \pi_w ^{   \left( \frac{p(1-\delta) -2 }{n+2}  \right) \left( \beta -\frac{ (q-p)(n+2) }{\delta p}  \right) +  \frac{ (p-2) (q-p) } {\delta p}   }   \\
            &\qquad \times \left(  \frac{1}{2|B_1|} \iint_{\Om_T} ( H(z,|\na u|) + H(z,|F|) )^\delta \, dz   \right)^{  \frac{\beta}{n+2} }.
        \end{split}
    \end{align*}
    Looking at the exponent of $\pi_w$ with $\beta$ replaced by $\alpha$ for a moment, we observe that
    \begin{align*}
        \begin{split}
           & \left( \frac{p(1-\delta) -2 }{n+2}  \right)  \left( \alpha -\frac{(q-p)(n+2)}{\delta p}  \right) +  \frac{ (p-2) (q-p)  }{\delta p}    \\
           &= \frac{ (p(1-\delta) -2) \alpha }{n+2}  + (q-p)\\
           &\le \frac{ (q(1-\delta_0) -2) \alpha }{n+2}  + q - p\\
           &<0,
        \end{split}
    \end{align*}
   where to obtain the last inequality we used \eqref{de_0_1}. Furthermore, recalling $p(1-\delta_0)-2<0$ and $\tfrac{(p-2)(q-p)}{\delta p}>0$, we take $\beta\in(0,\alpha)$  so that
   \[
   \left( \frac{p(1-\delta) -2 }{n+2}  \right)  \left( \beta -\frac{(q-p)(n+2)}{\delta p}  \right) +  \frac{ (p-2) (q-p)  }{\delta p} =0 .
   \]
   Then we get
   \[
   \rho_w^\alpha \pi_w^{q-p}\le \rho_w^{\alpha -\beta} \left(  \frac{1}{2|B_1|} \iint_{\Om_T} ( H(z,|\na u|) + H(z,|F|) )^\delta \, dz   \right)^{     \frac{\beta}{n+2}  }.
   \]
   Finally, since $\rho_w\le R_0$, we take $R_0$ small enough so that 
   \[
   \rho_w^\alpha \pi_w ^{q-p}\le \frac{1}{1+10[a]_\alpha}.
   \]
   This completes the proof.
\end{proof}

Next, we move on to showing the properties \eqref{eq_pq2}-\eqref{eq_pq4} of $(p,q)$-intrinsic cylinder. In the next lemma, we denote the stopping time of the radius as $\rho_w$. Although the same notation appears in \eqref{sec6:6} for $p$-intrinsic cylinder, it is clear from the context which radius is referred to.

\begin{lemma}\label{pq_stopping}
    For any $w\in \Psi(\Pi,R_1)$, there exists $\rho_{w}\in(0,(R_2-R_1)/16)$ such that
    \begin{align}\label{pq_sec6:6}
	\fiint_{G_{\rho_{w}}^{\pi_w}(w)}  ( H(z,|\na u|) + H(z,|F|)  )^\de\,dz= \Pi^\de
\end{align}
and 
\[
\fiint_{G_s^{\pi_w}(w)}  ( H(z,|\na u|) + H(z,|F|)  )^\de\,dz < \Pi^\de
\]
for every $s\in(\rho_{w},R_2-R_1)$.
\end{lemma}

\begin{proof}
For any $s\in [(R_2-R_1)/16,R_2-R_1)$ we have by \eqref{sec6:01}, \eqref{eq_96} and \eqref{sec6:5} that
\begin{align*} 
	\begin{split}
		&\fiint_{G_s^{\pi_w}(w)} ( H(z,|\na u|) + H(z,|F|)  )^\de \,dz\\
		&\le\frac{\Pi}{\pi_w^2}\left(\frac{2R}{s}\right)^{n+2}\fiint_{Q_{2R} (z_0) } ( H(z,|\na u|) + H(z,|F|)  )^\de\,dz\\
		&\le\left(\frac{32 R}{R_2-R_1}\right)^{n+2}\frac{\Pi}{\pi_w^2}\pi_0^{2}(\pi_0^p+\sup_{z\in Q_{2R}(z_0) }a(z)\pi_0^q)^{\de-1}\\
        &\le\left(\frac{32 R}{R_2-R_1}\right)^{n+2}\frac{\Pi}{\pi_w^2}\pi_0^{2}(\pi_0^p+a(w)\pi_0^q)^{\de-1}\\
        &<\left(\frac{32 R}{R_2-R_1}\right)^{n+2- \frac{(n+2)(2+p(\delta-1)) }{2+p(\delta-1)} }\frac{\Pi}{\pi_w^2}\pi_w^{2}(\pi_w^p+a(w)\pi_w^q)^{\de-1}\\
        &= \Pi^\de.
	\end{split}
\end{align*}
As in the $p$-intrinsic case, the conclusion follows from the Lebesgue differentiation theorem since $w\in \Psi(\Pi,R_1)$.
\end{proof}

The previous lemma proves \eqref{eq_pq3}-\eqref{eq_pq4}. Next, we consider the counterpart of \eqref{p_phase_w} appearing in \eqref{eq_pqcase}, i.e., the $(p,q)$-intrinsic case
\begin{align}\label{pq_phase_w}
    \pi_w^p< a( w)\pi_w^q.
\end{align}
The above condition along with the previous lemma implies \eqref{eq_pq2}.
\begin{lemma}\label{p_q_comp}
    Suppose $w\in \Psi(\Pi,R_1)$ and \eqref{pq_phase_w}. Then there exists $R_0\in(0,1)$ depending on $n,\alpha,[a]_\alpha, \delta, \| H(z,|\na u|) \|_{L^\delta(\Om_T)}, \| H(z,|F|) \|_{L^\delta(\Om_T)}$ such that for any $z\in Q_{10\rho_w}(w)$, we have
    \[
    \frac{a(w)}{2}\le a(z)\le 2a(w).
    \]
\end{lemma}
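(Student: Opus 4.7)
The plan is to reduce the claim to the pointwise bound $[a]_\alpha(10\rho_w)^\alpha \le \tfrac{1}{2}a(w)$. Once this is established, the Hölder condition \eqref{eq_holder_reg} combined with the quadratic time scaling of $Q_{10\rho_w}(w)$ (which ensures both $|x-y|^\alpha$ and $|t-s|^{\alpha/2}$ are bounded by $(10\rho_w)^\alpha$) yields $|a(z)-a(w)| \le a(w)/2$ throughout $Q_{10\rho_w}(w)$, from which the comparison $a(w)/2\le a(z)\le 2a(w)$ is immediate. The guiding observation is that the $(p,q)$-intrinsic condition $\pi_w^p<a(w)\pi_w^q$ is equivalent to $\pi_w^{q-p}>1/a(w)$, so it suffices to establish the stronger bound
\[
\rho_w^\alpha\pi_w^{q-p} \le \frac{1}{2\cdot 10^\alpha[a]_\alpha}.
\]

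To prepare the ground, I will first rearrange the stopping time identity \eqref{pq_sec6:6}. Since $|G_{\rho_w}^{\pi_w}(w)| = 2|B_1|\pi_w^2\rho_w^{n+2}/\Pi$, the identity can be rewritten as
\[
\rho_w^{n+2} = \frac{\Pi^{1-\delta}}{2|B_1|\pi_w^2}\iint_{G_{\rho_w}^{\pi_w}(w)}\bigl(H(z,|\nabla u|)+H(z,|F|)\bigr)^\delta\,dz.
\]
Applying $\Pi\le 2a(w)\pi_w^q\le 2\|a\|_\infty\pi_w^q$ in the factor $\Pi^{1-\delta}$ (allowed because $1-\delta>0$) and dominating the local integral by the corresponding global $L^\delta$-norm yields
\[
\rho_w^{n+2}\le C_1\,\pi_w^{q(1-\delta)-2},
\]
where the exponent is negative by \eqref{de_0_1} and $C_1$ depends on $n,p,q,\|a\|_\infty,\delta$, $\|H(\cdot,|\nabla u|)\|_{L^\delta(\Omega_T)}$ and $\|H(\cdot,|F|)\|_{L^\delta(\Omega_T)}$.

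The decisive step, borrowed from the proof of Lemma~\ref{lemma_decay}, is the peel-off trick: I split $\rho_w^\alpha=\rho_w^{\alpha-\beta}\rho_w^\beta$ and choose $\beta\in(0,\alpha)$ so that when the previous display is raised to the power $\beta/(n+2)$ and multiplied by $\pi_w^{q-p}$, the $\pi_w$-exponent cancels exactly. A direct calculation fixes
\[
\beta = \frac{(q-p)(n+2)}{2-q(1-\delta)},
\]
and the requirement $\beta<\alpha$ is precisely $q-p<\alpha(2-q(1-\delta))/(n+2)$, which holds by the first condition in \eqref{de_0_1} (this is exactly why $\delta_0$ was chosen to satisfy that condition). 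Substituting back,
\[
\rho_w^\alpha\pi_w^{q-p}\le C_1^{\beta/(n+2)}\rho_w^{\alpha-\beta}\le C_1^{\beta/(n+2)}R_0^{\alpha-\beta},
\]
and choosing $R_0\in(0,1)$ small enough (in terms of $n,\alpha,[a]_\alpha,\delta$ and the $L^\delta$-norms) renders the right-hand side at most $1/(2\cdot 10^\alpha[a]_\alpha)$.

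The main obstacle to a naive argument is that $\pi_w$ is effectively unconstrained from above, so the direct estimate $\rho_w^\alpha\pi_w^{q-p}\lesssim\pi_w^{-\tau}$ coming from plugging the stopping time bound into $\rho_w^\alpha$ cannot be made uniformly small. The peel-off device trades a small power of $\rho_w$, controlled by the free parameter $R_0$, for the unwanted $\pi_w$-dependence; this exchange is exactly what the strict exponent bound in \eqref{de_0_1} makes possible.
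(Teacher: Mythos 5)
Your overall strategy is sound and the reduction to the pointwise bound $[a]_\alpha(10\rho_w)^\alpha\le\tfrac12 a(w)$ (equivalently, the paper's \eqref{eq_518}) matches the paper exactly, as does the observation that via $\pi_w^{q-p}>1/a(w)$ it suffices to show $\rho_w^\alpha\pi_w^{q-p}\le \text{(small)}$. The exponent bookkeeping in the peel-off—choosing $\beta=(q-p)(n+2)/(2-q(1-\delta))$ so that the $\pi_w$-exponent cancels, and verifying $\beta<\alpha$ from the first inequality in \eqref{de_0_1}—is correct. Where you diverge from the paper is in the passage from the stopping-time identity to $\rho_w^{n+2}\le C_1\pi_w^{q(1-\delta)-2}$: you bound $\Pi^{1-\delta}\le(2\|a\|_\infty)^{1-\delta}\pi_w^{q(1-\delta)}$, so $C_1$ (and hence your $R_0$) depends on $\|a\|_\infty$. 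The paper instead argues by contradiction: it assumes $a(w)<2[a]_\alpha(10\rho_w)^\alpha$, which supplies the needed upper bound $a(w)^{1-\delta}\lesssim([a]_\alpha\rho_w^\alpha)^{1-\delta}$ intrinsically, replacing the factor $\|a\|_\infty^{1-\delta}$ by a power of $\rho_w$ (and of $[a]_\alpha$) and thereby making $R_0$ independent of $\|a\|_\infty$, which is precisely the constant dependence claimed in the lemma and propagated to Theorem~\ref{HI_theorem}. Your writeup is also internally inconsistent on this point: you record $C_1$ as depending on $\|a\|_\infty$ but then assert $R_0$ can be chosen ``in terms of $n,\alpha,[a]_\alpha,\delta$ and the $L^\delta$-norms'' alone. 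So the argument is correct as written only if one is willing to allow $R_0$ to depend additionally on $\|a\|_\infty$; to recover the stated constant dependence one needs the paper's contradiction device (or an equivalent mechanism to control $a(w)$ without invoking $\|a\|_\infty$).
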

\begin{proof}
    It is enough to prove
    \begin{align}\label{eq_518}
    2[a]_\alpha(10\rho_w)^\alpha \le a(w).    
    \end{align}
    Indeed, if the above inequality holds, then it follows that
    \[
    2[a]_\alpha(10\rho_w)^\alpha \le a(w)\le \inf_{z\in Q_{10\rho_w}(w)}a(z)+[a]_\alpha(10\rho_w)^\alpha
    \]
    and thus we have
    \[
    [a]_\alpha(10\rho_w)^\alpha\le \inf_{z\in Q_{10\rho_w}(w)}a(z)
    \]
    and
    \[
    \sup_{z\in Q_{10\rho_w}(w)}a(z)\le \inf_{z\in Q_{10\rho_w}(w)}a(z)+[a]_\alpha(10\rho_w)^\alpha \le 2\inf_{z\in Q_{10\rho_w}(w)}a(z).
    \]
    The statement of the lemma follows from the above inequality.

    Now we prove \eqref{eq_518} by contradiction. Assume we have
    \begin{align}\label{false}
        a(w) < 2[a]_\alpha(10\rho_w)^\alpha.
    \end{align}
    Recall $\Pi=H(w,\pi_w)$.  Then along with \eqref{pq_sec6:6} and \eqref{pq_phase_w}, we get
    \begin{align*}
        \begin{split}
        (a(w)\pi_w^q)^\delta
            &\le \Pi^\delta \\
            &= \fiint_{G_{\rho_w}^{\pi_w}(w)} (H(z,|\na u|) + H(z,|F|) )^\delta\,dz\\
            &\le \frac{\pi_w^{-2}(\pi_w^p+a(w)\pi^q_w)}{2|B_1|\rho_w^{n+2}}\iint_{ \Om_T } (H(z,|\na u|) + H(z,|F|) )^\delta\,dz\\
            &\le \frac{a(w)\pi_w^{q-2}}{|B_1|\rho_w^{n+2}}\iint_{ \Om_T } (H(z,|\na u|) + H(z,|F|) )^\delta\,dz.
        \end{split}
    \end{align*}
    Multiplying both sides by $(a(w)\pi_w^q)^{-\delta} \rho_w^{n+2}$ and then using \eqref{false}, it follows that
    \begin{align*}
        \begin{split}
            \rho_w^{n+2}
            &\le \pi_w^{(1-\delta)q-2}\frac{a(w)^{1-\delta}}{|B_1|}\iint_{ \Om_T } (H(z,|\na u|) + H(z,|F|) )^\delta\,dz\\
            &\le \pi_w^{(1-\delta)q-2}\frac{(20[a]_\alpha \rho_w)^{1-\delta}}{|B_1|}\iint_{ \Om_T } (H(z,|\na u|) + H(z,|F|) )^\delta\,dz\\
            &\le \pi_w^{(1-\delta)q-2}\frac{(1+20[a]_\alpha) \rho_w^{1-\delta}}{|B_1|}\iint_{ \Om_T } (H(z,|\na u|) + H(z,|F|) )^\delta\,dz.
        \end{split}
    \end{align*}
   Since $\rho_w\le R_0$, we choose $R_0$ depending on $n,\alpha,[a]_\alpha, \delta,\| H(z,|\na u|) \|_{L^\delta(\Om_T)}, \linebreak \| H(z,|F|) \|_{L^\delta(\Om_T)}$ small enough to have
   \[
   \rho_w^{n+2}\le \left(\frac{1}{20[a]_\alpha}\right)^\frac{n+2}{\alpha} \pi_w^{(1-\delta)q -2},
   \]
   or equivalently,
   \[
   \rho_w^\al \le \frac{1}{20[a]_\alpha}\pi_w^\frac{((1-\delta)q -2)\alpha}{n+2}.
   \]
    Applying the above inequality to \eqref{pq_phase_w} along with \eqref{false}, we get
    \begin{align*}
            \pi_w^p 
            < a(w)\pi_w^q
            \le 20[a]_\alpha\rho_w^\alpha \pi_w^q
            \le \pi_w^{q+\frac{((1-\delta_0)q-2)\alpha}{n+2}}\le \pi_w^p,
    \end{align*}
    where to obtain the last inequality, we have used the choice of $\de_0$ in \eqref{de_0_1}. This is a contradiction and thus \eqref{eq_518} holds. The proof is completed.
\end{proof}

\subsection{Vitali type covering argument}
For each $ w\in \Psi(\Pi,R_1)$, we consider
\[
U ( w)= \begin{cases}
Q_{2\rho_{ w}}^{\pi_w}( w)&\text{ in $p$-intrinsic case,}\\    G_{2\rho_{ w}}^{\pi_w}( w)&\text{ in $(p,q)$-intrinsic case.}
\end{cases}
\]
We prove a Vitali type covering lemma for this collection of intrinsic cylinders.
We denote
\begin{align}\label{def_mathcal_F}
    \mathcal{F}=\left\{U ( w):  w\in \Psi(\Pi,R_1)\right\}.
\end{align}
\begin{lemma}
    For $U(v),U(w)\in\mathcal{F}$, suppose $U(v)\cap U(w)\ne \emptyset$ and $\rho_w\le 2\rho_v$. Then we have
    \[
    \pi_v\le 2^{\frac{1}{p}} \pi_w.
    \]
\end{lemma}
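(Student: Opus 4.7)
The plan is to argue by case analysis on the intrinsic geometry of $U(w)$, exploiting the common-level identity
\[
    \pi_v^p + a(v)\pi_v^q \;=\; \Pi \;=\; \pi_w^p + a(w)\pi_w^q
\]
forced by \eqref{sec6:5} since both $v$ and $w$ belong to $\Psi(\Pi,R_1)$ with the same $\Pi$.

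First I would handle the case where $U(w)$ is $p$-intrinsic, i.e., $a(w)\pi_w^q \leq \pi_w^p$. Then $\Pi \leq 2\pi_w^p$, and since $\pi_v^p \leq \pi_v^p + a(v)\pi_v^q = \Pi$, one obtains directly $\pi_v^p \leq 2\pi_w^p$, which is the desired bound. Notably, neither the overlap condition nor the radius comparison $\rho_w \leq 2\rho_v$ enters here.

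The interesting case is when $U(w)$ is $(p,q)$-intrinsic. My plan is to establish that $a(v)$ and $a(w)$ are comparable with sharp enough constants, so that $a(v)\pi_v^q \leq \Pi < 2a(w)\pi_w^q$ yields $\pi_v^q < 2\bigl(a(w)/a(v)\bigr)\pi_w^q$, which with tight control of $a(w)/a(v)$ becomes $\pi_v \leq 2^{1/p}\pi_w$. The natural tool is Lemma \ref{p_q_comp}, which gives $a \approx a(w)$ on $Q_{10\rho_w}(w)$; I would verify via the overlap and $\rho_w \leq 2\rho_v$ that $v$ lies in this cylinder. Here $|x_v - x_w| < 2\rho_v + 2\rho_w$ handles the spatial part, while the temporal part is controlled using $\pi_v^2/\Pi \leq 1$ and $\pi_w^2/\Pi \leq 1$ (valid because $\Pi > \mfc > 1$ forces $\pi_v,\pi_w \geq 1$). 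When $\rho_v$ is much larger than $\rho_w$ I would reverse roles and apply Lemma \ref{p_q_comp} at $v$ instead, provided $U(v)$ is also $(p,q)$-intrinsic; the bound $\rho_w \leq 2\rho_v$ ensures $w \in Q_{10\rho_v}(v)$.

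The hard part is the mixed sub-case where $U(v)$ is $p$-intrinsic but $U(w)$ is $(p,q)$-intrinsic, so that neither centered application of Lemma \ref{p_q_comp} covers the other point. Here the plan is to replace the pointwise comparison by the Hölder continuity \eqref{eq_holder_reg} of $a$: using $|a(v) - a(w)| \leq C[a]_\alpha \rho_v^\alpha$ together with the decay estimate Lemma \ref{lemma_decay} in the form $\rho_v^\alpha \pi_v^{q-p} \leq 1/(1+10[a]_\alpha)$, one obtains $|a(v) - a(w)| \leq C' \pi_v^{p-q}$. Substituting back into the common-level identity produces an algebraic inequality in $r = \pi_v/\pi_w$ that should collapse to $r \leq 2^{1/p}$, with the strict inequality $q < p + 2\alpha/(n+2)$ from \eqref{eq_range_q} providing the room to balance the exponents.
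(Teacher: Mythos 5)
Your Case~A observation --- that if $U(w)$ is $p$-intrinsic then $\Pi=\pi_w^p+a(w)\pi_w^q\le 2\pi_w^p$ while $\pi_v^p\le\Pi$, giving $\pi_v\le 2^{1/p}\pi_w$ with no geometric hypotheses at all --- is correct and not spelled out in the paper, which instead argues by contradiction in every case. However, the remainder of your plan has two genuine gaps, and the paper's proof (which cases only on the phase of $U(v)$ and assumes $\pi_w\le 2^{-1/p}\pi_v$ to derive $\Pi<\Pi$) is structured precisely to avoid them.

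The first gap is quantitative. From $a(v)\pi_v^q\le\Pi<2a(w)\pi_w^q$ and $a(w)\le 2a(v)$ (the best Lemma~\ref{p_q_comp} gives), you obtain $\pi_v<2^{2/q}\pi_w$; since $q<p+\tfrac{2\alpha}{n+2}<2p$, one has $2/q>1/p$, so $2^{2/q}>2^{1/p}$ and the bound is strictly weaker than claimed. There is no ``tight control'' to be had: the factor~$2$ in Lemma~\ref{p_q_comp} is fixed. The loss comes from discarding $\pi_w^p$ when bounding $\Pi$. The paper keeps both terms and works under the counterassumption $\pi_w\le 2^{-1/p}\pi_v$, which yields \emph{both} $\pi_w^p\le\tfrac12\pi_v^p$ and (since $q>p$) $\pi_w^q\le 2^{-q/p}\pi_v^q\le\tfrac12\pi_v^q$; these two factor-$\tfrac12$ gains exactly absorb the factor-$2$ loss from $a(w)\le 2a(v)$, producing $\Pi\le\tfrac12\pi_v^p+a(v)\pi_v^q<\Pi$.

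The second gap is geometric. Applying Lemma~\ref{p_q_comp} \emph{at $w$} requires $v\in Q_{10\rho_w}(w)$, and $|x_v-x_w|<2\rho_v+2\rho_w$ is not bounded by $10\rho_w$ when $\rho_v\gg\rho_w$; the hypothesis $\rho_w\le 2\rho_v$ only bounds $\rho_v$ from below. You recognize this and ``reverse roles,'' but then you must also sub-case on the phase of $U(v)$, producing a four-way decomposition with the hardest case left to an unverified algebraic collapse. The paper's decomposition is cleaner because it cases \emph{only} on the phase of $U(v)$ and always uses the Vitali inclusion $Q_{2\rho_w}(w)\subset Q_{10\rho_v}(v)$, i.e.\ $w\in Q_{10\rho_v}(v)$, which is the direction that actually holds: when $U(v)$ is $(p,q)$-intrinsic, apply Lemma~\ref{p_q_comp} at $v$ to get $a(w)\le 2a(v)$; when $U(v)$ is $p$-intrinsic, replace $a(w)$ by $a(v)+[a]_\alpha(10\rho_v)^\alpha$ and invoke Lemma~\ref{lemma_decay} to make the Hölder correction term subordinate to $\pi_v^p$. (Note that Lemma~\ref{lemma_decay} requires $U(v)$ to be $p$-intrinsic, which is why it appears in exactly that case.) Your final mixed sub-case proposes the right tools but does not carry out the estimate, and the step ``substituting back... should collapse to $r\le 2^{1/p}$'' again needs the contradiction setup to balance the exponents.
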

\begin{proof}
    Note that by the assumptions in the statement and the standard Vitali covering lemma, we have
    \begin{align}\label{vitali_inclusion}
        Q_{2\rho_w}(w) \subset Q_{10\rho_v}(v).
    \end{align}
    
    To prove the lemma, we assume for a contradiction that
    \[
     \pi_w \le 2^{-\frac{1}{p}} \pi_v.
    \]
    We divide into cases.

    \textit{Case}~1: Suppose $U(v)$ is $p$-intrinsic. We observe that by \eqref{vitali_inclusion}
    \[
    \Pi=\pi_w^p+a(w)\pi_w^q \le \pi_w^p+a(v)\pi_w^q + [a]_\alpha(10\rho_v)^\alpha \pi_w^q.
    \]
    Using the counterassumption and Lemma~\ref{lemma_decay}, we get
    \[
    \Pi\le \frac{1}{2} \left( \pi_v^p+a(v)\pi_v^q+ \frac{10[a]_\alpha}{1+10[a]_\alpha}\pi_v^p \right)<\pi_v^p+a(v)\pi_v^q=\Pi.
    \]
    This is a contradiction and the conclusion follows in this case.

    \textit{Case}~2: Suppose $U(w)$ is $(p,q)$-intrinsic. Employing \eqref{vitali_inclusion} and Lemma~\ref{p_q_comp}, we have
    \[
    \Pi=\pi_w+a(w)\pi_w^q\le \pi_w+2a(v)\pi_w^q\le \frac{1}{2}\left( \pi_v^p+2a(v)\pi_v^q \right)<\Pi,
    \]
    where to obtain the last inequality, we used the counterassumption. Again, this is a contradiction and the proof is completed.
\end{proof}

After this the argument to show that the following Vitali property holds is as in \cite{KKM}. We omit the proof.
\begin{lemma}\label{vitali_lem}
    Let $\mathcal{F}$ be defined as in \eqref{def_mathcal_F}. Then there exists a pairwise disjoint countable subcollection $\{U_i\}_{i\in \mathbb{N}}$ of $\mathcal{F}$ such that for any $U(w)\in \mathcal{F} $, there exists $i\in\mathbb{N}$ satisfying
\[
U(w) \subset 8U_i.
\]
\end{lemma}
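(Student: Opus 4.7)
My plan is to follow the standard generational Vitali covering strategy, using the comparability of $\pi_w$ and $\pi_v$ supplied by the previous lemma to convert the spatial Vitali argument into one adapted to the anisotropic intrinsic cylinders. Since the stopping time construction forces $\rho_w\in (0,(R_2-R_1)/16)$, the family $\mathcal{F}$ has uniformly bounded radii, so I can partition $\mathcal{F}$ into generations
\[
\mathcal{F}_k = \left\{ U(w)\in\mathcal{F} : \frac{R_2-R_1}{2^{k+5}} < \rho_w \le \frac{R_2-R_1}{2^{k+4}} \right\},\quad k=0,1,2,\dots.
\]
Within $\mathcal{F}_0$ I would greedily choose a maximal pairwise disjoint subcollection $\mathcal{G}_0$, and then inductively let $\mathcal{G}_k$ be a maximal pairwise disjoint subcollection of the cylinders in $\mathcal{F}_k$ that do not meet any cylinder already in $\mathcal{G}_0\cup\dots\cup\mathcal{G}_{k-1}$. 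The union $\{U_i\}_{i\in\mathbb{N}} = \bigcup_k \mathcal{G}_k$ is then pairwise disjoint by construction, and, as usual, any $U(w)\in\mathcal{F}_k$ which was not selected must intersect some $U(v)\in\mathcal{G}_0\cup\dots\cup\mathcal{G}_k$ with $\rho_v\ge \rho_w/2$ (otherwise maximality would be violated).

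Given such a pair with $U(v)\cap U(w)\ne\emptyset$ and $\rho_w\le 2\rho_v$, the previous lemma yields $\pi_v\le 2^{1/p}\pi_w$, i.e.\ $\pi_w\ge 2^{-1/p}\pi_v$. The remaining task is to verify $U(w)\subset 8U(v)$. Spatially, the triangle inequality and $\rho_w\le 2\rho_v$ give
\[
B_{2\rho_w}(x_w) \subset B_{2\rho_w + 2\rho_v + 2\rho_w}(x_v) \subset B_{10\rho_v}(x_v) \subset B_{16\rho_v}(x_v),
\]
which handles the ball component of $8U(v)$ in all four phase combinations.

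The time-direction inclusion is where the main work sits and is what I expect to be the chief obstacle. Here I would treat the four combinations of phase types for $U(v)$ and $U(w)$ separately. In each case the time radius of $U(v)$ is either $\pi_v^{2-p}(2\rho_v)^2$ or $\pi_v^2 H(v,\pi_v)^{-1}(2\rho_v)^2$, and the corresponding time radius of $U(w)$ has an analogous form. Since $p>2$, the map $\pi\mapsto\pi^{2-p}$ is decreasing, so the bound $\pi_w\ge 2^{-1/p}\pi_v$ converts into $\pi_w^{2-p}\le 2^{(p-2)/p}\pi_v^{2-p}$, and together with $\rho_w\le 2\rho_v$ this controls the time radius of $U(w)$ by a fixed constant times that of $U(v)$. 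When one or both cylinders are $(p,q)$-intrinsic, I would additionally invoke Lemma~\ref{p_q_comp} and the Hölder continuity of $a$ on $Q_{10\rho_v}(v)$ to compare $H(w,\pi_w)$ with $H(v,\pi_v)$, absorbing any $[a]_\alpha(10\rho_v)^\alpha$ contributions using Lemma~\ref{lemma_decay}. Combining these estimates with the triangle inequality for time centers shows the time extent of $U(w)$ relative to $t_v$ is dominated by the time radius $256\cdot$(time-radius of $U(v)$) of $8U(v)$, completing the inclusion $U(w)\subset 8U(v)$. Finally, the countability of $\{U_i\}$ is immediate since the pairwise disjoint cylinders have uniformly positive measure inside the bounded set $Q_{2R_2}(z_0)$.
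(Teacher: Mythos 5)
Your generational Vitali setup, the spatial inclusion, and the countability argument are all sound, and you correctly identify that the previous lemma gives $\pi_v\le 2^{1/p}\pi_w$ when $U(v)\cap U(w)\ne\emptyset$ and $\rho_w\le 2\rho_v$. The gap is in the time-direction inclusion when $U(v)$ is $(p,q)$-intrinsic. Your explicit calculation converts the one-sided bound $\pi_w\ge 2^{-1/p}\pi_v$ into $\pi_w^{2-p}\le 2^{(p-2)/p}\pi_v^{2-p}$, which bounds the time radius of $U(w)$ by a constant times $\pi_v^{2-p}(2\rho_v)^2$. That suffices only when $U(v)=Q_{2\rho_v}^{\pi_v}(v)$ is $p$-intrinsic. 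If instead $U(v)=G_{2\rho_v}^{\pi_v}(v)$, its time radius is $\tfrac{\pi_v^2}{\Pi}(2\rho_v)^2$, which is smaller than $\pi_v^{2-p}(2\rho_v)^2$ by the factor $\tfrac{\Pi}{\pi_v^p}=1+\tfrac{a(v)\pi_v^q}{\pi_v^p}$, and in the $(p,q)$-phase this factor can be arbitrarily large. So the lower bound on $\pi_w$ alone does not yield $U(w)\subset 8U(v)$ in that case; what is needed is also an upper bound $\pi_w\le c\,\pi_v$.

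Your sketch does invoke Lemma~\ref{p_q_comp}, but the stated purpose — ``to compare $H(w,\pi_w)$ with $H(v,\pi_v)$'' — is vacuous, since by \eqref{sec6:5} both quantities equal $\Pi$ identically. The correct use is: when $U(v)$ is $(p,q)$-intrinsic, Lemma~\ref{p_q_comp} applied at $v$ together with $Q_{2\rho_w}(w)\subset Q_{10\rho_v}(v)$ gives $a(w)\ge a(v)/2$, so $2\Pi = 2H(w,\pi_w)\ge \pi_w^p + a(v)\pi_w^q = H(v,\pi_w)$; since $H(v,2^{1/p}\pi_v)=2\pi_v^p+2^{q/p}a(v)\pi_v^q\ge 2\Pi$ (using $q>p$) and $s\mapsto H(v,s)$ is strictly increasing, it follows that $\pi_w\le 2^{1/p}\pi_v$, which closes the time inclusion. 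Also note that Lemma~\ref{lemma_decay} is not the right tool for absorbing $[a]_\alpha(10\rho_v)^\alpha$ in this phase — it controls $\rho_v^\alpha\pi_v^{q-p}$ and is applicable when $U(v)$ is $p$-intrinsic; the $(p,q)$-intrinsic phase of $U(v)$ is governed by Lemma~\ref{p_q_comp}.
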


\subsection{Final proof of the gradient estimate}

\begin{proof}[Proof of Theorem~\ref{HI_theorem}]

  Let $\{U_i\}_{i\in \mathbb{N}}$ be as in Lemma~\ref{vitali_lem}.
  We have shown in Section~\ref{sec_stopping_time} that these cylinders satisfy the properties required in Lemma~\ref{lem_reverse_Hölder} and Lemma~\ref{lem_reverse_Hölder_pq}. Therefore, we obtain from \eqref{eq_revH_Psi} and \eqref{eq_revH_Psi_pq} that
	\begin{align}\label{eq_930}
		\begin{split}
			\iint_{8 U_i}H(z,|\na u|)^\de\,dz
			&\le c\Pi^{\de-\theta_0}\iint_{ U_i\cap \Psi(c^{-1}\Pi)}H(z,|\na u|)^{\theta_0}\,dz\\
            &\qquad+c\iint_{ U_i\cap \Phi(c^{-1}\Pi)}H(z,|F|)^{\de}\,dz
		\end{split}
	\end{align}
for some $\theta_0\in(0,\delta)$ and any $i\in\NN$ and $\Pi>\left(\tfrac{32 R}{R_2-R_1}\right)^\frac{q(n+2)}{2+p(\delta-1)}\Pi_0$. Here $\Pi_0$ is defined in \eqref{sec6:01} and $R\le R_1<R_2\le 2R$. By summing over $i$ and applying the fact that the cylinders in $\{U_i\}_{i\in\mathbb{N}}$ are pairwise disjoint, we obtain from \eqref{eq_930} that
\begin{equation}\label{sec6:25}
\begin{split}
\iint_{\Psi(\Pi,R_1)}H(z,|\na u|)^\de\,dz
&\le\sum_{j=1}^\infty\iint_{8 U_{i}}H(z,|\na u|)^\de\,dz\\
&\le c\Pi^{\de -\theta_0}\iint_{\Psi(c^{-1}\Pi,R_2)}H(z,|\na u|)^{\theta_0}\,dz\\
&\qquad +c \iint_{\Phi(c^{-1}\Pi,R_2)}H(z,|F|)^{\de}\,dz.
\end{split}
\end{equation}
Moreover, since
\[
\iint_{\Psi(c^{-1}\Pi,R_1)\setminus \Psi(\Pi,R_1)}H(z,|\na u|)^\de\,dz
\le\Pi^{\de - \theta_0}\iint_{\Psi(c^{-1}\Pi,R_2)}H(z,|\na u|)^{\theta_0}\,dz,
\]
we conclude from \eqref{sec6:25} that
\begin{equation}\label{sec6:26}
	\begin{split}
		\iint_{\Psi(c^{-1}\Pi,R_1)}H(z,|\na u|)^\de\,dz
		&\le c\Pi^{\de - \theta_0 }\iint_{\Psi(c^{-1}\Pi,R_2)}H(z,|\na u|)^{ \theta_0 }\,dz\\
        &\qquad + c \iint_{\Phi(c^{-1}\Pi,R_2)}H(z,|F|)^{\de}\,dz.
	\end{split}
\end{equation}

For $k\in\mathbb N$, let
\[		
H(z,|\na u|)_k=\min\{H(z,|\na u|),k\}
\]
and
\[
\Psi_k(\Pi,\rho)=\{z\in Q_{\rho}(z_0):H(z,|\na u|)_k>\Pi\}.
\]
It is easy to see that if $\Pi>k$, then $\Psi_k(\Pi,\rho)=\emptyset$ and if $\Pi\le k$, then $\Psi_k( \Pi,\rho)=\Psi(\Pi,\rho)$. Therefore, we deduce from \eqref{sec6:26} that
\begin{align*}
	\begin{split}
		&\iint_{\Psi_k(c^{-1}\Pi,R_1)}H(z,|\na u|)_k^{\de - \theta_0}H(z,|\na u|)^{\theta_0}\,dz\\
		&\le c\Pi^{\de- \theta_0}\iint_{\Psi_k(c^{-1}\Pi,R_2)}H(z,|\na u|)^{\theta_0}\,dz 
        +  c \iint_{\Phi(c^{-1}\Pi,R_2)}H(z,|F|)^{\de}\,dz.
	\end{split}
\end{align*}
With the constant $c$ as above, we denote
\begin{align}\label{def_pi_1}
	\Pi_1=c^{-1}\left(\frac{32 R}{R_2-R_1}\right)^\frac{q(n+2)}{2+p(\de-1)}\Pi_0.
\end{align}
We conclude from the above that for any $\Pi>\Pi_1$ we have
\begin{align*}
	\begin{split}
		&\iint_{\Psi_k(\Pi,R_1)}H(z,|\na u|)_k^{\de - \theta_0}H(z,|\na u|)^{\theta_0}\,dz\\
		&\le c\Pi^{\de - \theta_0}\iint_{\Psi_k(\Pi,R_2)}H(z,|\na u|)^{\theta_0}\,dz 
        +  c \iint_{\Phi(\Pi,R_2)}H(z,|F|)^{\de}\,dz.
	\end{split}
\end{align*}

We multiply the above inequality by $\Pi^{-\delta}$ and integrate each term over $(\Pi_1,\infty)$, which implies
\begin{align*}
	\begin{split}
		\mathrm{I}
        &=\int_{\Pi_1}^{\infty}\Pi^{-\delta}\iint_{\Psi_k(\Pi,R_1)}H(z,|\na u|)_k^{\de - \theta_0 }H(z,|\na u|)^{\theta_0}\,dz\,d\Pi\\
		&\le c\int_{\Pi_1}^{\infty}\Pi^{-\theta_0}\iint_{\Psi_k(\Pi,R_2)}H(z,|\na u|)^{\theta_0}\,dz\,d\Pi \\
        &\qquad+ c \int_{\Pi_1}^{\infty} \Pi^{-\delta} \iint_{\Phi(\Pi,R_2)}H(z,|F|)^{\de}\,dz\,d\Pi\\
        &= \mathrm{II} + \mathrm{III}.
	\end{split}
\end{align*}

We apply Fubini's theorem to $\mathrm{I}$ and obtain
\begin{align*}
	\begin{split}
		\mathrm{I}
		&= \iint_{\Psi_k(\Pi_1,R_1)}H(z,|\na u|)_k^{\de- \theta_0}H(z,|\na u|)^{\theta_0}\int_{\Pi_1}^{H(z,|\na u|)_k}\Pi^{-\delta}\,d\Pi\,dz\\
        &=\frac{1}{1-\de}\iint_{\Psi_k(\Pi_1,R_1)}H(z,|\na u|)_k^{1-\theta_0 }H(z,|\na u|)^{\theta_0}\,dz\\
		&\qquad-\frac{1}{1-\de}\Pi_1^{1-\de}\iint_{\Psi_k(\Pi_1,R_1)}H(z,|\na u|)_k^{\delta-\theta_0}H(z,|\na u|)^{\theta_0}\,dz.
	\end{split}
\end{align*}
Also, since we have the following estimate
\begin{align*}
	\begin{split}
		&\iint_{Q_{R_1}(z_0) \setminus \Psi_k(\Pi_1,R_1)}H(z,|\na u|)_k^{1-\theta_0}H(z,|\na u|)^{\theta_0}\,dz\\
		&\le \Pi_1^{1-\de}\iint_{Q_{2R}(z_0)}H(z,|\na u|)_k^{\de -  \theta_0 }H(z,|\na u|)^{\theta_0}\,dz,
	\end{split}
\end{align*}
we have
\begin{align*}
	\begin{split}
		\mathrm{I}
        &\ge \frac{1}{1-\de}\iint_{Q_{R_1}(z_0)} H(z,|\na u|)_k^{1-\theta_0}H(z,|\na u|)^{\theta_0}\,dz\\
		&\qquad-\frac{2}{1-\de}\Pi_1^{1-\de}\iint_{Q_{2R}(z_0)}H(z,|\na u|)_k^{\de - \theta_0 }H(z,|\na u|)^{\theta_0}\,dz.
	\end{split}
\end{align*}

We estimate $\mathrm{II}$ and $\mathrm{III}$ as above and obtain 
\begin{align*}
		\begin{split}
		\mathrm{II}+ \mathrm{III}
        &\le\frac{c}{1-\theta_0 }\iint_{Q_{R_2}(z_0)}H(z,|\na u|)_k^{1-\theta_0}H(z,|\na u|)^{\theta_0} \,dz\\
        &\qquad + \frac{c}{1-\delta} \iint_{Q_{2R}(z_0)} H(z,|F|)\,dz.
		\end{split}
\end{align*}
Therefore, we have
\begin{align*}
    \begin{split}
        &\iint_{Q_{R_1}(z_0)} H(z,|\na u|)_k^{1-\theta_0}H(z,|\na u|)^{\theta_0}\,dz\\
        &\le \frac{c(1-\delta)}{1-\theta_0} \iint_{Q_{R_2}(z_0)} H(z,|\na u|)_k^{1-\theta_0}H(z,|\na u|)^{\theta_0}\,dz  + c\iint_{Q_{2R}(z_0)} H(z,|F|)\,dz\\
        & \qquad + \Pi_1^{1-\delta}\iint_{Q_{2R}(z_0)} H(z,|\na u|)_k^{\delta -\theta_0} H(z,|\na u|)^{\theta_0} \,dz.
    \end{split}
\end{align*}
Taking $\delta_0=\delta_0(\data)$ close enough to 1 so that for $\delta\in(\delta_0,1)$ we have
\[
\frac{c(1-\delta)}{1-\theta_0}\le \frac{1}{2},
\]
we obtain
\begin{align*}
    \begin{split}
        &\iint_{Q_{R_1}(z_0)} H(z,|\na u|)_k^{1-\theta_0}H(z,|\na u|)^{\theta_0}\,dz\\
        &\le \frac{1}{2} \iint_{Q_{R_2}(z_0)} H(z,|\na u|)_k^{1-\theta_0}H(z,|\na u|)^{\theta_0}\,dz  + c\iint_{Q_{2R}(z_0)} H(z,|F|)\,dz\\
                & \qquad + \Pi_1^{1-\delta}\iint_{Q_{2R}(z_0)} H(z,|\na u|)_k^{\delta -\theta_0} H(z,|\na u|)^{\theta_0} \,dz.
    \end{split}
\end{align*}
Recall the definition of $\Pi_1$ in \eqref{def_pi_1}. By applying Lemma~\ref{lem_iter} we get
\begin{align*}
    \begin{split}
        &\iint_{Q_{R}(z_0)} H(z,|\na u|)_k^{1-\theta_0}H(z,|\na u|)^{\theta_0}\,dz\\
        &\le  c\Pi_0^{1-\delta}\iint_{Q_{2R}(z_0)} H(z,|\na u|)_k^{\delta -\theta_0} H(z,|\na u|)^{\theta_0} \,dz+ c\iint_{Q_{2R}(z_0)} H(z,|F|)\,dz.
    \end{split}
\end{align*}
Letting $k\longrightarrow\infty$ and recalling \eqref{sec6:01}, we obtain 
\begin{align}\label{eq_final}
    \begin{split}
        \iint_{Q_{R}(z_0)} H(z,|\na u|)\,dz
        &\le c\Pi_0^{1-\delta}\iint_{Q_{2R}(z_0)} H(z,|\na u|)^{\delta} \,dz+ c\iint_{Q_{2R}(z_0)} H(z,|F|)\,dz.
    \end{split}
\end{align}
Finally, note from \eqref{sec6:01} that $\pi_0>1$ and thus, 
\[
\pi_0^{2+q(\delta-1)}\le c\fiint_{Q_{2R}(z_0)} (H(z,|\na u|) + H(z,|F|) )^\delta\,dz+ \mfc^\frac{2}{p} 
\]
and
\[
\Pi_0\le (1+\|a\|_{L^\infty(\Om_T)}) \pi_0^q \le c \left( \fiint_{Q_{2R}(z_0)} (H(z,|\na u|) + H(z,|F|) )^\delta\,dz+ \mfc^\frac{2}{p}  \right)^\frac{q}{2-q(1-\delta)},
\]
where $c=c(\data,\|a\|_{L^\infty(\Om_T)})$.
Furthermore, recalling $\mfc$ depending on $\data$ and $\| (H(z,|\na u|)+H(z,|F|)+1 )\|_{L^\delta(\Om_T)}$,
the desired estimate follow by substituting this inequality to \eqref{eq_final}.
\end{proof}

\section*{Acknowledgement}
 W. Kim has been supported by KIAS individual Grant (HP105501). L. Särkiö has been supported by a doctoral training grant from Vilho, Yrj\"o and Kalle V\"ais\"al\"a Foundation. Part of this work was done during his visit at KIAS in 2025.

\section*{Declarations}
\textbf{Data availability} No data were used to support this study.

\textbf{Competing interests}  The authors have no competing interests to declare that are relevant to the content of this
article.


\begin{thebibliography}{10}

\bibitem{MR2286632}
E.~Acerbi and G.~Mingione.
\newblock Gradient estimates for a class of parabolic systems.
\newblock {\em Duke Math. J.}, 136(2):285--320, 2007.

\bibitem{MR3928653}
K.~Adimurthi and S.~Byun.
\newblock Gradient weighted estimates at the natural exponent for quasilinear parabolic equations.
\newblock {\em Adv. Math.}, 348:456--511, 2019.

\bibitem{MR4074602}
K.~Adimurthi, S.-S. Byun, and J.~Oh.
\newblock Interior and boundary higher integrability of very weak solutions for quasilinear parabolic equations with variable exponents.
\newblock {\em Nonlinear Anal.}, 194:111370, 54, 2020.

\bibitem{MR3483172}
K.~Adimurthi and N.~C. Phuc.
\newblock An end-point global gradient weighted estimate for quasilinear equations in non-smooth domains.
\newblock {\em Manuscripta Math.}, 150(1-2):111--135, 2016.

\bibitem{BBK}
S.~Baasandorj, S.-S. Byun, and W.~Kim.
\newblock Self-improving properties of very weak solutions to double phase systems.
\newblock {\em Trans. Amer. Math. Soc.}, 376(12):8733--8768, 2023.

\bibitem{MR3348922}
P.~Baroni, M.~Colombo, and G.~Mingione.
\newblock Harnack inequalities for double phase functionals.
\newblock {\em Nonlinear Anal.}, 121:206--222, 2015.

\bibitem{verena_thesis}
V.~B\"ogelein.
\newblock {\em Regularity results for weak and very weak solutions of higher order parabolic systems}.
\newblock ProQuest LLC, Ann Arbor, MI, 2007.
\newblock Thesis (Ph.D.)--Friedrich-Alexander-Universit\"at Erlangen-N\"urnberg (Germany).

\bibitem{MR4163123}
V.~B\"ogelein, F.~Duzaar, J.~Kinnunen, and C.~Scheven.
\newblock Higher integrability for doubly nonlinear parabolic systems.
\newblock {\em J. Math. Pures Appl. (9)}, 143:31--72, 2020.

\bibitem{MR4287785}
V.~B\"ogelein, F.~Duzaar, and N.~Liao.
\newblock On the {H}\"older regularity of signed solutions to a doubly nonlinear equation.
\newblock {\em J. Funct. Anal.}, 281(9):Paper No. 109173, 58, 2021.

\bibitem{MR4603642}
V.~B\"ogelein, F.~Duzaar, N.~Liao, and L.~Sch\"atzler.
\newblock On the {H}\"older regularity of signed solutions to a doubly nonlinear equation. {P}art {II}.
\newblock {\em Rev. Mat. Iberoam.}, 39(3):1005--1037, 2023.

\bibitem{MR3102165}
V.~B\"{o}gelein, F.~Duzaar, and P.~Marcellini.
\newblock Parabolic equations with {$p,q$}-growth.
\newblock {\em J. Math. Pures Appl. (9)}, 100(4):535--563, 2013.

\bibitem{MR3073153}
V.~B\"{o}gelein, F.~Duzaar, and P.~Marcellini.
\newblock Parabolic systems with {$p,q$}-growth: a variational approach.
\newblock {\em Arch. Ration. Mech. Anal.}, 210(1):219--267, 2013.

\bibitem{MR4500278}
V.~B\"ogelein, F.~Duzaar, and C.~Scheven.
\newblock Higher integrability for doubly nonlinear parabolic systems.
\newblock {\em Partial Differ. Equ. Appl.}, 3(6):Paper No. 74, 41, 2022.

\bibitem{bogelein_li}
V.~B\"ogelein and Q.~Li.
\newblock Very weak solutions of degenerate parabolic systems with non-standard {$p(x,t)$}-growth.
\newblock {\em Nonlinear Anal.}, 98:190--225, 2014.

\bibitem{MR3985550}
M.~Bul\'{\i}\v{c}ek, J.~Burczak, and S.~Schwarzacher.
\newblock Well posedness of nonlinear parabolic systems beyond duality.
\newblock {\em Ann. Inst. H. Poincar\'{e} C Anal. Non Lin\'{e}aire}, 36(5):1467--1500, 2019.

\bibitem{MR3531368}
M.~Bul\'{\i}\v{c}ek, L.~Diening, and S.~Schwarzacher.
\newblock Existence, uniqueness and optimal regularity results for very weak solutions to nonlinear elliptic systems.
\newblock {\em Anal. PDE}, 9(5):1115--1151, 2016.

\bibitem{MR3500290}
M.~Bul\'{\i}\v{c}ek and S.~Schwarzacher.
\newblock Existence of very weak solutions to elliptic systems of {$p$}-{L}aplacian type.
\newblock {\em Calc. Var. Partial Differential Equations}, 55(3):Art. 52, 14, 2016.

\bibitem{bögelein2025schauderestimatesparabolicplaplace}
V.~Bögelein, F.~Duzaar, U.~Gianazza, N.~Liao, and C.~Scheven.
\newblock Schauder estimates for parabolic $p$-laplace systems.
\newblock {\em arXiv}, 2025.

\bibitem{MR3360738}
M.~Colombo and G.~Mingione.
\newblock Bounded minimisers of double phase variational integrals.
\newblock {\em Arch. Ration. Mech. Anal.}, 218(1):219--273, 2015.

\bibitem{MR3294408}
M.~Colombo and G.~Mingione.
\newblock Regularity for double phase variational problems.
\newblock {\em Arch. Ration. Mech. Anal.}, 215(2):443--496, 2015.

\bibitem{MR3447716}
M.~Colombo and G.~Mingione.
\newblock Calder\'{o}n-{Z}ygmund estimates and non-uniformly elliptic operators.
\newblock {\em J. Funct. Anal.}, 270(4):1416--1478, 2016.

\bibitem{colombo_jems}
M.~Colombo and R.~Tione.
\newblock Non-classical solutions of the {$p$}-{L}aplace equation.
\newblock {\em J. Eur. Math. Soc. (JEMS)}, 27(12):4845--4890, 2025.

\bibitem{MR4779533}
C.~De~Filippis, L.~Koch, and J.~Kristensen.
\newblock Quantified {L}egendreness and the regularity of minima.
\newblock {\em Arch. Ration. Mech. Anal.}, 248(4):Paper No. 69, 51, 2024.

\bibitem{MR3985927}
C.~De~Filippis and G.~Mingione.
\newblock A borderline case of {C}alder\'{o}n-{Z}ygmund estimates for nonuniformly elliptic problems.
\newblock {\em St. Petersburg Math. J}, 31(3):455--477, 2020.

\bibitem{MR4665778}
C.~De~Filippis and G.~Mingione.
\newblock Nonuniformly elliptic {S}chauder theory.
\newblock {\em Invent. Math.}, 234(3):1109--1196, 2023.

\bibitem{MR4965436}
C.~De~Filippis and G.~Mingione.
\newblock Nonuniform ellipticity in variational problems and regularity.
\newblock {\em Notices Amer. Math. Soc.}, 72(9):989--999, 2025.

\bibitem{MR4917742}
C.~De~Filippis and G.~Mingione.
\newblock The sharp growth rate in nonuniformly elliptic {S}chauder theory.
\newblock {\em Duke Math. J.}, 174(9):1775--1848, 2025.

\bibitem{MR1230384}
E.~DiBenedetto.
\newblock {\em Degenerate parabolic equations}.
\newblock Universitext. Springer-Verlag, New York, 1993.

\bibitem{MR2413134}
E.~DiBenedetto, U.~Gianazza, and V.~Vespri.
\newblock Harnack estimates for quasi-linear degenerate parabolic differential equations.
\newblock {\em Acta Math.}, 200(2):181--209, 2008.

\bibitem{MR2414742}
E.~DiBenedetto, U.~Gianazza, and V.~Vespri.
\newblock Subpotential lower bounds for nonnegative solutions to certain quasi-linear degenerate parabolic equations.
\newblock {\em Duke Math. J.}, 143(1):1--15, 2008.

\bibitem{MR2731161}
E.~DiBenedetto, U.~Gianazza, and V.~Vespri.
\newblock Forward, backward and elliptic {H}arnack inequalities for non-negative solutions to certain singular parabolic partial differential equations.
\newblock {\em Ann. Sc. Norm. Super. Pisa Cl. Sci. (5)}, 9(2):385--422, 2010.

\bibitem{MR2865434}
E.~DiBenedetto, U.~Gianazza, and V.~Vespri.
\newblock {\em Harnack's inequality for degenerate and singular parabolic equations}.
\newblock Springer Monographs in Mathematics. Springer, New York, 2012.

\bibitem{MR2668872}
L.~Diening, M.~Rů\v{z}i\v{c}ka, and J.~Wolf.
\newblock Existence of weak solutions for unsteady motions of generalized {N}ewtonian fluids.
\newblock {\em Ann. Sc. Norm. Super. Pisa Cl. Sci. (5)}, 9(1):1--46, 2010.

\bibitem{MR3672391}
L.~Diening, S.~Schwarzacher, B.~Stroffolini, and A.~Verde.
\newblock Parabolic {L}ipschitz truncation and caloric approximation.
\newblock {\em Calc. Var. Partial Differential Equations}, 56(4):Paper No. 120, 27, 2017.

\bibitem{MR2076158}
L.~Esposito, F.~Leonetti, and G.~Mingione.
\newblock Sharp regularity for functionals with {$(p,q)$} growth.
\newblock {\em J. Differential Equations}, 204(1):5--55, 2004.

\bibitem{MR2058167}
I.~Fonseca, J.~Mal\'y, and G.~Mingione.
\newblock Scalar minimizers with fractal singular sets.
\newblock {\em Arch. Ration. Mech. Anal.}, 172(2):295--307, 2004.

\bibitem{MR4525732}
M.~Foss, T.~Isernia, C.~Leone, and A.~Verde.
\newblock {$\mathcal{A}$}-caloric approximation and partial regularity for parabolic systems with {O}rlicz growth.
\newblock {\em Calc. Var. Partial Differential Equations}, 62(2):Paper No. 51, 39, 2023.

\bibitem{MR3928041}
U.~Gianazza and S.~Schwarzacher.
\newblock Self-improving property of degenerate parabolic equations of porous medium-type.
\newblock {\em Amer. J. Math.}, 141(2):399--446, 2019.

\bibitem{MR4019094}
U.~Gianazza and S.~Schwarzacher.
\newblock Self-improving property of the fast diffusion equation.
\newblock {\em J. Funct. Anal.}, 277(12):108291, 57, 2019.

\bibitem{MR1962933}
E.~Giusti.
\newblock {\em Direct methods in the calculus of variations}.
\newblock World Scientific Publishing Co., Inc., River Edge, NJ, 2003.

\bibitem{Iwaniec_Sbordone}
T.~Iwaniec and C.~Sbordone.
\newblock Weak minima of variational integrals.
\newblock {\em J. Reine Angew. Math.}, 454:143--161, 1994.

\bibitem{MR4774133}
B.~Kim and J.~Oh.
\newblock Higher integrability for weak solutions to parabolic multi-phase equations.
\newblock {\em J. Differential Equations}, 409:223--298, 2024.

\bibitem{kim2025boundedsolutionsinterpolativegap}
B.~Kim and J.~Oh.
\newblock Bounded solutions and interpolative gap bounds for degenerate parabolic double phase problems.
\newblock {\em arXiv}, 2025.

\bibitem{MR4926910}
B.~Kim, J.~Oh, and A.~Sen.
\newblock Parabolic {L}ipschitz truncation for multi-phase problems: the degenerate case.
\newblock {\em Adv. Calc. Var.}, 18(3):979--1010, 2025.

\bibitem{KKM}
W.~Kim, J.~Kinnunen, and K.~Moring.
\newblock Gradient higher integrability for degenerate parabolic double-phase systems.
\newblock {\em Arch. Ration. Mech. Anal.}, 247(5):Paper No. 79, 46, 2023.

\bibitem{KKS}
W.~Kim, J.~Kinnunen, and L.~S\"arki\"o.
\newblock Lipschitz truncation method for parabolic double-phase systems and applications.
\newblock {\em J. Funct. Anal.}, 288(3):Paper No. 110738, 60, 2025.

\bibitem{KS}
W.~Kim and L.~S\"arki\"o.
\newblock Gradient higher integrability for singular parabolic double-phase systems.
\newblock {\em NoDEA Nonlinear Differential Equations Appl.}, 31(3):Paper No. 40, 38, 2024.

\bibitem{MR4306765}
J.~Kinnunen, J.~Lehrb\"{a}ck, and A.~V\"{a}h\"{a}kangas.
\newblock {\em Maximal function methods for {S}obolev spaces}, volume 257 of {\em Mathematical Surveys and Monographs}.
\newblock American Mathematical Society, Providence, RI, [2021] \copyright 2021.

\bibitem{KL_HI}
J.~Kinnunen and J.~L. Lewis.
\newblock Higher integrability for parabolic systems of {$p$}-{L}aplacian type.
\newblock {\em Duke Math. J.}, 102(2):253--271, 2000.

\bibitem{KL_veryweak}
J.~Kinnunen and J.~L. Lewis.
\newblock Very weak solutions of parabolic systems of {$p$}-{L}aplacian type.
\newblock {\em Ark. Mat.}, 40(1):105--132, 2002.

\bibitem{MR2968162}
T.~Kuusi and G.~Mingione.
\newblock New perturbation methods for nonlinear parabolic problems.
\newblock {\em J. Math. Pures Appl. (9)}, 98(4):390--427, 2012.

\bibitem{MR3273649}
T.~Kuusi and G.~Mingione.
\newblock Borderline gradient continuity for nonlinear parabolic systems.
\newblock {\em Math. Ann.}, 360(3-4):937--993, 2014.

\bibitem{MR3187676}
T.~Kuusi and G.~Mingione.
\newblock Riesz potentials and nonlinear parabolic equations.
\newblock {\em Arch. Ration. Mech. Anal.}, 212(3):727--780, 2014.

\bibitem{MR3191979}
T.~Kuusi and G.~Mingione.
\newblock The {W}olff gradient bound for degenerate parabolic equations.
\newblock {\em J. Eur. Math. Soc. (JEMS)}, 16(4):835--892, 2014.

\bibitem{MR241822}
O.~A. Ladyženskaja, V.~A. Solonnikov, and N.~N. Ural\textquotesingle~ceva.
\newblock {\em Linear and quasilinear equations of parabolic type}, volume Vol. 23 of {\em Translations of Mathematical Monographs}.
\newblock American Mathematical Society, Providence, RI, 1968.
\newblock Translated from the Russian by S. Smith.

\bibitem{Lewis_elliptic}
J.~L. Lewis.
\newblock On very weak solutions of certain elliptic systems.
\newblock {\em Comm. Partial Differential Equations}, 18(9-10):1515--1537, 1993.

\bibitem{Li}
Q.~Li.
\newblock Very weak solutions of subquadratic parabolic systems with non-standard {$p(x,t)$}-growth.
\newblock {\em Nonlinear Anal.}, 156:17--41, 2017.

\bibitem{MR4373238}
N.~Liao and L.~Sch\"atzler.
\newblock On the {H}\"older regularity of signed solutions to a doubly nonlinear equation. {P}art {III}.
\newblock {\em Int. Math. Res. Not. IMRN}, (3):2376--2400, 2022.

\bibitem{MR969900}
P.~Marcellini.
\newblock Regularity of minimizers of integrals of the calculus of variations with nonstandard growth conditions.
\newblock {\em Arch. Rational Mech. Anal.}, 105(3):267--284, 1989.

\bibitem{MR1094446}
P.~Marcellini.
\newblock Regularity and existence of solutions of elliptic equations with {$p,q$}-growth conditions.
\newblock {\em J. Differential Equations}, 90(1):1--30, 1991.

\bibitem{MR1401415}
P.~Marcellini.
\newblock Everywhere regularity for a class of elliptic systems without growth conditions.
\newblock {\em Ann. Scuola Norm. Sup. Pisa Cl. Sci. (4)}, 23(1):1--25, 1996.

\bibitem{Meyers}
N.~G. Meyers and A.~Elcrat.
\newblock Some results on regularity for solutions of non-linear elliptic systems and quasi-regular functions.
\newblock {\em Duke Math. J.}, 42:121--136, 1975.

\bibitem{MR1939688}
M.~Misawa.
\newblock Local {H}\"older regularity of gradients for evolutional {$p$}-{L}aplacian systems.
\newblock {\em Ann. Mat. Pura Appl. (4)}, 181(4):389--405, 2002.

\bibitem{MR4797292}
K.~Moring, L.~Sch\"atzler, and C.~Scheven.
\newblock Higher integrability for singular doubly nonlinear systems.
\newblock {\em Ann. Mat. Pura Appl. (4)}, 203(5):2235--2274, 2024.

\bibitem{MR159139}
J.~Moser.
\newblock A {H}arnack inequality for parabolic differential equations.
\newblock {\em Comm. Pure Appl. Math.}, 17:101--134, 1964.

\bibitem{oh2025gradient}
J.~Oh and A.~Sen.
\newblock Gradient higher integrability for degenerate parabolic double phase systems with two modulating coefficients.
\newblock {\em arXiv}, 2025.

\bibitem{ok2025partialregularityparabolicsystems}
J.~Ok, G.~Scilla, and B.~Stroffolini.
\newblock Partial regularity for parabolic systems of double phase type.
\newblock {\em arXiv}, 2025.

\bibitem{sen2025lipschitzregularityparabolicdouble}
A.~Sen and J.~Siltakoski.
\newblock Lipschitz regularity for parabolic double phase equations with gradient nonlinearity, 2025.

\bibitem{MR3356846}
T.~Singer.
\newblock Parabolic equations with {$p,q$}-growth: the subquadratic case.
\newblock {\em Q. J. Math.}, 66(2):707--742, 2015.

\bibitem{MR3532237}
T.~Singer.
\newblock Existence of weak solutions of parabolic systems with {$p, q$}-growth.
\newblock {\em Manuscripta Math.}, 151(1-2):87--112, 2016.

\bibitem{MR3482400}
T.~Singer.
\newblock Local boundedness of variational solutions to evolutionary problems with non-standard growth.
\newblock {\em NoDEA Nonlinear Differential Equations Appl.}, 23(2):Art. 19, 23, 2016.

\end{thebibliography}
\end{document}